\def\@cite#1#2{{\m@th\upshape\bfseries%
[{#1\if@tempswa{\m@th\upshape\mdseries, #2}\fi}]}}
\theoremstyle{plain}
\newtheorem{thm}{Theorem}[section]
\newtheorem{cor}[thm]{Corollary}
\newtheorem{prop}[thm]{Proposition}
\newtheorem{lem}[thm]{Lemma}
\theoremstyle{definition}
\newtheorem{defn}[thm]{Definition}
\newtheorem{war}[thm]{Warning}
\newtheorem{ex}[thm]{Example}
\newtheorem{conj}[thm]{Conjecture}
\theoremstyle{remark}
\newtheorem{rem}[thm]{Remark}
\numberwithin{equation}{subsection}
\renewcommand{\bold}[1]{\medskip \noindent {\bf #1 }\nopagebreak}
\newcommand{\nc}{\newcommand}
\newcommand{\rnc}{\renewcommand}
\newcommand{\e}{\varepsilon}
\newcommand{\ann}[1]{}
\newcommand{\eb}[1]{\emph{\textbf{#1}}}
\nc\bA{\mathbb{A}}
\nc\bB{\mathbb{B}}
\nc\bC{\mathbb{C}}
\nc\bD{\mathbb{D}}
\nc\bE{\mathbb{E}}
\nc\bF{\mathbb{F}}
\nc\bG{\mathbb{G}}
\nc\bH{\mathbb{H}}
\nc\bI{\mathbb{I}}
\nc{\bJ}{\mathbb{J}} 
\nc\bK{\mathbb{K}}
\nc\bL{\mathbb{L}}
\nc\bM{\mathbb{M}}
\nc\bN{\mathbb{N}}
\nc\bO{\mathbb{O}}
\nc\bP{\mathbb{P}}
\nc\bQ{\mathbb{Q}}
\nc\bR{\mathbb{R}}
\nc\bS{\mathbb{S}}
\nc\bT{\mathbb{T}}
\nc\bU{\mathbb{U}}
\nc\bV{\mathbb{V}}
\nc\bW{\mathbb{W}}
\nc\bY{\mathbb{Y}}
\nc\bX{\mathbb{X}}
\nc\bZ{\mathbb{Z}}
\nc\cA{\mathcal{A}}
\nc\cB{\mathcal{B}}
\nc\cC{\mathcal{C}}
\rnc\cD{\mathcal{D}}
\nc\cE{\mathcal{E}}
\nc\cF{\mathcal{F}}
\nc\cG{\mathcal{G}}
\rnc\cH{\mathcal{H}}
\nc\cI{\mathcal{I}}
\nc{\cJ}{\mathcal{J}} 
\nc\cK{\mathcal{K}}
\rnc\cL{\mathcal{L}}
\nc\cM{\mathcal{M}}
\nc\cN{\mathcal{N}}
\nc\cO{\mathcal{O}}
\nc\cP{\mathcal{P}}
\nc\cQ{\mathcal{Q}}
\rnc\cR{\mathcal{R}}
\nc\cS{\mathcal{S}}
\nc\cT{\mathcal{T}}
\nc\cU{\mathcal{U}}
\nc\cV{\mathcal{V}}
\nc\cW{\mathcal{W}}
\nc\cY{\mathcal{Y}}
\nc\cX{\mathcal{X}}
\nc\cZ{\mathcal{Z}}
\newcommand{\bk}{{\mathbf{k}}}
\nc{\dmo}{\DeclareMathOperator}
\rnc{\Re}{\operatorname{Re}}
\rnc{\Im}{\operatorname{Im}}
\rnc{\span}{\operatorname{span}}
\dmo{\rank}{rank}
\dmo{\End}{End}
\dmo{\Hom}{Hom}
\dmo{\Jac}{Jac}
\dmo{\Id}{Id}
\dmo{\Ann}{Ann}
\dmo{\Area}{Area}
\dmo{\CP}{\bC P^1}
\title{The boundary of an affine invariant submanifold}
\author[Mirzakhani]{Maryam~Mirzakhani}
\author[Wright]{Alex~Wright}
\begin{document}
\maketitle


\vspace{1cm}



\section{Introduction}\label{S:intro}

\ann{A: All referee comments have been addressed. Some additional small changes have been made, and some of subsection 9.2 has been rewritten. The referee comments and the authors' description of changes are in the margins.} 

\bold{Motivation and main result.} Let $\mathcal{H} (\kappa)$ be a stratum of the moduli space of Abelian differentials,
i.e. the space of pairs $(X,\omega)$ where $X$ is a Riemann surface
of genus $g$ and $\omega$ is a holomorphic $1$-form on $X$  whose zeros have
multiplicities $\kappa_1, \ldots, \kappa_s$ and $\sum_{i=1}^s \kappa_i=2g-2$. The form $\omega$ defines a
 flat metric on $X$ with conical singularities at the zeros
of $\omega$. Such $(X,\omega)$ are called {\em translation surfaces}, and can be also described using polygons up to cut and paste.

The $GL(2, \bR)$  orbit  of any translation surface equidistributes in some affine invariant submanifold, which governs most dynamical and geometric problems on the surface. Besides the obvious ones such as the entire moduli space, affine invariant submanifolds parameterize families of translation surfaces with exceptional flat geometry and dynamics, give rise to amazing families of algebraic curves, and witness unlikely intersections in the moduli space of principally polarized Abelian varieties. Each affine invariant submanifold is a rare gem, and their classification is now a central problem in dynamics on moduli spaces with rich connections to many other areas of mathematics. 

For the connection to flat geometry and dynamics see, for example, \cite{EMa, V, LeWe, Wcyl, AD,EsCh}, and for the connection to Abelian varieties see, for example, \cite{Mc, Mc2, M, M2, Fi1, Fi2, KM, MZag}. 

There are only countably many affine invariant submanifolds \cite{EMM, Wfield}, and we hope for strong classification results based on inductive arguments using degenerations of translation surface. 

Such arguments should take place in a  partial compactification $\overline{\cH}$ of the stratum (moduli space of translation surfaces) compatible with degenerations obtained by expressing translation surfaces in terms of polygons and letting edge lengths go to zero.  In\ann{A: Changed wording slightly (removed ``for simplicity").} the introduction we will restrict to the subspace $\overline{\cH}_{conn}\subset \overline{\cH}$ of connected (single component) translation surfaces. 

Let $\partial \cM$ denote the boundary of $\cM$ in $\overline{\cH}$. By Eskin-Mirzakhani-Mohammadi \cite{EM, EMM}, $\partial \cM \cap \overline{\cH}_{conn}$  is a finite union of affine invariant submanifolds $\cM_i$. The main result in this paper is to compute their tangent spaces. 

Suppose $\cM_i$ is contained in the stratum $\cH_i\subset \overline{\cH}$. At points in $\cH$ near $\cH_i$, we show that the tangent space of the boundary stratum $\cH_i$ is naturally identified with a subspace of the tangent space to $\cH$. 

\begin{thm}[Main Theorem, special case]\label{T:main}
Let $\cM_i$ be one of the finitely many affine invariant submanifolds whose union is $\partial \cM \cap \overline{\cH}_{conn}$.
The tangent space of $\cM_i$ is given by the intersection of the tangent space to $\cM$ with the tangent space to the boundary stratum. 
\end{thm}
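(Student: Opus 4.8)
The plan is to reduce the statement to the local structure of $\overline{\cH}$ near its boundary: once one knows that period coordinates extend to honest charts near boundary points in which the boundary strata appear as linear subspaces, the theorem becomes a soft consequence of the fact that an affine invariant submanifold is locally linear in period coordinates. Fix a point $p\in\cM_i$, with $\cM_i$ lying in the boundary stratum $\cH_i\subset\overline{\cH}$. First I would invoke the local model coming from the construction of $\overline{\cH}$: a neighborhood $U$ of $p$ in $\overline{\cH}$ together with a homeomorphism onto an open subset of a complex vector space $\bV$, extending the period coordinates on $U\cap\cH$, under which $U\cap\cH_i$ is the intersection of $U$ with a linear subspace $\bV_i\subset\bV$ (the locus where the periods of the saddle connections collapsing along $\cH_i$ all vanish), and under which the natural inclusion $T\cH_i\hookrightarrow T\cH$ is the inclusion $\bV_i\hookrightarrow\bV$. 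I also use the standard facts that $\cM$ is locally cut out by real-linear equations in period coordinates and is closed in $\cH$.

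Next I would check that $\cM$ is linear all the way up to the boundary near $p$. After shrinking $U$ so that $U\cap\cH$ is connected, the locally constant real-linear equations defining $\cM$ agree throughout $U\cap\cH$, and a standard open-and-closed argument, using that $\cM$ is closed in $\cH$, identifies $\cM\cap U$ with $\bV_{\cM}\cap U\cap\cH$ for a single real-affine subspace $\bV_{\cM}\subset\bV$. Since $p\in\overline{\cM}$, the subspace $\bV_{\cM}$ meets $\cH$, so no collapsing period vanishes identically on $\bV_{\cM}$; hence $\bV_{\cM}\cap\cH$ is dense in $\bV_{\cM}\cap U$, and by continuity of the extended period coordinates $\overline{\cM}\cap U=\bV_{\cM}\cap U$.

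Now I would conclude. By the defining property of the decomposition $\partial\cM\cap\overline{\cH}_{conn}=\bigcup_i\cM_i$ (from Eskin--Mirzakhani--Mohammadi), after a further shrinking of $U$ we have $\cM_i\cap U=\partial\cM\cap\cH_i\cap U=\overline{\cM}\cap\cH_i\cap U$, the last equality because $\cM\subset\cH$ is disjoint from $\cH_i$. Combining this with the two previous paragraphs, $\cM_i\cap U=\bV_{\cM}\cap\bV_i\cap U$, an intersection of affine subspaces, so $T_p\cM_i=T_p\bV_{\cM}\cap\bV_i$. Finally $T_p\bV_{\cM}$ is the tangent space to $\cM$ (locally constant in period coordinates, and identified with a subspace of $T\cH$ near $\cH_i$) while $\bV_i$ is $T\cH_i$, so $T_p\cM_i=T\cM\cap T\cH_i$. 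Since $p\in\cM_i$ was arbitrary, this proves the theorem.

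The main obstacle lies entirely in the input to the first step rather than in this deduction: one must construct $\overline{\cH}$ so that period coordinates really do extend to single-valued charts, valued in a fixed complex vector space, near each boundary point, with the boundary strata realized as linear subspaces and $T\cH_i\hookrightarrow T\cH$ realized as the corresponding subspace inclusion. This requires ruling out monodromy of the period coordinates around the boundary and understanding precisely how the collapsing saddle connections interact with the flat structure. Granting that machinery, the one genuinely new observation is that the local linear structure of $\cM$ passes unchanged across the boundary divisor: continuity of period coordinates forbids any extra, degeneration-induced linear relations, and this is exactly what prevents $T\cM_i$ from being strictly smaller than $T\cM\cap T\cH_i$.
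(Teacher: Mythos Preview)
Your argument rests entirely on the existence of period-coordinate charts on $\overline{\cH}$ near boundary points, valued in a fixed vector space $\bV$, in which the boundary stratum $\cH_i$ appears as a linear subspace $\bV_i$ and $\cM$ as another linear subspace $\bV_\cM$. You flag this as ``the main obstacle'' and treat it as machinery to be granted. It cannot be granted: such charts do not exist, and this is the genuine gap.

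Concretely, a neighborhood $U$ of a boundary point intersected with the open stratum $\cH$ is not simply connected, and the Gauss--Manin monodromy around the boundary acts nontrivially on $H^1(X_n,\Sigma_n,\bC)$ (by Picard--Lefschetz, for instance). Only the subspace $\Ann(V)$ is guaranteed to be monodromy-invariant; this is exactly the content of Proposition~\ref{P:Vn}. There is therefore no single $\bV$, and your key step---that the linear equations cutting out $\cM$ are constant on $U\cap\cH$, producing a single $\bV_\cM$---fails outright: the subspace $T_{(X_n,\omega_n)}(\cM)\subset H^1(X_n,\Sigma_n,\bC)$ genuinely moves as $(X_n,\omega_n)$ winds around the boundary. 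The paper's phrasing ``for infinitely many $n$'' in Theorem~\ref{T:main2}, the need to pass to subsequences in Proposition~\ref{P:main}, the cautionary examples of Section~\ref{S:Examples} (a linear path exhibiting chaotic accumulation, another with infinite death and rebirth of cylinders), and the explicit remark in the introduction that ``there is no explicit description of a neighborhood of a boundary stratum, except in special cases,'' all reflect this.

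The paper's proof is accordingly of a completely different nature, and the containment that is hard is precisely the one your picture would make automatic: $T(\cM')\subset T(\cM)\cap\Ann(V)$. The easier containment $T(\cM)\cap\Ann(V)\subset T(\cM')$ does follow softly from Proposition~\ref{P:AnnV}. For the hard direction the paper builds substantial dynamical machinery: the Cylinder Finiteness Theorem (Section~\ref{S:CF}) produces finite sets $(S_1,S_2)$ so that certain \emph{recognizable} cylinder twists always lie in $T(\cM)$; these twists span $T(\cM')$ off a small bad set (Proposition~\ref{P:spanTM}); and crucially, a recognizable twist at a boundary surface remains recognizable at nearby interior surfaces (Proposition~\ref{P:rinside}), forcing it into $T(\cM)\cap\Ann(V)$. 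None of this can be shortcut by soft chart-based reasoning.
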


 This formula is valid for a sequence of points of $\cM$ converging to the boundary stratum $\cH_i$.  See Section \ref{S:multi} for more precise statements, which allow multicomponent degenerations. The multicomponent case is more complicated, in part because closed $GL(2, \bR)$ invariant sets of multicomponent surfaces are not always finite unions of affine invariant submanifolds, and some of our results in the multicomponent case are conditional on a  version of \cite{EM, EMM} for multicomponent surfaces (see Conjecture \ref{C:MEMM}).\ann{A: Added clarification.} 

Two important invariants of an affine invariant submanifold are rank and  field of definition \cite{Wcyl, Wfield}. The rank is a measure of size related to the flexibility of disjoint sets of cylinders on surfaces in $\cM$, and the field of definition is related to the arithmetic complexity of the linear equations locally defining $\cM$. (This is not the same thing as the field of definition of an algebraic variety, which is related to the polynomial equations globally defining the variety.) As an example we mention that closed $GL(2, \bR)$ orbits have rank 1, and have field of definition $\bQ$ if and only if they contain a square-tiled surface. 

\begin{cor}[Consequences of Main Theorem, special case]\label{C:main}
Each $\cM_i$ has smaller dimension than $\cM$ and rank at most that of $\cM$. The field of definition of $\cM_i$ is  equal to  that of $\cM$. 
\end{cor} 

Our partial compactification $\overline{\cH}$ will be described in more detail in Section \ref{S:multi}. It can be obtained as  the bundle of finite area Abelian differentials over the Deligne-Mumford compactification, modulo zero area subsurfaces, with zeros and possibly other points marked.  For genus $g$ surfaces, this compactification will include surfaces of all genera at most $g$, and hence the boundary of an affine invariant submanifold will typically consist of many pieces of different genera. 

 \bold{Applications.} If $\cM$ is an affine invariant submanifold of genus $g$ translation surfaces, then the rank of $\cM$ is at most $g$. If its rank is equal to $g$, it is called full rank. 

\begin{thm}\label{T:HW}
Any affine invariant submanifold of full rank is equal to a connected component of a stratum, or the hyperelliptic locus in a connected component of a stratum. 
\end{thm}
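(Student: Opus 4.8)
The plan is to induct on the genus $g$ of the surfaces in $\cM$, taking the theorem to be known for all strata of genus less than $g$, and to move information between $\cM$ and its boundary by means of Theorem~\ref{T:main}. Two preliminary remarks. First, full rank forces the field of definition to be $\bQ$: by the structure of the field of definition of an affine invariant submanifold (see \cite{Wfield}) the rank and the degree satisfy $\rank(\cM)\cdot[\bk_\cM:\bQ]\le g$, and $\rank(\cM)=g$ leaves no room for $[\bk_\cM:\bQ]>1$. Second, $\cM$ is $GL(2,\bR)$-invariant, hence unbounded in its stratum, so $\partial\cM$ is nonempty; we may thus assume $\cM$ is not open in its stratum, since otherwise $\cM$ is already a connected component and there is nothing to prove.

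First I would produce a degeneration that lowers the genus while preserving full rank. Pick a surface in $\cM$ carrying a cylinder $C$ whose equivalence class, in the sense of Wright's Cylinder Deformation Theorem \cite{Wcyl}, pinches only homologically nontrivial curves; shrinking that equivalence class to zero inside $\cM$ produces in the limit a piece $\cM'$ of $\partial\cM\cap\overline{\cH}_{conn}$ lying in a boundary stratum $\cH'$ of genus $g'<g$. Full rank is used here: it supplies enough cylinders, and enough moves among equivalence classes, to arrange such a degeneration with connected limit. By Theorem~\ref{T:main}, $T\cM'=T\cM\cap T\cH'$. Let $V\subset H^1(X;\bC)$ be the isotropic subspace spanned by the pinched classes, so that the absolute cohomology of the limit surface is $V^\perp/V$. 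Full rank of $\cM$ says exactly that the projection $p$ to $H^1(X;\bC)$ is onto, whence the induced projection of $T\cM'$ onto $V^\perp/V$ is also onto. Thus $\cM'$ is again of full rank, of genus $g'$, with field of definition $\bQ$, and by the inductive hypothesis $\cM'$ is a connected component of a stratum or the hyperelliptic locus in one.

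Next I would reconstruct $\cM$ from $\cM'$. Since $\cM$ is cut out locally inside $H^1(X,\Sigma;\bC)$ by linear equations defined over $\bk_\cM=\bQ$, and since $T\cM\cap T\cH'$ is as large as the structure of $\cM'$ just obtained allows, at the boundary these equations impose nothing beyond, at worst, a hyperellipticity condition. The work is to promote this to the interior of $\cM$: using $C$ together with further cylinder deformations, one exhibits a single conveniently chosen surface in $\cM$ on which every cylinder is free and no relative period is constrained unless $\cM$ is hyperelliptic, and then concludes that $\cM$ is a connected component of its stratum, or the hyperelliptic locus in a connected component. The base case is genus $1$, where the only stratum is the space of flat tori and the statement is immediate.

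I expect the reconstruction, together with the bookkeeping of the pieces of $\partial\cM$, to be the main obstacle. Knowing that \emph{each} component of $\partial\cM$ is ``everything'' in its own stratum does not by itself force $\cM$ to be everything: $\partial\cM$ consists of pieces of various smaller genera, and the relative-period constraints that could cut $\cM$ down are precisely the ones that become invisible along the genus-lowering part of the boundary. Making this transport of information rigorous --- and dealing with cylinders whose equivalence classes would instead pinch separating curves, leading out of $\overline{\cH}_{conn}$ --- is where the real effort goes.
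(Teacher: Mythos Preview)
The paper does not prove Theorem~\ref{T:HW}. Immediately after the statement the authors write: ``The proof of Theorem~\ref{T:HW} appears in \cite{FullRank} and uses Theorem~\ref{T:main} and other results of this paper.'' There is therefore no argument in this paper to compare your proposal against; the theorem is quoted here only as an application of the boundary machinery, with the proof deferred in its entirety to the companion paper.

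Your sketch is in the right spirit: the argument in \cite{FullRank} is indeed an induction on genus that uses the boundary theory of the present paper (Theorem~\ref{T:main} and its consequences) to pass to lower genus and then transport information back. Your own caveat is accurate --- the ``reconstruction'' step carries essentially all of the content, and requires considerably more than you have indicated. I would also flag one earlier step: the assertion that full rank is automatically inherited by the boundary piece $\cM'$ is not as immediate as you suggest. Corollary~\ref{C:main} only gives $\rank(\cM')\le\rank(\cM)$, which is vacuous here since $\rank(\cM')\le g'$ anyway; showing $\rank(\cM')=g'$ from $\rank(\cM)=g$ requires knowing that $T(\cM)\cap\Ann(V)$ still surjects onto the absolute cohomology of the limit, and this is not a formal consequence of $p(T(\cM))=H^1(X_n)$. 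In \cite{FullRank} this and the choice of which degeneration to take are handled with care. Treat your write-up as an outline of the strategy rather than a proof.
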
\ann{R: It would nice to give a sketch of proof of Theorem 1.3 since it is a very strong application of the method developed in the present paper.\\A: Added citation.}

The proof of Theorem \ref{T:HW} appears in \cite{FullRank} and uses Theorem \ref{T:main} and other results of this paper. Theorem \ref{T:HW} can be used to simplify the proof of a theorem of \cite{HW}, see \cite[Remark 5.7]{HW}.
   
Paul Apisa has used Theorem \ref{T:main} and and other results of this paper in his recent proof that all orbit closures of rank greater than 1 in  hyperelliptic connected components of strata arise from covering constructions \cite{Apisa}.

\bold{Idea of proof and additional results.} The collection of results we develop to prove Theorem \ref{T:main} may be as interesting as Theorem \ref{T:main} itself. 

Our strategy starts with the Cylinder Deformation Theorem \cite{Wcyl}, which appears as Theorem \ref{T:CDT} below. Every translation surface has infinitely many cylinders \cite{MasurClosed}. Shearing a collection of parallel cylinders while leaving the rest of the surface unchanged gives a path of  translation surfaces called a cylinder twist. The Cylinder Deformation Theorem states that cylinder twists given by equally shearing  the full set of cylinders in some direction on a translation surface $(X,\omega)$ remain in any affine invariant submanifold containing $(X,\omega)$.  

In an effort to understand affine invariant submanifolds geometrically, one could consider, for each $(X, \omega)$, the span $E(X,\omega)$ of derivatives of such cylinder twists. This $E(X,\omega)$ is a subspace of the tangent space to the stratum at $(X,\omega)$. By the Cylinder Deformation Theorem, the dimension of $E(X,\omega)$ is a lower bound for the dimension of any affine invariant submanifold containing $(X,\omega)$.   \ann{R: It is not very clear what $E(X,\omega)$ is.\\A: Clarified that we are considering derivatives of twists. Clarified that $E(X,\omega)$ is a subspace of the tangent space to the stratum. Added reference to Remark \ref{R:E}.} However this $E(X,\omega)$ is  very behaved: In Remark \ref{R:E} we observe that it is nowhere continuous. 

Nonetheless, we will show that the tangent space to the boundary $\cM_i$ is generated by  cylinder twists that can still be performed on nearby surfaces in the interior $\cM$. This implies that, once the tangent space to the boundary stratum $\cH_i$ is  identified with a subspace of the tangent space to the stratum $\cH$, the tangent space to the boundary affine invariant submanifold $\cM_i$ is contained in the  tangent space to $\cM$. 

To do this, we will have to define a larger class of twists than those generating $E(X,\omega)$, by using  knowledge of $\cM$. Our larger class of twists need not remain in all affine invariant submanifolds containing $(X,\omega)$, only $\cM$. 

Recall that two cylinders on a surface $(X,\omega)\in\cM$ with core curves $\alpha, \beta$ are said to be $\cM$-parallel if there is a linear equation ${\int_{\alpha}\omega = c\int_{\beta}\omega}$  for some $c\in \bR$ that holds locally on $\cM$. The Cylinder Deformation Theorem states  that cylinder twists given by equally shearing an equivalence class of $\cM$-parallel cylinders remain in $\cM$. 

Suppose that a surface $(X,\omega)\in \cM$ has exactly two cylinders in some direction, but their ratio is not in the set of ratios of $\cM$-parallel cylinders. The Cylinder Deformation Theorem then guarantees that the twist in each one of these individual cylinders  remains in $\cM$. Such twists will be the prototypical examples of a class of twists that we call recognizable, a notion that depends $\cM$. The key properties of recognizable twists, defined in Section \ref{S:Rec}, are  

\begin{enumerate}
\item at every point of $\cM$, the recognizable twists remain in $\cM$,  
\item away from a very small set (a finite union of proper affine invariant submanifolds) the recognizable twists span the tangent space to $\cM$,
\item recognizable twists at the boundary of $\cM$ are also recognizable at nearby surfaces in the interior. 
\end{enumerate}

To obtain these properties, we use a version of the following result. 

\begin{thm}[Cylinder Finiteness Theorem, special case]
For each affine invariant submanifold $\cM$, the set of ratios of circumferences of $\cM$-parallel cylinders on surfaces in $\cM$ is finite. 
\end{thm}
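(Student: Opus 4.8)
The plan is to deduce finiteness from the algebraicity of affine invariant submanifolds together with the fact that $\cM$-parallelism is governed by linear equations with coefficients in the field of definition, which has bounded degree over $\bQ$. First I would recall that if cylinders with core curves $\alpha,\beta$ are $\cM$-parallel, then the constant $c$ with $\int_\alpha\omega = c\int_\beta\omega$ holding locally on $\cM$ lies in the field of definition $\bk(\cM)$, which by \cite{Wfield} is a number field of degree at most $g$ over $\bQ$. Thus it suffices to bound the heights (or, more robustly, to show there are only finitely many possible values) of these ratios, ranging over all pairs of $\cM$-parallel cylinders on all surfaces in $\cM$.

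The key step is to package all $\cM$-parallel cylinder pairs into finitely many algebraic families. For a fixed topological configuration of a pair of disjoint simple cylinders on a surface (there are finitely many such configurations up to the action of the mapping class group, for a fixed stratum), the locus in $\cM$ where that pair is present and parallel is a constructible set, and on it the ratio $c$ is a locally constant function (it is the ratio of two period coordinates restricted to $\cM$). Because $\cM$ has finitely many connected components and each such locus has finitely many connected components (algebraicity, via \cite{Fi1} or \cite{EMM}), $c$ takes only finitely many values on each. Summing over the finitely many topological configurations of cylinder pairs in the stratum gives finiteness of the full set of ratios.

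The main obstacle is making the phrase ``finitely many topological configurations'' do real work: a priori a surface in $\cM$ has infinitely many cylinders (\cite{MasurClosed}), so there are infinitely many pairs to consider on a single surface. The resolution is that what matters is not the pair of curves but the period coordinates they determine, i.e. the pair of cohomology classes in $H^1(X,\Sigma;\bZ)$ (relative to the zeros $\Sigma$) that the cylinders represent; the ratio $c$ depends only on how the restriction map to $H^1(\cM)$ (more precisely to the cotangent space of $\cM$) acts on these two classes. The crucial input is then a finiteness statement for the set of lines spanned by core curves of cylinders in cohomology modulo the equations defining $\cM$ — equivalently, one shows the set of possible ratios is invariant under a natural $GL(2,\bR)$-symmetry and descends to a question on the finitely many cusps/boundary strata, where it becomes visibly finite. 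I would expect to need the Cylinder Deformation Theorem \ref{T:CDT} here as well, to rigidify the possible deformations and rule out continuous families of ratios, since a continuum of ratios would force a continuum of non-parallel-but-deformable cylinders and contradict the structure of the tangent space to $\cM$.
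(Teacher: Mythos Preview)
Your proposal has a genuine gap at exactly the point you flag as ``the main obstacle.'' The reduction to finitely many topological configurations does not go through, and the attempted fix via cohomology classes is not enough.

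First, there are infinitely many homology classes in $H_1(X,\Sigma;\bZ)$ realized by core curves of cylinders (already on a single surface), so summing over ``finitely many topological configurations'' is not an option. Your replacement idea is to say the ratio $c$ depends only on the images of $\alpha,\beta$ in $T(\cM)^*$; that is true, but the image of the integer lattice in $T(\cM)^*$ is still infinite, and pairs of integer classes whose images are proportional can produce infinitely many distinct ratios in $\bk(\cM)$ unless you have some further input. Knowing $c\in\bk(\cM)$ is far from finiteness: a number field has infinitely many elements, and you have offered no height bound. Finally, the suggestion to ``descend to the finitely many cusps/boundary strata'' is circular here: in this paper the boundary theory is built \emph{on top of} the Cylinder Finiteness Theorem, not the other way around.

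The paper's proof is quite different and genuinely dynamical. The strategy is: (i) prove a uniform bound $R_{cyl}$ on all ratios of $\cM$-parallel cylinders (Proposition~\ref{P:Cbound}); (ii) use the $GL(2,\bR)$ action together with Minsky--Weiss recurrence (Theorem~\ref{T:MW}) to normalize any equivalence class of $\cM$-parallel cylinders so that the smallest has circumference $1$ and the surface lies in a fixed compact set $K$; (iii) on $K$, use a semialgebraic finiteness argument (Lemmas~\ref{L:homfin}, \ref{L:cV}) to conclude there are only finitely many possible equivalence classes and hence finitely many ratios. Step (i) is the heart of the argument and requires independent tools: Theorem~\ref{T:twists} (the partial converse to the Cylinder Deformation Theorem), Smillie--Weiss (Theorem~\ref{T:SW}), and a homology lemma (Lemma~\ref{L:hom}) bounding the circumference of any ``new'' cylinder in an equivalence class by the sum of the circumferences of the old ones (Lemma~\ref{L:atmostsum}). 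None of these ingredients appear in your sketch, and without the uniform bound of Proposition~\ref{P:Cbound} the compactness reduction cannot even begin.
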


See Section \ref{S:CF} for a more powerful version, which additionally describes moduli and cylinder deformations.

The idea of the proof of the Cylinder Finiteness Theorem is to use the $GL(2, \bR)$ action and the results of Minsky-Weiss on recurrence of horocycle flow \cite{MinW} to reduce the problem to studying cylinders of bounded circumference in a compact subset of the moduli space  of unit area surfaces. This simple plan runs up again serious technical difficulties, which we resolve using new results of independent interest on cylinder deformations. For example, we provide a partial converse to the Cylinder Deformation Theorem, for which we additionally make use of results of Smillie-Weiss \cite{SW2}. 

\begin{thm}\label{T:twistsintro}
All cylinder deformations supported on an equivalence class of $\cM$-parallel cylinders are, up to purely relative cohomology classes, equal to a scalar multiple of the standard cylinder deformation. 
\end{thm}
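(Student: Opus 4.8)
Write $p\colon H^1(X,\Sigma;\bC)\to H^1(X;\bC)$ for the map forgetting relative periods, so that $\ker p$ is the space of purely relative classes. Let $\cC$ be the equivalence class of $\cM$-parallel cylinders under consideration, let $V=\bigoplus_{C\in\cC}\bC\cdot\gamma_C^{*}\subseteq H^1(X,\Sigma;\bC)$ be the space of cylinder deformations supported on $\cC$, and let $\sigma_{\cC}\in V$ be the standard deformation. The statement to be proved is $p(T_{(X,\omega)}\cM\cap V)=\bC\cdot p(\sigma_{\cC})$. After acting by $GL(2,\bR)$ we may assume the cylinders of $\cC$ are horizontal, the statement being $GL(2,\bR)$-equivariant. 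The inclusion $\supseteq$ is immediate from the Cylinder Deformation Theorem (Theorem \ref{T:CDT}), which gives $\bC\cdot\sigma_{\cC}\subseteq T\cM$; in particular $\dim_\bC p(T\cM\cap V)\ge 1$, and everything below is aimed at the reverse bound $\dim_\bC p(T\cM\cap V)\le 1$.

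The key linear-algebraic input is an isotropy estimate. Applying $GL(2,\bR)$-invariance of $\cM$ to the diagonal subgroup shows that $T_{(X,\omega)}\cM$ is the complexification of $T_{(X,\omega)}\cM\cap H^1(X,\Sigma;\bR)$, so $p(T\cM)=U\otimes_{\bR}\bC$ for a real subspace $U\subseteq H^1(X;\bR)$; by the structure theory of affine invariant submanifolds \cite{Wcyl}, $U$ is symplectic for the cup product and $\dim_\bR U=2\,\rank(\cM)$. The classes $p(\gamma_C^{*})$, $C\in\cC$, are up to sign the Poincar\'e duals of the disjoint core curves, hence span an \emph{isotropic} subspace $R_{\cC}\subseteq H^1(X;\bR)$. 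Therefore $p(T\cM\cap V)\subseteq p(T\cM)\cap p(V)=(U\cap R_{\cC})\otimes\bC$, and $U\cap R_{\cC}$, being an isotropic subspace of the symplectic space $U$, has real dimension at most $\rank(\cM)$; in particular $\dim_\bC p(T\cM\cap V)\le\rank(\cM)$. When $\rank(\cM)=1$ this is already the desired bound, so the theorem holds whenever $\rank(\cM)=1$, in particular for $GL(2,\bR)$-orbit closures.

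For higher rank one must sharpen the count, and here I would use the dynamics. By the recurrence and periodicity results of Smillie--Weiss \cite{SW2}, one can find inside $\cM=\overline{GL(2,\bR)\cdot(X,\omega)}$ a \emph{horizontally periodic} surface $(Y,\eta)$ --- whose horizontal foliation is a union of cylinders --- reached from $(X,\omega)$ along a path (horocycle flow in the direction of $\cC$, the diagonal flow, and a limit) that keeps the cylinders of $\cC$ horizontal, and which carries as many equivalence classes of $\cM$-parallel horizontal cylinders as possible, namely $\rank(\cM)$ of them. On such a surface the union $R$ of all the $R_{\cC_j}$ over the horizontal equivalence classes $\cC_1,\dots,\cC_{\rank(\cM)}$ is isotropic, so $\dim_\bR(R\cap U)\le\rank(\cM)$; but the Cylinder Deformation Theorem puts the $\rank(\cM)$ linearly independent classes $p(\sigma_{\cC_j})$ into $R\cap U$, so equality holds and $R\cap U=\bigoplus_j\bR\cdot p(\sigma_{\cC_j})$. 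Intersecting with $R_{\cC_j}$ and using that the standard deformations of distinct classes are supported on disjoint sets of cylinders, one obtains $p(T_{(Y,\eta)}\cM\cap V_{\cC_j})=\bC\cdot p(\sigma_{\cC_j})$ for each $j$; applying this to the class into which $\cC$ deforms, and transferring back to $(X,\omega)$ along the path, where $\dim_\bC p(T\cM\cap V)$ cannot have dropped, yields $\dim_\bC p(T_{(X,\omega)}\cM\cap V)\le 1$, as required.

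The main obstacles are the dynamical reduction and the transfer. One must arrange the Smillie--Weiss degeneration so that it genuinely lands on a horizontally periodic surface \emph{inside} $\cM$, with the maximal number of equivalence classes and with $\cC$ persisting as one of them, and one must control the behaviour of the line $p(T\cM\cap V)$ under the limit. This is also exactly where relative cohomology intervenes: the phrase ``up to purely relative cohomology classes'' in the statement is what is needed to absorb the homology relations that can hold among core curves of distinct equivalence classes, and keeping these from contaminating either the dimension count on $(Y,\eta)$ or the transfer back to $(X,\omega)$ is the delicate point.
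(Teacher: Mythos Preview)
Your overall strategy matches the paper's: pass via Smillie--Weiss to a horizontally periodic surface carrying $r=\rank(\cM)$ equivalence classes whose standard twists $p(\sigma_{\cC_1}),\dots,p(\sigma_{\cC_r})$ form a Lagrangian basis of $p(T\cM)$, and exploit isotropy. But the step ``Intersecting with $R_{\cC_j}$ and using that the standard deformations of distinct classes are supported on disjoint sets of cylinders'' is a genuine gap. Disjointness of support is a statement in $H^1(X,\Sigma)$; after applying $p$ it need not survive, since core curves of cylinders in \emph{different} equivalence classes can be homologous in $H_1(X)$. So from $R\cap U=\bigoplus_j\bR\, p(\sigma_{\cC_j})$ you cannot conclude $R_{\cC_j}\cap U=\bR\, p(\sigma_{\cC_j})$ by a support argument: an element of $R_{\cC_j}$ could equal a combination $\sum_i c_i p(\sigma_{\cC_i})$ with some $c_i\neq 0$ for $i\neq j$. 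You correctly flag in your last paragraph that homology relations between classes are the delicate point, but you do not supply an argument that rules them out.

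The paper closes exactly this gap with a deformation argument, arguing by contradiction. Assuming two twists $u,u'$ supported on $\cC_1$ with $p(u),p(u')$ independent, on the periodic surface $p(u),p(u')$ and the $p(\sigma_{\cC_i})$ all lie in the Lagrangian; hence some $p(\sigma_{\cC_r})$ is a real combination of $p(u),p(u')$ and $p(\sigma_{\cC_i}),\,i<r$. Translating via Poincar\'e duality, a \emph{positive} combination of core curves of $\cC_r$ is homologous to a real combination of core curves of $\cC_1,\dots,\cC_{r-1}$. But independence of the $p(\sigma_{\cC_i})$ means one can deform within $\cM$ to make $\cC_r$ non-horizontal while keeping $\cC_1,\dots,\cC_{r-1}$ horizontal, which is incompatible with that homology relation. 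This geometric step, not a support/disjointness argument, is what actually forces the dimension bound. Your reduction to the periodic surface also requires care (this is the paper's Lemma~\ref{L:Lag}, which invokes \cite[Section~8]{Wcyl} and in particular carries the two offending twists $u,u'$ to the limit surface); your ``transfer back'' sketch is in the right direction but is not yet an argument.
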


The standard cylinder deformation is the deformation that twists all cylinders equally; this is precisely the cylinder deformation which is guaranteed to stay in $\cM$ by the  Cylinder Deformation Theorem.

After we introduce the appropriate notation,  Theorem \ref{T:twistsintro} is restated as Theorem \ref{T:twists} in more precise language.

\begin{cor}\label{C:rational}
If the tangent space to $\cM$ contains no purely relative cohomology classes, then the moduli of $\cM$-parallel cylinders are rational multiples of each other. 
\end{cor}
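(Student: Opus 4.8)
The plan is to interpret the rationality of the moduli as the statement that a cylinder-twist flow in $\cM$ is periodic, and to deduce this periodicity from Theorem~\ref{T:twists} together with the hypothesis. Fix $(X,\omega)\in\cM$ and an equivalence class $\cC=\{C_1,\dots,C_n\}$ of $\cM$-parallel cylinders, which we may take to be horizontal; write $m_1,\dots,m_n$ for their moduli, $s_\cC$ for the standard cylinder deformation, and $V_\cC\subset H^1(X,\Sigma;\bC)$ for the complex span of the Poincar\'e--Lefschetz duals of the core curves of the $C_i$ (this is the space of cohomology classes of cylinder deformations supported on $\cC$). First I would consider the flow $u^\cC_t$ obtained by applying $\left(\begin{smallmatrix}1&t\\0&1\end{smallmatrix}\right)$ simultaneously to every cylinder of $\cC$. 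By the Cylinder Deformation Theorem it stays in $\cM$, with derivative $s_\cC$ at $(X,\omega)$. Since $\left(\begin{smallmatrix}1&t\\0&1\end{smallmatrix}\right)$ shears $C_i$ through $t\,m_i$ full turns, the orbit $\{u^\cC_t(X,\omega):t\in\bR\}$ lies in the compact torus $\cT(X,\omega)$ of surfaces obtained by shearing the $C_i$ independently, where it is the linear flow in the direction $(m_1,\dots,m_n)$. Its closure $\bT$, which lies in $\cM$ because $\cM$ is closed, is therefore a subtorus of $\cT(X,\omega)$ of dimension $d:=\dim_\bQ\span_\bQ\{m_1,\dots,m_n\}$.

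Next I would bound $d$. The tangent space of $\bT$ at $(X,\omega)$ is a $d$-dimensional subspace of $T_{(X,\omega)}\cT(X,\omega)$, and $T_{(X,\omega)}\cT(X,\omega)$ is precisely the space of real cohomology classes in $V_\cC$; since $\bT\subset\cM$, this subspace also lies in $T\cM$, so $d\le\dim_\bR\!\big(T\cM\cap V_\cC\cap H^1(X,\Sigma;\bR)\big)$. On the other hand, Theorem~\ref{T:twists} says that every cylinder deformation supported on $\cC$ that remains in $\cM$ agrees, modulo purely relative classes, with a scalar multiple of $s_\cC$; the hypothesis that $T\cM$ contains no purely relative classes makes the natural map $H^1(X,\Sigma;\bC)\to H^1(X;\bC)$ injective on $T\cM$, so the phrase ``modulo purely relative classes'' may be omitted and $T\cM\cap V_\cC=\bC\cdot s_\cC$. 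Intersecting with the real classes, and using that $s_\cC$ is itself real, gives $T\cM\cap V_\cC\cap H^1(X,\Sigma;\bR)=\bR\cdot s_\cC$, which is one-dimensional. Hence $d\le1$; since the moduli are positive, the direction $(m_1,\dots,m_n)$ is nonzero, so $d=1$. This is exactly the statement that $m_1,\dots,m_n$ are rational multiples of one another (equivalently, that $u^\cC_t$ is periodic).

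The step I expect to be the main obstacle is the geometric input used in the first paragraph: showing that the closure of the orbit of $u^\cC_t$ is an honest (possibly immersed) subtorus of $\cM$ with the asserted tangent space, and in particular that the orbit is genuinely trapped in the compact set $\cT(X,\omega)$ and that $\cT(X,\omega)$ behaves like a torus even when some cylinder of $\cC$ carries a nontrivial automorphism. This is where the results of Smillie--Weiss \cite{SW2} on the dynamics of such flows are invoked.
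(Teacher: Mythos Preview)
Your argument is correct, and it is the natural direct derivation one would expect from the placement of the corollary immediately after Theorem~\ref{T:twistsintro}. The only explicit proof the paper supplies, however, is in Section~\ref{S:CF}: there Corollary~\ref{C:rational} is obtained as a special case of Corollary~\ref{C:rational2}, which in turn rests on the full Cylinder Finiteness Theorem. Your route is lighter---it combines Theorem~\ref{T:twists} directly with the elementary torus argument already sketched in the proof of Lemma~\ref{L:rational} (and attributed there to \cite[Corollary~3.4]{Wcyl})---while the paper's route buys the stronger conclusion that the ratios of moduli lie in a fixed finite subset of $\bQ$ depending only on $\cM$.

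Your final paragraph overstates the obstacle. That the closure of a linear flow on an $n$-torus in direction $(m_1,\dots,m_n)$ is a subtorus of dimension $\dim_{\bQ}\span_{\bQ}\{m_1,\dots,m_n\}$ is Kronecker's theorem; no input from Smillie--Weiss is needed at this step. (Smillie--Weiss enters the paper in the proof of Theorem~\ref{T:twists} itself, via Lemma~\ref{L:Lag}, not in the passage from that theorem to the corollary.) The concern about automorphisms is likewise harmless: the twist path is linear in period coordinates regardless, and the tangent-space containment $T_{(X,\omega)}\bT\subset T(\cM)\cap V_{\cC}$ survives even if the map from the abstract torus $\cT(X,\omega)$ to the stratum is only finite-to-one.
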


The corollary can be seen as generalization of one part of the Veech dichotomy \cite{V}. More generally, one can typically obtain \ann{A: Word ``that" deleted.} more complicated rational linear relations  among moduli of $\cM$-parallel cylinders when the tangent space contains some purely relative cohomology classes. 

In the course of our study, we must prove basic results on the partial compactification $\overline{\cH}$ and how strata of small genus translation surfaces appear in the boundary of strata of large genus translation surfaces, for example Proposition \ref{P:plusv}. Previous results  primarily focused on special cases  when the difference of the genus of the boundary surface and the interior surface is at most 1  \cite{Boissy, KZ, EMZboundary}. \ann{R: When you quote [16, 28], you may also quote C. Boissy Connected components of the moduli space of meromorphic differentials. This work is related to SmillieÕs ideas on compactification.\\A: Citation added.}

\bold{Potential applications.} The results of this paper seem to greatly restrict the structure of affine invariant submanifolds. For example, they contributed significantly to the discovery, announced in early 2014, of a rank two affine invariant submanifold of $\cH(6)$ with field of definition $\bQ[\sqrt{5}]$. (At the time, the results of the present paper were conjectured to be true by the authors.) We hope the results of this paper will be similarly be useful in the future for the classification of affine invariant submanifolds, either with certain properties or in certain strata. It is conceivable that more new affine invariant submanifolds could be discovered in the process. 

The authors are currently pursuing generalizations of Theorem \ref{T:HW}. It would be interesting  to classify non-arithmetic rank 2 orbit closures in $\cH(6)$, as we have implied might be possible above. We hope that recent efforts to classify  higher rank affine invariant submanifolds in low genus might be extended using the new tools in this paper \cite{NW, ANW, AN}.

It is worth noting that Theorem \ref{T:main} is useful even without any classification of the affine invariant submanifolds that might appear in the boundary.  This theme appears already in the work of Apisa \cite{Apisa}, and will also feature in forthcoming work of the authors. 

The boundary structure of affine invariant submanifolds is of interest in the dynamics of Teichmuller geodesic flow \cite{AMY, Gadre}. The simple boundary structure of closed $GL(2,\bR)$ orbits (i.e., there are finitely many cusps) played a key role in the Veech dichotomy \cite{V}, and in the recent work of Avila and Delecroix on weak mixing in Veech surfaces \cite{AD}. \ann{A: We removed the sentence on Aulicino's joint work with Avila and Delecroix, because it is possible the set of authors for that project has changed, and we do not want to risk referring to it incorrectly.} 

 Our methods, in particular Theorem \ref{T:Sdetermines}, may clarify the extent to which the boundary of an affine invariant submanifold determines the affine invariant submanifold.

\bold{Notes and references.} For an introduction to orbit closures of translation surfaces, see, for example, \cite{Wbilliards, Wsurvey, MT, Z}. For recent results on the classification of affine invariant submanifolds, see \cite{NW, ANW, AN}. For the earlier classification results of McMullen in genus 2, see \cite{Mc5, Mc4, McM:spin}. See also \cite{Ca} for a different presentation of affine invariant submanifolds in genus 2.  For some extensions of McMullen's techniques beyond genus 2, see \cite{HLM-Q,HLM-S,N}. \ann{R: The paper by Bainbridge, Chen, Gendron, Grushevsky, Moeller: Compactification of strata of abelian differentials should be quoted by the authors (it appeared after the result of Mirzakhani and Wright). The link with this algebraic compactification should be explained.\\ A: Added citation, and some remarks after Definition \ref{D:converge} to facilitate such a comparison. Given the huge amount currently being written on compactifications, we prefer to avoid a detailed comparison here.}

A number of works, some ongoing, define compactifications of strata that remember much more information than ours, for example \cite{EKZbig, Gen, Chen, Many} and work in progress of John Smillie. Rather than deliberately forgetting the information of vanishing subsurfaces, they individually rescale the metric on each such subsurface to obtain meromorphic differentials. 
Due to the meromorphic differentials the boundary of an affine invariant submanifold in such a compactification will not be an affine invariant submanifold. 


Using disjoint methods from ours, Filip has shown that affine invariant submanifolds are quasi-projective varieties \cite{Fi1, Fi2}. It would be interesting to relate his work to the boundary theory of affine invariant submanifolds. On the one hand, one should be able to deduce some results on the boundary from Filip's work.  \ann{A: Removed two sentences.} 
Filip shows that $\cM$ can be defined by endomorphism and torsion conditions, and it would be interesting to explicitly understand the relationship between these conditions and those defining the boundary of $\cM$. This might allow a description of the boundary in algebro-geometric terms. On the other hand,  M\"oller suggested several years ago that, given sufficiently good understand of the boundary of $\cM$, one might hope to find a proof of algebraicity using Chow's Theorem \cite[Corollary 4.6]{Mum}. 

Although we  describe the boundary of an affine invariant submanifold of $\cM$, we do not describe what $\cM$ looks like in a neighborhood of the boundary. This  interesting problem has not even been addressed for strata: so far there is no explicit description of a neighborhood of a boundary stratum, except in  special cases  \cite{KZ, EMZboundary}.

\bold{Organization.} In Section \ref{S:multi} we  define our partial compactification and state our main theorem on boundaries. The proofs of basic facts about the partial compactification are deferred until the end of the paper in Section \ref{S:Basic}.  In Section \ref{S:Examples} we give examples of convergent sequences of translation surfaces. In Section \ref{S:Twist} we recall the Cylinder Deformation Theorem, and prove the partial converse. In Section \ref{S:CF} we state and prove the Cylinder Finiteness Theorem. This is the technical core of the paper. Next, in Sections  \ref{S:Rec} and \ref{S:Boundary} we define and study recognizable twists, and use them to study the boundary of affine invariant submanifolds. Most of the paper is written for the case of connected (single component) degenerations, and the general case is addressed in Section \ref{S:MultiCase}.

\bold{Acknowledgements.} We thank Matt Bainbridge, Alex Eskin, Simion Filip, Howard Masur, Curt McMullen, Kasra Rafi, Rick Schoen, Scott Wolpert, and Anton Zorich for helpful conversations. We are grateful to Curt McMullen for sharing the example in Section \ref{SS:Mc} with us.  We thank the referee for their helpful comments and suggestions.\ann{A: Added thanks to referee and Filip.}

The work of MM was partially supported by NSF and Simons grants. The work of AW was partially supported by a Clay Research Fellowship. 

\section{Multicomponent  surfaces and statement of main result}\label{S:multi}

In this section we introduce a natural class of finite area limits of translation surfaces and state our main results. We also review some definitions and some of our main tools. 

\subsection{The ``what you see is what you get" partial compactification} We begin with the definitions. 

\begin{defn}
A \emph{multicomponent translation surface} is a collection $(X, \omega, \Sigma)$. Here $X$ is a compact Riemann surface with at most finitely many connected components, $\omega$ is an Abelian differential that is nonzero on every connected component of $X$, and $\Sigma\subset X$ is a finite set of  marked points. We require that $\Sigma$ contain all the zeros of $\omega$. 
\end{defn}

 We will also consider labeled multicomponent surfaces, which are multicomponent surfaces where  the components are labelled. 
(The labeling is  to allow us refer to the different components unambiguously.)  

Every abelian differential on a connected Riemann surface defines a multicomponent translation surface as above, by marking the zeros. Marked points where $\omega$ does not vanish will be considered to be zeros of $\omega$ of order zero.  

A  partition is a set $\kappa=\{k_1, \ldots, k_s\}$, where the $k_i$ are non-negative integers (some of which may be zero). Typically we will assume $\sum k_i=2g-2$ for some integer $g$, and  will say that $\kappa$ is a partition of $2g-2$. 

 Given a finite  collection of  partitions $\kappa_1, \ldots, \kappa_p$, the stratum $\cH(\kappa_1)\times \cdots\times \cH( \kappa_p)$ of labeled multicomponent surfaces will be the collection of all labeled multicomponent translation surfaces with $p$ components, each one with zeros and marked points given by $\kappa_i$. The unlabeled stratum is a quotient of the labeled stratum, in which the labeling is forgotten. Frequently we will pass from the unlabeled situation to the labeled situation without comment by taking the pre-image under this forgetful map. 


\begin{defn}\label{D:converge}
Let $\cH'$ and $\cH$ be two strata of multicomponent translation surfaces. 
Say that $(X_n, \omega_n, \Sigma_n)\in \cH'$  converges to $(X,\omega, \Sigma)\in \cH$ if there are decreasing neighborhoods $U_n\subset X$ of $\Sigma$ with $\cap U_n = \Sigma$ such that the following holds. There are maps $g_n: X\setminus U_n \to X_n$ that are diffeomorphisms onto their range, such that 
\begin{enumerate}
\item $g_n^*(\omega_n)$ converges to $\omega$ in the compact open topology on $X\setminus \Sigma$,  
\item the injectivity radius of points not in the image of $g_n$ goes to zero uniformly in $n$. 
\end{enumerate}
Injectivity radius at a point is defined as the sup of $\e$ such that the ball of radius $\e$ in the flat metric centered at that point is embedded and does not contain any marked points.
\end{defn}

\ann{A: Added a paragraph and Remark \ref{R:partial} explicitly stating what our partial compactification is. This should make it easier to compare to other compactifications.} 
Given a stratum of connected translation surfaces, we will work in the partial compactification obtained by the union of that stratum together with all limit points of sequences of surfaces in the stratum. It follows directly from Definition \ref{D:converge} that this space admits a continuous $GL(2, \bR)$ action. 

\begin{rem}\label{R:partial}
 This partial compactification can also be obtained as follows. This description is not required for the main results in this paper. 
\begin{enumerate}
\item Given the stratum of connected translation surfaces, mark all zeros of the Abelian differentials to obtain a subset of the Hodge bundle over the moduli space of  Riemann surfaces with marked points. 
\item Take the subset of the Hodge bundle over the Deligne-Mumford compactification consisting of all finite area Abelian differentials in the closure. (This is endowed with the subspace topology.) 
\item Consider two Abelian differentials to be equivalent if they are equal after removing all zero area components (including nodes that lie on zero area components), and filling in any punctures thus created with marked points. (So if a zero area component was joined at a node to another component, that node becomes a marked point.) Take the quotient by this equivalence relation. (This quotient space is endowed with the quotient topology, which is Hausdorff.) 
\end{enumerate}
Definition \ref{D:converge} is the same as convergence in this space. This notion of convergence was also used by McMullen in \cite{McM:nav}. 
\end{rem}
So as not to distract from our main results, we defer the proof of the final claim in Remark \ref{R:partial} and the next three results to Section \ref{S:Basic}.

\begin{prop}\label{P:fn}
Suppose that $(X_n, \omega_n, \Sigma_n)\in \cH'$  converges to $(X,\omega, \Sigma)\in \cH$. Then there are ``collapse maps" $f_n$ from $X_n$ to $X$ (with some additional identifications between points of $\Sigma$)  that map marked points to marked points and have the following property: for any $\e>0$ and $n$ large enough, the $f_n$ are the inverse to $g_n$ on the subset of $(X_n, \omega_n, \Sigma_n)$ with injectivity radius at least $\e$. 
\end{prop}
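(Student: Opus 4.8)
The plan is to build $f_n$ by inverting $g_n$ on the ``visible'' part of $X_n$ and collapsing the rest toward $\Sigma$. Fix $U_n$ and $g_n\colon X\setminus U_n\to X_n$ as in Definition \ref{D:converge}, and set $A_n:=g_n(X\setminus U_n)$, a closed subsurface of $X_n$ homeomorphic to $X\setminus U_n$, and $V_n:=\overline{X_n\setminus A_n}$, the ``vanishing part''. Since the image of $g_n$ is the interior of $A_n$, hypothesis~(2) of Definition \ref{D:converge} says the injectivity radius tends to $0$ uniformly on $X_n\setminus A_n$; hence for each $\e>0$ the $\e$-thick part $(X_n)_{\ge\e}$ (the points of injectivity radius $\ge\e$, which contains no marked point) lies in $A_n$ once $n$ is large. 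On $A_n$ we will take $f_n:=q\circ g_n^{-1}$, where $q\colon X\to\overline X$ is a quotient map collapsing finitely many points of $\Sigma$ together; the content of the proposition is the continuous extension of $f_n$ over $V_n$.

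The geometric input we need is a description of $V_n$ for $n$ large: \emph{each connected component $W$ of $V_n$ is a compact subsurface of $X_n$ whose boundary circles are carried by $g_n$ onto the boundary circles $\partial B$ of connected components $B$ of $U_n$, each such $B$ being a topological (cone-)disk containing exactly one point of $\Sigma$; moreover all marked points of $X_n$ lie in $V_n$.} Establishing this, I expect, is the main obstacle. The hypothesis that the metrics converge on compacta of $X\setminus\Sigma$ gives no direct control near $\Sigma$, so one must rule out, for large $n$, spurious non-simply-connected components of $U_n$ clustering near points of $\Sigma$; this is the kind of structural fact about the partial compactification supplied in Section \ref{S:Basic} (cf.\ Proposition \ref{P:plusv}), and it uses the thick--thin decomposition of the flat metrics $g_n^*\omega_n$, the fact that $\Area(V_n)\to0$ (because $\Area(X_n)\to\Area(X)=\lim\Area(A_n)$), and the fact that $X$, being fixed, has positive systole. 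A convenient first reduction is to replace $(U_n,g_n)$ by $\big(X\setminus g_n^{-1}((X_n)_{\ge\e_n}),\,g_n|_{g_n^{-1}((X_n)_{\ge\e_n})}\big)$ for a slowly decaying sequence $\e_n\to0$: this still realizes the convergence (the newly uncovered set is the $\e_n$-thin part of $X_n$, of injectivity radius $<\e_n$, and one checks $\bigcap_n(\text{new }U_n)=\Sigma$), it places all marked points of $X_n$ in the vanishing part, and it shrinks $U_n$ into an arbitrarily small neighborhood of $\Sigma$, so that only the behavior near $\Sigma$ remains to be understood.

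Granting the structure above, the construction is routine. Let $\sim$ be the equivalence relation on $X$ generated by declaring $p\sim p'$ (for $p,p'\in\Sigma$) whenever the disk-components $B\ni p$ and $B'\ni p'$ of $U_n$ satisfy that $g_n(\partial B)$ and $g_n(\partial B')$ lie on the boundary of a common component $W$ of $V_n$; set $\overline X:=X/\!\sim$ with quotient map $q$. Then $\overline X$ is $X$ with finitely many points of $\Sigma$ glued together, and for each $W$ the image under $q$ of the union of those $\overline B$ glued to $W$ is a wedge of (cone-)disks, hence contractible. Put $f_n:=q\circ g_n^{-1}$ on $A_n$. On a component $W$ of $V_n$, choose pairwise disjoint collar neighborhoods $N_B\cong(\partial B)\times[0,1]$ of the boundary circles of $W$, indexed by the $B$ glued to $W$, with $(\partial B)\times\{1\}$ being the boundary circle $g_n(\partial B)$ of $W$, chosen thin enough to avoid the finitely many marked points of $X_n$; let $f_n$ map each $N_B$ onto $q(\overline B)$ by the standard collar collapse that restricts to $q\circ g_n^{-1}$ on $(\partial B)\times\{1\}$ and sends $(\partial B)\times\{0\}$ to the point $q(p)$, and let $f_n$ equal $q(p)$ on the core $W\setminus\bigcup_B\operatorname{int}N_B$. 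These pieces agree on overlaps (which are only the inner circles $(\partial B)\times\{0\}$, all sent to $q(p)$), so $f_n$ is continuous on $V_n$ and matches $q\circ g_n^{-1}$ along $\partial V_n$; hence $f_n$ is continuous on $X_n$, and it sends marked points to marked points (those in $A_n$ go into $q(\Sigma)$ via $g_n^{-1}$, those in $V_n$ lie in cores and go to some $q(p)\in q(\Sigma)$).

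The asserted property is then immediate: for fixed $\e>0$ and $n$ large, $(X_n)_{\ge\e}\subseteq A_n$ and $(X_n)_{\ge\e}$ contains no marked point, so $g_n^{-1}$ sends it into $X\setminus\Sigma$, where $q$ is injective; thus on $(X_n)_{\ge\e}$ the map $f_n=q\circ g_n^{-1}$ coincides with $g_n^{-1}$ after identifying $X\setminus\Sigma$ with its image in $\overline X$, i.e.\ $f_n$ is inverse to $g_n$ there. (The same computation in fact shows $f_n\circ g_n$ is the inclusion $X\setminus U_n\hookrightarrow\overline X$ on all of $A_n$, not just on $\e$-thick parts.)
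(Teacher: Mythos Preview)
Your construction is correct and follows essentially the same approach as the paper: invert $g_n$ on the thick part, collapse each component of the thin part to a point of $\Sigma$ (after making the necessary identifications among points of $\Sigma$), and interpolate on annular collars in between.

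However, you substantially overcomplicate the ``geometric input'' step. The $U_n$ in Definition~\ref{D:converge} are \emph{arbitrary} decreasing neighborhoods of $\Sigma$ in the \emph{fixed} limit surface $X$ with $\bigcap U_n=\Sigma$. Hence one may simply \emph{choose} each $U_n$ to be a disjoint union of small round disks about the points of $\Sigma$, with $U_{n-1}\setminus U_n$ a union of annuli. There is nothing to ``rule out'': the worry about ``spurious non-simply-connected components of $U_n$'' evaporates once you realize the $U_n$ live in $X$, not in $X_n$, and are yours to pick. The paper's proof does exactly this in one line (``We may assume that the $U_n$ are homeomorphic to disks and the $U_{n-1}\setminus U_n$ are homeomorphic to annuli''), and then interpolates on the explicit annuli $g_n(U_{n-1}\setminus U_n)\subset X_n$ rather than on abstract collars inside $V_n$. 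Your thick--thin and isoperimetric machinery, and the replacement of $(U_n,g_n)$ by level sets of the injectivity radius, are all unnecessary for this proposition.
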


Related collapse maps also appear in the theory of the Deligne-Mumford compactification, see for example \cite{Ba}.

A sequence $(X_n, \omega_n, \Sigma_n)$ may converge to a limit with more than one component. In this case one can think that some of the marked points of each component of the limit are naturally identified to some of the marked points in the other components. We do not remember these identifications in this paper, however they are required in the previous propositions, since of course there cannot be continuous maps $f_n$ from the connected $X_n$ to $X$ when $X$ is disconnected. 

In fact we will not use the $f_n$ much, only the ``space of vanishing cycles" 
$$V_n=\ker(f_n: H_1(X_n, \Sigma_n, \bC)\to H_1(X, \Sigma, \bC)).$$
Note that the relative homology group $H_1(X, \Sigma, \bC)$ is unchanged if some points of $\Sigma$ are identified to some other points of $\Sigma$, thus the map on relative homology exists even if the identifications among points of $\Sigma$ are forgotten. Although the $f_n$ are not canonically defined on the subset of with small injectivity radius, the induced map on relative homology does not depend on the choice of  map $f_n$ satisfying the conditions of Proposition \ref{P:fn}. 

\begin{prop}\label{P:Vn}
$V_n$ is well defined and eventually constant in $n$: there is a neighborhood of $(X,\omega, \Sigma)$ in $\cH'$ that contains $(X_n, \omega_n, \Sigma_n)$ for $n$ large enough on which the space of vanishing cycles is well defined and invariant under parallel transport. 
\end{prop}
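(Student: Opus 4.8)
The plan is to describe the subspace $V_n\subset H_1(X_n,\Sigma_n,\bC)$ killed by $f_n$ in purely topological terms --- as the homology carried by the collapsing ``thin part'' of $X_n$ --- so that independence of the choice of $f_n$ and stability in $n$ both become transparent. First I would fix a small neighborhood $U$ of $\Sigma$ in $X$; by Proposition \ref{P:fn} the collapse maps restrict, for $n$ large, to $g_n^{-1}$ on the ``thick part'' $T_n:=g_n(X\setminus U)$, and I would take the $f_n$ (constructed in Section \ref{S:Basic}) compatible with the geometric degeneration, so that each component of the complementary ``thin part'' $Z_n:=X_n\setminus T_n$ --- which contains all of $\Sigma_n$ --- is collapsed by $f_n$ onto the point of $\Sigma$ to which it degenerates. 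The structural claim is that $V_n$ equals the span of those classes in $H_1(X_n,\Sigma_n,\bC)$ having a representative, absolute or relative to $\Sigma_n$, supported in $Z_n$. One inclusion is immediate, since such a class maps into a contractible subset of $X/\!\!\sim$ meeting $\Sigma$. For the other I would use a Mayer--Vietoris/excision argument for $X_n=T_n\cup Z_n$: the group $H_1(X_n,\Sigma_n,\bC)$ is spanned by classes supported in $Z_n$ together with classes coming from $(T_n,\partial T_n)$, and on the latter $f_{n*}$ is determined by $g_n^{-1}$ together with the (geometrically forced) assignment of the boundary circles of $T_n$ to points of $\Sigma$. Since this description of $V_n$ refers only to $g_n$ and not to $f_n$, it establishes that $V_n$ is well defined; a short further argument, using that for $n$ large the thin part carries the same topology for all sufficiently small $\e$ and that the $g_n$ are canonical up to isotopy, removes dependence on the remaining auxiliary data.

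Next I would prove that the pair $(X_n,Z_n)$ has a single homeomorphism type for all large $n$. Since each $X_n$ lies in the fixed stratum $\cH'$ and $T_n$ is identified by $g_n$ with the fixed surface-with-boundary $X\setminus U$, the thin part $Z_n$ is built by gluing onto $\partial(X\setminus U)$ a compact surface whose genus and number of components are bounded in terms of $\cH'$; there are finitely many such topological types, and convergence in the sense of Definition \ref{D:converge} forces one of them to occur for all $n$ large (indeed it is determined by the limit together with $\cH'$). Once this is known, the homeomorphisms $g_m\circ g_n^{-1}$ of thick parts are realized by parallel transport along paths in a connected neighborhood of $(X,\omega,\Sigma)$ in $\cH'$, and they carry $V_n$ to $V_m$, both being the span of the classes supported in the identified thin subsurface; this is the ``eventually constant'' assertion. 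Finally, using the description of the topology of $\overline{\cH'}$ in Remark \ref{R:partial}, I would show that every multicomponent surface in a sufficiently small neighborhood $\Omega$ of $(X,\omega,\Sigma)$ in $\cH'$ admits a thick/thin decomposition whose thin subsurface varies in a locally constant family over $\Omega$; then ``classes supported in the thin subsurface'' defines a flat subbundle of the homology local system over $\Omega$, with fiber $V_n$ at $(X_n,\omega_n,\Sigma_n)$ for $n$ large, and flatness is exactly invariance of the space of vanishing cycles under parallel transport.

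I expect the stabilization to be the main obstacle: controlling the topological type of the collapsing region $Z_n$, and verifying that the thick/thin decomposition extends over a neighborhood in $\cH'$ as a locally constant family. Both rest on a precise grip on the topology of the partial compactification near the boundary, which is why this proposition belongs with the structural results of Section \ref{S:Basic}. By contrast the relative-homology bookkeeping forced by the ``additional identifications between points of $\Sigma$'' is merely a technical nuisance, handled by the observation that $H_1(X,\Sigma,\bC)\cong\widetilde H_1(X/\Sigma,\bC)$ is unchanged when points of $\Sigma$ are identified among themselves.
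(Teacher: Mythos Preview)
Your core identification of $V_n$ with the relative homology carried by the thin part $Z_n$ is exactly the paper's. The difference is in how stabilization is obtained.

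The paper does not try to show that the homeomorphism type of the pair $(X_n,Z_n)$ stabilizes, nor does it invoke the Deligne--Mumford description of Remark~\ref{R:partial} (which it explicitly says is not used in any proof). Instead it proves a short flat-geometric lemma (Lemma~\ref{L:Vthin}): on a suitable neighborhood $\cU\subset\cH'$ of $(X,\omega,\Sigma)$, every saddle connection of length less than $\Delta/2$ (where $\Delta$ is the shortest saddle connection on the limit) lies in the $\delta$-thin part. This lets one replace your choice-dependent $Z_n$ by the \emph{intrinsically defined} $\delta$-thin part, and local constancy over all of $\cU$ then follows directly: the $\delta$-thin part is determined by which short saddle connections are present, and those persist throughout $\cU$. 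No Mayer--Vietoris bookkeeping, no classification of thin-part types, no appeal to the partial compactification's global topology.

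Your stabilization step is where the proposal is genuinely thin. The parenthetical claim that the topological type of $Z_n$ is ``determined by the limit together with $\cH'$'' is asserted, not proved: finiteness of candidate types plus convergence does not by itself exclude oscillation along the sequence, and it is not obvious that two different degenerating paths in $\cH'$ to the same boundary point must collapse homeomorphic subsurfaces. You could likely rescue this via the Deligne--Mumford picture as you indicate, but that imports machinery the paper deliberately avoids. The paper's trade---a one-paragraph saddle-connection estimate in place of a topological classification argument---is what keeps its proof to a few lines.
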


In this paper we will not need the results of Rafi on the comparison between flat and hyperbolic metrics \cite{Rafi}, however those familiar with these results can anticipate the source of vanishing cycles. The  $(X_n, \omega_n, \Sigma_n)$ have a thick part with large injectivity radius, and a thin part with small injectivity radius. These two parts are separated by a collection of ``expanding annuli". The space of vanishing cycles is the relative homology of the thin part. 

Since the space of vanishing cycles $V_n$ is eventually constant, we will often omit the $n$ and write $V$ instead of $V_n$. It will be implicit that only large $n$ are considered. We will write $\Ann(V)$ for the space of cohomology classes that annihilate the vanishing cycles, using the usual pairing between relative cohomology and relative homology.  

\begin{prop}\label{P:AnnV}
In the situation above,  $\Ann(V)$  is naturally identified with the tangent space to the boundary stratum $\cH$. Furthermore, there is a neighborhood of $0$ in $\Ann(V)$ such that if $\xi_n, \xi$ are in this neighborhood and $\xi_n \to\xi,$ then  
$(X_n, \omega_n, \Sigma_n)+\xi_n$ converges to $(X, \omega, \Sigma) +\xi\in \cH$. 
\end{prop}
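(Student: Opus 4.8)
The plan is to establish the two assertions of Proposition \ref{P:AnnV} separately: the identification comes from dualizing the collapse map, and the continuity statement from an explicit model of the period‑coordinate charts adapted to the thick–thin decomposition of $(X_n,\omega_n,\Sigma_n)$.

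\emph{The identification.} Write $\hat X$ for the target of the collapse maps $f_n$, that is, $X$ with the additional identifications of points of $\Sigma$ permitted by Proposition \ref{P:fn}; as noted above $H_1(\hat X,\hat\Sigma,\bC)=H_1(X,\Sigma,\bC)$, so dually $H^1(\hat X,\hat\Sigma,\bC)=H^1(X,\Sigma,\bC)$ is the tangent space to $\cH$ at $(X,\omega,\Sigma)$ in period coordinates. By definition $V=\ker\big(f_{n,*}\colon H_1(X_n,\Sigma_n,\bC)\to H_1(\hat X,\hat\Sigma,\bC)\big)$, and the first step is to check that $f_{n,*}$ is surjective. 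By Proposition \ref{P:fn}, $f_n$ carries the $\e$‑thick part of $X_n$ diffeomorphically onto $\hat X\setminus U_n$; any relative cycle of $\hat X$ can be represented by loops together with arcs joining marked points, all lying in $\hat X\setminus U_n$ except for short terminal hairs running into $U_n$, and pulling the bulk back through $g_n$ while completing the hairs to the marked points of $X_n$ (possible since $f_n$ sends marked points to marked points) exhibits a preimage. Dualizing, $f_n^*\colon H^1(\hat X,\hat\Sigma,\bC)\to H^1(X_n,\Sigma_n,\bC)$ is injective with image the annihilator of $\ker f_{n,*}=V$, namely $\Ann(V)$; this is the required natural identification, and it is independent of the choice of $f_n$ because the induced map on (co)homology is.

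\emph{The deformation: set‑up.} Identify everything with $T_{(X,\omega,\Sigma)}\cH=H^1(X,\Sigma,\bC)$ via $f_n^*$, so the hypothesis reads $\xi_n\to\xi$ in $H^1(X,\Sigma,\bC)$ with $\xi$ small, and we must show $(X_n,\omega_n,\Sigma_n)+f_n^*\xi_n\to(X,\omega,\Sigma)+\xi$. Fix a geodesic triangulation $T$ of $\hat X$ with vertex set $\hat\Sigma$; its edges span $H_1(\hat X,\hat\Sigma,\bZ)$, and $(X,\omega,\Sigma)$ is recorded by the edge vectors $\int_e\omega\in\bC$, subject only to the triangle‑closing relations (which are precisely the relations among the edges in relative homology) and the open condition that triangle areas be positive. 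Realize $(X,\omega,\Sigma)+\xi$ by replacing each $\int_e\omega$ with $\int_e\omega+\xi(e)$; this is legitimate for $\xi$ small. For $n$ large, transport $T$ through $g_n$ onto the thick part of $X_n$ and complete it to a geodesic triangulation $T_n$ of $X_n$ with vertex set $\Sigma_n$ by triangulating the thin pieces, arranged (using Proposition \ref{P:Vn}, so the relevant structures are constant in $n$) so that $f_{n,*}$ sends thick edges to the corresponding edges of $T$ and thin edges to fixed classes of $H_1(\hat X,\hat\Sigma,\bZ)$. Realize $(X_n,\omega_n,\Sigma_n)+f_n^*\xi_n$ by replacing each $T_n$‑edge vector $\int_{e'}\omega_n$ with $\int_{e'}\omega_n+\xi_n(f_{n,*}e')$, and verify — via injectivity of period coordinates — that the surface re‑glued from these data is indeed the claimed point.

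\emph{The deformation: verification and the main difficulty.} On a thick edge $e'=g_n(e)$ the new vector $\int_{g_n(e)}\omega_n+\xi_n(e)$ converges to $\int_e\omega+\xi(e)$, because $g_n^*\omega_n\to\omega$ on compacta and $\xi_n\to\xi$; hence the deformed thick parts converge, and composing the $g_n$ with the triangle re‑gluings gives maps $g_n'$ establishing condition (1) of Definition \ref{D:converge} for the deformed surfaces. For condition (2), the key point is that $f_n^*\xi_n\in\Ann(V)$ changes no vanishing period: for $v\in V$ the period shifts by $\xi_n(f_{n,*}v)=\xi_n(0)=0$, while the remaining $T_n$‑edges move only by the uniformly small amounts $\xi_n(f_{n,*}e')=O(\|\xi_n\|)$. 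The main obstacle is exactly here: one must choose the completions $T_n$ into the thin part uniformly in $n$ and argue that perturbing the thin edges by these small amounts neither destroys the triangulation nor disturbs the uniform rate at which injectivity radii go to zero off the image of $g_n'$, so that condition (2) persists. Since the injectivity radii in the thin part are governed by the lengths of the short geodesics there — which are vanishing periods, hence unchanged — together with an $O(\|\xi_n\|)$ perturbation of the surrounding geometry, this should go through for $\xi_n$ small; shrinking the neighborhood of $0$ in $\Ann(V)$ to keep all triangle areas positive throughout then finishes the argument. The thin‑part estimates required are of the kind developed for $\overline{\cH}$ in Section \ref{S:Basic}.
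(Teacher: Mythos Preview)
Your identification argument is essentially the same as the paper's and is fine.

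For the deformation statement, however, your triangulation approach differs from the paper's and has a genuine gap precisely at the point you flag as ``the main obstacle.'' The thin part of $(X_n,\omega_n,\Sigma_n)$ can have arbitrarily degenerate geometry: triangles in any triangulation $T_n$ of the thin part will have areas going to zero with $n$. You observe correctly that edges in $V$ have their periods fixed by $f_n^*\xi_n$, but thin edges need not lie in $V$ --- for instance, the core curve of a collapsing cylinder, or a diagonal across it, typically maps to a nonzero class in $H_1(X,\Sigma)$. Such edges \emph{are} perturbed by $O(\|\xi_n\|)$, and since the surrounding triangles have areas of order $o(1)$, there is no uniform neighborhood of $0$ on which the perturbed data still define a valid triangulation. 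Your appeal to Section \ref{S:Basic} is circular: that section does not supply thin-part triangulation estimates but rather proves exactly this proposition by a different route.

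The paper's method sidesteps the difficulty entirely. It describes deformations via \v{C}ech $1$-cocycles on an $\e$-decent open cover chosen so that all of $\Sigma$ --- and hence, after pullback by $f_n$, the entire thin part of $X_n$ --- lies in a \emph{single} element of the cover. A cocycle deformation slides the open sets relative to one another but leaves the internal geometry of each set untouched; thus the thin part is never triangulated or perturbed, only re-glued to the thick part. Well-definedness then reduces to a uniform lower bound ($\e/2$) on the decency constant of the pulled-back cover, which follows because $g_n$ is nearly isometric. This is the key idea your argument is missing: rather than controlling the thin part, arrange the deformation so it does not see the thin part at all.
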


The above result is quite intuitive: the ways of deforming the limit surface correspond to the ways to deforming the pre-limit surfaces that do not change the parts of the surface that are degenerating.  Recall that strata are locally modeled by relative cohomology, hence our notation $+\xi$ to indicate the surface whose corresponding relative cohomology class is modified by adding $\xi$.  To prove the proposition, one must in particular show that  $(X_n, \omega_n, \Sigma_n) +\xi$ is well defined for all sufficiently small $\xi$  independent of $n$.

\subsection{Statement of results}
 
 Recall that an affine invariant submanifold is a properly immersed connected submanifold locally defined in period coordinates by real linear equations with zero constant term. We also use this definition for strata of multicomponent translation surfaces.  A foundational result on affine invariant submanifolds is the following, from \cite{EM, EMM}. 
 
 \begin{thm}[Eskin-Mirzakhani-Mohammadi]
 Any closed $GL(2,\bR)$ invariant subset of a stratum of connected translation surfaces is a finite union of affine invariant submanifolds. 
 \end{thm}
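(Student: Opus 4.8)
This is the foundational theorem of Eskin--Mirzakhani and Eskin--Mirzakhani--Mohammadi, whose full proof spans \cite{EM, EMM}; here I only sketch the strategy one would follow. The overall plan mirrors the homogeneous-dynamics paradigm of Ratner and Benoist--Quint: first classify invariant measures, then deduce the structure of closed invariant sets by an induction on dimension. Write $P\subset SL(2,\bR)$ for the upper triangular subgroup and $g_t=\mathrm{diag}(e^t,e^{-t})$ for the geodesic flow.

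\textbf{Measure classification.} The heart of the matter is to show that every $P$-invariant ergodic Borel probability measure $\nu$ on the stratum is \emph{affine}: its support is an affine invariant submanifold $\cN$ and $\nu$ is the natural $SL(2,\bR)$-invariant Lebesgue-class measure on $\cN$. First I would use the correspondence between $\mu$-stationary measures (for a suitable measure $\mu$ on $SL(2,\bR)$) and $P$-invariant ones, and then analyze the conditional measures of $\nu$ along the expanding horospherical foliation of $g_t$, which in period coordinates is affine. The engine is an exponential drift argument in the style of Benoist--Quint: from two nearby $\nu$-generic points, flow by $g_t$ and track the relative displacement in period coordinates; the Oseledets decomposition of the Kontsevich--Zorich cocycle, together with the non-degeneracy results of Forni and Avila--Viana (crucially, the absence of zero Lyapunov exponents on the relevant factor), forces this displacement to lie in, and eventually to span, a linear subspace defined over the field of definition, which one identifies with $T\cN$. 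Hence the conditionals are Lebesgue on affine subspaces and $\nu$ is affine. Two delicate points: a time-reparametrization replacing the geodesic flow by the relevant averaged motion, and control of excursions into the cusp via the quantitative non-divergence estimates of Eskin--Masur and Minsky--Weiss. One then upgrades $P$-invariance to $GL(2,\bR)$-invariance by averaging over the rotation subgroup.

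\textbf{Orbit closures and isolation.} Granting the measure classification, for any $(X,\omega)$ the $g_t$-orbit averages are tight (again by quantitative non-divergence), any weak-$*$ limit is $P$-invariant hence affine, and one deduces that $\overline{GL(2,\bR)\cdot(X,\omega)}$ equals the support of an affine measure, so is an affine invariant submanifold. The genuinely new structural ingredient needed for the theorem is an \emph{isolation property}: affine invariant submanifolds of bounded complexity meeting a fixed compact set form a tame family --- in particular one cannot have infinitely many distinct affine invariant submanifolds of bounded dimension all passing near a fixed point without each ``absorbing'' a definite neighborhood of itself. This is proved by feeding the measure classification into a quantitative recurrence and entropy argument.

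\textbf{Induction and main obstacle.} Finally, let $Z$ be closed and $GL(2,\bR)$-invariant. Every point of $Z$ has orbit closure an affine invariant submanifold contained in $Z$; by the isolation property there are only finitely many such submanifolds of maximal dimension, say $\cN_1,\dots,\cN_r$, and $\bigcup_i\cN_i$ absorbs a neighborhood of itself within $Z$, so the closure of $Z\setminus\bigcup_i\cN_i$ is again closed and $GL(2,\bR)$-invariant with strictly smaller maximal orbit-closure dimension. Since dimensions are bounded by $\dim\cH$, this terminates and exhibits $Z$ as a finite union of affine invariant submanifolds. The overwhelming difficulty is the measure classification: running the exponential-drift scheme with no ambient algebraic group, where the ``unipotent'' input is replaced by the $P$-action and one must simultaneously control the cocycle dynamics (requiring the full strength of the Lyapunov spectrum results) and the non-compactness of the stratum. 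By comparison the isolation theorem, though the main new contribution of \cite{EMM}, and the final induction are comparatively formal once the measure classification is available.
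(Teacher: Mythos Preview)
The paper does not prove this theorem; it is quoted as a black box from \cite{EM, EMM} and used as a tool throughout. So there is no ``paper's own proof'' to compare against, and your opening sentence correctly identifies the situation.

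Your sketch of the Eskin--Mirzakhani and Eskin--Mirzakhani--Mohammadi strategy is broadly accurate in its architecture (measure classification via exponential drift, then isolation and induction on dimension), but one point is backwards. You write that the argument relies ``crucially'' on ``the absence of zero Lyapunov exponents on the relevant factor,'' attributing this to Forni and Avila--Viana. In fact one of the central achievements of \cite{EM} is precisely that it does \emph{not} require simplicity of the Kontsevich--Zorich spectrum or the absence of zero exponents. For an arbitrary affine invariant submanifold the spectrum may well have repeated or vanishing exponents, and a substantial portion of \cite{EM} is devoted to handling this possibility (this is where the modified Margulis function, the eight-variable entropy argument, and the bilipschitz estimates enter). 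The Forni and Avila--Viana results apply only to full strata and are not inputs to the general measure classification. So your sketch misidentifies both what is assumed and where the real difficulty lies within the drift argument.
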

 
 The tangent bundle $T(\cM)$ of an affine invariant submanifold is naturally a subbundle of $H^1_{rel}$, the vector bundle with fibers $H^1(X, \Sigma, \bC)$. Let $H^1$ denote the vector bundle with fibers $H^1(X, \bC)$, and let $p:H^1_{rel}\to H^1$ denote the natural map.

 As the next example shows, affine invariant submanifolds may have self-crossings. 
 
\begin{ex}
Let $\cM\subset \cH(2,0,0)$ be the locus where at least one of the two marked points is at a Weierstrass point. (The marked points are not allowed to coincide, or to be equal to the zero.) Then $\cM$ self-crosses along the locus where both marked points are Weierstrass point. In one branch, one marked point stays a Weierstrass point, and in the other branch the other point stays a Weierstrass point. (McMullen has also told us a simple way to build affine invariant submanifolds with self-crossings without using marked points.)  
 \end{ex}
 
 We suppress some minor issues arising from self-crossings, whose solutions can be found in \cite[Section 2.1]{LNW}. However, we keep in mind that boundary affine invariant submanifolds may have points of self-crossing, and possibly also distinct components that cross each other.

\begin{thm}[Main Theorem]\label{T:main2}
Let $\cM$ be an affine invariant submanifold. 
\begin{enumerate}
\item The subset of the boundary of $\cM$ consisting of connected surfaces consists of a finite union of affine invariant submanifolds. If $(X_n, \omega_n)\in \cM$ converge to a connected translation surface $(X, \omega)$ in the boundary, then  $(X,\omega)$ is contained in a component  $\cM'$ of the boundary of $\cM$ whose tangent space can be computed, for infinitely many $n$, as $$T(\cM')=T_{(X_n, \omega_n)}(\cM)\cap \Ann(V).$$ Here $V=V_n$ is the space of vanishing cycles, which is eventually constant. 
\item  If $(X', \omega')$ is a point in the boundary of $\cM$, and $\pi$ is a projection onto one of its connected components, then $\pi(X', \omega')$ is contained in an affine invariant submanifold $\cM'$ whose  tangent space can be computed, for infinitely many $n$, as $$T(\cM')=\pi_*(T_{(X_n, \omega_n)}(\cM)\cap \Ann(V)).$$
\end{enumerate}
\end{thm}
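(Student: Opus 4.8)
The plan is to prove part (1) first and then reduce part (2) to it, establishing the two inclusions of the tangent-space formula separately: one is essentially formal and uses only the basic properties of the partial compactification, while the other is where the real content lies. To begin part (1), for each of the finitely many strata $\cH'$ of connected translation surfaces, other than that of $\cM$, that meet $\overline{\cM}$, the set $\overline{\cM}\cap\cH'$ is closed in $\cH'$ and $GL(2,\bR)$-invariant, so by Eskin--Mirzakhani--Mohammadi it is a finite union of affine invariant submanifolds; the union over $\cH'$ gives the first assertion of part (1) and places $(X,\omega)$ in one such submanifold $\cM'\subseteq\cH'$. (That only finitely many $\cH'$ occur, and Propositions \ref{P:fn}--\ref{P:AnnV}, belong to the basic structure of $\overline{\cH}$ proved in Section \ref{S:Basic}.) I would then record two reductions. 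First, by Proposition \ref{P:Vn} the vanishing cycles $V=V_n$ are eventually constant and parallel near $(X,\omega)$, and since $T(\cM)\subset H^1_{rel}$ is flat, the subspace $T_{(X_n,\omega_n)}(\cM)\cap\Ann(V)$ is, after parallel transport to a fixed fiber, independent of $n$ for all large $n$ --- which is why passing to subsequences costs nothing and the formula need only be asserted for infinitely many $n$. Second, since $T(\cM)$ and (for large $n$) $\Ann(V)$ are parallel along a collar of $\cM$ accompanying any path in $\cM'$, the right-hand side of the formula is unchanged if $(X,\omega)$ and the sequence are moved within $\cM'$; so I may assume $(X,\omega)$ is a generic point of $\cM'$, avoiding the exceptional set of Section \ref{S:Rec} and lying on a single smooth sheet (self-crossings being handled as in \cite[Section 2.1]{LNW}).

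\emph{The inclusion $T_{(X_n,\omega_n)}(\cM)\cap\Ann(V)\subseteq T(\cM')$.} Given $\xi$ in the left-hand side, Proposition \ref{P:AnnV} identifies it with a tangent vector to the boundary stratum $\cH'$ at $(X,\omega)$ and shows that $(X_n,\omega_n)+t\xi$, which lies in $\cM$ for small $t$ (uniformly in $n$) by local linearity in period coordinates, converges to $(X,\omega)+t\xi\in\cH'$. Hence $(X,\omega)+t\xi\in\overline{\cM}$, so for small $t$ this path lies in $\partial\cM\cap\overline{\cH}_{conn}$, in fact in the component $\cM'$, and differentiating at $t=0$ gives $\xi\in T(\cM')$. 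The reverse inclusion is the substantive one: $T(\cM')\subseteq\Ann(V)$ is automatic since $\cM'\subseteq\cH'$, so the point is $T(\cM')\subseteq T(\cM)$, and here I would use recognizable twists. By genericity and property (2), $T(\cM')$ is spanned by derivatives of recognizable twists at $(X,\omega)$; by property (3) each such twist is also recognizable at the nearby interior surfaces $(X_n,\omega_n)\in\cM$, where it is supported on cylinders that persist to the limit, so its derivative annihilates $V$; by property (1) that derivative lies in $T_{(X_n,\omega_n)}(\cM)$; and since the relevant cylinders on $(X_n,\omega_n)$ converge to their limits on $(X,\omega)$, these derivatives converge, after transport to a fixed fiber, to the given twist, which therefore lies in the subspace $T_{(X_n,\omega_n)}(\cM)\cap\Ann(V)$ (a fixed subspace for all large $n$). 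This completes part (1).

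\emph{Part (2).} The one new issue is that the boundary of multicomponent surfaces need not be a union of affine invariant submanifolds, so Eskin--Mirzakhani--Mohammadi cannot be applied to it directly. I would get around this by fixing the component slot of $\pi$ and letting $S$ be the union, over all boundary points $(X',\omega')$ of $\cM$ and all their connected components, of the resulting connected translation surfaces; $S$ is closed and $GL(2,\bR)$-invariant (the action commutes with passing to components, and limits of such surfaces are again of this form by the structure of $\overline{\cH}$), so EMM makes it a finite union of affine invariant submanifolds, one of which, $\cM'$, contains $\pi(X',\omega')$. Then I would run the two inclusions componentwise exactly as above, using that $\pi_*$ is the projection of $H^1_{rel}$ of the limit onto the summand of the chosen component: for ``$\supseteq$'', $(X_n,\omega_n)+\xi\in\cM$ converges to $(X',\omega')+\xi$, whose $\pi$-component is $\pi(X',\omega')+\pi_*\xi$, hence lies in $S$, and so in $\cM'$ for small $\xi$; for ``$\subseteq$'', recognizable twists on the component $\pi(X',\omega')$ spanning $T(\cM')$ are also twists on the whole surface $(X',\omega')$, hence are recognizable at nearby interior surfaces in $\cM$, with derivatives in $T_{(X_n,\omega_n)}(\cM)\cap\Ann(V)$ that $\pi_*$ carries to the given twist.

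I expect the real obstacle throughout to be the inclusion $T(\cM')\subseteq T(\cM)$: one needs a supply of deformations of the degenerate surface that both span its tangent space inside $\cM'$ and persist as honest deformations within $\cM$ on the nearby smooth surfaces. The naive candidate, the span $E(X,\omega)$ of twists that stay in every affine invariant submanifold, is nowhere continuous (Remark \ref{R:E}) and does not persist; so everything rests on constructing the larger, $\cM$-dependent class of recognizable twists and verifying properties (1)--(3), which in turn depends on the Cylinder Finiteness Theorem and on the partial converse to the Cylinder Deformation Theorem (Theorem \ref{T:twists}). Once those three properties are in hand, the argument sketched above is the comparatively formal extraction of the main theorem from them.
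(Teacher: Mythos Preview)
Your overall strategy matches the paper's: establish the two inclusions separately, using Proposition~\ref{P:AnnV} for the formal inclusion $T(\cM)\cap\Ann(V)\subseteq T(\cM')$ and recognizable twists for the substantive reverse inclusion. However, there is a genuine gap in your reduction ``so I may assume $(X,\omega)$ is a generic point of $\cM'$.''

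You define $\cM'$ as a component of $\partial\cM\cap\cH'$ via EMM, and then wish to move $(X,\omega)$ along a path in $\cM'$ to a generic point, claiming that a ``collar of $\cM$ accompanying any path in $\cM'$'' keeps the right-hand side constant. But no such collar has been produced: to move the limit within $\cM'$ while keeping the approximating sequence inside $\cM$ and preserving $V$, you can only move by vectors in $W=T(\cM)\cap\Ann(V)$ (this is exactly what Proposition~\ref{P:AnnV} buys you), and at this stage you only know $W\subseteq T(\cM')$. If this inclusion were strict, the linear piece $(X,\omega)+W$ could lie entirely inside a proper affine invariant submanifold of $\cM'$ and never reach a generic point --- so the argument is circular. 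The paper breaks this circularity by \emph{defining} $\cM'$ differently: as the smallest closed $GL(2,\bR)$-invariant set containing the linear piece $U=(X,\omega)+W$ (Proposition~\ref{P:main}). By construction $U$ is then not contained in any proper affine invariant submanifold of $\cM'$, so $U$ meets the generic locus, and one can choose $\xi_n$ so that $(X_n,\omega_n)+\xi_n\in\cM$ converges to a generic $(X',\omega')\in U$. Only \emph{after} the tangent-space formula is proved for this $\cM'$ does the paper argue (Theorem~\ref{T:finite}) that finitely many such $\cM'$ arise, and hence that they are the boundary components.

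Two smaller points. First, you need $(S_1,S_2)$ adapted simultaneously to $\cM$ and to $\cM'$: property~(2) is Proposition~\ref{P:spanTM}, which requires $\cM'$-adapted, while property~(1) requires $\cM$-adapted. This is easily arranged by enlarging the sets (or via Remark~\ref{R:variant}), but should be said. Second, in part~(2): your set $S$ of all components of all boundary points is not obviously closed (areas of the discarded components can run off to infinity along a sequence), and the paper does not take this route --- it again defines $\cM'$ as the orbit closure of $\pi(U)$. Moreover, a recognizable twist on $\pi(X',\omega')$ is \emph{not} automatically recognizable on the full multicomponent surface $(X',\omega')$, since recognizability tests linear relations among \emph{all} parallel cylinders, including those on other components; the paper handles this with a short linear-algebra argument (Section~\ref{S:MultiCase}) extending each such twist to a recognizable twist on $(X',\omega')$ whose $\pi_*$-image is the given one.
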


The restriction to infinitely many $n$, rather than all $n$, reflects the possibility that several components of the boundary might cross at $(X,\omega)$, necessitating passing to a subsequence. Because the boundary is a \emph{finite} union of affine invariant submanifolds, Theorem \ref{T:main2}(1) implies that the sequence $(X_n, \omega_n)$ can always be decomposed into a union of finitely many disjoint subsequences, for each of which  $T_{(X_n, \omega_n)}(\cM)\cap \Ann(V)$ is eventually equal to the tangent space of a  branch of the self-crossing.  \ann{R: The restriction to infinitely many $n$... This comment is very cryptic. The end of the next paragraph (before Conjecture 2.8) is also not very illuminating for the reader.}

The first part of Theorem \ref{T:main2} is a more precise restatement  of Theorem \ref{T:main}. \ann{A: Added example and paragraph before theorem, and rewrote paragraph after.}

Given  affine invariant submanifolds $\cM_i, i=1, 2$ of strata $\cH_i$ of possibly multicomponent surfaces, one can obtain a product affine invariant submanifold $\cM_1\times \cM_2$  in the stratum of labelled multicomponent surfaces $\cH_1\times \cH_2$. If $\cM$ does not arise in this way, we say that it is irreducible. (If $\cM$ is in a stratum of unlabeled surfaces, we say it is irreducible if the connected components of its pre-image in the stratum of labelled surfaces is irreducible.)  Examples of  irreducible affine invariant submanifolds can be obtained by considering loci of multicomponent surfaces all of whose components cover a common single component translation surface.

\begin{conj}[Multicomponent EMM]\label{C:MEMM}
Let $\cM$ be a closed $GL(2,\bR)$ invariant subset of a stratum of multicomponent surfaces\ann{A: Added missing word, clarified wording.}, and assume $\cM$ consists entirely of surfaces all of whose components have equal area. Then $\cM$ is equal to the set of surfaces all of whose components have equal area in a finite union of affine invariant submanifolds. Furthermore: 
\begin{enumerate}
\item There are only countably many  closed $GL(2,\bR)$ invariant subsets of the locus where all components have equal area. 
\item For any irreducible affine invariant submanifold, locally the absolute periods of any component determine those of all other components. In particular, the ratio of areas of the components is constant. 
\item The projection of any  affine invariant submanifold to any subset of the set of components is equal to the complement of a finite union of affine invariant submanifolds in a larger affine invariant submanifold. 
\item The bundle $H^1$ over any affine invariant submanifold is semisimple. 
\end{enumerate}
\end{conj}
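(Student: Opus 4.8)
The plan is to rerun the Eskin--Mirzakhani \cite{EM} and Eskin--Mirzakhani--Mohammadi \cite{EMM} arguments in the product setting. Write $\cH=\cH(\kappa_1)\times\cdots\times\cH(\kappa_p)$ for the ambient stratum of labelled multicomponent surfaces, with $GL(2,\bR)$ acting diagonally, and let $\cH^{eq}\subset\cH$ be the locus where all components have equal area. Since equality of areas is a Hermitian rather than a linear condition, $\cH^{eq}$ is not itself an affine invariant submanifold, which is why the conjecture describes $\cM$ as the equal-area locus inside a finite union of affine invariant submanifolds rather than as such a union. Rescaling identifies $\cH^{eq}$ with the $\bR_{>0}$-cone over the locus $\cH^{eq}_1$ where every component has area exactly $1$; the latter sits inside the product of the area-one loci of the factors, so it has finite $SL(2,\bR)$-invariant volume, and the diagonal horocycle flow on it is non-divergent (apply Minsky--Weiss \cite{MinW} in each factor in the common horocycle direction and take a union bound on bad directions). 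Thus $\cH^{eq}_1$ is an $SL(2,\bR)$-space of exactly the type the EMM technology addresses; the one new structural feature is that the Gauss--Manin local system is a direct sum $\bigoplus_i H^1(X_i,\bC)$, each summand carrying its own cup-product symplectic form and its own tautological plane, whereas the $GL(2,\bR)$-orbit direction is the single diagonal plane $\span(\Re\omega,\Im\omega)$.

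First I would prove the measure classification: every $P$-invariant (equivalently $SL(2,\bR)$-invariant, by the standard bootstrapping) ergodic probability measure on $\cH^{eq}_1$ is the natural measure on $\cM'\cap\cH^{eq}_1$ for an affine invariant submanifold $\cM'\subset\cH$. The proof should follow EMM essentially verbatim, since the entropy and exponential-drift machinery uses only the $SL(2,\bR)$-action, the flat connection, a symplectic form, recurrence, and the cocycle structure, all available here. The main point to verify is that the additional ``joining'' directions in the cocycle over a product --- for instance graph-type relations arising from covering constructions between distinct components --- are produced by, rather than obstruct, the drift argument; as these directions are again governed by $SL(2,\bR)$-representation theory I expect no essential difficulty, but this is where the bulk of the technical work sits. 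One then upgrades to orbit closures as in \cite{EMM} (Markov operator and averaging, together with the Mozes--Shah-type isolation of affine measures); the conclusion of the conjecture, and the countability in (1), follow as in the connected case, or alternatively from the degree bound on the field of definition \cite{Wfield} applied in the product.

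Given the classification, (3) and (4) are structural. For (4), the bundle $H^1=\bigoplus_i H^1(X_i)$ over an affine invariant submanifold underlies a polarized weight-one variation of Hodge structure, so Deligne semisimplicity --- equivalently Filip's rigidity and semisimplicity results \cite{Fi1, Fi2} --- applies exactly as for connected surfaces, the product decomposition of $X$ playing no role. For (3), the projection $\pi_A(\cM')$ of $\cM'$ to a sub-collection $A$ of the components is $GL(2,\bR)$-invariant, so its closure is a finite union of affine invariant submanifolds by the classification; that $\pi_A(\cM')$ differs from this closure by a finite union of affine invariant submanifolds follows by Noetherian induction using the main boundary theorem of this paper, since the missing points are exactly those over which the suppressed components degenerate and are hence controlled by the boundary of $\cM'$.

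The delicate point --- which I expect, together with the EMM core, to be the main obstacle --- is (2). Writing $W=p(T\cM')\subseteq\bigoplus_i H^1(X_i,\bR)$ for the absolute part of the tangent bundle of an irreducible $\cM'$, the claim is equivalent to injectivity of each projection $W\to H^1(X_i,\bR)$, i.e.\ to $W\cap\bigoplus_{j\ne i}H^1(X_j,\bR)=0$. One would argue by contradiction: such an intersection is a flat $SL(2,\bR)$-invariant subbundle supported off component $i$; using the semisimplicity from (4) to split it off, together with the rigidity of the cocycle and, where needed, the cylinder machinery of this paper (a cylinder deformation on a component lies in $T\cM'$ and moves only that component's periods), one wants to promote the splitting of local systems to a genuine product decomposition $\cM'=\cM'_A\times\cM'_B$, contradicting irreducibility; one must in particular rule out an inter-component constraint involving only relative periods, since such a constraint cannot be monodromy-invariant. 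Alternatively one can invoke Filip's description of $\cM'$ by endomorphism and torsion conditions \cite{Fi1, Fi2} and observe that the coupling in an irreducible $\cM'$ is realized by an isogeny-type morphism of polarized Hodge structures $H^1(X_i)\to H^1(X_j)$, which scales the polarizations, and hence the areas, by a constant. Either route requires genuine new work, and this --- not the transfer of the EMM machinery --- is the part I would expect to be hardest to make rigorous.
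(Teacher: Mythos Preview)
This statement is a \emph{conjecture} in the paper, not a theorem; the paper does not prove it. There is therefore no proof in the paper to compare your proposal against. The paper's own commentary on the conjecture is limited to two remarks: first, the authors believe parts (1)--(4) would follow once the main measure/orbit-closure classification is established for multicomponent surfaces; second, they explicitly flag that the diagonal geodesic flow $(g_t,g_t)$ on a product of strata embeds in the $\bR^2$-action $(g_s,g_t)$ and is therefore \emph{not} hyperbolic even on the equal-area locus. They conclude that the arguments in \cite{EM,EMM} would need at least some modification.

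Your proposal is a reasonable outline of a programme, but it underestimates exactly the obstacle the paper singles out. You write that the entropy and exponential-drift machinery ``should follow EMM essentially verbatim'' because all the inputs are present; but the EMM argument is built around hyperbolicity of $g_t$, and the paper's remark says this fails here. The extra $\bR$-direction commuting with the diagonal flow produces zero Lyapunov exponents that are not accounted for by the usual tautological plane, and this is precisely the kind of neutral direction that complicates the leafwise-measure/drift analysis. You should identify this issue explicitly and say how you intend to handle it (e.g.\ by working modulo the additional $\bR$-action, or by proving an appropriate rigidity statement for the resulting higher-rank abelian action), rather than asserting that the transfer is routine. As it stands, the proposal is a plausible sketch of what a proof might look like, but not a proof, and the paper itself does not claim otherwise.
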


We believe that if any version of the main statement could be shown for multicomponent surfaces, all four following statements  would also follow immediately. The four statements can be thought of as likely corollaries of the main statement, whose proofs are deferred until the main statement has been verified.  

It is plausible that the proofs in \cite{EM, EMM} could be adapted to the multicomponent case with only very few changes. 

\begin{rem}
It is certain that at least some minor changes would be required to the proofs, because the ``diagonal" action $(g_t, g_t)$ of geodesic flow on the product of two strata is part of a $\bR^2$ action, namely $(g_s, g_t)$. Thus, even when restricting to the set where both components have equal area, the flow $(g_t, g_t)$ is not hyperbolic.
\end{rem}

\begin{thm}\label{T:main3}
 The first statement of Theorem \ref{T:main2}  holds without the connectivity assumption (i.e., multicomponent limits are allowed) if multicomponent EMM is true. 
\end{thm}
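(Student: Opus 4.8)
The plan is to transcribe the proof of the first statement of Theorem~\ref{T:main2} into the multicomponent setting, replacing each appeal to \cite{EM, EMM} by the corresponding assertion in Conjecture~\ref{C:MEMM} and checking that the other ingredients never used connectivity in an essential way. The structural facts about the partial compactification that the argument needs --- Propositions~\ref{P:fn}, \ref{P:Vn}, and~\ref{P:AnnV}, especially the identification of $\Ann(V)$ with the tangent space to the boundary stratum and the local model $+\xi$ for deformations --- are already proved in Section~\ref{S:Basic} for arbitrary strata of multicomponent surfaces, so they are available verbatim. The one genuinely new global input is finiteness of the boundary: given a sequence in $\cM$ converging to a multicomponent limit, I would rescale the Abelian differential on each component of each limit point so that all its components have unit area. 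This operation is continuous and commutes with the diagonal $GL(2,\bR)$-action (which scales every component's area by the same factor), so the rescaled boundary is a closed $GL(2,\bR)$-invariant subset of the equal-area locus of a stratum of multicomponent surfaces. The main statement of Conjecture~\ref{C:MEMM} presents it as a finite union of equal-area loci in affine invariant submanifolds, and undoing the rescaling exhibits $\partial\cM$, multicomponent points included, as a finite union of affine invariant submanifolds.

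I would then extend the cylinder machinery. A cylinder on a multicomponent surface lies on a single component, though an equivalence class of $\cM$-parallel cylinders may meet several; but the Cylinder Deformation Theorem~\ref{T:CDT}, its partial converse Theorem~\ref{T:twists}, and the Cylinder Finiteness Theorem of Section~\ref{S:CF} are each deduced from the local linearity of $\cM$ in period coordinates, the (now componentwise) $GL(2,\bR)$-action, the Smillie--Weiss results \cite{SW2}, and Minsky--Weiss recurrence \cite{MinW}, none of which requires a connected surface. For Cylinder Finiteness I would reduce to irreducible $\cM$ by projecting onto subsets of components via part~(3) of Conjecture~\ref{C:MEMM}; for irreducible $\cM$, part~(2) makes the ratios of component areas constant, so the $GL(2,\bR)$-normalization and the Minsky--Weiss non-divergence argument can be carried out on all components simultaneously and the circumferences of $\cM$-parallel cylinders are again trapped in a fixed compact set of normalized surfaces. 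Part~(4), semisimplicity of $H^1$, is used as in the connected theory to control complements of the purely relative subspace.

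With this in place, the notion of a recognizable twist from Section~\ref{S:Rec} and its three key properties carry over: property~(1) is the multicomponent Cylinder Deformation Theorem; property~(2) uses the multicomponent Cylinder Finiteness Theorem together with the fact, now supplied by multicomponent EMM, that the exceptional locus where the recognizable twists fail to span $T(\cM)$ is a finite union of proper affine invariant submanifolds; property~(3), that recognizability persists to nearby interior surfaces, is a local statement about the collapse maps $f_n$ in period coordinates and is indifferent to connectivity. The argument of Section~\ref{S:Boundary} then shows that the tangent space of the boundary component $\cM'$ through the limit is spanned by recognizable twists that survive to nearby surfaces of $\cM$, giving $T(\cM')\subseteq T_{(X_n,\omega_n)}(\cM)\cap\Ann(V)$; the reverse inclusion follows from Proposition~\ref{P:AnnV}, since any class in $T_{(X_n,\omega_n)}(\cM)\cap\Ann(V)$ deforms the limit inside $\overline{\cM}$ and hence, by the finiteness above, inside the boundary component through it. As in the connected case the equality $T(\cM')=T_{(X_n,\omega_n)}(\cM)\cap\Ann(V)$ is asserted only for infinitely many $n$, because several boundary components may cross at the limit and one passes to the subsequence on which the right-hand side selects a single branch.

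I expect the main obstacle to be the cylinder step: running the Cylinder Finiteness Theorem on the equal-area locus of a product of strata, where the diagonal geodesic flow is no longer hyperbolic and one must invoke the conjectural multicomponent EMM in place of a citation to \cite{EMM}, and cleanly executing the reduction of a general boundary point to its irreducible constituents via parts~(2) and~(3) of Conjecture~\ref{C:MEMM}, together with the attendant self-crossing bookkeeping. Everything else should be a routine, if somewhat lengthy, transcription of the connected argument.
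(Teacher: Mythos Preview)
Your outline misses two genuine obstacles that the paper has to work around, and your proposed fixes would not succeed as stated.

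First, in the analogue of Proposition~\ref{P:main}: you define $U$ and take the smallest closed $GL(2,\bR)$-invariant set containing it, but in the multicomponent case this set need not be an affine invariant submanifold. Example~\ref{E:H2} in the paper exhibits a locally linear $U$ in $\cH(2)\times\cH(2)$ whose $GL(2,\bR)$-closure is the (nonlinear) equal-area locus. The paper circumvents this by working directly with the span of recognizable twists rather than with the invariant closure of $U$: it proves (Lemmas~\ref{L:multispan} and~\ref{L:noS1parallel}) that for almost every surface in $U$ there is a point in its orbit closure where no two cylinders on different factors have circumference ratio in $S_1$, and at such a point the recognizable twists decouple across factors and span $T(\cM')$. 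Your transcription of the connected argument does not address this decoupling step and would stall at precisely the point where you invoke ``the finiteness above'' to identify the closure with an affine invariant submanifold.

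Second, your finiteness argument via rescaling to unit-area components has a gap: the rescaling map forgets the area ratios, so ``undoing the rescaling'' recovers only the cone over the resulting affine invariant submanifolds, not $\partial\cM$ itself. The genuine danger (flagged in Section~\ref{S:MultiCase}) is that a sequence of boundary components $\cM_i$ could diverge by having area ratios going to infinity, as do the loci in $\cH(2)\times\cH(2)$ where one component is $n$ times the other. The paper handles this by proving that every boundary component $\cM'$ inherits the same $(S_1,S_2)$ from $\cM$ (last paragraph of Proposition~\ref{P:multimain}), and then showing that the area ratios of any $\cM'$ with a given $(S_1,S_2)$ lie in a fixed finite set $A(S_1,S_2,d)$. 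Only with this control on area ratios can one run the accumulation argument of Theorem~\ref{T:finite}. Relatedly, your plan to re-prove the Cylinder Finiteness Theorem for multicomponent $\cM'$ is unnecessary: $\cM$ itself is a manifold of connected surfaces, so the original Cylinder Finiteness applies to it, and the $(S_1,S_2)$ so obtained is shown to be $\cM'$-adapted directly.
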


\begin{rem}
For multicomponent surfaces, not all closed $GL(2,\bR)$ invariant sets are finite unions of affine invariant submanifolds, and there are uncountably\ann{A: Typo fixed.} many affine invariant submanifolds. Both issues are created by the ability to scale the different components separately. These issues are mild, but add a few complications to the proof of Theorem \ref{T:main3}.
\end{rem} 

The body of the paper is written in the case of connected limits. Section \ref{S:MultiCase} addresses the multicomponent case, and contains the proofs of the second statement of Theorem \ref{T:main2} and the proof of Theorem \ref{T:main3}. From now until Section 8, all statements, unless we explicitly say otherwise, are for the connected case. 

\subsection{A few corollaries.}

We now turn to the proof of Corollary \ref{C:main}, which is restated in more detail below. 

Recall that the rank of an affine invariant submanifold is defined as $\frac12 \dim p(T(\cM))$ \cite{Wcyl}. (Recall $p:H^1_{rel}\to H^1$ was defined before Theorem \ref{T:main2}.)   The field of definition of $\cM$ is the smallest subfield of $\bR$ such that $\cM$ can be defined by linear equations with coefficients in this field \cite{Wfield}. 

\begin{cor}
Suppose that $\cM$ is an affine invariant submanifold, and let $\cM'$ be a component of the boundary of $\cM$. 
\begin{enumerate}
\item The dimension of $\cM'$ is less than that of $\cM$.
\item The field of definition of $\cM'$ is  equal to  that of $\cM$. 
\item The rank of $\cM'$ is at most that of $\cM$. 
\item If $\ker(p)\cap T(\cM)=\{0\}$, then the rank of $\cM'$ is strictly less than that of $\cM$. 
\end{enumerate}
\end{cor}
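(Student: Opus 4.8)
The plan is to deduce all four parts from the Main Theorem (Theorem \ref{T:main2}(1)), which identifies $T(\cM') = T_{(X_n,\omega_n)}(\cM) \cap \Ann(V)$ for infinitely many $n$. For part (1), I would argue that the vanishing space $V$ is nonzero: since the sequence genuinely degenerates, there is a curve being pinched, and the injectivity radius condition in Definition \ref{D:converge} forces a nontrivial vanishing cycle, so $\Ann(V)$ is a proper subspace of $H^1_{rel}$. It remains to see that $T(\cM)$ is not contained in $\Ann(V)$; this should follow because $T(\cM)$ always contains the ``tautological'' plane spanned by $\Re(\omega), \Im(\omega)$ (the $GL(2,\bR)$ directions), and these do not annihilate the vanishing cycles — the period of a vanishing cycle goes to zero along the sequence but is nonzero for each $n$, so a vanishing cycle $\gamma$ has $\int_\gamma \omega_n \neq 0$, showing $\Re(\omega_n)$ or $\Im(\omega_n)$ pairs nontrivially with $\gamma$. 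Hence the intersection drops dimension.

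For part (2), the key point is that $\Ann(V)$ is defined by \emph{rational} (indeed integral) linear equations, since $V$ is spanned by integral homology classes. If $\bk$ denotes the field of definition of $\cM$, then $T(\cM)$ is cut out by linear equations over $\bk$, so the intersection $T(\cM) \cap \Ann(V)$ is cut out by linear equations over $\bk$ as well, giving that the field of definition of $\cM'$ is contained in $\bk$. For the reverse inclusion one would invoke the general principle (from \cite{Wfield}, or the structure of the situation together with Proposition \ref{P:AnnV}) that the field of definition cannot grow under the operations involved, or more directly: near the boundary, $\cM$ is ``built from'' $\cM'$ together with the vanishing-cycle data in a way that forces the defining field of $\cM$ to lie in that of $\cM'$. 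I expect this direction to require a short argument using the fact that absolute periods survive to the limit and that the field of definition is detected by ratios of such periods. Combined, this gives equality.

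For part (3), recall $\rank \cM = \tfrac12 \dim p(T(\cM))$ where $p: H^1_{rel} \to H^1$ forgets the relative part. The point is that under the identification of Proposition \ref{P:AnnV}, the absolute cohomology $H^1(X,\bC)$ of the limit is naturally a \emph{sub}quotient of $H^1(X_n,\bC)$: the map $f_n$ induces $H^1(X,\bC) \hookrightarrow H^1(X_n,\bC)$ with image the classes annihilating vanishing cycles, so $p(\Ann(V))$ inside $H^1$ of the limit corresponds to $p(T_{(X_n,\omega_n)}(\cM) \cap \Ann(V))$ sitting inside the image of $p$ restricted to $\Ann(V)$-classes, which is a subspace of $p(T(\cM))$. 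Thus $\dim p(T(\cM')) \le \dim p(T(\cM))$, giving $\rank \cM' \le \rank \cM$. For part (4), the hypothesis $\ker(p) \cap T(\cM) = \{0\}$ means $p$ is injective on $T(\cM)$, so $\dim p(T(\cM')) = \dim(T(\cM) \cap \Ann(V)) < \dim T(\cM) = \dim p(T(\cM))$, where the strict inequality is exactly part (1). Since ranks are half-dimensions of these (even-dimensional, complex-symplectic) spaces, strict inequality of dimensions forces $\rank \cM' < \rank \cM$.

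\textbf{Main obstacle.} I expect the genuinely delicate step to be the reverse inclusion in part (2) — showing the field of definition does not \emph{shrink} in the limit — and, underlying everything, making precise the claim that $p$ restricted to $\Ann(V)$ corresponds to absolute cohomology of the limit in a way compatible with the symplectic structure; the bookkeeping here is what requires Propositions \ref{P:Vn} and \ref{P:AnnV} and care about the fact that periods of vanishing cycles degenerate while absolute periods of the limit do not. Everything else is linear algebra once the Main Theorem is granted.
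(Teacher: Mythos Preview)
Your proposal is correct and follows essentially the same route as the paper: all four parts are deduced from the formula $T(\cM')=T(\cM)\cap\Ann(V)$, using that the $GL(2,\bR)$ directions are not in $\Ann(V)$ for (1), rationality of $\Ann(V)$ plus \cite{Wfield} for (2), the commutative square relating $p$ on the limit and on $(X_n,\omega_n)$ for (3), and injectivity of $p$ for (4). For the reverse inclusion in (2), which you correctly flag as the subtle step, the paper's argument is exactly the one you gesture at: since $T(\cM')=T(\cM)\cap\Ann(V)$ is defined over $\bk(\cM')$, the space $T(\cM)$ contains nonzero vectors defined over $\bk(\cM')$, and \cite[Theorem~5.1]{Wfield} then forces $\bk(\cM)\subset\bk(\cM')$; for (4) the paper phrases the same computation as $2\rank\cM'\le\dim\cM'<\dim\cM=2\rank\cM$.
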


 The corollary is also true without projections assuming multicomponent EMM. All parts except the second are true for projections to connected components in the multicomponent case (without assuming multicomponent EMM). When projections are considered, one can only show that the field of definition of $\cM'$ is contained in that of $\cM$, and indeed there are examples that show it may be strictly smaller. 

\begin{proof}
All of these follow from Theorem \ref{T:main2}. 
\begin{enumerate}
\item This follows from $T(\cM')=T_{(X_n, \omega_n)}(\cM)\cap \Ann(V)$. The tangent space $T(\cM)$ cannot be entirely contained in $\Ann(V)$ because the $GL(2,\bR)$ directions are not contained in $\Ann(V)$. 
\item  The containment $\bk(\cM')\subset \bk(\cM)$ follows from the same formula, since $\Ann(V)$ is defined over $\bQ$. The other containment follows, for example, from \cite[Theorem 5.1]{Wfield}. Indeed,  $T_{(X_n, \omega_n)}(\cM)\cap \Ann(V)$ is defined over $\bk(\cM')$, and so in particular $T_{(X_n, \omega_n)}$ contains vectors defined over $\bk(\cM')$. By \cite[Theorem 5.1]{Wfield}, $\bk(\cM)\subset \bk(\cM')$.  
\item This follows from the same formula and the following commutative diagram, where $X'$ is $X$ with some additional identifications of points, so there are collapse maps $X_n \to X$ as in Proposition \ref{P:fn}.
 \begin{equation*}
  \xymatrix@R+2em@C+2em{
H^1(X_n, \Sigma_n) \ar[d]_-p & \ar@{_{(}->}[l] H^1(X',\Sigma') \ar[d]^-p \ar[r]  &   H^1(X,\Sigma) \ar[d]^-p  \\
  H^1(X_n)   & \ar@{_{(}->}[l] H^1(X')  \ar[r]  & H^1(X)
  }
 \end{equation*}
 As we remarked prior to Proposition \ref{P:Vn}, the top right arrow is an isomorphism. The left commutative square gives that the image of $T_{(X,\omega)}(\cM')$ in $H^1(X')$ has dimension at most twice the rank of $\cM$, and the right square gives the same statement with $X'$ replaced by $X$.

\item This follows from part 1 and the fact that the dimension is at least twice the rank with equality if $\ker(p)\cap T(\cM)=\{0\}$. 
\end{enumerate}
\end{proof}

\subsection{Degenerating cylinders.}

The following summarizes some facts that will be used throughout the paper. Say two annuli are isotopic if their core curves are isotopic. 

\begin{lem}\label{L:horrible} 
Suppose $(X_n, \omega_n, \Sigma_n)$ converges to $(X,\omega, \Sigma)$, and let $g_n: X\setminus U_n \to X_n$ be the maps given in Definition \ref{D:converge}.  The following statements hold. 
\begin{enumerate}
\item For every cylinder $C$ on $(X,\omega, \Sigma)$ and for $n$ large enough there is a corresponding cylinder $C_n$ on $(X_n, \omega_n, \Sigma_n)$   isotopic to $g_n(C)$. The height, modulus, and circumference of $C_n$ converge to those of $C$. \ann{R: What do you mean by isotopic to $g_n$?\\ A: Added reference to definition of $g_n$, and changed the wording to improve clarity.}
\item If $C_n$ are cylinders on $(X_n, \omega_n, \Sigma_n)$ of circumference uniformly bounded above and below and height uniformly bounded below, then,  possibly after passing to a subsequence, there is a cylinder $C$ on $(X,\omega, \Sigma)$ such the circumference and height and direction of $C_n$ converges to those of $C$, and such that  $C_n$ is isotopic to $g_n(C)$.  
\item In the situation of (2), for any $\e>0$, for $n$ large enough depending on $\e$, if $C_n'$ is a cylinder on $(X_n, \omega_n, \Sigma_n)$ that is parallel to $C_n$ but  not isotopic to $g_n(C')$ for a cylinder $C'$ on $(X, \omega, \Sigma)$ parallel to $C$, then either $C_n'$ has modulus less than $\e$ or circumference less than $\e$ (or both). 
\end{enumerate}
\end{lem}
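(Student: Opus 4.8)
The plan is to combine three standard facts about an individual translation surface — that every regular closed geodesic lies in a \emph{unique} maximal flat cylinder, that consequently two freely homotopic regular closed geodesics bound the same maximal cylinder, and that a cylinder (with its height, circumference, and modulus) varies continuously under sufficiently small perturbations of the flat structure, its boundary saddle connections persisting — with two inputs about the compactification that I would take from Section~\ref{S:Basic}: that $\Area(X_n)\to\Area(X)$, and that the compact--open convergence $g_n^*(\omega_n)\to\omega$ can be upgraded to $C^\infty$ convergence on slightly smaller compact subsets of $X\setminus\Sigma$ (the $\omega_n$ being holomorphic for complex structures that also converge). The elementary workhorse is the following observation: if $D$ is a flat cylinder of circumference in $[c_-,c_+]$ and height $\geq h$ on \emph{any} translation surface, and $\rho\leq\tfrac12\min(h,c_-)$, then deleting the collar of width $\rho$ about $\partial D$ leaves a flat sub-cylinder of the same circumference and height $\geq h-2\rho$, all of whose points have injectivity radius $\geq\rho$ in the ambient surface (order-zero marked points lying inside a cylinder are a minor nuisance, handled by passing to a sub-cylinder; I suppress this). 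Combined with Definition~\ref{D:converge}(2), which provides $\rho_n\to0$ with $\{\mathrm{injrad}\geq\rho_n\}\subseteq g_n(X\setminus U_n)$, this means the ``core band'' of any such cylinder on $X_n$ lies in the image of $g_n$ for $n$ large, hence can be pulled back to $X$.

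For part (1), I would take the core geodesic $\gamma$ of $C$ (which lies at positive distance from $\Sigma$), a slightly smaller sub-cylinder $C_\e$ of height $\mathrm{height}(C)-\e$ with the same core, and note $\overline{C_\e}$ is a compact subset of $X\setminus\Sigma$, hence contained in $X\setminus U_n$ for $n$ large. By the $C^\infty$ convergence on $\overline{C_\e}$, $g_n(C_\e)$ is an embedded annulus on $X_n$ whose flat structure is $C^1$-close to the straight one, so it contains a straight flat sub-cylinder with circumference and height within $\e$ of those of $C_\e$ and core isotopic to $g_n(\gamma)$; let $C_n$ be the maximal cylinder on $X_n$ containing it. Then $C_n$ is the unique maximal cylinder with core isotopic to $g_n(\gamma)$ (so it does not depend on $\e$), its circumference is the period of its core, which converges to $|\int_\gamma\omega|=\mathrm{circ}(C)$, and $\mathrm{mod}(C_n)\geq\mathrm{mod}(C)-\delta(\e)$ with $\delta(\e)\to0$, giving $\liminf\mathrm{mod}(C_n)\geq\mathrm{mod}(C)$. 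For the reverse inequality I would pull back $C_n$ minus a collar of width $\rho_n$ about its boundary (which by the workhorse lies in $g_n(X\setminus U_n)$), observe the resulting $g_n^*\omega_n$-flat cylinders have cores converging to $\gamma$ and hence stay in a fixed compact set, pass to a subsequential limit — a flat annulus of $(X,\omega)$ with core freely homotopic to $\gamma$ and modulus $\lim\mathrm{mod}(C_n)$ — and invoke that such an annulus lies in $C$, so $\limsup\mathrm{mod}(C_n)\leq\mathrm{mod}(C)$. Hence $\mathrm{mod}(C_n)\to\mathrm{mod}(C)$, and with the circumference this gives height convergence.

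For part (2), I would take the core bands $B_n\subseteq C_n$ (circumference in $[c_-,c_+]$, modulus bounded below), which lie in $g_n(X\setminus U_n)$ for $n$ large, and pull them back to $g_n^*\omega_n$-flat cylinders $A_n\subseteq X\setminus\Sigma$ with cores $\gamma_n$ of $g_n^*\omega_n$-length $\mathrm{circ}(C_n)\in[c_-,c_+]$. \textbf{The main obstacle is to show that, after passing to a subsequence, the $\gamma_n$ remain in a fixed compact subset of $X\setminus\Sigma$}, i.e., that a cylinder of bounded-below circumference and modulus cannot ``escape'' into the degenerating part of $X_n$. Granting this, $g_n^*\omega_n\to\omega$ uniformly there, so Arzel\`a--Ascoli (after arc-length reparametrization) gives a subsequence with $\gamma_n\to\gamma$ uniformly and the $A_n$ converging to a flat annulus of $(X,\omega)$ of positive modulus with core $\gamma$; thus $\gamma$ is a regular closed $\omega$-geodesic, contained in a unique maximal cylinder $C$, and one reads off $\mathrm{circ}(C)=|\int_\gamma\omega|=\lim\mathrm{circ}(C_n)$, convergence of directions, and (as in part (1), comparing $C_n$ with $g_n$ of a sub-cylinder of $C$ and using uniqueness of the maximal cylinder in an isotopy class) $\mathrm{height}(C_n)\to\mathrm{height}(C)$ and $C_n$ isotopic to $g_n(C)$ for $n$ large. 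To prove the compactness claim I would argue that otherwise $\gamma_n$ would, along a subsequence, be freely homotopic in $X\setminus\Sigma$ into arbitrarily small neighborhoods of $\Sigma$ — so that $g_n(\gamma_n)$ becomes a combination of vanishing cycles — and then $\int_{g_n(\gamma_n)}\omega_n\to0$ by Proposition~\ref{P:Vn}, contradicting $\mathrm{circ}(C_n)\geq c_->0$; making this precise uses the thick--thin description recalled after Proposition~\ref{P:Vn} together with the basic facts of Section~\ref{S:Basic}, and is the delicate point.

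Part (3) I expect to be a quick consequence of part (2). Arguing the contrapositive, suppose for some $\e>0$ there are infinitely many $n$ with $C_n'$ parallel to $C_n$, $\mathrm{mod}(C_n')\geq\e$, $\mathrm{circ}(C_n')\geq\e$, and $C_n'$ not isotopic to $g_n(C')$ for any cylinder $C'$ on $(X,\omega,\Sigma)$ parallel to $C$. From $\mathrm{circ}(C_n')\,\mathrm{height}(C_n')\leq\Area(X_n)\leq\Area(X)+1$ and $\mathrm{height}(C_n')=\mathrm{mod}(C_n')\,\mathrm{circ}(C_n')\geq\e\,\mathrm{circ}(C_n')$ one gets $\mathrm{circ}(C_n')\leq\sqrt{(\Area(X)+1)/\e}$, so $\mathrm{circ}(C_n')$ is bounded above, while $\mathrm{height}(C_n')\geq\e^2$ is bounded below. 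Part (2) applied to these $C_n'$ then produces, along a further subsequence, a cylinder $C''$ on $(X,\omega,\Sigma)$ with $C_n'$ isotopic to $g_n(C'')$ and $\mathrm{direction}(C_n')\to\mathrm{direction}(C'')$; but $C_n'$ is parallel to $C_n$, so $\mathrm{direction}(C_n')=\mathrm{direction}(C_n)\to\mathrm{direction}(C)$, forcing $C''$ parallel to $C$ — contradicting the choice of the $C_n'$. This establishes part (3).
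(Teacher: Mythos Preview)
Your proposal is considerably more detailed than what the paper provides: the paper gives essentially no proof, only remarking that (1) is standard (citing \cite[Proposition 4.5]{MT} and \cite[Theorem A.2]{McM:nav}), that (2) is ``similar'', and that (3) ``follows from the second''. Your arguments for (1) and (3) are correct and are exactly what one would extract from those references; your derivation of (3) from (2) via the area bound is precisely what the paper intends.

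The one point that deserves comment is your handling of the ``main obstacle'' in part (2). You correctly identify that showing the pulled-back cores $\gamma_n$ remain in a fixed compact subset of $X\setminus\Sigma$ is the crux. However, the specific mechanism you invoke --- that if $\gamma_n$ escaped it would become a combination of vanishing cycles and hence $\int_{g_n(\gamma_n)}\omega_n\to 0$ ``by Proposition~\ref{P:Vn}'' --- is not right as stated. Proposition~\ref{P:Vn} only says the space of vanishing cycles is well defined and locally constant; it says nothing about periods vanishing, and indeed periods of relative vanishing cycles need not tend to zero (think of a saddle connection crossing a tall thin pinching cylinder). More to the point, you have not established that an escaping $\gamma_n$ is freely homotopic into a small neighbourhood of $\Sigma$: a curve of bounded $g_n^*\omega_n$-length could a priori visit a neighbourhood of $\Sigma$ without being trapped there, since you have no uniform comparison of $g_n^*\omega_n$ and $\omega$ near $\Sigma$.

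The cleaner route is to stay on $X_n$ and use the injectivity radius directly. Your workhorse already gives that the core band $B_n$ lies in the $\rho$-thick part of $(X_n,\omega_n)$ for a \emph{fixed} $\rho>0$ (not merely $\rho_n\to 0$). One then shows that for $n$ large the $\rho$-thick part of $X_n$ is contained in $g_n(K)$ for a fixed compact $K\subset X\setminus\Sigma$: points outside $g_n(X\setminus U_n)$ are thin by Definition~\ref{D:converge}(2), and points in $g_n(N\setminus U_n)$ for a small fixed neighbourhood $N$ of $\Sigma$ are thin because they are within bounded $\omega_n$-distance of the boundary of the degenerating region (using that $g_n$ is nearly isometric on the fixed compact $\overline{N}\setminus U_n$, together with the $1$-Lipschitz property of injectivity radius). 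This is essentially what is established in the references the paper cites, and is the content of the thick--thin picture you allude to. Once $A_n\subset K$, your Arzel\`a--Ascoli argument goes through as written.
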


In fact, all of the isotopies in this result can be assumed to be small, i.e., not to move points very much. 

The first claim is standard, and boils down to the fact that the image of a cylinder under a map which almost preserves the flat metric will contain a cylinder of similar size: a related claim is used in \cite[Proposition 4.5]{MT}, and it is also helpful to compare to the proof of \cite[Theorem A.2]{McM:nav}. The second claim is similar, and the third follows from the second.

\section{Examples of convergence to lower genus surfaces}\label{S:Examples}

In this section we give four examples of convergence according to Definition \ref{D:converge}. The first shows that the definition is easy to verify for the most commonly used degenerations. The second is actually a special case of the first, and is the simplest situation in which the degeneration cannot be obtained via local surgeries, contrary to the commonly studied situation for ``splitting a zero" and ``bubbling off a cylinder" \cite{KZ}. The last two examples  show that extremely complicated behavior is possibly along  converging sequences. 

\subsection{Degenerating cylinders.}

Let $\cC$ be a collection of parallel cylinders on a multicomponent translation surface $(X, \omega, \Sigma)$. Assume the union $\cup \cC$ of the cylinders in $\cC$ do not cover all of $(X,\omega, \Sigma)$. Let $(X_t, \omega_t, \Sigma_t), t\in (0,1)$ be the surfaces obtained by linearly scaling the height of the cylinders in $\cC$ by a factor of $t$ without shearing the cylinders. Define $(X_0, \omega_0, \Sigma_0)$ as follows. First cut out the cylinders in $\cC$, to obtain a surface with boundary. Mark every point $p$ in the boundary if it is in $\Sigma$ or if there is a vertical line in $\cup \cC$ from $p$ to a point in $\Sigma$. These finitely many marked points cut the boundary into finitely many edges. Identify two of the edges when they can be joined by vertical lines in $\cup \cC$. Make any identifications of the marked points implied by the edge identifications. 

\begin{lem}
  $(X_t, \omega_t, \Sigma_t)$ converges to the surface $(X_0, \omega_0, \Sigma_0)$. 
\end{lem}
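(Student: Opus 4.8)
The plan is to verify the two conditions of Definition \ref{D:converge} directly, exhibiting explicit maps $g_t : X_0 \setminus U_t \to X_t$ and the shrinking neighborhoods $U_t$ of $\Sigma_0$.

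\textbf{Setup and choice of $U_t$.} First I would fix notation: write $(X_0, \omega_0, \Sigma_0)$ concretely as the surface with boundary $Y = X \setminus \cup\cC$ with the edge identifications described in the statement, carrying the restriction of $\omega$. Away from a small neighborhood of $\Sigma_0$, the surface $X_0$ is naturally a subset of the flat surface underlying $Y$, hence comes with a flat structure that is literally the restriction of that of $(X,\omega)$. For $\e > 0$ small, let $U_t$ (or rather $U_\e$, reindexed) be the set of points of $X_0$ at flat distance less than $\e$ from $\Sigma_0$; since $\cap_\e U_\e = \Sigma_0$ and the heights of cylinders in $\cC$ scale linearly, I can arrange that $U_t$ decreases to $\Sigma_0$ as $t \to 0$. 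The key geometric point is that $X_0 \setminus U_t$ consists of: (i) a portion of $Y$ away from $\Sigma_0$, on which nothing moves, and (ii) near the seams where cylinders were glued, a collar that must be re-inserted into $X_t$.

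\textbf{Construction of $g_t$.} On the part of $X_0 \setminus U_t$ coming from $Y$ away from the seams, $g_t$ is the identity (viewing both as subsets of the same flat surface). Near each seam, I need to interpolate across the thin cylinder $C \in \cC$ of height $t h_C$ that has been glued back in $(X_t,\omega_t,\Sigma_t)$: on a collar neighborhood of the corresponding boundary arc in $X_0$, define $g_t$ by a map that pushes the arc slightly into the (now very short) cylinder, i.e.\ a shear-free affine interpolation in the cylinder coordinates $(x, y) \mapsto (x, \phi_t(y))$. Because the cylinders are \emph{not} sheared, these local maps are uniformly close to isometries, with $g_t^*(\omega_t) \to \omega_0$ uniformly on compact subsets of $X_0 \setminus \Sigma_0$ as $t \to 0$ (the derivative of $\phi_t$ tends to $1$ away from $\Sigma_0$). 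This gives condition (1). The marked points: the points $p$ marked on $\partial Y$ because of a vertical segment to $\Sigma$ are exactly the images of the cone points as the cylinder heights collapse, so one checks $g_t$ extends continuously over them with the required identifications.

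\textbf{Condition (2) and the main obstacle.} The complement of the image of $g_t$ in $X_t$ is contained in the union of the scaled cylinders $\cup \cC_t$ (minus the thin collars already covered), whose heights are $t h_C \to 0$; hence every point there has injectivity radius $O(t) \to 0$ uniformly, giving condition (2). The main obstacle — and the only place real care is needed — is the local analysis near the marked points $p$ of $X_0$ that arise from vertical segments hitting $\Sigma$: there the flat structure of $X_0$ genuinely differs from a neighborhood in $X$ (a cone angle may change, or two points of $\Sigma$ get identified), and one must check that the interpolating maps $g_t$ can be chosen to be genuine diffeomorphisms onto their range that respect all the edge identifications simultaneously and glue consistently around each such point. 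This is a finite, local, combinatorial check at each marked point, using that the identified boundary edges of $Y$ are matched by vertical translation in $\cup\cC$ so the gluing in $X_t$ is compatible with the gluing defining $X_0$; once this is done, Definition \ref{D:converge} is satisfied and the lemma follows. I do not expect any global difficulty, as everything is supported in arbitrarily small neighborhoods of $\Sigma_0$ and $\cup\cC$.
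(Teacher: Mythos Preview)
Your approach is essentially the same as the paper's sketch: identity away from the collapsed cylinders, and an orthogonal stretch on thin rectangles centered on the resulting saddle connections (your $(x,y)\mapsto(x,\phi_t(y))$ is exactly the paper's ``stretch in the orthogonal direction''). One minor correction to your condition (2) argument: the complement of the image of $g_t$ is not ``the scaled cylinders'' but rather small neighborhoods of $\Sigma_t$ (including any zeros lying in the interior of $Y$, away from $\cup\cC$), and the injectivity radius there is small because those points are close to marked points, not because of the cylinder heights---but this does not affect the overall correctness of the sketch.
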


\begin{proof}
We provide a sketch only.

\begin{figure}[h]
\includegraphics[width=\linewidth]{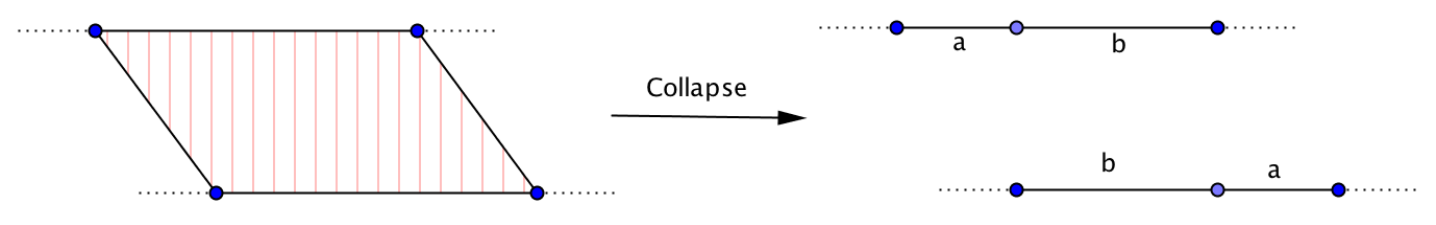}
\caption{An example of a cylinder collapse. On the left, the two non-horizontal opposite edges are identified to give a horizontal cylinder. The surface is assumed to continue above and below, but only the cylinder is drawn.  On the right, the cylinder is squashed to zero height, and becomes is a pair of saddle connections.}
\label{F:Collapse}
\end{figure}

The edges of the boundary in the construction of $(X_0, \omega_0, \Sigma_0)$ give a finite union of parallel saddle connections on $(X_0, \omega_0, \Sigma_0)$. For each of these saddle connection, we can pick a long and thin embedded rectangle centered on this saddle connection. 

\begin{figure}[h!]
\includegraphics[width=0.8\linewidth]{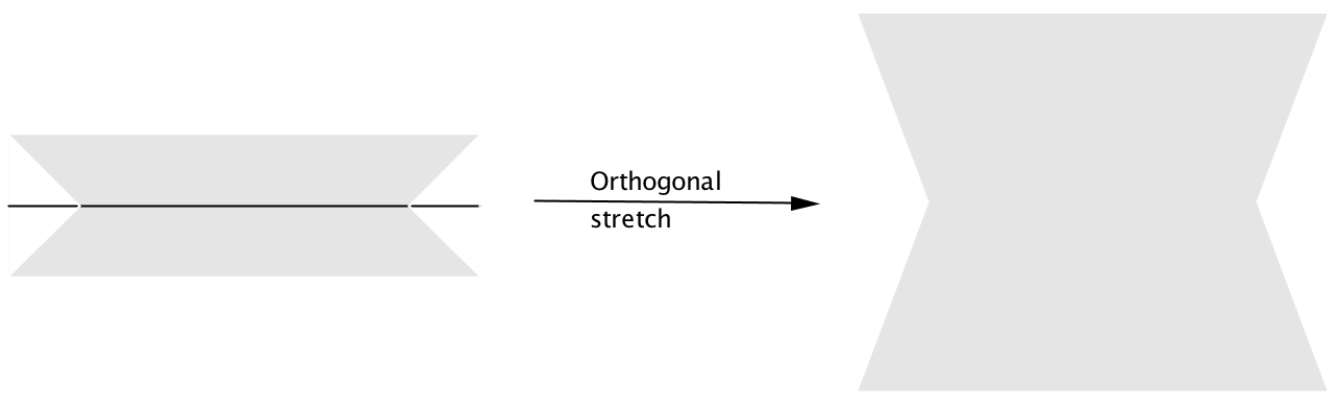}
\caption{In this example, the horizontal line is a saddle connection in the image of $\cC$ under the collapse map. The map $g_n$ takes a thin rectangle about this saddle connection and linearly stretches it in the orthogonal direction.}
\label{F:Stretch}
\end{figure}

The maps $g_n$ can be defined to be an appropriate stretch in the orthogonal direction, defined on this rectangle minus the union of small neighborhoods about the zeros. See Figure \ref{F:Stretch}. The maps $g_n$ can be taken to be the identity outside the disjoint union of these rectangles; they are not defined on the neighborhoods about the zeros.
\end{proof}

\subsection{$\cH(0,0)$ in the boundary of $\cH(2)$.}

Figure \ref{F:H2H00} illustrates a sequence of surfaces in $\cH(2)$ degenerating to $\cH(0,0)$. It is clear that the degeneration cannot be achieved by surgeries supported near the zeros, since the ratio of the periods of $\alpha$ and $\beta$  change as the surface degenerates, but $\alpha$ and $\beta$ are supported away from the zeros. 

\begin{figure}[h]
\includegraphics[width=\linewidth]{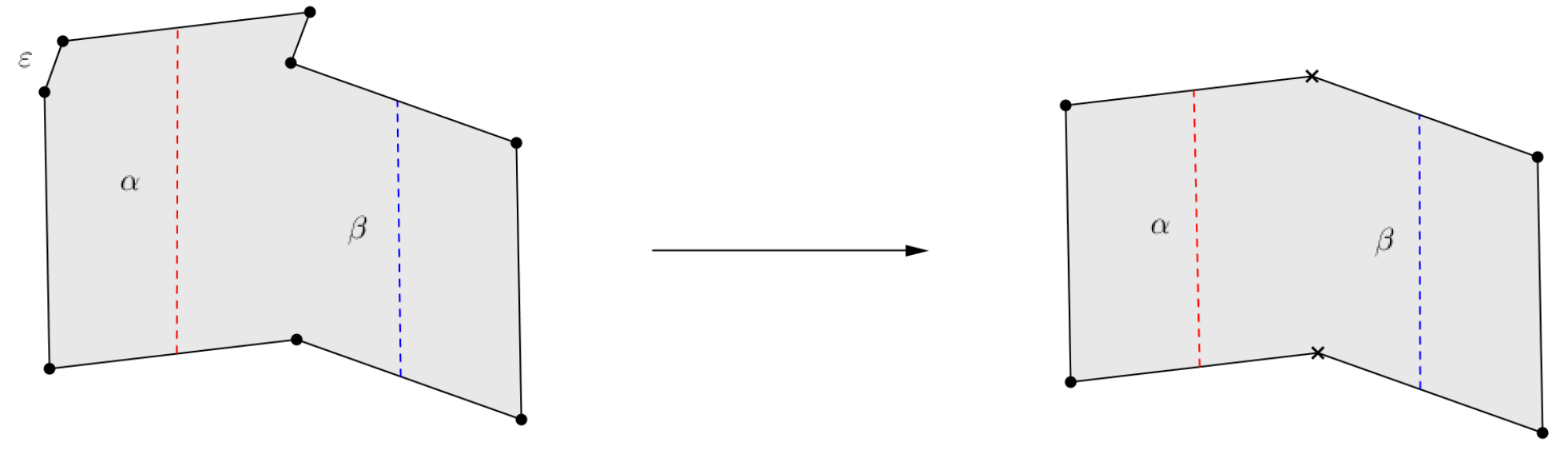}
\caption{Opposite edges are identified. The edge labeled $\e$ has length going to 0.}
\label{F:H2H00}
\end{figure}

\subsection{Chaotic behavior in the vanishing part of the surface (McMullen).}\label{SS:Mc}

This example and the next give linear paths in period coordinates that converge in the sense above, but do not converge in Deligne-Mumford. These examples are helpful but not necessary to read the rest of the paper.  

We are grateful to Curt McMullen for sharing with us this example, which shows how the chaos of the geodesic flow can be encoded in a convergent path which is even linear in period coordinates and consists entirely of unit area surfaces. 

For $t>0$ small and fixed $y$, let $E_t$ be the torus $\bC/\langle (0,1), (1-t,0)\rangle$, and let $F_t$ denote the torus $\bC/\langle (0,1), (t,y)\rangle$. Up to scaling, $F_t$ is an\ann{A: Typo fixed.} orbit of geodesic flow, and it can be arranged\footnote{Indeed, $\bC/\langle (0,1), (t,y)\rangle$, viewed as a function of $y$ for $t$ fixed, is a horosphere for the geodesic flow on the bundle of Abelian differentials over $\cM_1$. Because geodesic flow is ergodic, almost any choice of $y$ will work.} that $\{F_t: 0<t<\e\}$ is dense in $\cM_1$ for each $\e>0$. Here $\cM_1$ is the moduli space of complex tori, and we are forgetting the flat structure to get a point in $\cM_1$. \ann{R: This example is very difficult to understand. It took me
a very long time to understand the sentence It can be arranged that ...\\A: Added footnote.}

Let $(X_t, \omega_t)\in \cH(1,1)$ be $F_t$ glued to $E_t$ along a vertical slit of length $t$, as in Figure \ref{F:McM}. 
\begin{figure}[h]
\includegraphics[width=0.7\linewidth]{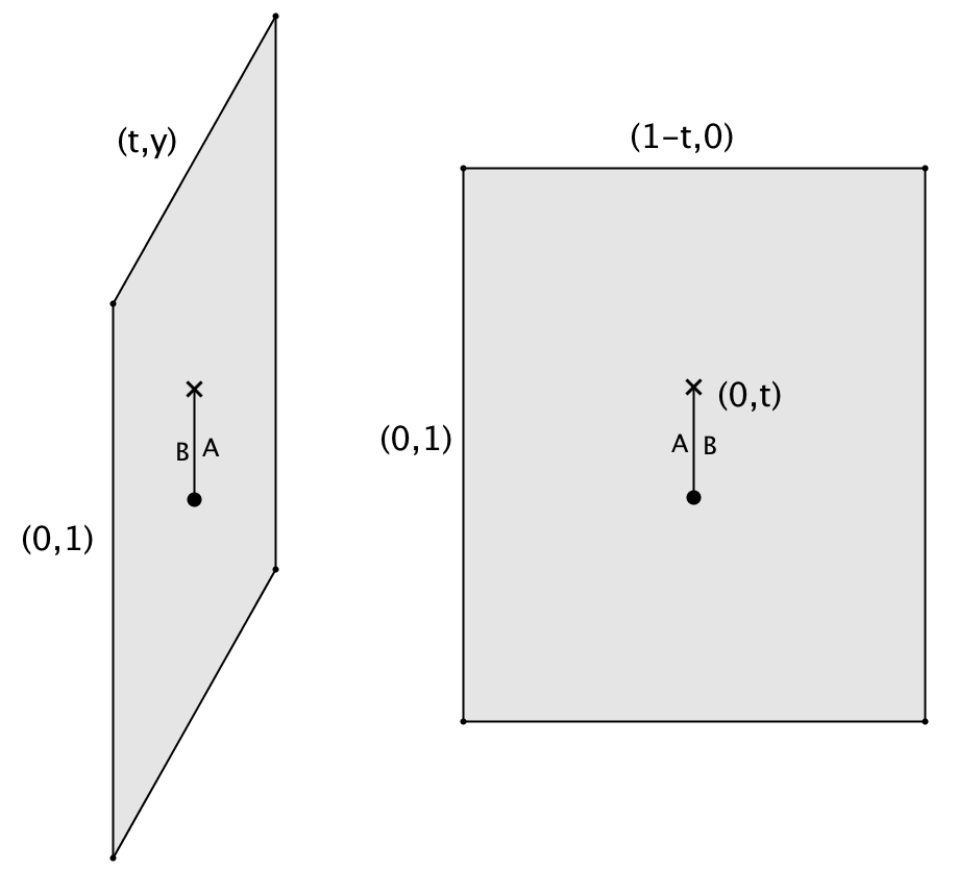}
\caption{Opposite sides are identified, so each quadrilateral gives a torus. On the left is $F_t$ and on the right is $E_t$. The edge identifications labeled A and B indicate that a slit in one torus is glued to a slit in the other.}
\label{F:McM}
\end{figure}

Observe that as $t\to 0$, the linear path $(X_t, \omega_t)$ accumulates in $\cM_2$ on $E_0\times \cM_1$. Indeed, for any $F\in \cM_1$, we may find $t$ arbitrarily small so that $F_t$ is arbitrarily close to $F$, and for these $t$ we have that $(X_t, \omega_t)$ is close to $F$ glued to $E_0$ at a point. (Note that, if we  multiply $F_t$ by $1/\sqrt{t}$ times the identify matrix, we obtain a unit area torus with a  slit of length $\sqrt{t}$. Thus the slit is very small even compared to the decreasing ``size" of $F_t$.)  

\subsection{Infinite death and rebirth of cylinders.}

The next example is not quite as chaotic as the previous one, but is more relevant in this paper. It shows the surprising phenomenon of ``infinite death and rebirth" of cylinders in a \emph{fixed homology class}, along a convergent linear path in period coordinates. This example could be easily modified to make the surfaces all of area exactly 1 (possibly changing the genus). 

Consider the surfaces $(X_t, \omega_t)$ in $\cH(2,2)$ illustrated Figure \ref{F:InfiniteRebirth}.
\begin{figure}[h!]
\includegraphics[width=\linewidth]{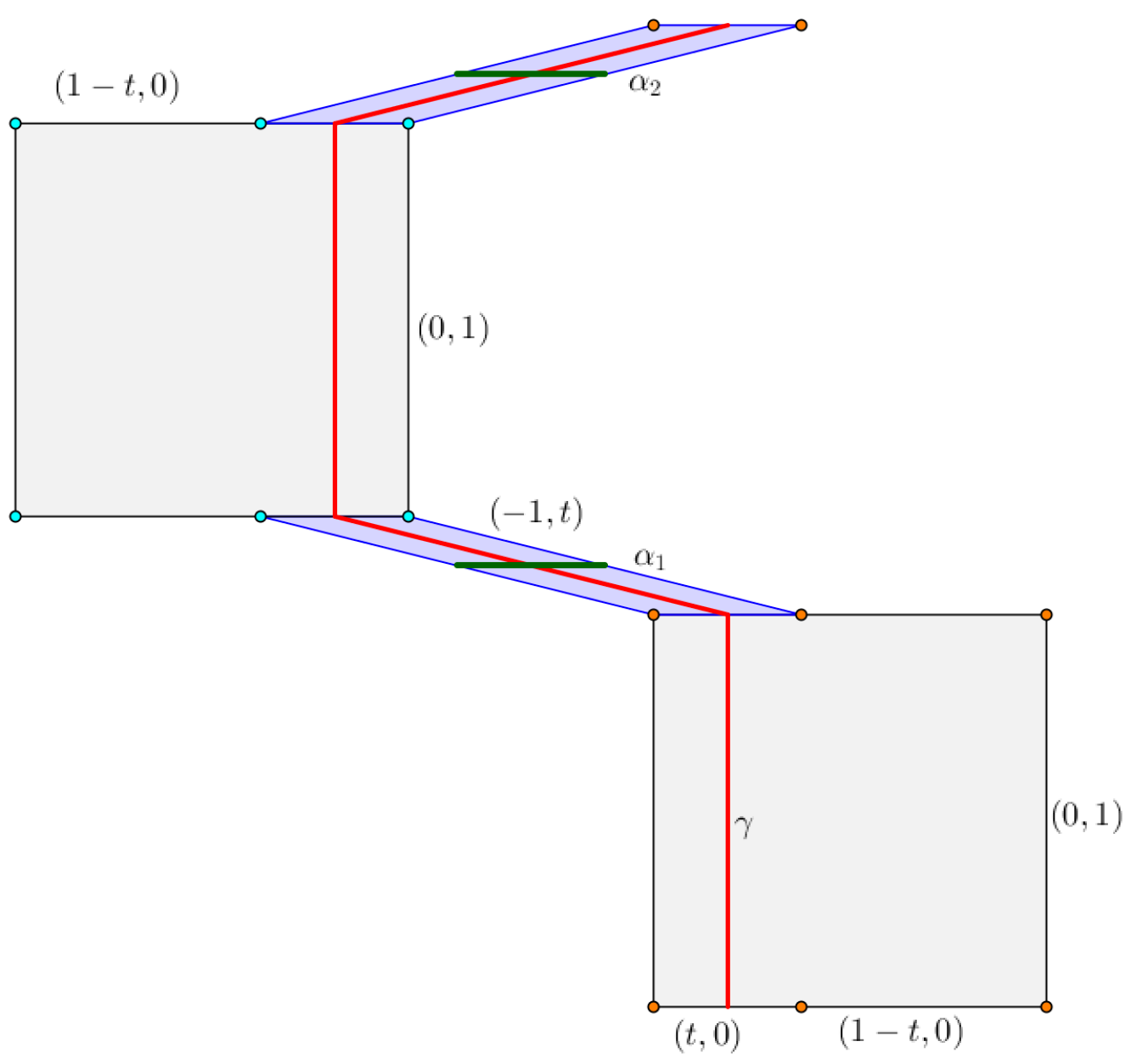}
\caption{This figure shows two tori glued together with two cylinders. The tori are both 1 by 1, i.e., $\bC/\bZ[i]$. The cylinders have height $t$. The bottom cylinder has a saddle connection going from its  bottom boundary to its top boundary with holonomy $(-1,t)$, and the top cylinder has a saddle connection going from its  bottom boundary to its top boundary with holonomy $(1,t)$. The core curves of the bottom and top cylinders are $\alpha_1$ and $\alpha_2$ respectively. }
\label{F:InfiniteRebirth}
\end{figure} 
As $t\to 0$, these surfaces converge to two tori with one marked point each. \ann{R: Why are there two marked points on each torus?\\ A: Corrected 2 to 1. }
 When $t=1/n$, there is a vertical cylinder, whose core curve is isotopic to $D^n_{\alpha_1} D^{-n}_{\alpha_2}(\gamma)$, where  $D_\alpha$ denotes the Dehn twist about a curve $\alpha$. By continuity, this cylinder also exists with $t$ is very close to $1/n$. But as $t$ gets smaller or larger, it will disappear.
 
 Since $\alpha_1$ and $\alpha_2$ are equal in homology, we see that all the core curves of these cylinders are equal in homology. 
 
In summary,  infinitely many cylinders whose core curves are homologous to $\gamma$ are ``born" and ``die" as $t\to 0$. Although the core curves of these cylinders are homologous, they are not isotopic.

\section{Cylinder twists}\label{S:Twist}

In this section we recall some background, and finally prove a result (Theorem  \ref{T:twists}) that improves previous knowledge of cylinder deformations. The reader may find it helpful to become familiar with the Cylinder Deformation Theorem, for example reading at least the introduction of \cite{Wcyl}, before proceeding. 

Note that when translation surfaces have marked points, if there is a zero inside of a cylinder, that cylinder is actually considered to be two adjacent cylinders. In other words, the interior of cylinders by definition may not contain marked points. 

\subsection{Cylinders and horocycle flow}

Suppose $(X,\omega)$ is a translation surface, and $C$ is a cylinder on $(X, \omega)$ with circumference $c$, height $h$, modulus $h/c$, and core curve $\alpha$. We assume an orientation of $\alpha$ is chosen. We will consider the cylinder twist $u_t^C(X,\omega)$, which is defined as follows. Rotate the surface by the appropriate angle $-\theta$ so that $C$ becomes horizontal. Apply 
$$u_t=\left(\begin{array}{cc} 1 & t\\0 &1\end{array}\right)$$
only to the cylinder $C$. This operation does not change the circumference of $C$ or any of the saddle connections on the boundary of $C$. Then, rotate the surface by $\theta$ to return $C$ to its original direction. The result is $u_t^C(X,\omega)$. 

In other words, $u_t^C$ shears the cylinder $C$, leaving the rest of the surface unchanged. The parameterization is such that it $u_t^C$ has period $c/h$, i.e., when $t=c/h$ then $u_t^C(X,\omega)=(X,\omega)$. 

There is a Poincare duality isomorphism $$H_1(X-\Sigma, \bZ)\simeq H_1(X, \Sigma, \bZ)^* = H^1(X, \Sigma, \bZ)$$ given by intersection number. Given a class $\alpha\in H_1(X-\Sigma, \bZ)$ we denote the dual class in $H^1(X, \Sigma, \bZ)$ by $I_\Sigma(\alpha)$.  Note that there are natural maps $$H_1(X-\Sigma, \bZ)\to H_1(X, \bZ)\quad\quad \text{and}\quad\quad p:H^1(X, \Sigma, \bZ)\to H^1(X, \bZ).$$ For a class $\alpha \in H_1(X, \bZ)$, we will denote its Poincare dual by $I(\alpha)\in H^1(X, \bZ)$. 

We will consider the core curve $\alpha$ of $C$ to be an element of both $H_1(X-\Sigma, \bZ)$ and $H_1(X, \bZ)$ without using different notation, and we have $p(I_\Sigma(\alpha))=I(\alpha)$. 

Define $u^C(X,\omega)=e^{i\theta} h I_\Sigma(\alpha)$, where $\theta$ is the argument of $\int_\alpha \omega$. We refer to $\theta$ as the direction of the cylinder. 
The derivative of the path $u_t^C(X,\omega)$ is equal to $u^C(X,\omega)$. Compare to Lemma 2.4 and Remark 2.5 in \cite{Wcyl}. 

We will say that a collection of parallel saddle connections and cylinders is consistently oriented if each saddle connection, and each core curve of a cylinder, is given an orientation so that all their holonomies are positive multiples of each other. Whenever we speak of a collection of cylinders in this paper, we will assume it is consistently oriented. 

Given a collection of cylinders $\cC=\{C_1, \ldots, C_k\}$, define $u^\cC_t(X,\omega)=u_t^{C_1} \cdots u_t^{C_k} (X,\omega)$ to be the result of simultaneously shearing each $C_i$. Suppose $C_i$ has height $h_i$, and define $u^\cC=e^{i\theta}\sum h_i I_\Sigma(\alpha_i)$, where $\theta$ is the direction of the cylinders. The derivative of the path $u_t^\cC(X,\omega)$ is $u^\cC(X,\omega)$.  Compare to Lemma 2.4 and Remark 2.5 in \cite{Wcyl}.

Let $\cM$ be an affine invariant submanifold. Recall that if $(X, \omega)\in \cM$, then two cylinders on $(X,\omega)$ are said to be $\cM$-parallel if they are parallel, and if they remain parallel on all sufficiently nearby surface in $\cM$ \cite[Definition 4.6]{Wcyl}. (More precisely: There is a small simply connected neighborhood of $(X,\omega)$ on which both cylinders persist, and whose intersection with $\cM$ is connected. We require the two cylinders to be parallel on all surfaces in this neighborhood intersect $\cM$. See \cite[Section 2.1]{LNW} for the case when $\cM$ has self-crossings.\ann{A: Added citation.}) 

If two cylinders are $\cM$-parallel, then the ratio of their circumferences must  remain constant on all nearby deformations \cite[Lemma 4.7]{Wcyl}. The following is \cite[Theorem 5.1]{Wcyl}. 

\begin{thm}[Cylinder Deformation Theorem]\label{T:CDT}
Let $\cM$ be an affine invariant submanifold. 
If $\cC$ is an  equivalence class of $\cM$-parallel cylinders on $(X, \omega)\in \cM$, then $u_t^\cC(X, \omega)$ stays in $\cM$ for all $t\in \bR$. Equivalently, $\sum h_i I_\Sigma(\alpha_i)\in T(\cM)$. 
\end{thm}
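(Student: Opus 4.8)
The plan is to reduce the statement to the cohomological assertion that $u^\cC=e^{i\theta}\sum_{C_i\in\cC}h_iI_\Sigma(\alpha_i)\in T(\cM)$ --- the equivalence the theorem already advertises --- and then to prove that assertion. For the reduction: $\cM$ is locally cut out in period coordinates by $\bR$-linear equations, and in any period coordinate chart the path $t\mapsto u_t^\cC(X,\omega)$ is the straight line through $(X,\omega)$ with velocity $u^\cC$; so once $u^\cC$ satisfies the local equations the path stays in $\cM$ within that chart, and one propagates this along the whole real line using that $T(\cM)$ (hence the defining equations) is invariant under parallel transport and that $u^\cC$ is itself parallel-transported to the twist class of the continued cylinders. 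Here one uses that $u_t^\cC$ is globally well defined --- it shears a fixed collection of cylinders while fixing their boundary saddle connections, so it always produces a surface in the same stratum --- and that $T(\cM)$ is a \emph{complex} subspace, so that $u^\cC\in T(\cM)$ forces $iu^\cC\in T(\cM)$, which is what guarantees the \emph{real} line $(X,\omega)+tu^\cC$ satisfies all the defining equations.

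It remains to show $u^\cC\in T(\cM)$. I would first record the standard structural facts: $T(\cM)\subset H^1(X,\Sigma;\bC)$ is a real, $GL(2,\bR)$-invariant, parallel-transport-invariant, complex subspace containing the tautological classes $\Re\omega$ and $\Im\omega$ (these follow from $GL(2,\bR)$-invariance of $\cM$ together with the real-linearity of its defining equations). Applying a rotation in $SO(2)\subset GL(2,\bR)$ we may assume the cylinders of $\cC$ are horizontal, so $u^\cC=\sum_{C_i\in\cC}h_iI_\Sigma(\alpha_i)\in H^1(X,\Sigma;\bR)$, and since $T(\cM)$ is complex it suffices to place this real class (equivalently, the corresponding cylinder \emph{stretch}) in $T(\cM)$. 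The clean case is when the horizontal direction is completely periodic and $\cC$ is the full set of horizontal cylinders: pairing against any relative cycle $\gamma$ shows $\Im\omega=\sum_i h_iI_\Sigma(\alpha_i)$ in $H^1(X,\Sigma;\bR)$, because the vertical period of $\gamma$ is exactly the sum of the heights of the cylinders it crosses, each counted with multiplicity $\alpha_i\cdot\gamma$; and $\Im\omega\in T(\cM)$ is tautological.

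For a general surface I would reduce to this clean case by a limiting argument inside $\cM$. Using recurrence of the geodesic (or horocycle) flow --- the quantitative nondivergence estimates of Minsky--Weiss \cite{MinW}, together with the measure classification of Eskin--Mirzakhani--Mohammadi \cite{EM, EMM} to identify orbit closures as affine invariant submanifolds --- one produces a sequence $g_{t_k}(X,\omega)$ converging inside $\cM$ to a limit in which the cylinders of $\cC$ have been stretched until they fill out their direction, while tracking the relative periods so that the (suitably rescaled) velocity of the corresponding family of cylinder deformations converges to a multiple of $\sum_i h_iI_\Sigma(\alpha_i)$; since $\cM$ is closed and its tangent distribution is continuous along $\cM$, this limiting class lies in $T(\cM)$. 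The hypothesis that $\cC$ is an \emph{equivalence class of $\cM$-parallel} cylinders --- not merely a collection of parallel cylinders --- enters precisely here: it forces the circumference ratios $c_i/c_j$ to be locally constant on $\cM$, so the cylinders of $\cC$ degenerate compatibly with $\cM$ and the limit does not pick up contributions from cylinders or minimal components outside $\cC$.

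The main obstacle is this last step: organizing the limiting argument so that the surviving velocity class is supported on $\cup\cC$ and is exactly the asserted combination of Poincar\'e duals, with nothing leaking in from the rest of the surface. This requires a careful choice of flow and of the times $t_k$, a controlled use of recurrence, possibly passing to a further subsequence, and verifying that $\cM$-parallelism is exactly the condition that makes all of this go through. Once $u^\cC\in T(\cM)$ is established for horizontal $\cC$, the general direction follows by rotation, and the special case that equally shearing \emph{all} cylinders in a direction stays in \emph{every} affine invariant submanifold follows because the full set of cylinders in a direction is a disjoint union of $\cM$-parallel equivalence classes, whose twist classes then sum to a class in $T(\cM)$.
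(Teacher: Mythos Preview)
The paper does not give a proof: the theorem is quoted from \cite[Theorem~5.1]{Wcyl}, with the single remark that its proof uses Smillie--Weiss (Theorem~\ref{T:SW}). So the comparison is really between your sketch and the argument in \cite{Wcyl}.

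Your reduction to the cohomological statement $u^\cC\in T(\cM)$ and the rotation to make $\cC$ horizontal are fine. The gap is in the core step. Your ``clean case'' assumes $\cC$ is the full set of horizontal cylinders on a horizontally periodic surface, and then correctly identifies $u^\cC$ with $\Im\omega$. But the theorem concerns a single $\cM$-parallel equivalence class, and a horizontally periodic surface typically decomposes into several classes $\cC_1,\ldots,\cC_r$; your identity only gives $\sum_j u^{\cC_j}=\Im\omega\in T(\cM)$, and isolating one summand is exactly the content to be proved. (Your final paragraph has this backwards: you deduce the full-direction twist by summing the equivalence-class twists, which is the trivial direction.)

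Your limiting argument does not close this gap. Geodesic flow $g_t$ does not make horizontal cylinders ``fill out their direction''---it makes them longer and thinner---and no recurrence argument will cause a minimal component or a cylinder outside $\cC$ to vanish from the limit. The sentence ``the cylinders of $\cC$ degenerate compatibly with $\cM$ and the limit does not pick up contributions from \ldots\ outside $\cC$'' asserts the conclusion rather than supplying a mechanism. Neither Minsky--Weiss nor EMM is used in the proof in \cite{Wcyl}; what is used is Smillie--Weiss, applied to the \emph{horocycle} orbit closure, to reach a horizontally periodic surface. The separation of equivalence classes then comes from a torus-closure argument: the horocycle orbit on a horizontally periodic surface factors through a torus of cylinder twists, the intersection with $\cM$ is a subtorus defined by rational equations in the moduli (compare Lemma~\ref{L:rational}), and one exploits the freedom to perturb within $\cM$---precisely the fact that circumference ratios across \emph{different} classes are \emph{not} locally constant---to force the subtorus to contain the twist in each equivalence class separately.
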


The proof of Theorem \ref{T:CDT} uses the following result. 

\begin{thm}[Smillie-Weiss]\label{T:SW}
Every closed, horocycle flow invariant subset of a stratum contains a horizontally periodic surface. 
\end{thm}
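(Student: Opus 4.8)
The plan is to combine the nondivergence of the horocycle flow with a renormalization argument based on the geodesic flow. Write $\{u_t\}$ for the horocycle flow and $g_s$ for the geodesic flow on the stratum, and let $\cN$ be a nonempty closed $\{u_t\}$-invariant subset. First I would extract from $\cN$ a nonempty \emph{compact} subset $\cZ$ that is minimal for the $\{u_t\}$-action. The obstruction to doing this naively is that a horocycle orbit in a stratum can in principle leave every compact set; the input that controls this is the Minsky--Weiss nondivergence theorem for the horocycle flow \cite{MinW}, which shows that, for a surface whose horizontal foliation has no saddle connection, a definite proportion of the time $t\in[0,T]$ keeps $u_t(X,\omega)$ in a fixed compact part of the stratum. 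Using this I would first show that $\cN$ contains a nonempty compact $\{u_t\}$-invariant set, and then apply Zorn's lemma to obtain a compact $\{u_t\}$-minimal $\cZ\subseteq\cN$. (If it happens that every surface in $\cN$ already has a horizontal saddle connection, one is partway to a cylinder decomposition and can argue more directly.)

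The second and more delicate step is to show that such a compact $\{u_t\}$-minimal set $\cZ$ must contain a horizontally periodic surface. Here the relevant structure is the commutation relation $g_s u_t g_{-s} = u_{t e^{2s}}$, which exhibits the geodesic flow $g_{-s}$ --- contracting the horizontal direction and expanding the vertical one --- as a renormalization of $\{u_t\}$. The idea I would pursue is that if some $(X,\omega)\in\cZ$ failed to be horizontally periodic, then its horizontal foliation would contain an infinite separatrix; pushing the surface by $g_{-s}$ shrinks horizontal lengths, and combining this degeneration with the syndetic recurrence of the $u_t$-orbit to small neighborhoods forced by minimality should produce surfaces of $\cZ$ with arbitrarily small systole, contradicting the compactness of $\cZ$. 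One then concludes that some surface of $\cZ$, hence of $\cN$, decomposes into horizontal cylinders.

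The hard part is the first step: the Minsky--Weiss nondivergence estimates are delicate, and they are precisely what rules out the scenario in which the horocycle orbit escapes toward the boundary of the stratum before any recurrence can be exploited --- without which one cannot even begin to extract a compact minimal set. A secondary difficulty lies in making the limiting portion of the second step rigorous, since one must keep track not only of the direction of the horizontal foliation but also of the moduli and circumferences of the cylinders appearing in the limit, so that the limiting object is a genuine translation surface in the same stratum. The complete argument is carried out in \cite{SW2}.
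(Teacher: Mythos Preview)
The paper does not prove this theorem; it is quoted as a known result of Smillie--Weiss \cite{SW2} and used as a black box (in particular in the proof of the Cylinder Deformation Theorem and in Lemma~\ref{L:Lag}). So there is nothing to compare your proposal against in the paper itself.

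That said, your outline is a fair summary of the Smillie--Weiss strategy: one first uses the Minsky--Weiss quantitative nondivergence \cite{MinW} to pass to a nonempty compact $u_t$-invariant set and hence (by Zorn) to a compact $u_t$-minimal set, and then uses the renormalization relation $g_s u_t g_{-s}=u_{te^{2s}}$ to show that a compact $u_t$-minimal set must contain a horizontally periodic surface. One point to be careful about in your second step is the direction of the contradiction: the argument in \cite{SW2} does not simply ``push by $g_{-s}$'' and watch the systole shrink (this would take you out of $\cZ$, which is only $u_t$-invariant). Rather, one exploits minimality to get uniform recurrence of $u_t$-orbits and then uses the conjugation relation to produce, inside $\cZ$, surfaces with horizontal cylinders of arbitrarily large modulus; compactness of $\cZ$ and a limiting argument then yield a completely periodic surface in $\cZ$. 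Your sketch has the right ingredients but conflates these two roles of $g_s$ slightly.
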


\subsection{Cautionary examples.}

The possibility that small deformations can create new cylinders $\cM$-parallel to the existing cylinders is a major complicating difficulty in this paper, so we encourage the reader to consider the following two examples in detail.

\begin{ex}
When $\cM$ is a stratum, two cylinders are $\cM$-parallel if and only if they are homologous.
Take $\cM$ to be a stratum that contains a surface with a pair of homologous cylinders. Consider a surface in this stratum with a pair of homologous cylinders. Keeping the circumference constant, collapse one cylinder so that it has zero height, without degenerating the surface. Let the resulting surface be $(X, \omega)$, and let $\cC$ be the one remaining cylinder from the homologous pair. Then, for many deformations, such as the reverse of the path we just described, a new cylinder will appear homologous to the one cylinder in $\cC$.  
\end{ex}

\begin{ex} Take $\cM$ to be an eigenform locus in $\cH(1,1)$ \cite{Mc}, see also \cite{Ca}. Thus $\cM$ is rank 1, and so all parallel cylinders are automatically $\cM$-parallel, see \cite[Propositions 4.19, 4.20]{Wsurvey} or \cite[Theorem 1.5]{Wcyl}.   Consider a surface $(X, \omega)\in \cM$ with a two cylinder direction. Let $\cC$ be these two cylinders. Most small deformations of $(X, \omega)$ in $\cM$ result in a surface where there are three cylinders in the direction of the cylinders in $\cC$.  

Furthermore, McMullen has established the following result \cite{McCas,McM:iso}. 
In each eigenform locus $\cM$ in genus 2, there are linear paths $(X_t, \omega_t)$ in period coordinates that converge as $t\to 0$ to a surface in the eigenform locus (not its boundary) with the following properties. For some $\e>0$ and all $t\in (0,\e)$, the surfaces $(X_t, \omega_t)$ are all horizontally periodic. Furthermore, absolute periods are constant along the path $(X_t, \omega_t)$, and  as $t\to 0$, the circumference of the smallest horizontal cylinder goes to infinity.
  
Since absolute periods are constant along the path $(X_t, \omega_t)$, the circumference of cylinders are locally constant. In order for the circumference of the smallest horizontal cylinder to go to infinity as $t\to 0$, the horizontal cylinders must disappear and be replaced with longer and longer horizontal cylinders. Infinitely many horizontal cylinders must appear and then die off. \ann{R: Since the cylinders are getting longer and longer ... I do not understand this argument.\\A: Rephrased.} The ``lifespan" of each horizontal cylinder goes to zero as $t\to 0$. Furthermore, since these $\cM$ are rank 1, all the core curves of the horizontal cylinders in this family are $\cM$ collinear. 
  
This fact that $\cM$-parallel cylinders in a given direction may appear and disappear infinitely many times along finite paths is a major source of complication in this paper. 

See \cite{HW} for related examples, with a very explicit analysis of how infinitely many cylinders appear and disappear along a small path $(X_t, \omega_t), \,0< t<\e$ . (In most of the examples in \cite{HW}, the cylinders are not $\cM$-parallel.)
\end{ex}

\subsection{Non-standard twists}

Given an equivalence class $\cC$ of $\cM$-parallel cylinders with core curves $\alpha_i$, define its \emph{twist space} to be  
$$(\span_\bC I_\Sigma(\alpha_i)) \cap T(\cM).$$ This space describes all deformations that remain in $\cM$ that arise from deforming cylinders in $\cC$. 

We will call $u^\cC$ the standard twist in $\cC$. By the Cylinder Deformation Theorem, it is always contained in the twist space.   

\begin{war} The definition of the standard twist  depends on $(X, \omega)$. For example, suppose $(X', \omega')\in \cM$ is close enough to $(X, \omega)$ so that all cylinders in $\cC$ persist at $(X', \omega')$. It may be that the heights of the cylinders in $\cC$ at $(X, \omega)$ and $(X', \omega')$ are not equal. Worse yet, it is possible that at $(X', \omega')$ there are cylinders $\cM$-parallel to those in $\cC$ but not contained in $\cC$.  Either of these possibilities show that the standard twist in an equivalence class of $\cM$--parallel cylinders is not locally constant, even when the cylinders persist. 
\end{war}

\begin{lem}\label{L:rational}
Suppose that $(X, \omega)\in \cM$. Let $\cC$ be the collection of all cylinders in $(X, \omega)$ in some direction, so in particular all cylinders in $\cC$ are parallel. Let $\cC=\{C_1, \ldots, C_k\}$ and suppose that $C_i$ has circumference $c_i$, modulus $m_i$, height $h_i$, and core curve $\alpha_i$. Then the subspace of 
$$\left\{\sum t_i c_i I_\Sigma(\alpha_i): (t_1, \ldots, t_k)\in \bC^k\right\}$$
that is contained in $T(\cM)$ is defined by linear equations on the variables $t_i$ with coefficients in $\bQ$ that are satisfied when all $t_i=m_i$. 
\end{lem}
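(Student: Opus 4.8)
The plan is to reduce the statement to a rationality property of the tangent space $T(\cM)$ in period coordinates, using the $GL(2,\bR)$ action and the fact that $\cM$ is defined by linear equations with real coefficients, but in fact by equations defined over the field of definition $\bk(\cM)\subset\bR$. First I would normalize so that the direction of $\cC$ is horizontal; then the core curves $\alpha_i\in H_1(X-\Sigma,\bZ)$ give classes $I_\Sigma(\alpha_i)\in H^1(X,\Sigma,\bZ)$, and the map $(t_1,\ldots,t_k)\mapsto \sum t_i c_i I_\Sigma(\alpha_i)$ is a complex-linear injection $\bC^k\hookrightarrow H^1(X,\Sigma,\bC)$ (injective because the $I_\Sigma(\alpha_i)$ are linearly independent, the cylinders being disjoint with non-homologous-to-each-other core curves in the complement of $\Sigma$ — or, if some coincide, one works with the independent ones and the statement is unaffected). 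The preimage of $T(\cM)$ under this injection is a complex-linear subspace of $\bC^k$ cut out by linear equations; the content of the lemma is that these equations can be taken with rational coefficients, and that $(m_1,\ldots,m_k)$ is a solution.

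That $(t_i)=(m_i)$ lies in $T(\cM)$ is exactly the Cylinder Deformation Theorem (Theorem \ref{T:CDT}): $\sum m_i c_i I_\Sigma(\alpha_i)=\sum h_i I_\Sigma(\alpha_i)=u^{\cC}$ up to the unit-modulus factor $e^{i\theta}$ (which we have normalized to $1$), and this is the standard twist, which stays in $\cM$. So the real work is the rationality of the defining equations. Here is the mechanism I would use: the horocycle flow $u_s$ and the geodesic flow $g_t$ act on $T(\cM)$; more usefully, the real and imaginary parts $\Re$ and $\Im$ of a cohomology class, and the complex structure, interact with the $GL(2,\bR)$ action so that $T(\cM)$ is a $GL(2,\bR)$-invariant (in particular $\Re$- and $\Im$-compatible) real subspace. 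Concretely, $T(\cM)\otimes\bC$ decomposes and the key input is the semisimplicity/rationality results: by \cite[Theorem 5.1]{Wfield} (already invoked in the proof of the corollary above), $T_{(X,\omega)}(\cM)$ is defined over $\bk(\cM)$, and after intersecting with the $\bQ$-subspace $\span_\bQ I_\Sigma(\alpha_i)\subset H^1(X,\Sigma,\bQ)$ — wait, that is not quite a $\bQ$-subspace of the fiber since the $c_i$ are not rational — one instead observes that the \emph{coordinate change} $t_i\mapsto c_i t_i$ converts the lattice $\bigoplus \bZ I_\Sigma(\alpha_i)$ into the relevant integral structure. The equations defining $T(\cM)$ in a period-coordinate chart adapted to a horizontally periodic surface, restricted to the "cylinder variables," become equations in the $t_i$; since the saddle connection decomposition of the $\alpha_i$ is through integral combinations, and since by the Cylinder Deformation Theorem plus its partial converse (Theorem \ref{T:twists}) the twist space is spanned by $u^{\cC}$ plus purely relative classes, one gets that the subspace is rational.

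Let me restate the cleanest route, which I expect to be the intended one. Since $\cM$ is $GL(2,\bR)$-invariant and defined by real-linear equations, the subspace $W=\{\sum t_i c_i I_\Sigma(\alpha_i)\}\cap T(\cM)$ of the $\bC^k$ in question is itself $\bR$-linear in $(\Re t_i,\Im t_i)$ and, because the horocycle flow $u_s^{\cC}$ (applied to all of $\cC$) acts on the $\bC^k$-model by $t_i\mapsto t_i + s m_i$ and preserves both $\cM$ and the subspace $\{\sum t_i c_i I_\Sigma(\alpha_i)\}$, the vector $(m_i)$ lies in $W$ and $W$ is invariant under translation by $(m_i)$ — but this alone gives containment of one line, not rationality. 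For rationality I would invoke that $\cM$, being cut out near $(X,\omega)$ by equations in $\bk(\cM)\subset\bR$, and using that $H^1(X,\Sigma,\bQ)$ has a $\bQ$-structure in which $I_\Sigma(\alpha_i)$ are rational vectors, the linear map sending $(t_i)$ to $\sum t_i I_\Sigma(\alpha_i)$ is $\bQ$-rational; composing with the \emph{diagonal} substitution $t_i\rightsquigarrow c_i t_i$ is not rational, but the point is that the equations defining $T(\cM)$ that involve \emph{only} the $\int_{\alpha_i}\omega = c_i$ and the heights — i.e., the equations among the $\{\int_{\alpha_i}\omega\}$ together with their "dual" twist coordinates — are, by \cite[Theorem 5.1]{Wfield} and the structure of the standard twist, exactly the equations stating that certain $\bQ$-linear combinations of the $m_i$ vanish whenever the corresponding $\bQ$-linear combinations of the $c_i$ do. I would therefore: (1) reduce to a horizontally periodic $(X,\omega)$ by the Smillie–Weiss theorem (Theorem \ref{T:SW}) applied in the $GL(2,\bR)$-orbit closure, wait — actually $(X,\omega)$ already has all cylinders in one direction so it is periodic in that direction; (2) use that in period coordinates the cylinder-twist deformations span the relevant coordinate directions and that $T(\cM)$'s defining equations are rational by \cite[Theorem 5.1]{Wfield}; (3) track the $c_i$ through the change of variables to land on the stated form "rational equations in $t_i$, satisfied at $t_i=m_i$."

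\textbf{Main obstacle.} The delicate point is keeping the rationality honest through the substitution $t_i\mapsto c_i t_i$: the circumferences $c_i$ need not be rationally related, so one must argue that the defining equations of $T(\cM)$, when expressed in the $t_i$-variables, nonetheless have rational coefficients. This is where \cite[Theorem 5.1]{Wfield} — which says $T(\cM)$ is defined over $\bk(\cM)$ and, crucially, that rational cohomology classes in $T(\cM)$ span a $\bk(\cM)$-form — must be combined with the observation that $\bigoplus_i I_\Sigma(\alpha_i)$ is a \emph{rational} collection of vectors and that the standard twist $\sum h_i I_\Sigma(\alpha_i)\in T(\cM)$ has the $h_i=c_im_i$; the $\bQ$-linear relations among the coordinates of vectors of the form $\sum t_i c_i I_\Sigma(\alpha_i)$ that force membership in $T(\cM)$ then pull back to $\bQ$-linear relations in the $t_i$ precisely because both the ambient rational structure and $T(\cM)\cap(\text{rational span})$ are defined over $\bQ$ after clearing the common real scalars $c_i$. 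I expect the bookkeeping to require the partial converse to the Cylinder Deformation Theorem (Theorem \ref{T:twists}) to identify the twist space as (standard twist) $\oplus$ (purely relative part), since the purely relative classes are manifestly rational in these coordinates, and that is what pins down the rational form of the equations.
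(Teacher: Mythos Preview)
Your proposal misses the key idea and, as written, does not establish the $\bQ$-rationality claimed. The paper's proof is a one-paragraph torus argument: the map
\[
(s_1,\ldots,s_k)\longmapsto u_{s_1}^{C_1}\cdots u_{s_k}^{C_k}(X,\omega)
\]
descends to an immersion of a $k$-torus into the stratum, because each $u_s^{C_i}$ is periodic. In the $t_i$-coordinates of the lemma the period lattice is exactly $\bZ^k$ (a full Dehn twist in $C_i$ adds $c_i I_\Sigma(\alpha_i)$, i.e.\ $t_i\mapsto t_i+1$). The set of twists that land in $\cM$ is then a closed subgroup of $\bR^k/\bZ^k$, and closed subgroups of the standard torus are cut out by $\bQ$-linear equations. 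The ``satisfied at $t_i=m_i$'' clause is just the Cylinder Deformation Theorem, which you correctly identified. The paper cites \cite[Corollary~3.4]{Wcyl} for this standard argument.

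Your route through the field of definition cannot yield $\bQ$-coefficients. Knowing $T(\cM)$ is defined over $\bk(\cM)$ gives, at best, equations $\sum_j a_j\, (c_j t_j)=0$ with $a_j\in\bk(\cM)$; in the $t_j$-variables the coefficients are $a_j c_j$, and there is no reason these products are rational (the $c_j$ are arbitrary real circumferences). You noticed this yourself (``the delicate point is keeping the rationality honest through the substitution $t_i\mapsto c_i t_i$'') but the proposed fix via Theorem~\ref{T:twists} does not help: that theorem controls the image in absolute cohomology, not the $\bQ$-structure of the twist space. The actual source of rationality is not arithmetic at all---it is the dynamical fact that the Dehn-twist lattice becomes $\bZ^k$ precisely in the $t_i$-scaling, so the intersection with $\cM$ is a subtorus. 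You briefly gesture at this (``converts the lattice\ldots into the relevant integral structure'') but then abandon it for the field-of-definition line, which is a dead end here.
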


In this lemma, and for the  remainder of the paper, we choose to parameterize cylinder deformations as linear combinations of $c_i I_\Sigma(\alpha_i)$ rather than $ I_\Sigma(\alpha_i)$. Without this scaling of the coordinates, the rationality assertion (coefficients in $\bQ$) of this lemma would be false. 

\begin{proof}
This follows from a standard argument. Namely, the collection of all surfaces obtained by twisting the cylinders arbitrary amounts forms a torus of dimension $k$ in the stratum. All twists that stay in $\cM$ give rise to a subtorus, which must be defined by rational equations in the coordinates. Since the $t_i=e^{i\theta} m_i$ is the standard twist and stays in $\cM$ by the Cylinder Deformation Theorem, all these rational equations must be satisfied at $t_i=m_i$. See  \cite[Corollary 3.4]{Wcyl} for details.
\end{proof}

\begin{thm}\label{T:twists}
The twist space of an equivalence class of $\cM$-parallel cylinders is contained in the span of the standard twist and $\ker(p)\cap T(\cM)$. 
\end{thm}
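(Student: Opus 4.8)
The plan is to reduce first to real cylinder deformations, then to surfaces periodic in the cylinder direction, and finally to run a commensurability argument in the spirit of the Veech dichotomy. After rotating by an element of $SO(2)\subset GL(2,\bR)$ (which preserves $\cM$) we may assume the cylinders of $\cC$ are horizontal, so that the standard twist is the \emph{real} class $u^\cC=\sum_i h_i I_\Sigma(\alpha_i)$. Each of the subspaces $\span_\bC I_\Sigma(\alpha_i)$, $T(\cM)$, and $\ker(p)$ is invariant under complex conjugation: the first because the $I_\Sigma(\alpha_i)$ are integral classes, the second because affine invariant submanifolds are cut out by linear equations with real coefficients, and the third because $p$ is a morphism of $\bQ$-vector bundles. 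Hence the twist space and the right-hand side $\span_\bC(u^\cC)+\bigl(\ker(p)\cap T(\cM)\bigr)$ are both complexifications of their real points, and it suffices to prove the inclusion on real classes. Concretely: given $v=\sum_i a_i I_\Sigma(\alpha_i)\in T(\cM)$ with $a_i\in\bR$, I must show that $p(v)=\sum_i a_i I(\alpha_i)$ is a real multiple of $p(u^\cC)=\sum_i h_i I(\alpha_i)$ in $H^1(X,\bR)$; equivalently, that $p$ of the twist space is at most one dimensional (it contains $p(u^\cC)$ by the Cylinder Deformation Theorem, Theorem \ref{T:CDT}).

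Next I would bring in the twist torus. The classes $c_i I_\Sigma(\alpha_i)$ are the velocities of the commuting shear flows of the individual cylinders of $\cC$, and together they generate an action of a torus $\mathbb{T}$ on $(X,\omega)$ inside the stratum, under which the heights, circumferences, and moduli of the $C_i$ are constant. By the argument of Lemma \ref{L:rational}, the deformations in this family that remain in $\cM$ form a $\bQ$-rational subspace $W$ of $\span_\bR(c_i I_\Sigma(\alpha_i))$ (in the normalized coordinates) which contains the moduli vector, and $W$ sweeps out a subtorus $\mathbb{T}_W\!\cdot(X,\omega)\subseteq\cM$ passing through $(X,\omega)$ in the direction of the standard twist. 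Thus I have a genuine torus of surfaces in $\cM$, all sharing the cylinders of $\cC$, through which both the standard twist and the hypothetical exotic twist $v$ pass, and it is enough to bound $\dim_\bC p(W\otimes\bC)$ by $1$.

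Now I would pass to a horizontally periodic surface and conclude. Applying Theorem \ref{T:SW} (Smillie--Weiss) to the closed, horocycle-invariant set $\overline{\{u_s(X,\omega):s\in\bR\}}\subseteq\cM$ produces a horizontally periodic surface in it; since the horocycle flow preserves the geometry of the horizontal cylinders, Lemma \ref{L:horrible} lets me follow the cylinders of $\cC$ (with unchanged moduli and still $\cM$-parallel to one another) into a horizontally periodic $(X_0,\omega_0)\in\cM$, carrying a sub-collection $\cC_0$ of its horizontal cylinders corresponding to $\cC$, so that it suffices to prove the statement when $(X,\omega)$ is horizontally periodic. For such a surface the horocycle-flow derivative is exactly $u^{\cD}=\sum h_i I_\Sigma(\alpha_i)$, the standard twist of the \emph{full} horizontal cylinder decomposition $\cD$ (which lies in $T(\cM)$, being a sum of standard twists of $\cM$-parallel classes), and the horocycle orbit of $(X_0,\omega_0)$ is a line in the torus of all horizontal-cylinder twists in the direction given by the horizontal moduli; its closure is a subtorus $\mathbb{T}_\ast\!\cdot(X_0,\omega_0)\subseteq\cM$ whose Lie algebra is the smallest $\bQ$-rational subspace containing the moduli vector, and which is described further by the results of Smillie--Weiss \cite{SW2}. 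Combining this torus structure with the rationality of $W$ from Lemma \ref{L:rational} and a discreteness argument generalizing the proof that the moduli of cylinders in a completely periodic direction of a Veech surface are commensurable \cite{V}: if $p$ of the twist space of $\cC$ were more than one dimensional, one would obtain arbitrarily small deformations staying in $\cM$ that shear the $C_i$ by incommensurable amounts while moving absolute periods in a direction not proportional to $p(u^\cC)$, contradicting the description of horocycle orbit closures of horizontally periodic surfaces in $\cM$. This forces $p$ of the twist space into $\bR\,p(u^\cC)$, as required.

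I expect the main obstacle to be the descent to a horizontally periodic surface. When one takes a limit along the horocycle flow, new horizontal cylinders $\cM$-parallel to those of $\cC$ may be born (cf. Lemma \ref{L:horrible}(3) and the cautionary examples of Section \ref{S:Twist}), so one must check that the equivalence class and the exotic twist $v$ are tracked correctly through the limit, that no information needed for the final commensurability argument is lost, and that the conclusion at $(X_0,\omega_0)$ genuinely transfers back to $(X,\omega)$. In other words, the crux is an upper-semicontinuity statement for $\dim_\bC p(\text{twist space})-1$ along the horocycle orbit, reducing its vanishing to the well-understood horizontally periodic case, together with the precise input from \cite{SW2} needed to pin down the orbit closures there.
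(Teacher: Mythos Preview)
Your setup is sound: the reduction to real classes is correct, and the instinct to pass to a horizontally periodic surface via Smillie--Weiss is exactly what the paper does (through Lemma~\ref{L:Lag}). But the final step of your proposal is not a proof. You write that an exotic twist would ``contradict the description of horocycle orbit closures of horizontally periodic surfaces in $\cM$,'' citing \cite{SW2}, but no such description is available: \cite{SW2} gives only Theorem~\ref{T:SW} (existence of a periodic surface in every closed horocycle-invariant set), not a classification of horocycle orbit closures in an arbitrary affine invariant submanifold. The commensurability argument you allude to works for Veech surfaces because rank~1 forces the horocycle orbit to close up; in higher rank there is no analogous periodicity to exploit, and you have not supplied a substitute. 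So the argument, as written, stops exactly at the hard part.

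The paper's mechanism is completely different and does not use any fine structure of horocycle orbit closures. After passing to a horizontally periodic $(X',\omega')$, Lemma~\ref{L:Lag} (whose proof is in \cite[Section~8]{Wcyl}, using Theorem~\ref{T:SW} repeatedly) arranges that the standard twists $p(u^{\cC_1}),\ldots,p(u^{\cC_r})$ of the equivalence classes of horizontal cylinders form a basis of a \emph{Lagrangian} in $p(T(\cM))$, with the hypothetical non-collinear twists $p(u),p(u')$ supported on $\cC_1$. Since all of $p(u),p(u'),p(u^{\cC_i})$ lie in an isotropic subspace, one may replace $p(u^{\cC_r})$ by $p(u),p(u')$ in the Lagrangian basis; hence $p(u^{\cC_r})$ is a real combination of $p(u),p(u')$ and $p(u^{\cC_i}),\,i<r$. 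Dually, a positive combination of core curves of $\cC_r$ is homologous to a combination of core curves of the $\cC_i,\,i<r$. But linear independence of the $p(u^{\cC_i})$ means one can deform in $\cM$ so that $\cC_r$ ceases to be horizontal while the $\cC_i,\,i<r$ remain horizontal, contradicting that homology relation. The symplectic (Lagrangian) structure, which your proposal does not invoke, is what makes the argument go through.
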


The proof will be by contradiction, using the following lemma. 

\begin{lem}\label{L:Lag}
Suppose that $(X, \omega)\in\cM$ has an equivalence class of $\cM$-parallel horizontal cylinders that supports two twists $u, u'$ such that $p(u)$ and $p(u')$ are not collinear. 

Then there is an $(X', \omega')\in \cM$ with the same property, such  that $(X', \omega')$ is horizontally periodic and has a collection $\cC_1, \ldots, \cC_r$ of equivalence classes of $\cM$-parallel horizontal cylinders  such that $p(u^{\cC_i}), i=1, \ldots, r$ is a basis of a Lagrangian in $p(T(\cM))$.  We may also assume that  $u$ and $u'$ are supported on $\cC_1$.
\end{lem}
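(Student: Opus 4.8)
The goal is to upgrade a single bad configuration — an equivalence class $\cC$ of $\cM$-parallel horizontal cylinders on some $(X,\omega)\in\cM$ carrying two twists $u,u'$ with $p(u),p(u')$ non-collinear — to a horizontally periodic surface with a full Lagrangian-spanning collection of cylinder classes, still containing the bad configuration on $\cC_1$. The natural tool is the Smillie--Weiss theorem (Theorem \ref{T:SW}): the horocycle orbit closure of $(X,\omega)$ inside $\cM$ is closed and horocycle-invariant, hence contains a horizontally periodic surface $(X',\omega')$. The point is that the bad configuration, and more importantly its \emph{obstruction to collinearity}, is a property witnessed by cohomology classes in $T(\cM)$, which is locally constant, so it survives passing to the limit.

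First I would set up the cohomological bookkeeping. Let $\cC_1,\dots,\cC_r$ be the equivalence classes of $\cM$-parallel horizontal cylinders on a horizontally periodic surface; the standard twists $u^{\cC_i}=\sum_{C_j\in\cC_i}h_j I_\Sigma(\alpha_j)$ lie in $T(\cM)$ by the Cylinder Deformation Theorem, and their images $p(u^{\cC_i})=\sum h_j I(\alpha_j)\in p(T(\cM))$ are supported on homologically disjoint curves — the core curves of distinct equivalence classes on a single horizontally periodic surface have zero algebraic intersection with one another (they are all horizontal), so the symplectic form vanishes on $\span\{p(u^{\cC_i})\}$; i.e. this span is isotropic in $H^1(X,\bC)$ with the cup product. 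Hence $r\le\frac12\dim p(T(\cM))$, with equality exactly when we span a Lagrangian of $p(T(\cM))$. So the content of the lemma is to find a horizontally periodic surface \emph{attaining} this maximum while still carrying the non-collinear pair on $\cC_1$.

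Here is the argument I would run. Among all $(X',\omega')\in\cM$ that are horizontally periodic and possess an equivalence class of $\cM$-parallel horizontal cylinders supporting twists $u,u'$ with $p(u),p(u')$ non-collinear, choose one maximizing $r$, the number of equivalence classes of horizontal cylinders — equivalently maximizing $\dim\span\{p(u^{\cC_i})\}$. That this set is nonempty is exactly Smillie--Weiss applied to the horocycle orbit closure of the given $(X,\omega)$: one must check that ``supports a non-collinear pair of twists on one of its horizontal equivalence classes'' passes to horocycle orbit closures, which follows because along a horocycle-flow path the horizontal cylinders persist (their heights may only grow), the relevant $I_\Sigma(\alpha)$ classes vary continuously by parallel transport, and $T(\cM)$ is locally constant, so non-collinearity of $p(u),p(u')$ is an open-and-closed condition preserved under limits of horizontally periodic surfaces — here Lemma \ref{L:horrible} (or rather its non-degenerating analogue for horocycle limits, cf.\ the persistence of cylinders used throughout Section \ref{S:Twist}) controls what happens to cylinders in the limit. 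Relabel so that the class carrying $u,u'$ is $\cC_1$. Now I claim $\{p(u^{\cC_i})\}_{i=1}^r$ spans a Lagrangian in $p(T(\cM))$. It is isotropic by the intersection-number computation above, so suppose for contradiction it is not maximal: there is $v\in p(T(\cM))$ with $v$ symplectically orthogonal to every $p(u^{\cC_i})$ but $v\notin\span\{p(u^{\cC_i})\}$. The strategy is to use such a $v$, together with the partial converse to the Cylinder Deformation Theorem (Theorem \ref{T:twists}, whose proof this lemma feeds — so I must be careful not to invoke \ref{T:twists} circularly; instead I use Lemma \ref{L:rational} and the Smillie--Weiss/periodicity machinery directly) to produce a surface in $\cM$, still horizontally periodic and still carrying the bad pair on (the persistence of) $\cC_1$, but with strictly more horizontal equivalence classes — contradicting maximality of $r$. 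Concretely: $v$ being symplectically orthogonal to all $p(u^{\cC_i})$ means $v$ is a sum of absolute classes that "pairs trivially" with the existing horizontal core curves, which forces $v$ to be (cohomologically) supported transverse to the horizontal direction; rotating/deforming to realize $v$ as a new cylinder deformation in a new horizontal direction and then applying Smillie--Weiss again to re-horizontalize produces the extra equivalence class while, by the same persistence/local-constancy argument, retaining the non-collinear pair on the class descending from $\cC_1$.

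\textbf{The main obstacle.} The delicate point — and the step I would spend the most care on — is the last one: manufacturing the \emph{extra} equivalence class $\cC_{r+1}$ from an orthogonal complement vector $v$ while simultaneously \emph{preserving} the non-collinear twist pair on $\cC_1$, all within $\cM$. Both the "find horizontally periodic" and "preserve the bad pair" requirements want to be handled by Smillie--Weiss plus local constancy of $T(\cM)$, but realizing $v$ as an actual geometric cylinder (rather than an abstract cohomology class) requires knowing $v$ is, up to the $\ker(p)$-ambiguity, a genuine cylinder deformation in some direction — this is where one needs the structural input about cylinders on horizontally periodic surfaces (every horizontally periodic surface in a direction forcing $v$ has cylinders whose twists generate the relevant part of $p(T(\cM))$), and where the interplay with Lemma \ref{L:rational} and the results of Smillie--Weiss genuinely does the work. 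I expect the clean way to phrase it is an explicit downward induction on the codimension of $\span\{p(u^{\cC_i})\}$ inside its symplectic orthogonal within $p(T(\cM))$, with the base case being the Lagrangian condition.
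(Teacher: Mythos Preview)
Your outline is correct and matches the paper's approach. The paper's own ``proof'' is in fact just a pointer: it says the argument is non-trivial, uses Smillie--Weiss (Theorem~\ref{T:SW}) repeatedly, and that the exact argument appears in \cite[Section~8]{Wcyl}. What you have sketched --- pass to a horizontally periodic surface via Smillie--Weiss, observe that the span of the $p(u^{\cC_i})$ is isotropic in $p(T(\cM))$, maximize, and if not yet Lagrangian use a symplectically orthogonal $v\in p(T(\cM))$ outside the span to manufacture an additional class --- is exactly the shape of that argument.

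A few remarks on the points you flagged. First, your caution about circularity with Theorem~\ref{T:twists} is well placed: the lemma feeds the proof of \ref{T:twists}, so you may only use the Cylinder Deformation Theorem, Lemma~\ref{L:rational}, and Smillie--Weiss. Second, the ``preserve the bad pair'' step is slightly less frightening than you suggest: $u,u'$ are fixed elements of the flat bundle $T(\cM)$, so they parallel-transport for free along any path in $\cM$; what you must check is that the cylinders of $\cC_1$ persist through the Smillie--Weiss limit (so that $u,u'$ remain supported on a single equivalence class), and this is handled in \cite{Wcyl} by the fact that horizontal cylinders can only persist or enlarge under horocycle limits. Third, your claimed equivalence ``maximizing $r$'' $\Leftrightarrow$ ``maximizing $\dim\span\{p(u^{\cC_i})\}$'' is not automatic and is in fact part of what \cite[Section~8]{Wcyl} establishes; it is safer to maximize the dimension of the span directly. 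The genuinely delicate step, as you correctly identify, is the iteration: showing that deforming in the direction of a lift of $v$ (which keeps the existing horizontal cylinders horizontal since $v$ pairs trivially with their cores) and then reapplying Smillie--Weiss yields a horizontally periodic surface whose standard-twist span is strictly larger. That is where the repeated use of Theorem~\ref{T:SW} earns its keep.
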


\ann{A: Removed a sentence.}
Here, by definition, $r$ is the rank of $\cM$.

\begin{proof}
This follows by a non-trivial argument that uses Theorem \ref{T:SW} repeatedly. The exact argument required appears in \cite[Section 8]{Wcyl}.
\end{proof}

\begin{proof}[Proof of Proposition.]
Suppose in order to find a contradiction that $(X, \omega)$ has an equivalence class of $\cM$-parallel horizontal cylinders that supports two twists $u, u'$ such that $p(u)$ and $p(u')$ are not collinear. Let $(X', \omega')$ be as in the previous lemma. 

Note that $p(u), p(u')$ together with  $p(u^{\cC_i}), i=1, \ldots, r$ span an isotropic subspace. Since $p(u^{\cC_i}), i=1, \ldots, r$ are a basis for a Lagrangian, $p(u), p(u')$ can be extended to a basis for this Lagrangian by adding some of the $p(u^{\cC_i})$. Thus, without loss of generality, we may assume that  $p(u^{\cC_i}), i=1, \ldots, r-1$ together with $u, u'$  span this Lagrangian. 

This would imply that $p(u^{\cC_r})$ can be written as a linear combination of $p(u), p(u')$  and $p(u^{\cC_i}), i=1, \ldots, r-1$ with \emph{real} coefficients. (The coefficients can be assumed to be real because all these vectors are contained in the real vector space $H^1(X, \Sigma, \bR)$.)

By definition, each $p(u^{\cC_i})$ is dual to a real positive linear combination of core curves of cylinders in $\cC_i$. Similarly, both $u$ and $u'$ are dual to a real linear combination of core curves of cylinders in $\cC_1$. Hence, we conclude that some positive linear combination of core curves of cylinders in $\cC_r$ is homologous to a real linear combination of core curves of the $\cC_i, i=1, \ldots, r-1$. 

However, the fact that the $p(u^{\cC_i}), i=1, \ldots, r$ are linearly independent implies that the $\cC_r$ can be made not horizontal on a nearby surface $(X'', \omega'')\in \cM$ while keeping the $\cC_i, i=1, \ldots, r-1$ horizontal.  This contradicts the homology relation in the previous paragraph. 
 \end{proof}

\section{Cylinder finiteness}\label{S:CF}

This section is the technical heart of this paper. Its\ann{A: Typo fixed.} purpose is to show the following. 

\begin{thm}[Cylinder Finiteness Theorem]\label{T:CFT}
Fix an affine invariant submanifold $\cM$.
There are two {finite} sets $S_1\subset \bk(\cM)$ and $S_2\subset \bQ$, such that if $(X,\omega)\in \cM$, and $\cC$ is an equivalence class of $\cM$-parallel cylinders on $(X, \omega)$, then the ratio of circumferences of any two cylinders  in $\cC$ is in $S_1$. 

Furthermore, if $C_i$ has circumference $c_i$ and core curve $\alpha_i$, then the twist space for $\cC$ at $(X,\omega)$ is a subspace of 
$$\left\{\sum t_i c_i I_\Sigma(\alpha_i)\right\}$$ 
defined by linear equations on the $t_i$ with coefficients in $S_2$ that are satisfied when all $t_i=m_i$ are the moduli of the cylinders $C_i$. 
\end{thm}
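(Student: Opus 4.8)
The plan is to reduce to a compact family of surfaces using the $GL(2,\bR)$ action and recurrence of the horocycle flow, then quote the local structure theory (Lemma \ref{L:rational} and Theorem \ref{T:twists}) uniformly over that compact family. First I would observe that both assertions — finiteness of the set $S_1$ of circumference ratios, and the existence of the finite coefficient set $S_2$ — are invariant under the $GL(2,\bR)$ action on $\cM$: rescaling, rotating, and shearing all cylinders in a class simultaneously multiplies all circumferences by a common (complex) factor and does not change the rational equations cutting out the twist space inside $\{\sum t_i c_i I_\Sigma(\alpha_i)\}$. Hence it suffices to bound these data for $(X,\omega)$ ranging over a set of representatives, one per $GL(2,\bR)$-orbit, that we may take to lie in a fixed compact subset of the moduli space of unit-area surfaces in $\cM$. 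The key input here is Minsky--Weiss recurrence \cite{MinW}: given a horizontal cylinder $C$ in the class $\cC$, applying the geodesic flow $g_t$ shrinks the circumference of $C$; combined with a rotation we can arrange that $C$ has, say, circumference in $[1,2]$, and recurrence of the horocycle orbit of the resulting point lets us further arrange that the surface lies in a prescribed compact set $K\subset \cM^{(1)}$ (unit area). So every equivalence class of $\cM$-parallel cylinders, on every surface in $\cM$, is $GL(2,\bR)$-equivalent to one in which a chosen cylinder has bounded circumference and the underlying surface lies in $K$.

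Next I would extract finiteness from compactness. On a fixed compact set $K$, there is a uniform lower bound on the area and hence, together with the normalized circumference bound on the chosen cylinder $C_1$, a uniform upper bound on the circumferences of \emph{all} cylinders in the class $\cC$: any cylinder $\cM$-parallel to $C_1$ has circumference a bounded multiple of that of $C_1$ because otherwise its modulus times its squared circumference would exceed the total area — more carefully, one uses that a cylinder of large modulus forces the surface near the boundary of moduli space, contradicting membership in $K$, and a cylinder of bounded modulus but huge circumference has huge area. Actually the cleanest route is: on $K$, only finitely many isotopy classes of cylinders with circumference below a fixed bound can occur with modulus bounded below, and any cylinder in $\cC$ which has tiny modulus can still have its circumference controlled because its core curve is homologous (up to the $\cM$-parallel relation) to combinations of the bounded ones — here Theorem \ref{T:twists} (all twists on $\cC$ are, up to $\ker(p)\cap T(\cM)$, scalar multiples of the standard one) is what pins down the circumference ratios: the ratios $c_i/c_j$ are determined by the single cohomology class $p(u^{\cC})$ together with the finite combinatorial data of which curves appear, so compactness of $K$ forces only finitely many values. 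This yields the finite set $S_1$; that $S_1\subset \bk(\cM)$ follows since the $\cM$-parallel relation, hence the ratios, are governed by the linear equations defining $\cM$, which have coefficients in $\bk(\cM)$ \cite{Wfield}.

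For the twist-space statement, by Lemma \ref{L:rational} the subspace of $\{\sum t_i c_i I_\Sigma(\alpha_i)\}$ lying in $T(\cM)$ is cut out by linear equations in the $t_i$ with \emph{rational} coefficients satisfied at $t_i = m_i$; what remains is to see these rational equations can be taken from a single finite set $S_2\subset\bQ$ independent of $(X,\omega)$ and $\cC$. This again follows from the compactness reduction: on $K$ there are only finitely many possible configurations (isotopy classes of the finitely many relevant cylinders, with their intersection pattern and the subspace $T(\cM)\cap H^1(X,\Sigma)$ varying continuously, hence locally constantly in an appropriate sense over a cover of $K$ by period-coordinate charts), so only finitely many subtori of the twist torus arise, and each is defined by finitely many rational equations; take $S_2$ to be the union of all coefficients that appear. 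One subtlety is that the $t_i$ are indexed by cylinders in $\cC$ whose number is a priori unbounded — but the circumference-ratio finiteness just proved bounds the number of distinct "types" of cylinders, and cylinders of the same type contribute in a uniform way, so the equations really live in a bounded number of variables after grouping; I would make this precise by first proving $S_1$ finite and then running the $S_2$ argument in the resulting bounded-dimensional model.

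The main obstacle I expect is controlling cylinders of very small modulus in the class $\cC$ — the "infinite death and rebirth" phenomenon of Section \ref{SS:Mc} shows that along paths in $\cM$ one really does see infinitely many distinct cylinders $\cM$-parallel to a fixed one, with moduli going to zero, so one cannot naively say "finitely many cylinders occur." The resolution must be that although infinitely many such cylinders appear across $\cM$, their circumferences and their contributions to the twist equations take only finitely many values, and the tool forcing this is precisely Theorem \ref{T:twists} together with the partial converse to the Cylinder Deformation Theorem — deformations supported on $\cC$ are rigid enough (scalar multiple of the standard twist modulo purely relative classes) that the circumference data is determined by finitely much information. Making the passage from "modulus small" cylinders to the bounded-modulus ones rigorous, uniformly over the compact set $K$, is where the real work lies, and is presumably why the authors call this section the technical heart of the paper.
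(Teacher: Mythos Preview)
Your overall strategy --- use $GL(2,\bR)$ and Minsky--Weiss recurrence to reduce to a compact set, then extract finiteness --- matches the paper's. But there is a genuine gap in the reduction step that is exactly where the paper does its real work.

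Minsky--Weiss (Theorem \ref{T:MW}) does not say that every horocycle orbit hits a fixed compact set $K$; it says that for each $\e>0$ there is a compact $K_\e$ hit by every horocycle orbit \emph{provided the surface has no horizontal saddle connection shorter than $\e$}. Normalizing one cylinder in $\cC$ to have circumference in $[1,2]$ gives no control whatsoever on the shortest horizontal saddle connection: there could be saddle connections on the boundary of $\cC$, or parallel to $\cC$, of length $10^{-6}$. Then Minsky--Weiss only puts you in $K_{10^{-6}}$, not in a single compact set independent of the original $(X,\omega,\cC)$, and the compactness argument collapses. The paper addresses this with Proposition \ref{P:notinySC}: for any saddle connection $w$ that is $\cM$-parallel to a cylinder, there exists a cylinder in the class of circumference at most $R_{sc}\cdot|w|$ (and of definite area). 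After nudging the surface so that all horizontal saddle connections are $\cM$-parallel to $\cC$, this proposition guarantees that normalizing the \emph{shortest} such cylinder to circumference $1$ forces the shortest horizontal saddle connection to have length $\geq 1/R_{sc}$, so a single $\e=1/R_{sc}$ and a single $K$ work. Proving Proposition \ref{P:notinySC} is itself delicate (it uses Lemmas \ref{L:still disjoint} and \ref{L:disjointcyls}, which in turn rely on Theorem \ref{T:twists} and Smillie--Weiss), and your proposal has no substitute for it.

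A second, related gap is in your circumference-ratio bound. You argue that a cylinder in $\cC$ of large circumference must either have large modulus (escaping $K$) or large area (impossible); but a cylinder of circumference $c$ and modulus $1/c^2$ has area $1$ and can sit on a unit-area surface in $K$ with $c$ arbitrarily large. Your fallback --- ``its core curve is homologous to combinations of the bounded ones via Theorem \ref{T:twists}'' --- is the right instinct, and is essentially what the paper does in Lemma \ref{L:atmostsum} and Proposition \ref{P:Cbound}, but turning that homology relation into a circumference bound requires the combinatorial Lemma \ref{L:hom} (a careful cut-and-reassemble argument on the surface), which you have not supplied. Once Propositions \ref{P:notinySC} and \ref{P:Cbound} are in hand, the rest of your outline (cover $K$ by finitely many sets on which cylinder configurations are constant, read off $S_1$ and $S_2$) is correct and is exactly what the paper does in Lemmas \ref{L:homfin}--\ref{L:cV} and the concluding proof.
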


Besides the statement of Theorem \ref{T:CFT}, no results from this section will be used in the rest of the paper. The strength of the statement is that $S_1$ and $S_2$ are finite. Indeed, it is easy to see that the ratio of two $\cM$-parallel cylinders must be contained in $\bk(\cM)$, see \cite[Section 7]{Wcyl}, and we saw in Lemma  \ref{L:rational} that twist spaces are always defined by  linear equations on the $t_i$ with rational coefficients. 

\begin{cor}\label{C:rational2}
There is a finite set $S_2'\subset \bQ$ depending only on $S_2$ and the maximum size of an equivalence class of $\cM$-parallel cylinders such that the following is true. 

Suppose that $\cC$ is an equivalence class of $\cM$-parallel cylinders on $(X,\omega) \in \cM$, such that the ratio of moduli of cylinders in $\cC$ cannot be changed by deforming the surface in $\cM$.  Then the ratio of moduli of cylinders in $\cC$ is in $S_2'$.
\end{cor}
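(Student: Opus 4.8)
The plan is to deduce the statement from the Cylinder Finiteness Theorem (Theorem \ref{T:CFT}) by combining it with the observation that there are only finitely many rational linear subspaces of $\bC^k$ cut out by a linear system with coefficients in a fixed finite set. Fix an equivalence class $\cC=\{C_1,\dots,C_k\}$ of $\cM$-parallel cylinders on a surface $(X,\omega)\in\cM$, where $k$ is at most $N$, the maximum size of an equivalence class of $\cM$-parallel cylinders, and say $C_i$ has circumference $c_i$, modulus $m_i$, and core curve $\alpha_i$; rotating the surface (which preserves $\cM$, the class $\cC$, and all moduli), we may assume the $C_i$ are horizontal. By Theorem \ref{T:CFT} the twist space of $\cC$ at $(X,\omega)$ equals $\{\sum_i t_i c_i I_\Sigma(\alpha_i):(t_1,\dots,t_k)\in W\}$, where $W\subseteq\bC^k$ is the solution set of a linear system with coefficients in the fixed finite set $S_2\subset\bQ$, and the vector $(m_1,\dots,m_k)$ lies in $W$. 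In particular $W$ is defined over $\bQ$, and for each $k\le N$ only finitely many subspaces of $\bC^k$ arise in this way.

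The next step is to translate the hypothesis into a condition on $W$. Since $\cM$ is cut out by real linear equations in period coordinates, the tangent space $T(\cM)$ is locally constant, so for any $(s_1,\dots,s_k)$ in $W_\bR:=W\cap\bR^k$ with sufficiently small entries the deformation $(X,\omega)+\sum_i i\,s_i c_i I_\Sigma(\alpha_i)$ again lies in $\cM$ — its coordinate vector $i(s_1,\dots,s_k)$ lies in $W$ because $W$ is a complex subspace, so this deformation lies in the twist space and hence in $T(\cM)$. Moreover each $C_i$ persists on the deformed surface, and since this deformation changes the imaginary part of the period of a cross curve of $C_i$ by $s_i c_i$ while leaving $\int_{\alpha_i}\omega$, hence the circumference of $C_i$, unchanged, the modulus of $C_i$ on the deformed surface is exactly $m_i+s_i$. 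If, as in the hypothesis, no deformation within $\cM$ changes the ratio of moduli of the cylinders in $\cC$, then $(m_1+s_1:\dots:m_k+s_k)=(m_1:\dots:m_k)$ for all such $(s_i)$, which forces $W_\bR\subseteq\bR\cdot(m_1,\dots,m_k)$; since $(m_1,\dots,m_k)$ is itself a nonzero element of $W_\bR$, we conclude $W_\bR=\bR\cdot(m_1,\dots,m_k)$ is one-dimensional.

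To finish, I would extract the finitely many possibilities for the ratios. Because $W$ is defined over $\bQ$, so is the real subspace $W_\bR$, so, being one-dimensional, it is spanned by a rational vector $v_W=(v_1^{(W)},\dots,v_k^{(W)})\in\bQ^k$, unique up to scaling; all its coordinates are nonzero, having the same sign as the positive numbers $m_i$. Thus $(m_1,\dots,m_k)$ is a scalar multiple of $v_W$, and every ratio $m_i/m_j$ equals the rational number $v_i^{(W)}/v_j^{(W)}$. Taking $S_2'$ to be the finite collection of all such numbers $v_i^{(W)}/v_j^{(W)}$, as $k$ ranges over $\{1,\dots,N\}$, as $W$ ranges over the finitely many $\bQ$-subspaces of $\bC^k$ cut out by a system with coefficients in $S_2$ and satisfying $\dim_\bR W_\bR=1$, and as $i,j$ range over $\{1,\dots,k\}$, yields a finite subset of $\bQ$ depending only on $S_2$ and $N$ and containing every ratio of moduli of cylinders in an equivalence class satisfying the hypothesis.

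The step I expect to require the most care is the middle one: verifying that the imaginary cylinder stretches really do produce surfaces in $\cM$ on which the cylinders $C_i$ persist with the asserted moduli. This rests on the linearity of $\cM$ in period coordinates together with the explicit description of the twist space, exactly the ingredients used for Lemma \ref{L:rational} and in \cite[Corollary 3.4]{Wcyl}; once this is in place, the remaining arguments are routine linear algebra over $\bQ$.
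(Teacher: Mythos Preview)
Your argument is correct and follows the same route as the paper's proof: use the Cylinder Finiteness Theorem to realize the twist space as the solution set $W\subset\bC^k$ of a linear system with coefficients in the fixed finite set $S_2$, show that the hypothesis forces $W$ to be one-dimensional, and then observe that there are only finitely many such systems in at most $N$ variables, so only finitely many possible ratio vectors. The paper's proof is a two-sentence sketch of exactly this (``the only solution is $t_i=m_i$; there are only finitely many such systems''), whereas you have carefully justified the middle implication via the imaginary cylinder stretch, which the paper leaves implicit.
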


By Theorem \ref{T:twists}, the assumption on $\cC$ is always satisfied\ann{R: typo: always the satified\\A: Corrected.} if $\ker(p)\cap T(\cM)=\{0\}$, and hence Corollary \ref{C:rational} is a special case of Corollary \ref{C:rational2}. A trivial bound for the maximum size of an equivalence class of $\cM$-parallel cylinders is given by the maximum size of a set of parallel cylinders in the stratum.

\begin{proof}[Proof of Corollary.]
The twist space is defined by equations on the $t_i$ with coefficients in $S_2$, and the only solution is $t_i=m_i$. There are only finitely many such systems of equations, and we may take $S_2'$ to be the set of all ratios of $t_i$ for the solutions of all these systems. 
\end{proof}

The strategy of the proof of Theorem \ref{T:CFT} is to use the following corollary of the quantitative recurrence results of Minsky-Weiss \cite{MinW}. For any affine invariant submanifold $\cM$, let $\cM^1$ denote the set of unit area surfaces in $\cM$.

\begin{thm}[Minsky-Weiss]\label{T:MW}
Fix a stratum $\cH$. For every $\e>0$, there is a compact subset $K\subset \cH^1$, such that if $(X, \omega)$ has no horizontal saddle connections of length less than $\e$, then the horocycle flow orbit of $(X,\omega)$ intersects $K$. 
\end{thm}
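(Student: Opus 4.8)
The plan is to obtain this as a consequence of the quantitative nondivergence estimate for the horocycle flow $u_t$ on strata established by Minsky--Weiss \cite{MinW}. Up to the precise form of the exponents, that estimate has the following shape: for a fixed stratum $\cH$ there are constants $C,\alpha>0$ and $\rho_0>0$ such that for every $(X,\omega)\in\cH^1$, every $T>0$, and every $\rho\in(0,\rho_0]$, either $(X,\omega)$ has a horizontal saddle connection of length less than $\rho$, or for every $\delta\in(0,\rho]$ the measure of $\{t\in[0,T]:u_t(X,\omega)\text{ has a saddle connection of length}<\delta\}$ is at most $C(\delta/\rho)^{\alpha}\,T$. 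The key structural point, and essentially the only thing one has to contribute in order to apply the estimate, is that $u_t$ sends the holonomy vector $(x,y)$ of a saddle connection to $(x+ty,y)$; hence a saddle connection remains short for all $t$ exactly when it is horizontal, so the first alternative above is the \emph{only} mechanism by which recurrence of the orbit can fail, and it is precisely what the hypothesis of the theorem excludes.

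Granting the estimate, here is the argument I would run. Fix $\e>0$, put $\rho=\min(\e,\rho_0)$, and choose $\delta\in(0,\rho)$ small enough that $C(\delta/\rho)^{\alpha}<1$. Let $K$ be the set of $q\in\cH^1$ having no saddle connection of length less than $\delta$. Then $K$ is compact: the length of the shortest saddle connection is a proper function on $\cH^1$, since any sequence leaving all compact subsets of $\cH^1$ must have a saddle connection shrinking to zero (the usual ``Mumford compactness'' for strata). Now suppose $(X,\omega)\in\cH^1$ has no horizontal saddle connection of length less than $\e$. Since $\rho\le\e$ it has none of length less than $\rho$, so the first alternative of the estimate fails and the second holds with this $\rho$; applying it with $T=1$ and our $\delta$ shows that $\{t\in[0,1]:u_t(X,\omega)\notin K\}$ has measure at most $C(\delta/\rho)^{\alpha}<1$. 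Hence $u_t(X,\omega)\in K$ for some $t\in[0,1]$, so the horocycle orbit of $(X,\omega)$ meets $K$, as required. (I have taken $(X,\omega)\in\cH^1$, which is the only case in which the conclusion makes sense, since $u_t$ preserves area.)

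The real work — which I would simply cite from \cite{MinW} — is the proof of the nondivergence estimate itself: showing that the relevant area and length functions are $(C,\alpha)$-good along $u_t$-orbits, and then running the Dani--Margulis--style covering argument adapted to moduli space. Beyond that there is no obstacle here: one needs only the observation that the sole obstruction to recurrence is a persistent short saddle connection, which must be horizontal by the action of $u_t$ on periods, together with the standard compactness of the systole sublevel set $K$.
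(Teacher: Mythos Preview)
Your derivation is correct, but there is nothing to compare it to: the paper does not prove this theorem. It is stated as a black-box corollary of the quantitative recurrence results of Minsky--Weiss \cite{MinW} and simply cited. Your argument is exactly the standard way one extracts this statement from the Minsky--Weiss nondivergence estimate, and your observations (that horizontal saddle connections are the only obstruction because $u_t$ acts by $(x,y)\mapsto(x+ty,y)$ on holonomy, and that the sublevel set of the systole is compact) are precisely what is needed.
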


We wish to use this result to show that any equivalence class of $\cM$-parallel cylinders can be ``moved" to a fixed compact set in a way that the smallest cylinder becomes circumference 1, and then use a compactness argument to obtain finiteness. Both parts of this strategy encounter major technical difficulties. These are handled in the next two subsections, after which Theorem \ref{T:CFT} follows easily using Theorem \ref{T:MW}.

We will make repeated use of the following matrices: 
$$u_t=\left(\begin{array}{cc} 1 & t\\0 &1\end{array}\right) \quad\quad\text{and}\quad\quad 
g_t=\left(\begin{array}{cc} e^t & 0\\0 &e^{-t}\end{array}\right).$$

\subsection{Saddle connections parallel to cylinders.} 

Recall that a cylinder is said to be $\cM$-parallel to a saddle connection if, locally on $\cM$, the holonomy of the saddle connection is a fixed real multiple of the holonomy of the core curve of the cylinder. 

\begin{prop}\label{P:notinySC}
Fix an affine invariant submanifold $\cM$. Then there is a positive constant $R_{sc}$ such that the follow holds. For every $(X,\omega)\in \cM$, and for every saddle connection $w$ on $(X,\omega)$, if there is a cylinder $\cM$-parallel to $w$, then there is cylinder $\cM$-parallel to $w$ whose circumference is at most $R_{sc}$ times the length of $w$ and whose area is at least $1/R_{sc}$ times the sum of the areas of the cylinders $\cM$-parallel to $w$. 
\end{prop}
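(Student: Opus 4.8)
\textbf{Proof proposal for Proposition \ref{P:notinySC}.}

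The plan is to reduce to a local-finiteness statement on $\cM$ itself, exploiting that ``being $\cM$-parallel to a given saddle connection'' and the relevant ratios of circumferences and areas are \emph{locally constant} data on $\cM$ (up to the finiteness coming from the Cylinder Finiteness Theorem, Theorem \ref{T:CFT}). First I would use the $GL(2,\bR)$ action to normalize: replacing $(X,\omega)$ by $g(X,\omega)$ changes all circumferences and areas of $\cM$-parallel objects by the same comparable factors, and the conclusion is $GL(2,\bR)$-invariant, so we may rotate so that $w$ is horizontal and then apply $g_t$ so that $w$ has length exactly $1$. Now the claim becomes: there is a uniform $R_{sc}$ such that whenever $(X,\omega)\in\cM$ has a unit-length horizontal saddle connection $w$ which is $\cM$-parallel to at least one cylinder, there is a cylinder $\cM$-parallel to $w$ of circumference $\le R_{sc}$ carrying at least a $1/R_{sc}$-fraction of the total area of all cylinders $\cM$-parallel to $w$.

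Next I would try to further normalize using horocycle flow and Theorem \ref{T:MW} (Minsky-Weiss): the obstruction to pushing $(X,\omega)$ into a fixed compact set $K\subset\cH^1$ is the presence of \emph{short horizontal saddle connections}, but horizontal saddle connections shorter than $w$ are themselves controlled — if such a short saddle connection is $\cM$-parallel to $w$ then its holonomy is a bounded multiple (coefficients in the finite set $\bk(\cM)$, cf.\ Theorem \ref{T:CFT}) of a core curve holonomy, and if it is \emph{not} $\cM$-parallel to $w$ we can apply $g_t$ once more in a direction that shrinks $w$ only by a controlled amount while expanding the problematic short saddle connections. The technical heart is exactly this juggling: one wants, after finitely many applications of $u_t$ and $g_t$ controlled by $S_1, S_2$ and the geometry of the stratum, to land in a fixed compact set $K$ while keeping the length of $w$ pinched between two positive constants. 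On $K$, there are only finitely many isotopy classes of saddle connections and cylinders of bounded length, injectivity radius is bounded below, and total area is $1$; so a compactness/continuity argument over the finitely many affine invariant submanifolds that can arise (here one uses that $\cM$ is closed and that the data ``$\cM$-parallel to $w$'' varies locally constantly, invoking Proposition \ref{P:Vn}-style local constancy arguments together with the equivalence-class structure from \cite[Definition 4.6]{Wcyl}) yields a uniform $R_{sc}$: a cylinder $\cM$-parallel to $w$ of bounded circumference exists and, since the total area of $\cM$-parallel cylinders is at most $1$ and there are boundedly many of them with circumference ratios in the finite set $S_1$, one of them must carry a definite fraction of that total area.

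The step I expect to be the main obstacle is the simultaneous normalization: pushing into the compact set $K$ \emph{without} destroying control on the length of $w$, when there may be many short horizontal saddle connections, only some of which are $\cM$-parallel to $w$. The worry is that shrinking the non-parallel short ones (to apply Theorem \ref{T:MW}) drags the length of $w$ toward $0$ or $\infty$; handling this requires a careful choice of the $g_t$-direction and an induction on the number of ``problematic'' short saddle connections, using the finiteness of ratios from Theorem \ref{T:CFT} at each stage to ensure the process terminates with uniform bounds. A secondary subtlety is that a cylinder $\cM$-parallel to $w$ might degenerate (height $\to 0$) under these deformations, so one must track the area of the \emph{union} of $\cM$-parallel cylinders rather than individual ones, and argue that at least one survives with definite modulus in $K$; this is where the area lower bound in the statement comes from, and it is essential that we only ask for a $1/R_{sc}$ fraction rather than all of the area.
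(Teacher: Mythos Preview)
Your proposal has a fatal circularity: you invoke Theorem \ref{T:CFT} (the Cylinder Finiteness Theorem) several times, but in the paper Proposition \ref{P:notinySC} is one of the main ingredients in the \emph{proof} of Theorem \ref{T:CFT} (via Proposition \ref{P:Cbound} and the final argument in \S\ref{S:CF}). So you cannot use the finite set $S_1$ of circumference ratios, nor any finiteness of $\cM$-parallel data, at this stage.

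Setting the circularity aside, the strategy also has a genuine gap at the compactness step. Your plan is to land in a fixed compact $K$ and then argue that ``the data `$\cM$-parallel to $w$' varies locally constantly,'' so a continuity argument gives a uniform bound. But the set of cylinders $\cM$-parallel to $w$ is \emph{not} upper semicontinuous: small deformations can create new $\cM$-parallel cylinders (this is exactly the ``major complicating difficulty'' flagged before Example 4.2 and in the Warning after the definition of twist space). So even on a compact set one cannot simply pass to a limit and read off a uniform $R_{sc}$. Your proposed fix for short horizontal saddle connections is also problematic: applying $g_t$ cannot separate the length of $w$ from the lengths of other \emph{horizontal} saddle connections, since $g_t$ scales all horizontal holonomies equally.

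The paper's proof avoids both issues. It argues by contradiction: after deforming slightly so that every horizontal saddle connection is $\cM$-parallel to $w_n$ (removing accidental parallels), it takes $w_n$ to be the shortest horizontal saddle connection, normalizes its length to $1$, and applies the \emph{cylinder stretch} in $\cC_n$ to force the total area of $\cC_n$ to approach $1$. Only then does Minsky--Weiss push into a compact set. The contradiction comes not from a direct compactness bound on $\cC_n$, but from Lemmas \ref{L:still disjoint} and \ref{L:disjointcyls} (both consequences of Theorem \ref{T:twists}, proved earlier): on the limit surface one finds a \emph{different} equivalence class $\cD$ of $\cM$-parallel cylinders disjoint from $w$ and of definite area, which must then overlap $\cC_n'$ (by area) yet cannot be $\cM$-parallel to it (by bounded circumference), contradicting the $\cM$-parallelism of $w_n$ and $\cC_n$. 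The key missing idea in your sketch is precisely this use of an auxiliary disjoint cylinder class to generate the contradiction, rather than a direct bound on the cylinders in $\cC_n$.
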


In the next lemma, we will consider positive linear combinations of parallel consistently oriented saddle connections. Recall that consistently oriented means that the holonomies are positive multiples of each other. The key  property that such linear combinations have in common with individual saddle connections is that if the holonomy does not change under a standard cylinder deformation,  then the saddle connections are disjoint from the cylinders. We consider a saddle connection and a cylinder to be disjoint if the interior of a saddle connection is disjoint from the interior of the cylinder, so for example a saddle connection on the boundary of a cylinder is considered to be disjoint from the cylinder.  
\ann{R: I have a stupid question about the definition of cylinders: do you include the boundary or not? It is important when you build cylinders disjoint from w.\\ A: Clarified.}

Recall that two relative homology classes $\gamma$ and $w$ are called $\cM$-collinear if, for some $c\in \bR$, the equation $\int_\gamma \omega=c\int_w \omega$ is one of the linear equations locally defining $\cM$ in period coordinates. If, as in the next lemma, $\gamma$ is an absolute homology class but $w$ is not, this means that $w$ will behave like an absolute homology class for surfaces in $\cM$. 

\begin{lem}\label{L:still disjoint}
Suppose that $\cC$ is an equivalence class of $\cM$-parallel cylinders on $(X,\omega)\in \cM$.  Let $w$ be a linear combination of saddle connections on $(X,\omega)$ that are disjoint from the cylinders in $\cC$, and assume that $w$ is $\cM$-collinear to an absolute homology class $\gamma$.   

 Then for $(X', \omega')\in \cM$ sufficiently close to $(X,\omega)$, if $\cC'$ is the equivalence class of cylinders $\cM$-parallel to cylinders in $\cC$,  and $w'$ is a positive linear combination of consistently oriented parallel saddle connections on $(X', \omega')$ equal to $w$ in relative homology, then the  cylinders in $\cC'$ are disjoint from $w'$ on $(X', \omega')$. 
\end{lem}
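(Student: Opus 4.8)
\textbf{Proof plan for Lemma \ref{L:still disjoint}.}

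The plan is to reduce the statement to the case of a genuine absolute homology class, where the key fact (a class whose holonomy is locally unchanged under the standard cylinder deformation is represented disjointly from the cylinders) is already available, and to exploit the $\cM$-collinearity hypothesis to transport disjointness across the deformation. First I would fix a small simply connected neighborhood $\cU$ of $(X,\omega)$ on which all cylinders in $\cC$ persist, on which the relevant saddle connections persist, on which the equation $\int_w\omega=c\int_\gamma\omega$ holds identically, and on which $\cC'$ is an honest equivalence class of $\cM$-parallel cylinders; this is possible by the standard local structure of affine invariant submanifolds and by \cite[Lemma 4.7, Definition 4.6]{Wcyl}. On $\cU$ the cylinders in $\cC'$ are exactly the cylinders $\cM$-parallel to those in $\cC$, and for $(X',\omega')$ close enough to $(X,\omega)$ each of these is isotopic to the corresponding cylinder in $\cC$ via a small isotopy, so it suffices to check that, for a well-chosen representative, the interiors of the cylinders in $\cC'$ miss the saddle connections forming $w'$.

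The core of the argument is the following. The standard twist $u^{\cC}$ deforms the surface by an element of $T(\cM)$ supported on $\cC$, i.e. by $e^{i\theta}\sum h_i I_\Sigma(\alpha_i)$ where $\alpha_i$ are the core curves of cylinders in $\cC$. Since the saddle connections making up $w$ are disjoint from the cylinders in $\cC$, their holonomies are unchanged under the path $u_t^{\cC}$; hence the relative homology class $w$ pairs trivially with $u^{\cC}$, and likewise (using the $\cM$-collinearity $w = c\gamma$ in period coordinates restricted to $\cM$, so that $w$ behaves like an absolute class on $\cM$) the pairing of $u^{\cC}$ with the absolute class $\gamma$ is determined by that of $w$ and is also trivial in the relevant sense. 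Now on $(X',\omega')$ the twist space of $\cC'$ contains $u^{\cC'}$, and by Theorem \ref{T:twists} every element of the twist space agrees with a multiple of $u^{\cC'}$ after projecting to $H^1$; so the absolute cohomology class $p(u^{\cC'})$ is, up to scalar, $p(u^{\cC})$, meaning the intersection pattern of $\cC'$ with absolute homology is continuous in the family. Consequently $I(\gamma)$ — and hence $w'$, because $w'=c\gamma$ on $\cM$ — pairs trivially with $p(u^{\cC'})$, i.e. the core curves of $\cC'$ have zero algebraic intersection with $w'$. To upgrade this to \emph{geometric} disjointness I would argue as follows: apply the full twist $u_t^{\cC'}$ for a period, which returns $(X',\omega')$ to itself; if some saddle connection of $w'$ genuinely crossed a cylinder of $\cC'$, then twisting would change its holonomy, contradicting the fact that (by $\cM$-collinearity with an absolute class, whose holonomy is twist-invariant) $w'$ has holonomy fixed along the twist path. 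A small technical point, which is why the lemma restricts to \emph{positive} consistently oriented combinations equal to $w$ in relative homology, is that there could a priori be a cancelling pair of crossings; positivity and consistent orientation rule this out, since a genuine crossing would then contribute with a definite sign to the rate of change of $\int_{w'}\omega$ under $u_t^{\cC'}$.

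Thus the steps, in order, are: (1) set up the neighborhood $\cU$ and identify $\cC'$ with $\cC$ up to small isotopy; (2) record that holonomies of $w$ are fixed under $u_t^{\cC}$ because $w$ is disjoint from $\cC$, hence the pairing of $w$ (equivalently $\gamma$) with $u^{\cC}$ vanishes; (3) use Theorem \ref{T:twists} to transport this vanishing to $(X',\omega')$, obtaining that $w'$ has zero algebraic intersection with the core curves of $\cC'$; (4) promote algebraic disjointness to geometric disjointness by running the full twist $u_t^{\cC'}$ and using that the holonomy of $w'$, being $\cM$-collinear to an absolute class, is invariant along the twist, together with positivity of the combination $w'$ to exclude cancelling crossings. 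The main obstacle I anticipate is step (4): passing from "zero algebraic intersection number" to "the interiors are actually disjoint" is exactly the kind of subtlety that cylinder deformation arguments are prone to, and it is the reason the hypotheses are phrased in terms of positive consistently oriented combinations and $\cM$-collinearity to an \emph{absolute} class rather than an arbitrary one — the absolute class has genuinely twist-invariant holonomy, which is what makes the crossing-counting argument rigorous.
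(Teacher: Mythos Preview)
Your approach is essentially the paper's: show $u^{\cC}(w)=0$ by disjointness, use $\cM$-collinearity of $w$ with the absolute class $\gamma$ together with Theorem~\ref{T:twists} (which gives $p(u^{\cC'})$ collinear with $p(u^{\cC})$) to deduce $u^{\cC'}(w')=0$, and then invoke positivity and consistent orientation to pass from vanishing of the weighted intersection sum to geometric disjointness. The paper compresses your steps (2)--(4) into three lines, using the ``key property'' stated just before the lemma in place of your step~(4).

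Two corrections are worth making. First, your step~(1) claims that each cylinder of $\cC'$ is isotopic to a cylinder of $\cC$; this is false, and the paper explicitly warns that $\cC'$ may be strictly larger than $\cC$ (new $\cM$-parallel cylinders can appear under small perturbation --- see the warning after Theorem~\ref{T:CDT} and the note preceding the proof). Fortunately your argument never actually uses this identification, since Theorem~\ref{T:twists} applies to the standard twist $u^{\cC'}$ regardless of how many cylinders $\cC'$ contains. Second, your step~(4) is more elaborate than necessary: there is no need to run the twist ``for a period'' (indeed the cylinders in $\cC'$ generally have incommensurable moduli, so no such common period exists). Once you have $u^{\cC'}(w')=0$, the sign argument you outline --- that parallel consistently oriented saddle connections cross all cylinders of a parallel family with the same sign, so the intersection contributions cannot cancel --- already gives geometric disjointness directly.
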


Let $\cU$ be a neighborhood of $(X,\omega)$ in $\cM$, small enough so that all cylinders in $\cC$, persist on all surfaces in $\cU$. In the lemma, ``close enough" means $(X', \omega')\in \cU$.  Note that $\cC'$ may be strictly larger than $\cC$.

\begin{proof}
We assume $\gamma$ is chosen so that $\int_\gamma \omega=\int_w \omega$. 

Let $u=u^\cC(X,\omega)$ be the standard cylinder twist in the cylinders of $\cC$ at $(X, \omega)$, and let $u'=u^{\cC'}(X', \omega')$ be the standard twist in the cylinders in $\cC'$ at $(X', \omega')$. 

By assumption, $u(w)=0$.  Theorem \ref{T:twists} gives that $p(u)$ and $p(u')$ are collinear. Hence $u'(w)=u'(\gamma)$ is proportional to $u(\gamma)=u(w)=0$, so we get that $u'(w)=u'(w')=0$. The result follows. 
\end{proof}

\begin{lem}\label{L:disjointcyls}
Fix an affine invariant submanifold $\cM$, and suppose that $(X,\omega)\in \cM^1$ and that $w$ is a positive linear combination of consistently oriented horizontal saddle connections on $(X,\omega)$. Suppose that $w$ is $\cM$-collinear to an absolute homology class. 

Then there is a constant $\delta>0$ depending only on the stratum such that there is an equivalence class $\cC$ of $\cM$-parallel cylinders on $(X,\omega)$ that is disjoint from $w$ and that has area at least $\delta$.
\end{lem}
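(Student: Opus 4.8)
The plan is to argue by contradiction and compactness, combining the Minsky--Weiss recurrence theorem (Theorem \ref{T:MW}) with Lemma \ref{L:still disjoint}. Suppose no such uniform $\delta$ exists. Then there is a sequence $(X_n, \omega_n)\in\cM^1$ together with positive linear combinations $w_n$ of consistently oriented horizontal saddle connections, each $\cM$-collinear to an absolute homology class, such that every equivalence class of $\cM$-parallel cylinders disjoint from $w_n$ has area less than $1/n$. First I would normalize using the $GL(2,\bR)$ action: since $w_n$ is horizontal, applying a suitable $g_{t_n}$ I may assume the horizontal saddle connections appearing in $w_n$ all have length at least some fixed $\e_0>0$ (rescaling horizontally while keeping the surface in $\cM^1$ by also rescaling vertically; note this preserves the hypothesis and the area-of-cylinder conclusion since it scales areas by a bounded amount once combined with the horocycle flow step). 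Then, because there are no short horizontal saddle connections among those in the support of $w_n$ — and after a further application of the horocycle flow $u_{s_n}$, which fixes horizontal directions and does not change which cylinders are horizontal — Theorem \ref{T:MW} lets me move into a fixed compact set $K\subset\cH^1$ (here I must be slightly careful: Minsky--Weiss controls \emph{all} short horizontal saddle connections, so I actually want to arrange that there are no short horizontal saddle connections at all, which may force me to instead only require recurrence along the subsequence where this holds, or to absorb the short ones as discussed below).

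Next, passing to a subsequence, I would take a limit $(X_n,\omega_n)\to (X_\infty,\omega_\infty)\in K\subset\cH^1$ in the \emph{same} stratum (so this is ordinary convergence in $\cH^1$, not the degeneration of Definition \ref{D:converge}). By continuity, the $w_n$ converge to a nonzero horizontal object $w_\infty$ on $(X_\infty,\omega_\infty)$ — a positive combination of horizontal saddle connections — and $w_\infty$ is still $\cM$-collinear to an absolute homology class, since $\cM$-collinearity is a closed condition on the stratum (it is the assertion that a fixed linear functional vanishes on $T(\cM)$, which varies continuously). On $(X_\infty,\omega_\infty)$ there certainly \emph{is} an equivalence class $\cC_\infty$ of $\cM$-parallel cylinders disjoint from $w_\infty$ of positive area: one can take, for instance, the full horizontal cylinder decomposition away from the saddle connections in $w_\infty$ — since the total area is $1$ and the saddle connections $w_\infty$ have measure zero, some horizontal cylinder disjoint from $w_\infty$ has definite area, and after grouping into $\cM$-parallel equivalence classes one such class $\cC_\infty$ has area bounded below by a constant depending only on the number of horizontal cylinders, hence only on the stratum.

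The final step is to transport $\cC_\infty$ back to the surfaces $(X_n,\omega_n)$ for large $n$ using Lemma \ref{L:still disjoint}: since $(X_n,\omega_n)\to(X_\infty,\omega_\infty)$ in the stratum, for $n$ large $(X_n,\omega_n)$ lies in the neighborhood $\cU$ of $(X_\infty,\omega_\infty)$ on which the cylinders of $\cC_\infty$ persist, and $w_\infty$ (which equals $w_n$ in relative homology for $n$ large, up to the identification of nearby surfaces) remains disjoint from the equivalence class $\cC_n'$ of cylinders $\cM$-parallel to $\cC_\infty$. The cylinders of $\cC_n'$ have area converging to that of $\cC_\infty$, hence at least $\delta/2$ for $n$ large, contradicting that every $\cM$-parallel class disjoint from $w_n$ has area less than $1/n$. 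The main obstacle is the normalization in the first step: Minsky--Weiss gives recurrence only under the absence of \emph{all} short horizontal saddle connections, whereas a priori the surfaces $(X_n,\omega_n)$ could have short horizontal saddle connections outside the support of $w_n$. I expect this is handled exactly as in the companion results of this section — Proposition \ref{P:notinySC} and the surrounding machinery let one replace $w_n$ and the relevant cylinders by a comparable configuration in which the problematic short saddle connections are either absorbed into $w_n$ or shown to be $\cM$-parallel to cylinders that can be rescaled away — so the clean statement above can be reached after this preliminary reduction.
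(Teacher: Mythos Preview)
Your approach has two genuine gaps, and both stem from using Minsky--Weiss (Theorem~\ref{T:MW}) where the paper instead uses Smillie--Weiss (Theorem~\ref{T:SW}).

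\textbf{Gap 1: the normalization step is circular.} You correctly flag that Minsky--Weiss requires \emph{all} horizontal saddle connections to be bounded below, not just those in the support of $w_n$, and you propose to handle this by invoking Proposition~\ref{P:notinySC}. But look at the logical order: the proof of Proposition~\ref{P:notinySC} \emph{uses} Lemma~\ref{L:disjointcyls}. So this appeal is circular. Without that proposition, you have no mechanism to rule out short horizontal saddle connections outside the support of $w_n$, and Theorem~\ref{T:MW} does not apply.

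\textbf{Gap 2: the limit surface need not have any horizontal cylinders.} Even if you could reach a compact set $K$ and extract a limit $(X_\infty,\omega_\infty)$, nothing guarantees $(X_\infty,\omega_\infty)$ is horizontally periodic. Your assertion that ``some horizontal cylinder disjoint from $w_\infty$ has definite area'' presumes a horizontal cylinder decomposition, but a generic surface in $K$ has no horizontal cylinders whatsoever. Minsky--Weiss gives recurrence to a compact set, not to a horizontally periodic locus.

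The paper's argument is direct rather than by contradiction and sidesteps both issues by invoking Smillie--Weiss instead: the horocycle orbit closure of $(X,\omega)$ contains a horizontally periodic surface $(X',\omega')\in\cM$. On that surface horizontal cylinders cover the whole area, so by pigeonhole some $\cM$-parallel equivalence class $\cC$ has area at least $2\delta$, where $\delta$ is the reciprocal of twice the maximum number of parallel cylinders in the stratum. Choosing $t$ with $u_t(X,\omega)$ close to $(X',\omega')$, the cylinders of $\cC$ persist with area at least $\delta$, and Lemma~\ref{L:still disjoint} (applied with the roles of $(X,\omega)$ and $(X',\omega')$ reversed from what you had in mind) shows the full $\cM$-parallel class $\cC'$ on $u_t(X,\omega)$ is disjoint from $w$. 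Pulling back by $u_{-t}$ finishes.
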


\begin{proof}
Set $\delta$ to be 1 divided by twice the maximum number of parallel cylinders on a surface in the given stratum. Let $(X', \omega')\in \cM$ be a horizontally periodic surface in the horocycle flow orbit closure of $(X,\omega)$. 

Let $\cC$ be an equivalence class of $\cM$-parallel horizontal cylinders on $(X', \omega')$ that occupy at least $2\delta$ of the area. 

Pick $t$ so that $u_t(X, \omega)$ is  close enough to $(X', \omega')$ so all cylinders in $\cC$ persist at $u_t(X, \omega)$, and so that they occupy at least $\delta$ of the area. 

Let $\cC'$ be the equivalence class of cylinders on $u_t(X, \omega)$ that are $\cM$-parallel to cylinders in $\cC$. By  Lemma \ref{L:still disjoint}, the cylinders in $\cC'$ are disjoint from $w$. 

Pulling the cylinders of $\cC'$ back to $(X,\omega)$ using $u_{-t}$ gives the result. 
\end{proof}

\begin{proof}[\eb{Proof of Proposition \ref{P:notinySC}.}]
Assume to the contrary that there is a sequence $(X_n, \omega_n)\in \cM^1$ of unit area surfaces where there is a saddle connection $w_n$ that is $\cM$-parallel to a cylinder, but that the set $\cC_n$ of such cylinders can be broken up into two disjoint subsets $\cC_n=\cC_n'\cup \cC_n''$ such that all cylinders in $\cC_n'$ have circumference, divided by the length of $w_n$, going to infinity uniformly in $n$, and the cylinders in $\cC_n''$ have  area, divided by the total area of $\cC_n$, going to zero uniformly in $n$. 

Slightly deforming the surfaces, we may assume that $w_n$ is not parallel to any saddle connection that it is not $\cM$-parallel to. (Any cylinders created in this process that are $\cM$-parallel to those in $\cC_n$ can be assumed to have arbitrarily small area.) Without loss of generality, we will assume that any saddle connection parallel to $w_n$ is at least as long as $w_n$.  

Rotating the surfaces, we may assume that all $w_n$ are horizontal. The reason we choose to have $w_n$  horizontal is so that $w_n$ and $\cC_n$ will be preserved by the horocycle flow. Applying $g_t$, we may assume that $w_n$ has length 1. By applying the cylinder stretch in $\cC_n$ (the deformation with derivative $i u^{\cC_n}$, which increases heights of cylinders) we can increase the ratio of the area of the cylinders in $\cC_n$ to the total area. This changes the area of the surface, but we can then apply matrices of the form 
$$\left(\begin{array}{cc} 1 & 0\\0 &a\end{array}\right)$$
to renormalize the area. The result is that we may assume that the area of $\cC_n$ goes to 1 as $n$ goes to infinity. Hence the area of $\cC_n'$ goes to 1 as $n$ goes to infinity. 

By applying horocycle flow and using Theorem \ref{T:MW}, we may assume that all the $(X_n, \omega_n)$ lie in some fixed compact set. Hence, by passing to a subsequence, we may assume that they converge to a limit in the same stratum. Say the limit is $(X,\omega)$. 

This limit has a finite union of horizontal saddle connections of length 1, which we will call $w$. (Multiplicity is allowed.) This is the limit of $w_n$ on $(X_n, \omega_n)$, and has the same relative homology class as the $w_n$ when $n$ is large enough.
The relative homology class of $w$ is, by assumption, $\cM$-collinear to any $\gamma$ which is the core curve of a cylinder in $\cC_n$ on $(X_n, \omega_n)$. 

Hence by Lemma \ref{L:disjointcyls}, we get the existence of an equivalence class of $\cM$-parallel cylinders $\cD$ on $(X,\omega)$ that is disjoint from $w$, and occupies area at least $\delta$. Pick $n$ large enough so that $\cC_n'$ has area greater than $1-\delta/2$, and such that the cylinders of $\cD$ persist on $(X_n, \omega_n)$ and occupy area at least $\delta/2$. 

Let $\cD_n$ be the set of cylinders on $(X_n, \omega_n)$ that are $\cM$-parallel to the cylinders in $\cD$. Lemma \ref{L:still disjoint} shows that $\cD_n$ is disjoint from $w_n$. Furthermore, $\cD_n$  contains cylinders whose circumference and area are uniformly bounded above and below, namely those coming from $\cD$. Hence  $\cD_n$ is not contained in $\cC_n$, and hence no cylinder in $\cD_n$ is contained in $\cC_n$ and vice versa. (Since they are equivalence classes, if $\cC_n$ and $\cD_n$ had a single cylinder in common, they would be equal.) 

However, by area considerations, some cylinder in $\cD_n$ must intersect some cylinder in $\cC_n$. This is a contradiction, because the cylinder deformation in the $\cD_n$ changes the circumferences of at least one cylinder in $\cC_n$, without changing the holonomy of $w_n$, contradicting the fact that $w_n$ and the cylinders of $\cC_n$ are $\cM$-parallel. 
\end{proof}

\subsection{Bounding cylinder ratios.}

\begin{prop}\label{P:Cbound}
Fix an affine invariant submanifold $\cM$. Then there is a constant $R_{cyl}$ such that  any  ratio of circumferences of any pair of $\cM$-parallel cylinders on any surface in $\cM$ is at most $R_{cyl}$. 
\end{prop}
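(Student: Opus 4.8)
The plan is to argue by contradiction, following the template of the proof of Proposition~\ref{P:notinySC}. Suppose there were a sequence $(X_n,\omega_n)\in\cM^1$ carrying equivalence classes $\cC_n$ of $\cM$-parallel cylinders for which the ratio of the largest to the smallest circumference of a cylinder of $\cC_n$ tends to infinity. Since the number of parallel cylinders on a surface in the ambient stratum is bounded, we may sort the circumferences and, after passing to a subsequence, split $\cC_n=\cC_n'\sqcup\cC_n''$ at the first ``large gap'': the cylinders of $\cC_n''$ lie below the gap, so their circumferences stay within a bounded ratio of the smallest one, while the smallest circumference of a cylinder of $\cC_n'$ divided by the largest circumference of a cylinder of $\cC_n''$ tends to infinity.

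The normalization step is the heart of the argument and is where the previous subsection enters. Rotating, we may assume the common direction of $\cC_n$ is horizontal. Slightly deforming within $\cM$, as in the proof of Proposition~\ref{P:notinySC}, we may assume that the shortest horizontal saddle connection is $\cM$-parallel to the cylinders of $\cC_n$ (the horizontal saddle connections not $\cM$-parallel to them are made long). Applying the cylinder stretch with derivative $iu^{\cC_n}$ and renormalizing the area with a diagonal matrix, we may assume that $\cup\cC_n$, hence also $\cup\cC_n'$, occupies a proportion of the area tending to $1$ (the cylinders of $\cC_n''$ being the thin ones). Finally apply $g_t$ so that the shortest horizontal saddle connection has length exactly $1$. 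Then Theorem~\ref{T:MW} applies; and since that shortest saddle connection is $\cM$-parallel to $\cC_n$, Proposition~\ref{P:notinySC} forces $\cC_n$ to contain a cylinder of circumference at most $R_{sc}$, so after the normalization the cylinders of $\cC_n''$ have uniformly bounded circumference while those of $\cC_n'$ still have circumference tending to infinity. Now apply horocycle flow and Theorem~\ref{T:MW} to bring all the $(X_n,\omega_n)$ into a fixed compact subset of the stratum, and pass to a convergent subsequence with limit $(X,\omega)$ in the same stratum.

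On $(X,\omega)$ the cylinders of $\cC_n''$ converge (Lemma~\ref{L:horrible}), up to a subsequence, either to cylinders or, where their heights degenerate, to unions of horizontal saddle connections; in either case the limiting horizontal data is $\cM$-collinear to absolute homology classes, each core curve of a cylinder of $\cC_n''$ being absolute. Hence by Lemma~\ref{L:disjointcyls} there is an equivalence class $\cD$ of $\cM$-parallel horizontal cylinders on $(X,\omega)$ disjoint from all of it and occupying at least a definite area $\delta$ depending only on the stratum. By Lemma~\ref{L:still disjoint}, the equivalence class $\cD_n$ of cylinders on $(X_n,\omega_n)$ that are $\cM$-parallel to those of $\cD$ is disjoint from $\cup\cC_n''$ and contains cylinders of circumference and area uniformly bounded above and below. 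Those bounded cylinders cannot lie in $\cC_n'$ (whose circumferences blow up) nor in $\cC_n''$ (from which $\cD_n$ is disjoint), so $\cD_n$ and $\cC_n$ are distinct equivalence classes and hence share no cylinder. But $\cup\cD_n$ carries a definite amount of area while $\cup\cC_n'$ occupies almost all of it, so some cylinder of $\cD_n$ must meet some cylinder of $\cC_n'$. Then the standard cylinder deformation supported on $\cD_n$, which stays in $\cM$ by Theorem~\ref{T:CDT}, changes the circumference of a cylinder of $\cC_n'$ while leaving unchanged the circumference of every cylinder of $\cC_n''$ --- contradicting the fact that cylinders of $\cC_n'$ and of $\cC_n''$ are $\cM$-parallel and so keep a constant ratio of circumferences.

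The main obstacle is that, unlike in most compactness arguments, a cylinder of enormous circumference is entirely compatible with remaining in a fixed compact subset of a fixed stratum: the square torus $\bC/\bZ^2$ already carries a cylinder in the direction $(N,1)$ of circumference $\sqrt{N^2+1}$ although its shortest saddle connection has length $1$, and rotating that direction to the horizontal keeps the shortest saddle connection of length $1$. So compactness by itself gives nothing, and all the force has to be extracted from Proposition~\ref{P:notinySC} --- which is what prevents the ``small'' cylinders of $\cC_n$ from blowing up along with the ``large'' ones --- together with the rigidity of $\cM$-parallelism used at the last step. The remaining difficulty is essentially bookkeeping: arranging the slight deformation, the cylinder stretch, the area renormalization, and the $g_t$ preparing the surface for Theorem~\ref{T:MW} so that all the required uniform bounds hold simultaneously.
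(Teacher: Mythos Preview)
Your argument has a gap: you cannot rule out $\cD_n=\cC_n$. You assert that the bounded-circumference, definite-area cylinder persisting from $\cD$ cannot lie in $\cC_n''$ ``from which $\cD_n$ is disjoint,'' but Lemma~\ref{L:still disjoint} only says that the interiors of the $\cD_n$ cylinders miss the saddle connections $w'$, and a cylinder's interior is always disjoint from its own boundary saddle connections; so nothing prevents a $\cD_n$ cylinder from \emph{being} a $\cC_n''$ cylinder. Your parallel claim that, after stretching, ``$\cup\cC_n'$ occupies a proportion of the area tending to $1$ (the cylinders of $\cC_n''$ being the thin ones)'' is likewise unjustified: the stretch $iu^{\cC_n}$ scales all heights in $\cC_n$ by the same factor and hence preserves the ratio of the area of $\cC_n''$ to that of $\cC_n$, while your circumference-gap splitting says nothing about this ratio. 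If, say, the persisted $\cD$ cylinder coincides with a limiting $\cC_n''$ cylinder carrying a definite share of the area --- which Lemma~\ref{L:disjointcyls} does not exclude --- then $\cD_n=\cC_n$, and your final contradiction evaporates, since the standard deformation in $\cC_n$ changes no $\cC_n$ circumference.

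The paper's proof does not replay the area contradiction of Proposition~\ref{P:notinySC}. After the same normalization (via Proposition~\ref{P:notinySC} one cylinder of $\cC_n$ has circumference $1$ and area at least $1/(2R_{sc})$; by Theorem~\ref{T:MW} one passes to a limit $(X,\omega)$ in the same stratum), that cylinder survives as a cylinder $C$ on $(X,\omega)$ with equivalence class $\cC$. The key new input is Lemma~\ref{L:atmostsum}: using Theorem~\ref{T:twists} together with the purely homological Lemma~\ref{L:hom}, it shows that any cylinder on the nearby $(X_n,\omega_n)$ that is $\cM$-parallel to $C$ has circumference at most the sum of the circumferences of the cylinders in $\cC$. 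This uniformly bounds every circumference in $\cC_n$, contradicting the hypothesis. The content of Lemma~\ref{L:atmostsum} is precisely what is missing from your approach.
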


The following basic lemma will be helpful in combination with Theorem  \ref{T:twists} to understand how new cylinders can ``appear" in equivalence classes as the surface is deformed. 

\begin{lem}\label{L:hom}
Suppose that $\alpha_i, i=1, \ldots, n$ and $\beta_j, j=1, \ldots, m$ are all core curves of a set of consistently oriented distinct parallel cylinders on a translation surface $(X, \omega)$.  Suppose furthermore that in homology there is some linear relation 
$$\sum a_i \alpha_i = \sum a_i' \alpha_i+ \sum b_j \beta_j$$
with all $a_i$ and $a_i'$ and $b_j$ positive real numbers. 

Then for each $j_0$ there is a subset $J\subset \{1, \ldots, m\}$ containing $j_0$ and two  subsets $I, I'\subset \{1, \ldots, n\}$ such that in homology, 
$$\sum_{i\in I} \alpha_i = \sum_{i\in I'} \alpha_i +\sum_{j\in J} \beta_j.$$

In particular, for each $j$ we have $$\left|\int_{\beta_j} \omega\right| \leq  \sum_{i=1}^n \left|\int_{\alpha_i} \omega\right|.$$
\end{lem}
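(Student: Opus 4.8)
The plan is a level-set (``max-flow''-type) argument on the pieces cut out by the cylinders. Since distinct parallel cylinders have disjoint interiors, the core curves form a disjoint union $\Gamma=\alpha_1\sqcup\cdots\sqcup\alpha_n\sqcup\beta_1\sqcup\cdots\sqcup\beta_m$ of simple closed curves, and cutting $X$ along $\Gamma$ yields compact surfaces with boundary $P_1,\dots,P_N$, the \emph{pieces}. Because the $\alpha_i$ and $\beta_j$ are consistently oriented and $X$ is oriented, each $\gamma\in\Gamma$ has a well-defined co-orientation, hence a distinguished ``left'' piece $L(\gamma)$ and ``right'' piece $R(\gamma)$ (possibly equal), and one may fix conventions so that $\gamma$ occurs in $\partial P_v$ with coefficient $+1$ if $v=L(\gamma)$, $-1$ if $v=R(\gamma)$, and $0$ otherwise (the two contributions cancel when $L(\gamma)=R(\gamma)$).

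Write the hypothesis as $z:=\sum_i(a_i-a_i')\alpha_i-\sum_j b_j\beta_j=0$ in $H_1(X,\bR)$; a relation among closed curves holds in absolute homology iff it holds in relative homology, so this loses nothing. Viewing $z$ as a cycle supported on $\Gamma$ that dies in $H_1(X,\bR)$, the long exact sequence of the pair $(X,\Gamma)$ puts $z$ in the image of the connecting map $\partial\colon H_2(X,\Gamma,\bR)\to H_1(\Gamma,\bR)$; and $H_2(X,\Gamma,\bR)$ is spanned by the relative fundamental classes $[P_v]$ of the pieces (components of $X$ containing no $\gamma\in\Gamma$ contribute classes that $\partial$ kills and can be ignored). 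Hence $z=\partial\bigl(\sum_v n_v[P_v]\bigr)$ for some $n_v\in\bR$, and comparing the coefficient of each $\gamma$ gives $n_{L(\gamma)}-n_{R(\gamma)}=(\text{coefficient of }\gamma\text{ in }z)$. In particular $n_{L(\beta_j)}-n_{R(\beta_j)}=-b_j<0$ for every $j$, so $v\mapsto n_v$ is strictly smaller on the left side of each $\beta_j$ than on its right side.

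Now fix $j_0$, set $p=n_{L(\beta_{j_0})}<q=n_{R(\beta_{j_0})}$, choose $\lambda\in(p,q]$, put $W=\{v:n_v\ge\lambda\}$, and form $T=\sum_{v\in W}[P_v]$. Then $\partial T$ is a boundary, so $\partial T=\sum_{\gamma\in\Gamma}\epsilon_\gamma\gamma=0$ in $H_1(X,\bR)$, where $\epsilon_\gamma=[L(\gamma)\in W]-[R(\gamma)\in W]\in\{-1,0,1\}$. From $n_{L(\beta_j)}<n_{R(\beta_j)}$ one never has $L(\beta_j)\in W$ while $R(\beta_j)\notin W$, so $\epsilon_{\beta_j}\in\{0,-1\}$ for all $j$; and the choice of $\lambda$ forces $R(\beta_{j_0})\in W$, $L(\beta_{j_0})\notin W$, so $\epsilon_{\beta_{j_0}}=-1$. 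Setting $J=\{j:\epsilon_{\beta_j}=-1\}$ (so $j_0\in J$), $I=\{i:\epsilon_{\alpha_i}=1\}$, $I'=\{i:\epsilon_{\alpha_i}=-1\}$, the relation $\partial T=0$ rearranges to $\sum_{i\in I}\alpha_i=\sum_{i\in I'}\alpha_i+\sum_{j\in J}\beta_j$ in homology, which is the first assertion. For the displayed inequality, rescale $\omega$ so that the common holonomy direction is positive real; then every $\int_{\alpha_i}\omega$ and $\int_{\beta_j}\omega$ is a positive real, and applying the above with $j_0=j$ and integrating $\omega$ over the resulting relation yields $\int_{\beta_j}\omega\le\sum_{j'\in J}\int_{\beta_{j'}}\omega=\sum_{i\in I}\int_{\alpha_i}\omega-\sum_{i\in I'}\int_{\alpha_i}\omega\le\sum_{i=1}^n\int_{\alpha_i}\omega$, which gives $\bigl|\int_{\beta_j}\omega\bigr|\le\sum_{i=1}^n\bigl|\int_{\alpha_i}\omega\bigr|$.

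The only real difficulty is bookkeeping: fixing the orientation and co-orientation conventions carefully enough that the coefficient comparison produces the strict, one-signed inequalities $n_{L(\beta_j)}<n_{R(\beta_j)}$, and correctly invoking the long exact sequence of $(X,\Gamma)$ together with the identification of $H_2(X,\Gamma,\bR)$ with the span of the pieces. Once these are in place the level-set extraction of $J$, $I$, $I'$ is routine, and the rest of the argument is purely combinatorial.
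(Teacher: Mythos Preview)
Your proof is correct. Both arguments cut $X$ along the core curves and produce a union $W$ of complementary pieces whose boundary yields the homology relation with $\pm1$ coefficients; the difference is in how $W$ is chosen.

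The paper's argument is geometric and constructive: after rotating so the cylinders are horizontal, each piece has ``top'' and ``bottom'' boundary components, and one grows $W$ from the piece below $\beta_{j_0}$ by iteratively gluing on the piece across any $\beta_j$ appearing on the bottom. A short contradiction (a closed curve crossing only $\beta$'s with positive intersection, which is incompatible with the hypothesis via intersection number) shows $\beta_{j_0}$ survives on the top boundary, and then $I,I',J$ are read off.

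Your argument replaces this with a potential-function/level-set extraction: the long exact sequence of $(X,\Gamma)$ produces real weights $n_v$ on the pieces with $n_{L(\beta_j)}<n_{R(\beta_j)}$, and a threshold between $n_{L(\beta_{j_0})}$ and $n_{R(\beta_{j_0})}$ gives $W$. This is cleaner in that it avoids the iterative construction and the separate contradiction step, and it makes transparent why each $\beta_j$ can only appear with coefficient $0$ or $-1$ in $\partial T$. The paper's version is more hands-on and pictorial. The two methods may produce different subsets $I,I',J$, but both are valid witnesses. Your deduction of the circumference inequality is the same as the paper's (which simply asserts it follows).
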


Recall that consistently oriented means that the $\int_{\alpha_i}\omega, \int_{\beta_j}\omega$ are all positive multiples of each other.

\begin{proof}
Without loss of generality, assume the cylinders are horizontal. 
Cut out all the $\alpha_i$ and $\beta_j$ from $X$, to obtain a finite union $X_1, \ldots, X_k$ of connected components, each of which caries a flat structure and has horizontal boundary.

Suppose that $\beta_{j_0}$ is part of the top boundary of $X_1$. Iteratively perform the following operation, illustrated in Figure \ref{F:hom}. Start with $X_1$. If there is a bottom boundary component that is a $\beta_j$, glue on the piece $X_p$ that contains this $\beta_j$ in the top. In the object thus obtained, if there is a bottom boundary component that is a different $\beta_j$, glue on the piece $X_{p'}$ that contains this $\beta_j$ in the top. 

\begin{figure}[h]
\includegraphics[width=0.5\linewidth]{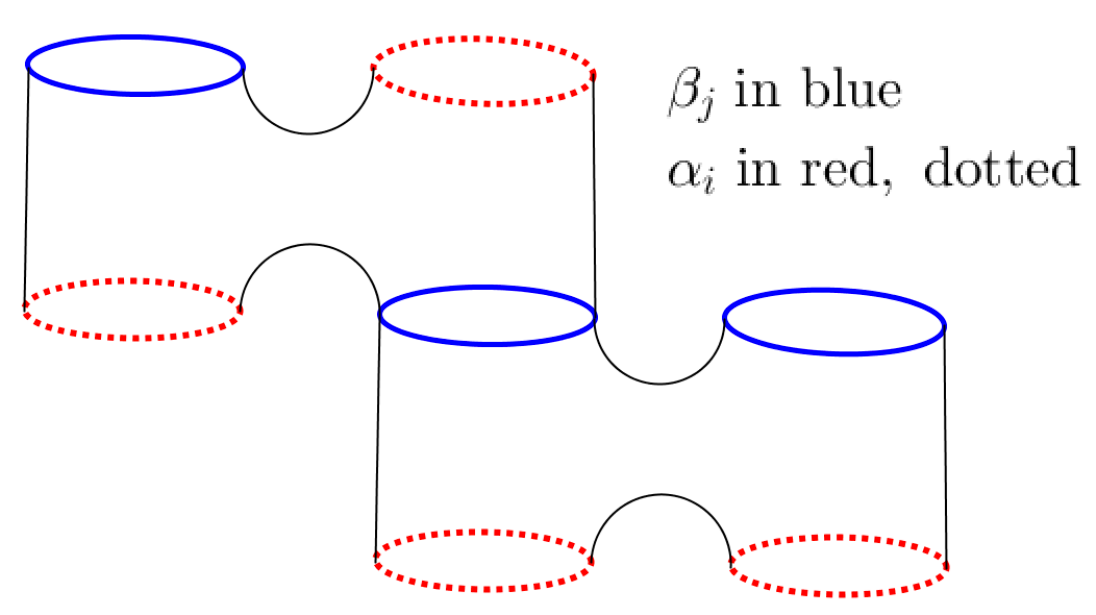}
\caption{Proof of Lemma \ref{L:hom}.}
\label{F:hom}
\end{figure}

We claim that when this process is completed, $\beta_{j_0}$ is contained in the top boundary of the resulting object (and not in the bottom--there are no $\beta_j$ in the bottom boundary). Indeed, otherwise we could find a curve $\gamma$ going through the pieces and crossing only the $\beta_j$ curves and not crossing any $\alpha_i$ curves. This curve would contradict the assumed linear relation. 

The lemma is now proved, with $J$ the set of $j$ such that $\beta_j$  occurs in the top boundary, $I$ the set of $i$ such that $\alpha_i$  occurs in the bottom boundary, and $I'$ the set of $i$ such that $\alpha_i$  occurs in the  top  boundary.  

\end{proof}

\begin{lem}\label{L:atmostsum}
Let $\cM$ be an affine invariant submanifold, and $(X,\omega)\in \cM$. Let $\cC=\{C_1, \ldots, C_n\}$ be an equivalence class of $\cM$-parallel cylinders on $(X,\omega)$. Suppose that $(X', \omega')\in \cM$ is a small deformation of $(X,\omega)$, so all the $C_i$ persist on $(X', \omega')$. 
Suppose that $C_0$ is a cylinder on $(X', \omega')$ that is $\cM$-parallel to the $C_i, i=1, \ldots, n$. Then the circumference of $C_0$ is at most the sum of the circumferences of the core curves of the cylinders in $\cC$. 
\end{lem}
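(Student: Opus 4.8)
\textbf{Proof proposal for Lemma \ref{L:atmostsum}.}

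The plan is to reduce to the homological statement in Lemma \ref{L:hom}. First I would pass to the case where all the cylinders in question are horizontal, and choose consistent orientations on the core curves $\alpha_i$ of the $C_i$ and on the core curve $\alpha_0$ of $C_0$ so that all the holonomies $\int_{\alpha_i}\omega'$ and $\int_{\alpha_0}\omega'$ are positive multiples of each other. Since $C_0$ is $\cM$-parallel to the $C_i$ on $(X',\omega')$, Theorem \ref{T:twists} applies: the twist $u^{C_0}$ (or any twist supported on $C_0$, viewed inside the twist space of the enlarged equivalence class $\cC'$ containing $C_0$ and the $C_i$) differs from a scalar multiple of the standard twist $u^{\cC'}$ only by a purely relative class, i.e. $p(u^{C_0})$ is a scalar multiple of $p(u^{\cC'})$. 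Concretely this says that in \emph{absolute} homology $I(\alpha_0)$ is a real linear combination of the $I(\alpha_i)$, so that $[\alpha_0] = \sum c_i [\alpha_i]$ in $H_1(X',\bR)$ for some real $c_i$; moreover the scalar relating $p(u^{C_0})$ to $p(u^{\cC'})$ is positive (heights are positive and the standard twist has positive coefficients), which forces the $c_i$ to be compatible with a nonnegative combination after collecting terms on the appropriate sides.

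The next step is to feed this into Lemma \ref{L:hom}. Writing the homology relation $[\alpha_0] = \sum c_i[\alpha_i]$ and moving the negative-coefficient terms to the left, we obtain a relation of the exact shape $\sum a_i \alpha_i = \sum a_i' \alpha_i + b_0 \alpha_0 + (\text{possibly more }\alpha_i\text{ terms})$ with all coefficients positive; rescaling we may take $b_0 = 1$. Now apply Lemma \ref{L:hom} with the role of the ``$\beta_j$" played by $\alpha_0$ alone (so $m=1$, $j_0 = 0$): it yields subsets $I, I' \subset \{1,\dots,n\}$ with $\sum_{i\in I}\alpha_i = \sum_{i\in I'}\alpha_i + \alpha_0$ in homology, and in particular the ``in particular" clause of Lemma \ref{L:hom} gives $\left|\int_{\alpha_0}\omega'\right| \le \sum_{i=1}^n \left|\int_{\alpha_i}\omega'\right|$. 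Since holonomy along a core curve of a horizontal cylinder has absolute value equal to the circumference, this is exactly the assertion that the circumference of $C_0$ is at most the sum of the circumferences of the $C_i$. Finally, circumferences of persistent cylinders vary continuously, so the bound on $(X',\omega')$ transfers to the original cylinders $C_i$ on $(X,\omega)$ up to the usual ``small deformation" caveat already built into the hypothesis; if one wants the clean inequality literally at $(X,\omega)$ one shrinks the neighborhood and takes a limit.

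The main obstacle I anticipate is the bookkeeping needed to verify that the homological relation coming from Theorem \ref{T:twists} really has the sign pattern required to invoke Lemma \ref{L:hom} — i.e. that after $p$ kills the relative part, the coefficients expressing $[\alpha_0]$ in terms of the $[\alpha_i]$ can be arranged with the positivity that Lemma \ref{L:hom} demands, rather than being arbitrary reals. This is where one must use that the standard twist $u^{\cC'}$ has strictly positive coefficients (the heights $h_i > 0$) together with the fact that the proportionality constant between $p(u^{C_0})$ and $p(u^{\cC'})$ is positive because both are ``twisting in the same direction"; the distinct-cylinders hypothesis (core curves of distinct parallel cylinders are not homologous with the wrong sign, cf. the consistent-orientation convention) is what rules out degenerate cancellations. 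Everything else is routine: reduction to horizontal cylinders, continuity of circumference, and translating between holonomy and circumference.
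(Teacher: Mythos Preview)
Your overall strategy---use Theorem \ref{T:twists} to produce a homology relation and then feed it to Lemma \ref{L:hom}---matches the paper's, but there is a genuine gap in how you invoke Theorem \ref{T:twists}. That theorem applies only to elements of the \emph{twist space}, which by definition is $(\span_\bC I_\Sigma(\alpha_i))\cap T(\cM)$. The single-cylinder twist $u^{C_0}$ lies in the span of the $I_\Sigma(\alpha_i)$, but there is no reason it lies in $T(\cM)$; indeed it usually does not. So you cannot conclude that $p(u^{C_0})$ is proportional to $p(u^{\cC'})$. A second, related problem: even if that proportionality held, $u^{\cC'}$ involves \emph{all} cylinders of $\cC'$ (including $C_0$ and any other new cylinders), so the resulting relation would not express $[\alpha_0]$ in terms of only the original $[\alpha_1],\ldots,[\alpha_n]$, and your application of Lemma \ref{L:hom} would bound the wrong quantity.

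The paper repairs this by choosing a different element of the twist space: the standard twist $u=\sum h_i I_\Sigma(\alpha_i)$ in $\cC$ \emph{computed at $(X,\omega)$}. This is in $T(\cM)$ by the Cylinder Deformation Theorem, and when parallel transported to $(X',\omega')$ it lies in the twist space of the enlarged class $\cC'$ (it is supported on $\cC\subset\cC'$ and still in $T(\cM)$). Theorem \ref{T:twists} then gives $p(u)$ proportional to $p(u^{\cC'})$, which after rescaling yields the relation
\[
\sum h_i\,\alpha_i \;=\; \sum h_i'\,\alpha_i \;+\; \sum \delta_j\,\beta_j
\]
in absolute homology, where the $\beta_j$ are exactly the \emph{new} cylinders of $\cC'\setminus\cC$ (so $C_0$ is among them) and all coefficients are positive heights. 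This is already in the format of Lemma \ref{L:hom}, with no sign bookkeeping required, and the bound on the circumference of $C_0$ follows immediately. The positivity obstacle you flagged is thus a symptom of having chosen the wrong twist rather than a separate difficulty.
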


\begin{ex}
It is however possible that the core curve of $C_0$ is not in the span of the core curves of the $C_i, i=1, \ldots, n$. In Figure \ref{F:TwoNewCyls} we draw an example of a locus $\cM$ of degree 2 torus covers in $\cH(1,1)$. The surface on the right is the $(X,\omega)$. Many deformations, such as those on the left, have two ``new" cylinders that are $\cM$-parallel to the  cylinder that persists from $(X,\omega)$. However, these cylinders are not homologous to the cylinder that persists from $(X,\omega)$.
\begin{figure}[h]
\includegraphics[width=0.9\linewidth]{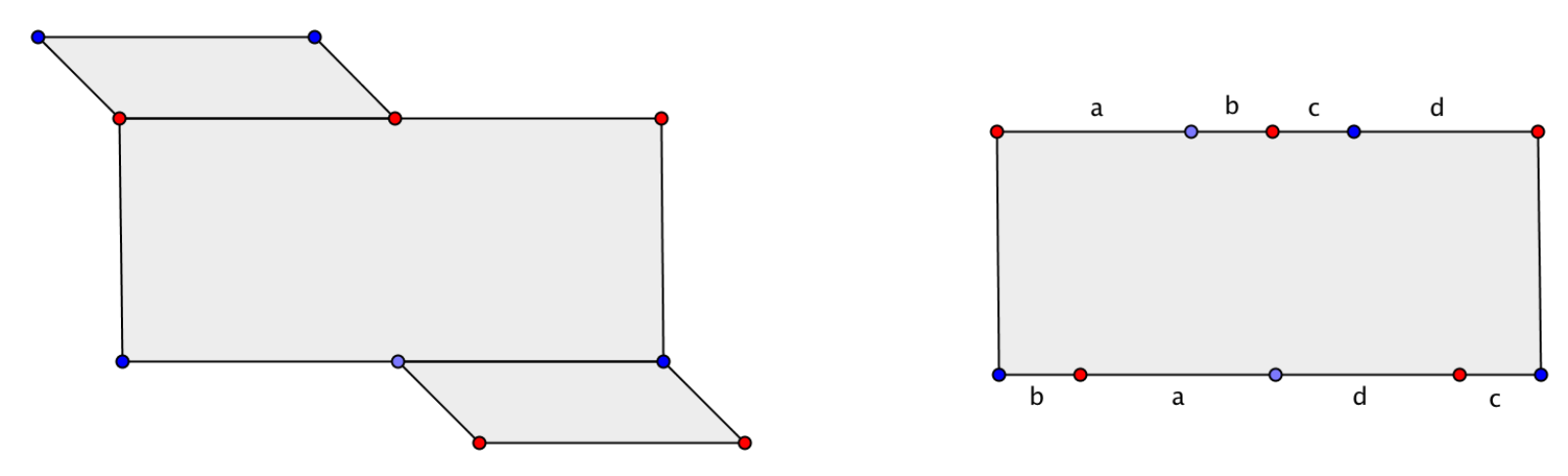}
\caption{}
\label{F:TwoNewCyls}
\end{figure}
\end{ex}

\begin{proof}
Let $u$ be the standard twist in $\cC$ at $(X,\omega)$.  Let $\cC'$ be the equivalence class of $C_1$ at $(X',\omega')$, so $\cC\subset \cC'$. At $(X', \omega')$, the relative cohomology class $u$ still describes a twist in the cylinders of $\cC'$. This twist involves only the cylinders in $\cC$, and not those in $\cC'\setminus \cC$. 

Let $u'$ be the standard twist at $(X', \omega')$ in the cylinders of $\cC'$. By Theorem \ref{T:twists}, by rescaling $u'$ if necessary, we may assume $p(u)=p(u')$. 

Suppose the core curves of the cylinders in $\cC$ are $\alpha_i$, and the core curves of the cylinders of $\cC'\setminus \cC$ are $\beta_i$. Assume for convinience that the cylinders under consideration are all horizontal.  We have
$$u=\sum h_i I_\Sigma(\alpha_i) \quad\text{and}\quad u'=\sum h_i' I_\Sigma(\alpha_i) + \sum \delta_i I_\Sigma(\beta_i),$$
where the $\delta_i$ are the heights of the new cylinders, and the $h_i$ and $h_i'$ are the heights of the cylinders in $\cC$ at $(X, \omega)$ and $(X', \omega')$ respectively.  Since these are equal in absolute homology, we get 
$$\sum h_i \alpha_i = \sum h_i' \alpha_i + \sum \delta_j \beta_j$$
in absolute homology. Lemma \ref{L:hom} now gives the result. 
\end{proof}

\begin{proof}[\eb{Proof of Proposition \ref{P:Cbound}.}]
Suppose to the contrary that there is a sequence of surfaces $(X_n, \omega_n)\in \cM^1$ that has an equivalence class $\cC_n$ of $\cM$-parallel cylinders where the ratio of the largest to the smallest circumference goes to infinity. Without loss of generality, assume that all these $\cC_n$ are horizontal, and that there are no horizontal saddle connections on $(X_n, \omega_n)$ that are not $\cM$-parallel to the cylinders in $\cC_n$. (If there are horizontal saddle connections on $(X_n, \omega_n)$ that are not $\cM$-parallel to the cylinders in $\cC_n$, the surface can be deformed slightly within $\cM$ to remove this unnecessary coincidence.)  

Let $w_n$ be the shortest horizontal saddle connection on $(X_n, \omega_n)$. By Proposition \ref{P:notinySC}, there is a constant $R_{sc}$ such that there is a cylinder in $\cC_n$ that has circumference at most $R_{sc}$ times the length of $w_n$, and that has area at least $1/R_{sc}$ times the total area of $\cC_n$. As in the proof of Proposition \ref{P:notinySC}, we can assume that this cylinder has circumference 1 and area at least $1/(2R_{sc})$. Since the  smallest horizontal saddle connection on $(X_n, \omega_n)$ has length at least $1/R_{sc}$, using Theorem \ref{T:MW} we may assume that $(X_n, \omega_n)$ converges to some surface $(X, \omega)$ in the same stratum. 

There is at least one cylinder  $C$ on $(X, \omega)$ that is a limit of horizontal cylinders on $(X_n,\omega_n)$ of circumference 1 and area at least $1/(2R_{sc})$. Let $\cC$ be the equivalence class of $C$. 

Let $Q$ be the sum of the circumferences of the cylinders in $\cC$. We now claim that for large $n$, all cylinders in $\cC_n$ are size at most $Q+1$, which contradicts our original supposition that the ratio of the largest and smallest circumferences of cylinders in $\cC_n$ goes to infinity. 

The claim follows directly from Lemma \ref{L:atmostsum}.
\end{proof}

\subsection{Proof the Cylinder Finiteness Theorem.}

\begin{lem}\label{L:homfin}
For any simply connected compact set $K$ of a stratum, and any constant $L>0$,  the set of relative homology classes represented by a saddle connection of length at most $L$ on some $(X,\omega)\in K$  is finite. 
\end{lem}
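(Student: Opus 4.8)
The plan is to bound, uniformly over the compact set $K$, both the number of saddle connections of length at most $L$ on any $(X,\omega)\in K$ \emph{and} the number of distinct relative homology classes they can represent, then combine the two bounds. First I would use compactness of $K$ to extract a uniform lower bound $\delta>0$ on the injectivity radius, away from the marked points, of every surface in $K$; equivalently, every surface in $K$ lies in the $\delta$-thick part of the stratum. A standard area argument (the flat area of each surface in $K$ is bounded above, and disjoint embedded flat disks of radius comparable to $\delta$ can be placed along a short saddle connection, or one simply invokes the well-known fact that the number of saddle connections of length $\le L$ on a translation surface of area $A$ with injectivity radius $\ge\delta$ is bounded in terms of $A$, $L$, $\delta$) then gives a constant $N=N(K,L)$ bounding the number of saddle connections of length at most $L$ on any $(X,\omega)\in K$.

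Next I would promote this to a finiteness statement about \emph{homology classes}, which is where the simple-connectivity of $K$ is used. Over a simply connected $K$, the local system $H_1(X,\Sigma,\bZ)$ is trivial, so we may identify $H_1(X_t,\Sigma_t,\bZ)$ for all $(X_t,\omega_t)\in K$ with one fixed lattice $\Lambda$ via parallel transport; under this identification a saddle connection of length $\le L$ on $(X_t,\omega_t)$ has holonomy (its image under the period map composed with evaluation against $\omega_t$) of modulus $\le L$. The key point is that the homology class of a saddle connection $w$ is \emph{determined} by finitely much data that varies continuously with $(X,\omega)$: along a path in $K$, a saddle connection of length $\le L$ persists and varies continuously (its holonomy cannot jump), and since $K$ is compact a saddle connection of length $\le L$ can only degenerate or merge at finitely many "events"; tracking these shows that the set of homology classes arising is the union over a finite cover of $K$ by period-coordinate charts of the finitely many classes visible in each chart. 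Concretely: cover $K$ by finitely many open sets $U_1,\dots,U_r$, each contained in a period-coordinate chart and each simply connected; on each $U_j$ a saddle connection of length $\le L$ at one point extends to a locally constant homology class on all of $U_j$ (its length stays bounded, say by $2L$, after shrinking), and by the argument above there are at most $N(K,2L)$ such classes per chart, so at most $r\cdot N(K,2L)$ classes total.

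The step I expect to be the main obstacle is the second one — controlling how the homology class of a short saddle connection can change as the surface moves through $K$, i.e. ruling out that infinitely many distinct classes appear through an accumulation of "jumps." The subtlety is that a saddle connection can cease to be a saddle connection (if a zero moves off it) and a \emph{different} homology class can take over as the shortest one, so one cannot literally follow a single class globally. The clean way around this is the covering argument just described: work locally in period coordinates where saddle connections of bounded length correspond to a locally finite, locally constant collection of integer vectors in $\Lambda$ with bounded holonomy, bound their number locally using the thick-part/area estimate of step one, and then use finiteness of a subcover of the compact $K$. I would also remark that an alternative, perhaps cleaner, route is to observe that the whole of $K$ sits inside finitely many period-coordinate charts, pull back the (now genuinely constant) lattice $\Lambda$, and note that in a fixed chart the period map is a fixed linear-algebraic map, so "holonomy of modulus $\le L$" cuts out a bounded region meeting $\Lambda$ in finitely many points — but some care with marked points and with the non-injectivity of $H_1(X\setminus\Sigma)\to H_1(X,\Sigma)$ is needed, which is exactly why phrasing everything in terms of $H_1(X,\Sigma,\bZ)$ and parallel transport over the simply connected $K$ is the safest formulation.
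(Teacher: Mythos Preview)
Your overall strategy is close in spirit to the paper's, but there is a genuine gap in your second step. The persistence claim --- that on a small enough chart $U_j$ any saddle connection of length $\le L$ at any point of $U_j$ persists as a saddle connection throughout $U_j$ --- is false. A zero not on the boundary of a saddle connection can sit arbitrarily close to its \emph{interior} without violating your injectivity-radius lower bound $\delta$ (the short perpendicular segment from that zero to the interior is not itself a saddle connection). Hence in any neighborhood, however small, that zero may cross the saddle connection and destroy it. No uniform smallness of charts prevents this, so the bound ``at most $N(K,2L)$ classes per chart'' is unjustified.

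The fix is precisely the paper's key idea: replace ``saddle connection'' by ``union of saddle connections'' and track total length. When a zero crosses a saddle connection, the latter becomes a union of saddle connections in the same relative homology class with nearly the same total length; thus the minimal total length of such a union representative of a fixed class is continuous on $K$, and by compactness varies by at most a multiplicative constant $c$. The paper then reduces to a \emph{single} surface rather than a finite cover: any class represented by a saddle connection of length $\le L$ somewhere in $K$ is represented by a union of saddle connections of total length $\le cL$ at one fixed $(X_0,\omega_0)\in K$, and on a fixed surface there are only finitely many saddle connections of length $\le cL$, hence only finitely many such unions. This bypasses both your uniform counting bound over $K$ and the covering argument.

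Your alternative ``cleaner route'' via the period map also does not work as stated: the period homomorphism $H_1(X,\Sigma,\bZ)\to\bC$, $\gamma\mapsto\int_\gamma\omega$, has rank $2$ while the lattice has rank $2g+|\Sigma|-1$, so infinitely many lattice points have holonomy of modulus $\le L$. The finiteness genuinely comes from the geometric constraint of being represented by a saddle connection (or union thereof), not from boundedness of the period.
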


The fact that $K$ is simply connected means that the relative homology of different surfaces in $X$ can be canonically identified. The proof is standard, but we give a sketch. 

\begin{proof}
There is some $c>1$ depending on $K$ so that if a relative cohomology class is represented by a saddle connection of length $L$ somewhere in $K$, then it is represented by a union of saddle connections of total length at most $cL$ everywhere on $K$.

If there were infinitely many relative cohomology classes represented somewhere on $K$ by saddle connections of length at most $L$, then on every $(X,\omega)\in K$ all of these relative cohomology classes would be represented by unions of saddle connections of total length  at most $cL$. This contradicts the fact that there are at most finitely many saddle connections on $(X,\omega)$ of length at most $cL$. 
\end{proof}

\begin{lem}\label{L:cV}
Let $\cU$ be a set of translation surfaces with compact closure in the stratum. Let $L>0$ be a large constant. Then $\cU$ is contained in a finite union of {connected} subsets $\cV$, such that  the following properties hold.
\begin{itemize}
\item Suppose $(X, \omega)\in \cV$ has a saddle connection of length at most $L$. Then this saddle connection exists everywhere in $\cV$. 
\item Suppose $(X, \omega)\in \cV$ has a pair saddle connection of length at most $L$ that are parallel. Then these saddle connections remain parallel on all of $\cV$. 
\end{itemize}
\end{lem}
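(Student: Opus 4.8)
\textbf{Proof proposal for Lemma \ref{L:cV}.}

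The plan is to build the cover by $\cV$'s out of the finite list of ``potentially short, potentially parallel'' relative homology classes supplied by Lemma \ref{L:homfin}, and then to decompose $\cU$ according to which of the finitely many possible incidence/parallelism patterns among these classes is realized. First I would fix a slightly larger compact simply connected set; more precisely, cover $\overline{\cU}$ by finitely many simply connected open sets $W_1, \dots, W_N$ in the stratum, each with compact closure, so that relative homology is canonically trivialized on each $W_k$ and it suffices to treat one $W=W_k$ at a time. On $W$ apply Lemma \ref{L:homfin} with the constant $cL$ (where $c>1$ is the distortion constant for $\overline{W}$ as in the proof of that lemma) to get a finite set $\Gamma$ of relative homology classes containing every class that is represented by a saddle connection of length at most $L$ anywhere in $W$. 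For a class $\gamma\in\Gamma$, let $\ell_\gamma:W\to(0,\infty]$ be the length of the shortest saddle connection in the class $\gamma$ (or $+\infty$ if $\gamma$ is not represented by a saddle connection at that surface); this is a continuous, in fact piecewise-smooth, function of the period coordinates.

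Next I would carve $W$ into pieces using these finitely many functions. For each subset $S\subset\Gamma$, let $W_S$ be the set of $(X,\omega)\in W$ with $\ell_\gamma(X,\omega)\le L$ for $\gamma\in S$ and $\ell_\gamma(X,\omega)> L$ for $\gamma\notin S$; the $W_S$ partition $W$ into finitely many pieces. Within each $W_S$, the classes that can be short are exactly the $\gamma\in S$, and each such $\gamma$ \emph{is} represented by a saddle connection throughout $W_S$ (its length is continuous and bounded by $L<cL$, so the saddle connection persists and cannot be destroyed without the length first exceeding $L$; here I use that on $W$ a saddle connection of length $\le L$ persists with length $\le cL$, the mechanism already exploited in Lemma \ref{L:homfin}). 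Then I would further subdivide $W_S$ according to the parallelism pattern: for each pair $\gamma,\gamma'\in S$ the locus where the representing saddle connections of $\gamma$ and $\gamma'$ are parallel is the real-analytic subset of $W_S$ cut out by $\Im(\overline{\mathrm{hol}(\gamma)}\,\mathrm{hol}(\gamma'))=0$, and I take the connected components of the complement of the union over all pairs of the boundaries of these loci — or more cleanly, I intersect $W_S$ with each of the finitely many ``sign/parallelism patterns'' for the finitely many pairs and pass to connected components. The resulting connected sets $\cV$ are finite in number and by construction have the two stated properties: any saddle connection of length $\le L$ on some point of $\cV$ lies in some $\gamma\in S$ and hence persists on all of $\cV$, and any two such that are parallel somewhere in $\cV$ have $\Im(\overline{\mathrm{hol}(\gamma)}\,\mathrm{hol}(\gamma'))=0$ being the constant value of that pattern on $\cV$, hence stay parallel.

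The main obstacle I anticipate is not the combinatorics but the persistence claim: one must be sure that within a piece $W_S$ a class $\gamma\in S$, which is short at one surface, is genuinely realized by a \emph{saddle connection} (not merely a geodesic representative that develops into several saddle connections) at every surface of $W_S$, and that it never becomes long and then short again in a way that would break connectedness of the ``short'' locus. This is handled by the distortion estimate: choosing $L$ large and working with the ratio $c$ from Lemma \ref{L:homfin}, a class that is $\le L$ somewhere is $\le cL$ everywhere on $W$, so its shortest saddle-connection representative never degenerates; combined with the openness of ``having a saddle connection in class $\gamma$ of length $<L$'' one gets that $W_S$ is contained in the locus where $\gamma$ is represented by a persistent saddle connection. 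A minor secondary point is to record that the pieces $\cV$ can be taken connected — this is automatic since we pass to connected components at the end, and there are finitely many because $W_S$ is a subanalytic set cut by finitely many analytic equations and inequalities, hence has finitely many connected components (alternatively, one avoids subanalytic geometry by noting each $\cV$ can be taken to be a small enough product neighborhood, subdividing $W$ further into finitely many such).
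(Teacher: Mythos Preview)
Your approach is correct and very close to the paper's: both reduce to a small coordinate ball, invoke Lemma \ref{L:homfin} for a finite set $P$ of relevant homology classes, partition according to parallelism patterns among pairs in $P$ (each locus $H_{\alpha,\beta}$ being a quadratic hypersurface in period coordinates), and cite semi-algebraic geometry for finiteness of connected components. The difference is that you insert an extra layer, first partitioning into the $W_S$ according to which classes have $\ell_\gamma\le L$ and only then refining by parallelism. The paper skips this, using the parallelism partition alone, and notes that the first property already follows: a saddle connection is created or destroyed exactly when a pair of previously non-parallel classes becomes parallel (or vice versa), so the parallelism pattern changes precisely at those moments. This one observation makes your $W_S$ step unnecessary, though not incorrect.

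One small correction: your claim that $\ell_\gamma$ is continuous is false in general, since $\ell_\gamma$ jumps to $+\infty$ when the saddle connection in class $\gamma$ degenerates into a concatenation of shorter saddle connections; so your persistence argument via ``$\le L$ somewhere implies $\le cL$ everywhere, hence never degenerates'' does not work as stated. This does not damage your proof, because the first property is in fact tautological on $W_S$ (by definition $\ell_\gamma\le L$ there, which forces a saddle connection in class $\gamma$ to exist). But it does mean that showing $W_S$ is subanalytic with finitely many components requires exactly the paper's observation: the boundary of ``$\gamma$ is represented by a saddle connection'' is contained in finitely many of the parallelism hypersurfaces $H_{\gamma_1,\gamma_2}$ with $\gamma_1+\gamma_2=\gamma$.
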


\begin{proof}
It suffices to show this for any small set $\cU$ that is a round ball in some choice of local period coordinates. 

Let $P\subset H_1(X, \Sigma, \bZ)$ be the set of relative homology classes that are represented by saddle connections of length  at most $cL$ at some surface in $\cU$, where $c$ is the constant from the proof of Lemma \ref{L:homfin}.  This set is finite by Lemma \ref{L:homfin}. For each $\alpha, \beta\in P$, consider the set $H_{\alpha, \beta}\subset \cU$ where the holonomy of $\alpha$ and $\beta$ are parallel. In local coordinates, this set is a hypersurface defined by a single quadratic equation. 

For each subset of pairs $J \subset P\times P$, consider the subset 
$$\cV_J=\bigcap_{(\alpha, \beta)\in J} H_{\alpha, \beta} \setminus \bigcup_{(\alpha, \beta)\notin J} H_{\alpha, \beta}$$
 of surface in $\cU$ that are in $H_{\alpha, \beta}$ if and only if $(\alpha, \beta)\in J$. Each $\cV_J$ intersects $\cU$ in at most finitely many connected components. (Indeed, these intersections are semi-algebraic sets, which always have at most finitely many connected components  \cite[Theorem 2.4.5]{BCR}.)

It is clear that the connected components of  $\cV_J$ satisfy the two desired properties, since when a saddle connection is created or destroyed a pair of previously parallel saddle connections become not parallel or vice versa.  
\end{proof}

\begin{cor}
Let $\cM$ be an affine invariant submanifold and let $L'>0$. Then each compact subset $K \subset \cM$ can be covered by finitely many connected sets $\cV$, such that the following property holds.  If $(X, \omega)\in \cV$ and $C$ is a cylinder of length at most $L'$ on $(X, \omega)$, then the set of all cylinders $\cM$-parallel to $C$ is constant on $\cV$, and the set of boundary saddle connections of these cylinders is also constant on $\cV$.
\end{cor}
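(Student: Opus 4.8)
The plan is to combine Lemma \ref{L:cV} with the uniform bound on circumferences of $\cM$-parallel cylinders furnished by Proposition \ref{P:Cbound}. Since $K$ is compact it is covered by finitely many round balls in local period coordinates, so it suffices to treat the case where $K=\cU$ is such a ball, chosen small enough that its closure $\overline{\cU}$ is a compact simply connected subset of the stratum; then the relative homology groups of all surfaces in $\cU$ are canonically identified and the period of a fixed class varies continuously over $\overline{\cU}$.

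The crux is to choose, in advance, a length threshold $L$ large enough that every saddle connection which bounds a cylinder relevant to the statement has length at most $L$ at every point of the pieces $\cV$ that Lemma \ref{L:cV} produces. Let $N$ be a bound, depending only on the stratum, for the number of saddle connections, counted with multiplicity, on one boundary component of a cylinder; let $R_{cyl}$ be the constant of Proposition \ref{P:Cbound}; and let $\cS_0$ be the set of relative homology classes represented by a saddle connection of length at most $L'$ somewhere in $\cU$, which is finite by Lemma \ref{L:homfin}. Set
\[ M \;=\; N\cdot \max_{\gamma\in\cS_0}\ \max_{(Z,\zeta)\in\overline{\cU}}\ \left|\int_\gamma \zeta\right|, \qquad L \;=\; \max(L',\,R_{cyl}M). \]
Now apply Lemma \ref{L:cV} to $\cU$ with this value of $L$, obtaining finitely many connected subsets $\cV\subseteq\cU$ covering $\cU$ on which saddle connections of length at most $L$ persist and parallelism among them is unchanged. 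Collecting these over all the balls covering $K$ gives a finite cover of $K$.

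To verify the stated property, fix one such $\cV$, a point $(X,\omega)\in\cV$, and a cylinder $C$ of circumference at most $L'$ on $(X,\omega)$. Each saddle connection on a boundary component of $C$ has length at most the circumference of $C$, hence at most $L'\le L$, so it lies in $\cS_0$ and, by Lemma \ref{L:cV}, persists throughout $\cV$ with its parallelisms unchanged; a standard argument then shows $C$ itself persists on $\cV$, with constant core curve class $\alpha$. As $\alpha$ is a sum of at most $N$ of the classes in $\cS_0$ counted with multiplicity, the circumference of $C$ is at most $M$ at every point of $\cV$. Therefore, at any point of $\cV$, any cylinder $D$ that is $\cM$-parallel to $C$ has circumference at most $R_{cyl}M\le L$ by Proposition \ref{P:Cbound}; its boundary saddle connections thus have length at most $L$, so they persist on $\cV$ by Lemma \ref{L:cV}, and hence $D$ persists on $\cV$ with a fixed set of boundary saddle connections. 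Finally, being $\cM$-parallel is recorded by a linear relation on periods valid locally on $\cM$, and both the subbundle $T(\cM)$ and the homology classes of core curves of persistent cylinders are invariant under parallel transport; since $\cV$ is connected, two cylinders that persist on $\cV$ are $\cM$-parallel at one point if and only if at every point. Combining these observations, at every point of $\cV$ the set of cylinders $\cM$-parallel to $C$ equals the fixed set of cylinders that persist on $\cV$ and are $\cM$-parallel to $C$, and the union of their boundary saddle connections is likewise fixed, as required.

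I expect the main obstacle to be the apparent circularity that this choice of $L$ is designed to defeat: a cylinder $\cM$-parallel to $C$ could a priori have enormous circumference at a point of $\cV$ where $C$ itself has enormous circumference, so Lemma \ref{L:cV} with a naive threshold would fail to make such a cylinder persist on all of $\cV$. This is overcome by first bounding the circumference of $C$ uniformly over $\cV$ — possible only because the core curve of $C$ ranges over the finite set of nonnegative integer combinations of at most $N$ classes from $\cS_0$ and $\cV$ lies in the fixed compact set $\overline{\cU}$ — and only then invoking the global circumference bound of Proposition \ref{P:Cbound}.
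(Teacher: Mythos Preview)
Your proof is correct and follows essentially the same approach as the paper's: reduce to Lemma~\ref{L:cV} with a suitable threshold $L$, using Proposition~\ref{P:Cbound} to control the circumferences of cylinders $\cM$-parallel to $C$. The paper's own proof is a single sentence, taking $L=L'R_{cyl}$.

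The one noteworthy difference is your extra care in choosing $L$. The paper's choice $L=L'R_{cyl}$ bounds the boundary saddle connections of every cylinder $\cM$-parallel to $C$ \emph{at the point $(X,\omega)$ where $C$ has circumference at most $L'$}; but as you correctly observe in your final paragraph, when one moves to another point of $\cV$ the circumference of $C$ may grow, and so a new cylinder $D'$ appearing there, $\cM$-parallel to $C$, might a priori have boundary saddle connections longer than $L'R_{cyl}$ --- putting it outside the reach of Lemma~\ref{L:cV}. Your intermediate constant $M$ (a uniform bound on the circumference of $C$ over $\overline{\cU}$, obtained by expressing the core curve as a bounded sum of classes in the finite set $\cS_0$) neatly closes this gap and is exactly the kind of adjustment the paper's terse proof leaves implicit. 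One small point: since $\cM$-parallelism is only defined on $\cM$ and invariance under parallel transport requires connectedness inside $\cM$, you should take connected components of $\cV\cap\cM$ rather than of $\cV$ itself; this is harmless since $\cM$ is locally linear and the intersection remains semi-algebraic with finitely many components.
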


\begin{proof}
This follows from Lemma \ref{L:cV} with constant $L$ equal to $L'$ times any upper bound $R_{cyl}$ for the ratio  of two $\cM$-parallel cylinders in $\cM$, which exists by Proposition \ref{P:Cbound}.
\end{proof}

Finally, we can conclude the proof of Theorem \ref{T:CFT}. 

\begin{proof}[\eb{Proof of Theorem \ref{T:CFT}.}]
Pick $\e=1/R_{sc}$, where $R_{sc}$ is the constant from Proposition \ref{P:notinySC}. Let $K\subset \cH^1$ be the compact set given by Theorem \ref{T:MW} such that every horocycle orbit of a unit area surface without saddle connections of length less than $\e$ intersects $K$.

Cover $K$ by finitely many connected sets $\cV$ as in the previous corollary, using $L'=1$.  For each $\cV$ and each cylinder $C$ of circumference at most $1$ on a surface in $\cV$, this cylinder and the equivalence class $\cC$ of cylinders $\cM$-parallel to $C$ persist everywhere on $\cV$, and the ratio of circumferences of cylinders\ann{A: Typo fixed.} in $\cC$ is constant on $\cV$. There are only finitely many such $C$ and $\cC$ for each $\cV$. 
 
 
 Suppose that the cylinders in $\cC$ have core curves $\alpha_i$ and circumferences $c_i$. Each twist in this equivalence class can be written as   
$$\sum t_i c_i I_\Sigma(\alpha_i)$$
for some $t_i$ in $\bC$. The $t_i$ are coordinates for the set of all twists supported on $\cC$. Note the circumferences $c_i$ are not constant on $\cV$, but their ratios are. The twist space of $\cC$ is also constant on $\cV$. 

Define the sets $S_1$ and $S_2$ as follows. For every $\cV$, and every equivalence class $\cC$ of $\cM$-parallel cylinders that contains a cylinder of circumference at most 1 somewhere on $\cV$, add the ratios of the circumferences to $S_1$. By Lemma \ref{L:rational}, the twist space of $\cC$ can be defined by linear equations on the $t_i$ with coefficients in $\bQ$. Pick a finite set of such equations defining the twists space of $\cC$, and add all the coefficients to $S_2$. (The definition of $S_2$ is not canonical.)

We will prove the theorem with this choice of $S_1$ and $S_2$. Take any surface $(X, \omega) \in \cM^1$, and any equivalence class $\cC$ of $\cM$-parallel cylinders on $(X, \omega)$. Without loss of generality, assume these cylinders are horizontal. 

If there are horizontal saddle connections not $\cM$-parallel to the cylinders in $\cC$, nudge $(X, \omega)$ to get $(X', \omega')$ where all cylinders in $\cC$ persist and stay horizontal, but where there are no such horizontal saddle connections accidentally parallel to the cylinders in $\cC$. Let $\cC'$ be the set of cylinders at $(X', \omega')$ that are $\cM$-parallel to cylinders in $\cC$, so $\cC\subset \cC'$. Note that the twist space for $\cC$ at $(X, \omega)$ embeds naturally in the twist space for $\cC'$ at $(X', \omega')$ via parallel transport. Furthermore, in the $t_i$ coordinates, the twist space of $\cC$ is a coordinate subspace of the twist space for $\cC'$, because twists in $\cC'$ are parallel transports of twists in $\cC$ if and only if they do not involve the cylinders of $\cC'\setminus \cC$, i.e. if the $t_i$ corresponding to cylinders in $\cC'\setminus \cC$ are $0$. 

By Proposition \ref{P:notinySC}, if we pick $t$ such that the smallest circumference of a cylinder in $\cC'$ on $g_t(X', \omega')$ is exactly 1, then  the shortest horizontal saddle connection is at most $1/R_{sc}$.  By Theorem \ref{T:MW},  we can find $s$ so that $h_s g_t(X', \omega')\in K$. By the definition of $S_1$ and $S_2$, this gives that the circumferences and twist space of $\cC'$ and hence also $\cC$ are described by the finite sets $S_1$ and $S_2$ as desired. 
\end{proof}

\section{Recognizable twists}\label{S:Rec}

The goal of this section is to prove that certain \emph{recognizable} twists span $T(\cM)$, and that a recognizable twist at a surface is still recognizable at nearby surfaces.  This ability to robustly recognize the tangent space will allow us to compare the tangent space at a surface in $\cM$ to the tangent space at a surface in $\partial \cM$ in the next section. 

\begin{rem}\label{R:E}
Before discussing what does work, it is helpful to mention what doesn't work. We could simply consider, for each surface $(X,\omega)$, the span $E(X,\omega)$ of the standard twists in the full set of all cylinders in each direction. (Or, what is a bit better, we could also include twists that are in the closure of the one parameter families of deformations given by the standard twists. So, for example if a direction on $(X,\omega)$ has exactly two parallel cylinders, but their moduli aren't rational multiples of each other, this would include the twist in each of these cylinders separately.) 

This $E(X,\omega)$ is defined without reference to any particular $\cM$ and is very poorly behaved. For example, square-tiled surfaces are dense in every stratum, and for each square-tiled surface $E(X,\omega)$ is just the tautological plane $\span(\Re(\omega), \Im(\omega))$. In particular, $E(X, \omega)$ is nowhere continuous. Even worse, as far as we know it is possible for $E(X, \omega)$ to be very small even if the orbit closure of $(X,\omega)$ is large. For example, it is an open question whether $E(X, \omega)=\span(\Re(\omega), \Im(\omega))$ implies that $(X,\omega)$ lies on a closed orbit \cite[Question 5]{SWprobs}. (For some progress on this question, see \cite{LNW}.)
\end{rem}

\subsection{Definitions and examples.}

We will begin in a fairly abstract setting, without reference to $\cM$.\ann{R: ¤6.1 is too abstract. It took me hours to understand what it means.\\A: Lemma \ref{L:inM} moved earlier in text, and referenced here, to make it less abstract.} The reader may  consult Lemma \ref{L:inM} as they are reading the definitions for some motivation. 

Let $S_1, S_2$ be two finite sets of real numbers. Suppose that $(X,\omega)$ is a translation surface, and let $\{C_i\}$ be the  set of {all} cylinders in some direction on $(X, \omega)$. Suppose $C_i$ has height $h_i$, circumference $c_i$,  modulus $m_i=h_i/c_i$, and core curve $\alpha_i$.
   
   We will say that the twist $$u=\sum t_i c_i I_\Sigma(\alpha_i)$$ is $(S_1, S_2)$-recognizable if, for all subsets of the cylinders $C_i$ with all ratios of circumferences in $S_1$,  the corresponding $t_i$ satisfy all linear equations with coefficients in $S_2$ that the $m_i$ do. 
   
More precisely, we require that for any subset  $\{C_i: i \in \cI\}$ of the cylinders with $c_i/c_j\in S_1$ for all $i,j\in \cI$, and any choice of $f_i\in S_2$ for each $i \in \cI$,  $$\sum_\cI f_i m_i=0\quad\quad \text{implies}\quad\quad   \sum_\cI f_i t_i=0.$$
   
Note that the $c_i=c_i(X,\omega)$ and $m_i=m_i(X,\omega)$ vary with $(X,\omega)$, and so a twist can be $(S_1, S_2)$-recognizable at a surface but not at some nearby surface. 
   
\begin{ex}
Suppose that in some direction $(X,\omega)$ has three cylinders, with circumferences $c_1=1, c_2=1, c_3=40$ and moduli $m_1=1, m_2=2, m_3=1$. Suppose $S_1=S_2=\{1\}$. Then the twist described by $t_1=1, t_2=-3, t_3=0$ is $(S_1, S_2)$-recognizable. To check this, note that the only set of cylinders with all ratios of circumferences in $S_1$ is $\{C_1, C_2\}$. So we need to check that $t_1, t_2$ satisfy all linear equations with coefficients in $S_2$ that $m_1, m_2$ satisfy. In this case $m_1, m_2$ don't satisfy any linear equations with coefficients $S_2$.  

 Next consider the same example but with $S_1=\{1,40, 1/40\}$ and $S_2=\{1\}$. The full set of all three cylinders has all ratios of circumferences in $S_1$. So we need to check that $t_1, t_2, t_3$ satisfy all linear equations with coefficients in $S_2$ that the $m_1, m_2, m_3$ satisfy. In this case the equation $m_1=m_3$ holds, but $t_1\neq t_3$, so the twist is not recognizable.  
  
Next consider the same example but with $S_1=\{1\}, S_2=\{1,2\}$.  The only set of cylinders with all ratios of circumferences in $S_1$ is $\{C_1, C_2\}$, but now the equation $m_2=2m_1$ shows the twist is not $(S_1,S_2)$-recognizable. 
\end{ex}

\begin{ex} If $\cC$ is the collection of all cylinders in some direction on $(X, \omega)$, then the standard twist in $\cC$ is $(S_1, S_2)$-recognizable for any  $S_1, S_2$. This is because this twist corresponds to $t_i=m_i$. 
\end{ex}

\begin{rem}
For many surfaces the standard twists will be the only $(S_1, S_2)$-recognizable twists. However at special surfaces, for example when there are extra parallel cylinders in some direction, there may be $(S_1, S_2)$-recognizable twists that are not standard. 
\end{rem}

\subsection{Key results.}

\begin{lem}\label{L:inM}
For any affine invariant submanifold $\cM$, there are finite sets $S_1, S_2$ so that any $(S_1, S_2)$-recognizable twist at a surface in $\cM$ remains in $\cM$. 
\end{lem}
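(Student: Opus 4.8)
The plan is to take for $S_1$ the finite set of ratios of circumferences of $\cM$-parallel cylinders provided by the Cylinder Finiteness Theorem (Theorem \ref{T:CFT}), and for $S_2$ the finite set of coefficients appearing in the linear equations (also provided by Theorem \ref{T:CFT}) that cut out the twist space of an equivalence class of $\cM$-parallel cylinders inside $\{\sum t_i c_i I_\Sigma(\alpha_i)\}$. The key point is that these are precisely the data that make the notion of $(S_1,S_2)$-recognizability, which is defined without reference to $\cM$, strong enough to detect membership in $T(\cM)$ at a surface in $\cM$.

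First I would fix $(X,\omega)\in\cM$ and a recognizable twist $u=\sum t_i c_i I_\Sigma(\alpha_i)$, where $\{C_i\}$ ranges over all cylinders in a fixed direction on $(X,\omega)$. Group the $C_i$ according to the equivalence relation of being $\cM$-parallel; this partitions the index set into classes $\cI_1,\dots,\cI_r$, and by Theorem \ref{T:CFT} all ratios $c_i/c_j$ for $i,j$ in the same class lie in $S_1$. Restricting the expression for $u$ to each class $\cI_\ell$ gives a would-be twist $u_\ell=\sum_{i\in\cI_\ell} t_i c_i I_\Sigma(\alpha_i)$, and clearly $u=\sum_\ell u_\ell$. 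So it suffices to show each $u_\ell\in T(\cM)$, since $T(\cM)$ is a linear subspace. Now apply the recognizability hypothesis with the subset $\{C_i:i\in\cI_\ell\}$: since all circumference ratios within $\cI_\ell$ are in $S_1$, the $t_i$ with $i\in\cI_\ell$ satisfy every linear equation with coefficients in $S_2$ that the moduli $m_i$, $i\in\cI_\ell$, satisfy. By the choice of $S_2$ in Theorem \ref{T:CFT}, the twist space of the equivalence class $\cI_\ell$ at $(X,\omega)$ is exactly the set of $\sum_{i\in\cI_\ell} s_i c_i I_\Sigma(\alpha_i)$ whose coefficients $s_i$ satisfy a finite list of equations with coefficients in $S_2$, all of which hold at $s_i=m_i$; hence they hold at $s_i=t_i$, which says precisely that $u_\ell$ lies in the twist space of $\cI_\ell$, and in particular $u_\ell\in T(\cM)$.

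The one subtlety I anticipate, and the step that needs the most care, is matching the quantifiers in the abstract definition of recognizability with the concrete output of Theorem \ref{T:CFT}. The theorem describes the twist space of a single equivalence class of $\cM$-parallel cylinders, whereas recognizability quantifies over \emph{all} subsets of the full set of parallel cylinders with circumference ratios in $S_1$. The essential observations that make this work are: (a) an equivalence class of $\cM$-parallel cylinders is one such subset, so the hypothesis does apply to it; and (b) the decomposition $u=\sum_\ell u_\ell$ over equivalence classes is legitimate because $\cM$-parallelism is genuinely an equivalence relation partitioning the cylinders in a fixed direction. I would also remark that $S_2$ as produced in the proof of Theorem \ref{T:CFT} depends on a non-canonical choice of defining equations for each twist space, but any such choice works here, since recognizability only gets stronger as $S_2$ grows and we need only that the finitely many equations we selected are among those with coefficients in $S_2$. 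With these points in place the argument is short; the real content has already been absorbed into the Cylinder Finiteness Theorem.
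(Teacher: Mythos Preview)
Your proof is correct and follows essentially the same approach as the paper: take $S_1,S_2$ from the Cylinder Finiteness Theorem, decompose $u$ as a sum over equivalence classes of $\cM$-parallel cylinders, and use that each class has circumference ratios in $S_1$ and twist space cut out by $S_2$-equations satisfied by the moduli. Your discussion of the quantifier-matching subtlety is more explicit than the paper's, but the logic is identical.
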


Pairs of sets $(S_1, S_2)$ satisfying the conclusion of the lemma will be called $\cM$-adapted. Note that enlarging an adapted set always gives another adapted set. 

\begin{proof}
Take the $S_1, S_2$ given by the Cylinder Finiteness Theorem. 

Suppose that $u$ is a $(S_1, S_2)$-recognizable twist. It suffices to assume it is supported on a equivalence class of $\cM$-parallel cylinders, since $u$ is the sum of its restrictions to such equivalence classes. 

Write $u=\sum t_i c_i I_\Sigma(\alpha_i)$. By assumption, the twist space of the equivalence class of $\cM$-parallel cylinders can be defined by linear equations in the $t_i$  with coefficients in $S_2$. The twist space contains the standard twist given by $t_i=m_i$, so hence all of the equations defining the twist space must have $t_i=m_i$ as a solution. 

By the definition of $(S_1, S_2)$-recognizable, the $t_i$ defining $u$ satisfy all such linear equations. 
\end{proof}

The key technical property of recognizable twists is given by the next lemma.  

\begin{lem}\label{L:recnearby}
Suppose that $u$ is a $(S_1, S_2)$-recognizable twist at $(X, \omega)$. Then there is a neighborhood $\cV$ of $(X, \omega)$ such that for all $(X', \omega')\in \cV$, the parallel translate of $u$ to $(X', \omega')$ is in the  span of the $(S_1, S_2)$-recognizable twists at $(X', \omega')$. 
\end{lem}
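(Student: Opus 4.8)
The plan is to reduce to a statement about a single direction and then verify the definition of recognizability directly; the only real subtlety is the comparison of cylinder data at the two surfaces. Rotating, I would assume the cylinders supporting $u$ are horizontal at $(X,\omega)$, let $I$ index all horizontal cylinders $C_i$ of $(X,\omega)$, with circumference $c_i$, modulus $m_i$ and core curve $\alpha_i$, and write $u=\sum_{i\in I}t_ic_iI_\Sigma(\alpha_i)$. On any $(X',\omega')$ sufficiently close to $(X,\omega)$ all the $C_i$ persist, but they need no longer be mutually parallel: they partition as $I=\bigsqcup_k I_k$ according to which direction $\theta_k$ (necessarily close to horizontal) they occupy on $(X',\omega')$. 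Parallel transport carries $u$ to the class $\tilde u=\sum_{i\in I}t_ic_iI_\Sigma(\alpha_i)$ on $(X',\omega')$, which splits accordingly as $\tilde u=\sum_k\tilde u_k$ with $\tilde u_k=\sum_{i\in I_k}t_ic_iI_\Sigma(\alpha_i)$ supported on the cylinders in direction $\theta_k$. So it is enough to show that, for $\cV$ small, each $\tilde u_k$ is itself an $(S_1,S_2)$-recognizable twist in direction $\theta_k$ on every $(X',\omega')\in\cV$; summing over $k$ then exhibits $\tilde u$ as a sum of recognizable twists, hence in their span.

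To test recognizability of $\tilde u_k$ I would rewrite it in the coordinates of $(X',\omega')$: writing $c_i'$ for the circumference of $C_i$ on $(X',\omega')$, we have $\tilde u_k=\sum\tau_\bullet c_\bullet'I_\Sigma(\cdot)$ with $\tau_i=t_ic_i/c_i'$ for $i\in I_k$ and $\tau_D=0$ for any cylinder $D$ in direction $\theta_k$ that is not one of the $C_i$. The task is then: for every subset $\cI$ of the cylinders in direction $\theta_k$ whose pairwise circumference ratios all lie in $S_1$, and every assignment $f_\bullet\in S_2$ with $\sum_\cI f_\bullet m_\bullet'=0$, show $\sum_\cI f_\bullet\tau_\bullet=0$. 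Two phenomena make this non-immediate: the circumferences $c_i'$ differ from $c_i$, so the coordinate change inserts the factors $c_i/c_i'$; and cylinders not present at $(X,\omega)$ can appear in direction $\theta_k$.

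The crux is to choose $\cV$ small enough that the following hold. (i) Since $S_1$ is finite and circumferences of persisting cylinders vary continuously, for $i,j\in I$ one has $c_i'/c_j'\in S_1$ only when $c_i/c_j\in S_1$, in which case $c_i'/c_j'=c_i/c_j$; hence on any admissible $\cI$ meeting $\{C_i:i\in I_k\}$ the number $c_i/c_i'$ is the same, say $\lambda$, for all $i$ with $C_i\in\cI$, and the awkward factors pull out of the sum. (ii) A cylinder $D$ lying in such an $\cI$ alongside some persisting $C_{i_0}$ has circumference at most $2(\max_i c_i)\max S_1$; but a cylinder of bounded circumference and bounded-below height on a surface near $(X,\omega)$ must be the persistence of a cylinder of $(X,\omega)$, by a routine compactness argument using finiteness of bounded-length homology classes (cf.\ Lemma \ref{L:horrible} and Lemma \ref{L:homfin}) together with the fact that the systole is bounded below on the compact closure of $\cV$; so $D$, being new, has modulus smaller than any prescribed bound. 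Taking that bound small compared to $\epsilon_0:=\min\{|\sum_\cJ g_im_i|:\cJ\subseteq I,\ g_i\in S_2,\ \sum_\cJ g_im_i\ne 0\}$, the relation $\sum_\cI f_\bullet m_\bullet'=0$ forces $\sum_{C_i\in\cI}f_im_i'$ to be small enough that $\sum_{C_i\in\cI}f_im_i$ vanishes exactly at $(X,\omega)$.

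With (i) and (ii) in hand the verification closes quickly: if $\cI$ contains no $C_i$ the claim is trivial; otherwise $\{i:C_i\in\cI\}$ is a subset of $I$ with circumference ratios in $S_1$ on which $\sum f_im_i=0$ at $(X,\omega)$, so recognizability of $u$ gives $\sum_{C_i\in\cI}f_it_i=0$, whence $\sum_\cI f_\bullet\tau_\bullet=\sum_{C_i\in\cI}f_it_ic_i/c_i'=\lambda\sum_{C_i\in\cI}f_it_i=0$. I expect the main difficulty to be item (ii): phrasing ``a genuinely new bounded cylinder must have vanishing height in the limit'' so that it holds uniformly over $(X',\omega')\in\cV$, which is the one place a compactness argument of the flavor of Lemma \ref{L:horrible} is really needed.
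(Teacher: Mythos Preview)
Your argument is correct and follows essentially the same approach as the paper's proof. Both proofs decompose $u$ into pieces according to the cylinder structure at $(X',\omega')$, arrange that no new $S_2$-relations among moduli of the persisting cylinders can appear and that any genuinely new cylinder with an $S_1$-ratio to a persisting one has small modulus, and then verify recognizability of each piece directly.

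The only noteworthy difference is the granularity of the decomposition: the paper partitions the $C_i$ by the equivalence relation generated by ``parallel at $(X',\omega')$ with circumference ratio in $S_1$'', whereas you partition only by direction at $(X',\omega')$. The paper's finer decomposition has the convenience that on each equivalence class the ratios $c_i/c_i'$ are constant, so the same $t_i$-coordinates work at both surfaces without the rescaling $\tau_i=t_ic_i/c_i'$ you introduce; conversely your coarser decomposition means fewer pieces but forces you to pull out the factor $\lambda$ only after restricting to a test set $\cI$ with $S_1$-ratios. Your $\epsilon_0$ argument is exactly the concrete form of the paper's choice of a neighborhood $\cU\subset\bR^{k+n}$ of $(m_1,\ldots,m_k,0,\ldots,0)$ on which no new $S_2$-equations hold; and your appeal to Lemma~\ref{L:horrible} for item (ii) is precisely what underlies the paper's condition (2) on $\cV$, which the paper states without separately justifying. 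These are packaging differences, not substantive ones.
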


By span we always mean span over the complex numbers. 

\begin{proof}
Let $C_i, i=1, \ldots, k$ be the collection of all cylinders in the direction of the cylinder twist $u$. Say $$u=\sum_{i=1}^k t_ic_i(X,\omega) I_\Sigma(\alpha_i).$$

Let $n$ be the maximum number of parallel cylinders on a surface in the stratum of $(X, \omega)$. Consider $$(m_1(X,\omega), \ldots, m_k(X,\omega), 0, \ldots, 0) \in \bR^{k+n}.$$ Find a neighborhood $\cU\subset \bR^{k+n}$ of this point such that for all $(m_1', \ldots, m_{k+n}')\in \cU$, the $m_i'$ do not satisfy any linear equations with coefficients in $S_2$ except those of the form $E_1+E_2=0$, where $E_1$ is a linear equation with coefficients in $S_2$ satisfied by  $(m_1(X,\omega), \ldots, m_k(X,\omega))$, and $E_2$ is any linear equation in the remaining $n$ variables with coefficients in $S_2$. 


We may find a neighborhood $\cU_0\times B_\e \subset \cU$, where $\cU_0$ is a neighborhood of $(m_1(X,\omega), \ldots, m_k(X,\omega))$ in $\bR^k$, and $B_\e$ is the ball of radius $\e$ in the remaining coordinates $\bR^{n}$. 

Define $\cV$ to be the set of  $(X', \omega')$ sufficiently close to $(X, \omega)$ so that the following three conditions hold.
\begin{enumerate} 
\item All the $C_i$ persist at $(X', \omega')$ and  the moduli $m_i(X',\omega')$ of the $C_i$ at $(X', \omega')$ have $(m_1(X',\omega'), \ldots, m_k(X',\omega'))\in \cU_0$.
\item If $C$ is a cylinder parallel to $C_i$ at $(X', \omega')$, and $C$ is not one of the $C_j$, and there is some $j$ so that the ratio of the circumference of $C$ and $C_j$ is in $S_1$, then the modulus of $C$ is less than $\epsilon$.
 \item If $C_i$ and $C_j$ have ratio of circumferences in $S_1$ at $(X', \omega')$, then they have the same ratio of circumferences at $(X, \omega)$. 
\end{enumerate}

Consider the  equivalence relation on the $C_i$, generated by $C_i\sim C_j$ if $C_i$ and $C_j$ remain parallel on $(X', \omega')$ and the ratio of their circumferences at $(X', \omega')$ is in $S_1$. Let $C_i, i\in \cI$ be an equivalence class for this equivalence relation. 

\bold{Claim:}  For each such equivalence class $\cI$,  the twist
$$u_{\cI}=\sum_{i\in \cI} t_i c_i(X, \omega) I_\Sigma(\alpha_{i})$$ is a recognizable twist at $(X', \omega')$. 

\bold{Conclusion of the proof of lemma given the claim:} 
$u$ is the sum of the twists $u_{\cI}$ over the different equivalence classes $\cI$. Since each $u_{\cI}$ is $(S_1, S_2)$-recognizable at $(X', \omega')$, we see that $u$ is in the span of the recognizable twists at $(X', \omega')$.

\bold{Proof of claim:} Because the ratios of circumferences of the $C_i, i\in \cI$ have not changed in moving from $(X, \omega)$ to $(X', \omega')$, the cohomology class $u_{\cI}$ is proportional to 
$$\sum_{i\in \cI} t_i c_i(X', \omega') I_\Sigma(\alpha_{i}).$$
Suppose, in order to find a contradiction, that this twist  is not recognizable at $(X', \omega')$. By definition, there must be 
\begin{itemize}
\item a collection of cylinders on $(X', \omega')$ parallel to the $C_i, i \in \cI$ for which all ratios of circumferences are in $S_1$, and  
\item  a linear equation $E$ with coefficients in $S_2$  that is satisfied by the moduli of these cylinders but not the $t_i$.
\end{itemize}
The $t_i$ value for any cylinder  other than the $C_i, i \in \cI$ is 0, because only the cylinders $C_i, i \in \cI$ appear in the definition of $u_{\cI}$.
\ann{R: I do not understand the last paragraph of the proof of Lemma 6.5. The lemma now follows directly from the claim ...\\A: The proof has been restructured and expanded to clarify. (What was Lemma 6.5 is now Lemma 6.6.)}

Let $D_j, j\in \cJ$ be the set of cylinders on $(X', \omega')$ that are parallel to some $C_i, i\in \cI$ and have ratio of circumference with this $C_i$ in $S_1$, but which are not equal to any of the $C_i$ (the full set of parallel cylinders considered on $(X, \omega)$). By part (2) of the definition of $\cV$, all $D_j$ have  modulus less than $\e$. By part (3),  the set of $m_i(X', \omega'), i \in \cI$ and $m_j(X', \omega'), j \in \cJ$ do not  satisfy any linear equations with coefficients in $S_2$, except those of the form $E_1+E_2$, where $E_1$ is an equation on the $t_i, i\in \cI$ that holds for the $m_i(X, \omega)$, and $E_2$ is any equation with coefficients in $S_2$ on the new $t$-variables corresponding to the $D_j, j\in \cJ$. So we may assume $E=E_1+E_2$ has this form.

The $t_i, i\in \cI$ satisfy  $E_1$ by construction, and  $u_{\cI}$ has 0 value for the new $t$-variables corresponding to the $D_j, j\in \cJ$. Hence $E$ is satisfied by the $t_i$, and we get a contradiction. We conclude that $u_{\cI}$ is in fact recognizable at $(X', \omega')$. 
 \end{proof}

%
%
%
%
%

\begin{lem}\label{L:adapted}
Let $(S_1, S_2)$ be arbitrary. There is an open, $GL(2,\bR)$ invariant set in $\cM$ on which the span of the $(S_1, S_2)$-recognizable twists is locally constant. 
\end{lem}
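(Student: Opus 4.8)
The plan is to study the integer-valued function $\delta(X,\omega)=\dim_\bC R(X,\omega)$, where $R(X,\omega)\subseteq H^1(X,\Sigma,\bC)$ denotes the span of the $(S_1,S_2)$-recognizable twists at $(X,\omega)$, and to show that $\delta$ is both lower semicontinuous on $\cM$ and $GL(2,\bR)$-invariant. Granting these, the set $\cM^\circ$ on which $\delta$ is locally constant is open; it is $GL(2,\bR)$-invariant because $\delta$ is; it is nonempty, indeed dense, because $\delta$ is a bounded integer-valued function, so on any open ball it attains its maximum at some point, where lower semicontinuity forces it to be locally constant; and on $\cM^\circ$ the subspace $R$ itself is locally constant for the Gauss--Manin connection, since near a point of $\cM^\circ$ the parallel transport of $R$ is contained in $R$ (by the estimate below) and has the same dimension, hence equals $R$.

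First I would establish lower semicontinuity of $\delta$. Choosing a basis of $R(X,\omega)$ consisting of recognizable twists and applying Lemma \ref{L:recnearby} to each basis vector, one obtains a single neighborhood of $(X,\omega)$ on which the parallel transport of each basis vector lies in the span of the recognizable twists; hence the parallel transport of $R(X,\omega)$ is contained in $R(X',\omega')$ for all nearby $(X',\omega')$, and in particular $\delta(X',\omega')\ge\delta(X,\omega)$. Since $R(X,\omega)$ sits inside $H^1(X,\Sigma,\bC)$, whose dimension is constant on the stratum, $\delta$ is globally bounded, so the remarks of the first paragraph apply.

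The remaining, and I expect hardest, point is that $\delta$ is $GL(2,\bR)$-invariant; for this it suffices to show that for each $g\in GL(2,\bR)$ the isomorphism $g_*$ carries the recognizable twists at $(X,\omega)$ bijectively onto those at $g(X,\omega)$, so that $g_*R(X,\omega)=R(g(X,\omega))$. I would first record that the recognizability conditions constrain the twist coordinates $t_i$ only through $\bR$-linear equations with coefficients in the real set $S_2$, so the set of recognizable twists is stable under multiplication by $i$ and by reals; thus the real and complex spans of the recognizable twists coincide, which is what makes taking spans compatible with the merely $\bR$-linear action of $g_*$. Then, fixing a direction on $(X,\omega)$ with all cylinders $C_1,\dots,C_k$ having core holonomies equal to positive real multiples of a unit vector $\hat v$, I would verify by direct computation that $g$ sends the $C_i$ to the cylinders $gC_i$ on $g(X,\omega)$ in the direction of $g\hat v$, that every circumference gets multiplied by the common factor $|g\hat v|$, that every modulus gets multiplied by the common factor $|\det g|/|g\hat v|^2$, and that, in the coordinates $u=\sum t_i c_i I_\Sigma(\alpha_i)$, the class $g_*u$ has coordinates $t_i'=g(t_i)/|g\hat v|$, i.e.\ each $t_i$ is transformed by the single invertible $\bR$-linear map $t\mapsto g(t)/|g\hat v|$ of $\bC\cong\bR^2$. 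Consequently all ratios of circumferences within the direction are preserved, so the subsets singled out by $S_1$ are unchanged, and a given $\bR$-linear relation with coefficients in $S_2$ holds among the $m_i$ (resp.\ the $t_i$) if and only if it holds among the $m_i'$ (resp.\ the $t_i'$). Hence $g_*u$ is recognizable at $g(X,\omega)$ exactly when $u$ is recognizable at $(X,\omega)$, which gives the invariance of $\delta$ and finishes the proof.

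Thus the main obstacle is the bookkeeping of the last paragraph: one must confirm that conjugation by a general element of $GL(2,\bR)$ affects the circumferences, the moduli, and the complex coordinates of a twist each by a single common transformation within a fixed direction, so that all the linear conditions defining $(S_1,S_2)$-recognizability are preserved. The rotation part of $g$ is immediate; the diagonal part is the computation indicated above, the one genuine subtlety being that $g_*$ acts only $\bR$-linearly on the complex coefficients, which is why one first checks that the real and complex spans of the recognizable twists agree.
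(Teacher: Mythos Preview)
Your proof is correct and follows the same approach as the paper: use Lemma \ref{L:recnearby} to get lower semicontinuity of the dimension, then take a locus where the dimension is (locally) maximal. The paper's two-line proof simply takes the set where the dimension is globally maximal and asserts $GL(2,\bR)$ invariance without comment; you take the (a priori larger) set where the dimension is locally constant and, more usefully, supply the verification of $GL(2,\bR)$ invariance that the paper omits---your bookkeeping on how circumferences, moduli, and twist coordinates transform under a general $g$ is correct, and your remark that recognizable twists are closed under complex scaling (so real and complex spans agree) is exactly what handles the only subtlety arising from the mere $\bR$-linearity of $g_*$.
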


\begin{proof}
Consider the open, $GL(2,\bR)$ invariant set where the span of the $(S_1, S_2)$-recognizable twists has maximal dimension. On this locus, the span is locally constant by Lemma \ref{L:recnearby}. 
\end{proof}

\begin{prop}\label{P:spanTM}
Suppose $(S_1, S_2)$ is $\cM$-adapted. There is a finite union of proper affine invariant submanifolds of $\cM$ such that if $(X, \omega)\in \cM$ is not contained in this finite union then the $(S_1,S_2)$-recognizable twists at $(X,\omega)$ generate the tangent space to $\cM$ at $(X,\omega)$. 
\end{prop}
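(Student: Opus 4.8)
The plan is to combine Lemma~\ref{L:adapted} with the fact that $GL(2,\bR)$ orbits equidistribute in affine invariant submanifolds. By Lemma~\ref{L:adapted}, there is an open, $GL(2,\bR)$-invariant set $\cM^\circ\subset\cM$ on which the span $E_{S_1,S_2}(X,\omega)$ of the $(S_1,S_2)$-recognizable twists is locally constant; denote by $d$ its dimension on $\cM^\circ$. Since $(S_1,S_2)$ is $\cM$-adapted, Lemma~\ref{L:inM} shows $E_{S_1,S_2}(X,\omega)\subset T(\cM)$ at every point, so $d\le \dim\cM$. The first step is to argue that on $\cM^\circ$ we actually have equality, i.e. the locally constant span \emph{is} all of $T(\cM)$; the second step is to show that the complement $\cM\setminus\cM^\circ$, together with the locus where the span fails to be the full tangent space, is contained in a finite union of proper affine invariant submanifolds.

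For the first step, I would use the $GL(2,\bR)$-invariance and the Cylinder Deformation Theorem together with the Smillie--Weiss theorem (Theorem~\ref{T:SW}). On $\cM^\circ$ the subspace $E_{S_1,S_2}$ is a flat subbundle of $T(\cM)$ invariant under parallel transport and under $GL(2,\bR)$; if it were a proper subbundle, then since any $GL(2,\bR)$-invariant, flat, proper subbundle of $T(\cM)$ would have to be compatible with the affine structure, one could argue (as in the proof that $\cM$ is the smallest affine invariant submanifold through its generic points, using that $\span(\Re\omega,\Im\omega)\subset E_{S_1,S_2}$ since the tautological plane is spanned by the standard twist in a horizontally periodic surface) that $E_{S_1,S_2}$ would cut out a proper affine invariant submanifold containing a full-measure subset of $\cM$, a contradiction. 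Concretely: take a horizontally periodic surface $(X,\omega)\in\cM^\circ$ in the horocycle orbit closure of a generic point (Theorem~\ref{T:SW}); on it the standard twists in the equivalence classes of $\cM$-parallel cylinders are recognizable (they correspond to $t_i=m_i$), and by the Cylinder Deformation Theorem plus the structure of $T(\cM)$ one checks these, together with the $GL(2,\bR)$ images, exhaust a spanning set — more precisely, one invokes that on a horizontally periodic surface in an orbit-dense position the twist spaces of the $\cM$-parallel cylinder classes already span a Lagrangian in $p(T(\cM))$ (Lemma~\ref{L:Lag}), and then uses the relative part.

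For the second step, set $\cM^{bad}=\{(X,\omega)\in\cM : E_{S_1,S_2}(X,\omega)\neq T_{(X,\omega)}(\cM)\}$. This set is $GL(2,\bR)$-invariant (since recognizability, $S_1$, $S_2$, and $T(\cM)$ are $GL(2,\bR)$-equivariant) and closed (by semicontinuity of the dimension of the recognizable span, which drops only on a closed set — this is where Lemma~\ref{L:recnearby} is used, guaranteeing the span is lower semicontinuous). By the Eskin--Mirzakhani--Mohammadi theorem, the closure of any $GL(2,\bR)$-invariant set is a finite union of affine invariant submanifolds; applying this to $\overline{\cM^{bad}}$ and discarding the (at most one) component equal to $\cM$ itself — which is excluded because, by the first step, $\cM^{bad}$ is disjoint from the open dense set $\cM^\circ$, hence $\overline{\cM^{bad}}\neq\cM$ — we obtain that $\cM^{bad}$ is contained in a finite union of \emph{proper} affine invariant submanifolds of $\cM$. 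This is exactly the conclusion.

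The main obstacle is the first step: proving that the locally constant recognizable span on $\cM^\circ$ is genuinely all of $T(\cM)$, rather than some proper $GL(2,\bR)$-invariant flat subbundle. The delicate point is that recognizable twists only see cylinders, and a priori the cylinder twists on a single generic surface need not span $T(\cM)$ (indeed $E(X,\omega)$ can be as small as the tautological plane). The resolution must exploit that we are free to move within the $GL(2,\bR)$ orbit and pass to horizontally periodic surfaces via Theorem~\ref{T:SW}, where Lemma~\ref{L:Lag} and Theorem~\ref{T:twists} give good control of twist spaces; assembling these into a proof that the \emph{span over all of $\cM^\circ$} — equivalently, because the span is locally constant and $\cM^\circ$ is connected-component-wise, the span at one well-chosen surface — equals $T(\cM)$ is the heart of the argument, and is presumably where the bulk of the work in the paper's actual proof goes.
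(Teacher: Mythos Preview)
Your overall architecture is correct: take the open $GL(2,\bR)$-invariant set $\cM^\circ$ from Lemma~\ref{L:adapted}, observe that its complement is closed and $GL(2,\bR)$-invariant hence a finite union of proper affine invariant submanifolds by EMM, and then show that on $\cM^\circ$ the recognizable span equals $T(\cM)$. This is exactly the paper's structure.

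However, you have badly misjudged where the difficulty lies. Your ``first step'' --- showing the locally constant span on $\cM^\circ$ is all of $T(\cM)$ --- is dispatched in the paper in two lines by quoting \cite[Theorem 5.1]{Wfield}: any flat subbundle of $T(\cM)$ not contained in $\ker(p)\cap T(\cM)$ must be all of $T(\cM)$. Since standard twists are always recognizable and are never in $\ker(p)$, the recognizable span is such a subbundle, and the result follows immediately. There is no need for Smillie--Weiss, Lemma~\ref{L:Lag}, or any analysis of horizontally periodic surfaces here; what you call ``the heart of the argument'' and ``where the bulk of the work goes'' is in fact a one-line citation.

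Your direct attempt to span $T(\cM)$ via Lemma~\ref{L:Lag} is also incomplete as stated: that lemma only produces a Lagrangian in $p(T(\cM))$, so even at best you would get something of dimension equal to the rank, not $\dim T(\cM)$; your phrase ``and then uses the relative part'' papers over a genuine gap. The result from \cite{Wfield} handles both the absolute and relative parts at once (ultimately via semisimplicity of the Kontsevich--Zorich cocycle), which is why the paper invokes it rather than arguing directly.

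Finally, your second step is slightly over-engineered: once you know the span equals $T(\cM)$ on $\cM^\circ$, the bad set is exactly $\cM\setminus\cM^\circ$, and EMM applied to this closed invariant set finishes; there is no need for a separate $\cM^{bad}$ and a semicontinuity argument.
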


\begin{proof}
Consider the open $GL(2,\bR)$ invariant set of Lemma \ref{L:adapted}. By EMM, the complement of this set is a finite union of proper affine invariant submanifolds. 


The span of the $(S_1, S_2)$-recognizable twists gives a flat subbundle over $\cM$ minus this finite union of proper orbit closures, and this flat subbundle is not contained in $\ker(p)\cap T(\cM)$, because standard twists are never in $\ker(p)$. 

By \cite[Theorem 5.1]{Wfield}, any such flat subbundle must be all of $T(\cM)$.  
\end{proof}

\subsection{$(S_1,S_2)$ almost determines $\cM$.}

The following result is not used in this paper. It is included to develop intuition, and because it may have applications in the future. 

\begin{thm}\label{T:Sdetermines}
Fix finite sets $S_1, S_2\subset \bR$. In any fixed stratum there are at most finitely many affine invariant submanifolds $\cM$ such that $(S_1, S_2)$ is $\cM$-adapted. 
\end{thm}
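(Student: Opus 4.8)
The plan is to deduce Theorem~\ref{T:Sdetermines} from the Cylinder Finiteness Theorem (Theorem~\ref{T:CFT}), Proposition~\ref{P:spanTM}, and the countability of affine invariant submanifolds \cite{EMM, Wfield}. First I would recall that, by the countability result, the strata under consideration contain only countably many affine invariant submanifolds $\cM_1, \cM_2, \ldots$; so it suffices to rule out the existence of infinitely many of them that are $(S_1, S_2)$-adapted. Suppose toward a contradiction that $\cM_1, \cM_2, \ldots$ are infinitely many distinct affine invariant submanifolds in a fixed stratum $\cH$, all of which are $(S_1, S_2)$-adapted. The idea is to produce a single surface $(X,\omega)$ (or a sequence of surfaces) witnessing a contradiction with the defining property of $(S_1, S_2)$-adaptedness.

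The key mechanism is this: for an $(S_1, S_2)$-adapted $\cM$, the span of the $(S_1, S_2)$-recognizable twists at $(X,\omega)$ is always contained in $T_{(X,\omega)}(\cM)$ (Lemma~\ref{L:inM}), and by Proposition~\ref{P:spanTM}, outside a proper finite union of affine invariant submanifolds it is \emph{equal} to $T_{(X,\omega)}(\cM)$. Thus, for a "generic" point of each $\cM_i$, the tangent space $T(\cM_i)$ is \emph{determined} by the local configuration of cylinders and moduli of the surface $(X,\omega)$ together with the fixed data $(S_1, S_2)$ — it is literally the span of all $(S_1,S_2)$-recognizable twists at $(X,\omega)$, a quantity intrinsic to $(X,\omega)$ and $(S_1,S_2)$ and not otherwise referring to $\cM_i$. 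So if we can find a single surface $(X,\omega)$ that is simultaneously a generic point (in the sense of Proposition~\ref{P:spanTM}) of two distinct $\cM_i$, $\cM_j$, we would get $T(\cM_i) = T(\cM_j)$ at that point, and since affine invariant submanifolds are determined by their tangent space at a point (they are locally linear and properly immersed, modulo the self-crossing issues handled in \cite{LNW}), this forces $\cM_i = \cM_j$, a contradiction.

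To find such a surface: fix any $(X,\omega)$ lying in infinitely many of the $\cM_i$'s — for instance, a surface whose orbit closure is all of $\cH$, since every surface is contained in \emph{some} affine invariant submanifold and one can arrange a surface contained in infinitely many by a Baire category or direct construction argument; alternatively, one can argue that some $\cM_i$ contains a surface that is generic (in the Proposition~\ref{P:spanTM} sense) for infinitely many $\cM_j$ with $j \neq i$, using that each "bad" locus is a \emph{proper} sub-affine-invariant-submanifold and invoking countability to take a point avoiding all countably many bad loci inside a fixed $\cM_i$ that is contained in infinitely many others. Perhaps the cleanest route is: among the infinitely many adapted $\cM_i$, pass to an infinite subfamily all containing a common surface (possible since, e.g., one can take the smallest-dimensional adapted submanifold and note the others containing it, or iterate), and then take a point of the common surface avoiding the countably many proper bad loci; at that point all the tangent spaces agree with the span of $(S_1, S_2)$-recognizable twists and hence with each other, forcing the $\cM_i$ to coincide near that point, hence globally.

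The main obstacle I anticipate is the bookkeeping around \emph{which} surface to choose and ensuring it is generic for sufficiently many of the $\cM_i$ at once: Proposition~\ref{P:spanTM} only gives genericity away from a proper finite union of sub-submanifolds \emph{depending on $\cM$}, so one must use countability carefully to find one point avoiding all the relevant bad loci simultaneously, and one must confirm that distinct affine invariant submanifolds with equal tangent spaces at a common generic point truly coincide (handling the self-crossing subtleties via \cite[Section 2.1]{LNW}). A secondary subtlety is that "$(S_1,S_2)$-recognizable twist at $(X,\omega)$" is phrased per-direction, so one should check that the span over all directions is indeed an intrinsic finite-dimensional subspace of $H^1(X,\Sigma,\bC)$ that varies appropriately; this follows from Lemma~\ref{L:recnearby} and Lemma~\ref{L:adapted}. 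Once these points are pinned down, the contradiction with the assumed infinitude is immediate.
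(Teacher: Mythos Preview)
Your core intuition is right: the span of the $(S_1,S_2)$-recognizable twists at a surface is intrinsic to the surface (and the fixed pair $(S_1,S_2)$), so if it equals $T(\cM_i)$ for two distinct $\cM_i$ at the same point you get a contradiction. But the step where you try to locate a single surface $(X,\omega)$ lying in infinitely many of the $\cM_i$ is a genuine gap, and none of your suggested fixes work. A surface with dense orbit in $\cH$ lies in \emph{no} proper affine invariant submanifold, not in all of them; Baire category goes the wrong way (the $\cM_i$ are closed and nowhere dense, so a generic point avoids them all); and there is no reason the smallest-dimensional adapted $\cM_i$ is contained in any of the others. Distinct affine invariant submanifolds of the same dimension---for instance, distinct closed orbits---are typically disjoint, so no common point need exist.

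The paper's proof replaces ``common point'' with ``nearby points plus local constancy.'' By EMM an infinite family of affine invariant submanifolds in a fixed stratum must be dense in some strictly larger affine invariant submanifold $\cM$. Lemma~\ref{L:adapted} (a direct consequence of Lemma~\ref{L:recnearby}) gives a dense open $GL(2,\bR)$-invariant subset of $\cM$ on which the span of the $(S_1,S_2)$-recognizable twists is \emph{locally constant as a function on the stratum}. Now pick generic points (in the sense of Proposition~\ref{P:spanTM}) of two distinct $\cM_n$, $\cM_{n'}$, both lying in this open set and close enough that the local constancy applies. At each such point the span equals $T(\cM_n)$, respectively $T(\cM_{n'})$; local constancy forces these to coincide under parallel transport, and two distinct affine invariant submanifolds cannot have the same tangent space at nearby points. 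The key ingredient you are missing is precisely this use of density in a larger $\cM$ together with the semicontinuity in Lemma~\ref{L:recnearby} to compare tangent spaces at \emph{different} nearby surfaces rather than at a single shared one.
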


\begin{proof}
Otherwise, by EMM we would get an infinite sequence of $\cM_n$ for which $(S_1, S_2)$ is $\cM$-adapted and that are dense in some larger $\cM$. By Lemma \ref{L:adapted}, there is a finite union of affine invariant submanifolds $\cN_i$ in $\cM$, such that on $\cM\setminus \cup \cN_i$ the span of the $(S_1, S_2)$-recognizable twists is locally constant. 

At most points of $\cM_n$, the $(S_1, S_2)$-recognizable twists span $T(\cM_n)$. This gives a contradiction, since the $\cM_n$ are dense in $\cM$, and two different affine invariant submanifolds cannot have the same tangent space. (More precisely, given two nearby points $(X, \omega)\in \cM_n$ and $(X', \omega')\in \cM_{n'}$ the parallel translate from $(X, \omega)$ to $(X', \omega')$ of $T(\cM_n)$ cannot be equal to $T(\cM_{n'}$).)
\end{proof}

\section{Boundary (connected case)}\label{S:Boundary}

In the first subsection we show that each point in $\partial \cM \cap \overline{\cH}_{conn}$ is contained in an affine invariant submanifold with the desired tangent space. In the second subsection, we bootstrap this to give a global statement about $\partial \cM \cap \overline{\cH}_{conn}$. 

\subsection{Affine invariant submanifolds containing limits.} 

Although the rest of this section deals only with connected degenerations, the following proposition applies to all of $\overline{\cH}$, not just $\overline{\cH}_{conn}$.

\begin{prop}\label{P:rinside}
Suppose that $u$ is a $(S_1, S_2)$-recognizable twist at $(X, \omega)\in\overline{\cH}$, and that $(X_n, \omega_n)\in \cH$ is a sequence  converging to $(X, \omega)$. Then, for $n$ large enough, $u$ is in the span of the $(S_1, S_2)$-recognizable twists at $(X_n, \omega_n)$. 
\end{prop}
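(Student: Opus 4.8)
The plan is to transport the recognizability condition across the limit using Lemma \ref{L:horrible} to match up cylinders, together with the fact that the defining data $S_1, S_2$ are finite sets of numbers, so only finitely many linear equations are ever at stake. First I would fix the direction in which $u$ is supported, say horizontal after applying an element of $GL(2,\bR)$, and let $C_1, \dots, C_k$ be the full set of horizontal cylinders on $(X,\omega)$, with $u = \sum t_i c_i(X,\omega) I_\Sigma(\alpha_i)$. By Lemma \ref{L:horrible}(1), each $C_i$ has a corresponding cylinder $C_i^{(n)}$ on $(X_n,\omega_n)$ (isotopic to $g_n(C_i)$) whose circumference, height, and modulus converge to those of $C_i$; define $u_n = \sum t_i c_i(X_n,\omega_n) I_\Sigma(\alpha_i)$ to be the parallel translate of $u$. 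The core observation is that recognizability of $u_n$ at $(X_n,\omega_n)$ is exactly what we must verify, and it is a statement only about (a) which subcollections of cylinders parallel to the $C_i^{(n)}$ have all ratios of circumferences in $S_1$, and (b) which linear equations with coefficients in $S_2$ their moduli satisfy.

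The main step is to control the \emph{new} horizontal cylinders that can appear on $(X_n,\omega_n)$ parallel to the $C_i^{(n)}$ but not among them. I would split these into two kinds. Cylinders of the first kind have circumference bounded above and below and height bounded below: by Lemma \ref{L:horrible}(2), after passing to a subsequence these converge to horizontal cylinders on $(X,\omega)$, hence are among the $C_i$ — contradiction, so for $n$ large there are none of this kind within any fixed circumference and modulus window. Cylinders of the second kind, by Lemma \ref{L:horrible}(3) applied with a suitably small $\e$, either have modulus less than $\e$ or circumference less than $\e$; the latter cannot happen once $\e$ is below the minimum circumference appearing among cylinders whose ratio with some $C_i^{(n)}$ lies in $S_1$ (here we use that $S_1$ is finite, so there is a definite lower bound on such circumferences relative to the $c_i$, which converge). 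Thus any ``extra'' cylinder that could enter an $S_1$-class with some $C_i^{(n)}$ has arbitrarily small modulus for $n$ large. This is the heart of the argument, and it mirrors the structure of the proof of Lemma \ref{L:recnearby}: choose $\e$ small enough that introducing moduli all less than $\e$ into the list $m_1(X,\omega), \dots, m_k(X,\omega)$ does not create any new linear relation with $S_2$-coefficients beyond those of the form $E_1 + E_2 = 0$, with $E_1$ an $S_2$-relation already satisfied by the $m_i(X,\omega)$ and $E_2$ an arbitrary $S_2$-relation in the new variables.

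With this in hand I would run the same decomposition as in Lemma \ref{L:recnearby}: form the equivalence relation on $\{C_1^{(n)}, \dots, C_k^{(n)}\}$ generated by declaring two of them equivalent if they remain parallel on $(X_n,\omega_n)$ with circumference ratio in $S_1$, and for each class $\cI$ set $u_{\cI} = \sum_{i\in\cI} t_i c_i(X_n,\omega_n) I_\Sigma(\alpha_i)$. Since the ratios of circumferences among the $C_i^{(n)}$ in a class $\cI$ converge to those on $(X,\omega)$ and $S_1$ is discrete, for $n$ large these ratios are literally equal to the ratios on $(X,\omega)$, so the $S_1$-subcollections relevant to testing recognizability of $u_{\cI}$ at $(X_n,\omega_n)$ consist of cylinders from $\cI$ together with at most some of the small-modulus extras from the previous paragraph. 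Any $S_2$-relation satisfied by the moduli of such a subcollection has, by the choice of $\e$, the form $E_1 + E_2$; the $t_i$, $i\in\cI$, satisfy $E_1$ because $u$ was recognizable at $(X,\omega)$ and $c_i(X_n,\omega_n)$-coordinates differ from $c_i(X,\omega)$-coordinates only by the common class-dependent scalar, while the $t$-value assigned to each small extra cylinder is $0$, so $E_2$ holds trivially. Hence each $u_{\cI}$ is $(S_1,S_2)$-recognizable at $(X_n,\omega_n)$, and $u_n = \sum_{\cI} u_{\cI}$ lies in their span, as desired.

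The step I expect to be the main obstacle is the uniform control of new cylinders — making sure that Lemma \ref{L:horrible}(3) can be invoked with a \emph{single} $\e$ that simultaneously (i) excludes short extra cylinders from all relevant $S_1$-classes and (ii) is small enough for the ``no new $S_2$-relation'' property near $(m_1(X,\omega),\dots,m_k(X,\omega))$. Both requirements depend only on $(X,\omega)$, $S_1$, $S_2$, and the stratum (the number of parallel cylinders is bounded there), so such an $\e$ exists; but laying out the quantifiers carefully, and handling the preliminary nudge to remove horizontal saddle connections not forced to be there, is where the care is needed. Everything else is a direct transcription of the argument already used for nearby surfaces in Lemma \ref{L:recnearby}, with ``nearby in $\cM$'' replaced by ``far enough along a sequence converging in $\overline{\cH}$.''
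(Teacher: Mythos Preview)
Your proposal is correct and follows essentially the same approach as the paper, whose proof reads in its entirety: ``This follows exactly as in the proof of Lemma \ref{L:recnearby}, using Lemma \ref{L:horrible}.'' You have unpacked precisely that argument---the use of Lemma \ref{L:horrible}(1)--(3) to verify the three conditions defining $\cV$ in the proof of Lemma \ref{L:recnearby}, followed by the same equivalence-class decomposition and the $E_1+E_2$ analysis; the one unnecessary worry is the ``preliminary nudge to remove horizontal saddle connections,'' which plays no role here since recognizability concerns only cylinders.
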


Let $V\subset H_1(X,\Sigma, \bZ)$ denote the space of vanishing cycles. Recall that the tangent space to the boundary stratum is naturally identified with $\Ann(V)\subset H^1(X,\Sigma, \bC)$. It is this identification that we are using to consider $u$ as a relative homology class at $(X_n, \omega_n)$. In particular, note that at $(X_n, \omega_n)$, by definition  $u$ is in $\Ann(V)$.

\begin{proof}
This follows  exactly as in the proof of Lemma \ref{L:recnearby}, using Lemma \ref{L:horrible}.\ann{A: Added reference to Lemma \ref{L:horrible}.}
\end{proof}

\begin{prop}\label{P:main}
Suppose that $(X_n, \omega_n)\in \cM$ converge to $(X, \omega)\in \partial \cM \cap \overline{\cH}_{conn}$. 

Then $(X, \omega)$ is contained in an affine invariant submanifold whose tangent space at $(X,\omega)$ is given, for infinitely many $n$, by $\Ann(V)\cap T_{(X_n, \omega_n)}(\cM)\subset H^1(X_n,\Sigma_n, \bC)$ via the isomorphism between the tangent space to the boundary stratum at $(X,\omega)$ and $\Ann(V)\subset H^1(X_n,\Sigma_n, \bC)$. 
\end{prop}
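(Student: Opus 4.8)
The plan is to produce a candidate affine invariant submanifold from the orbit closures of the pre-limit surfaces, pass it to the boundary, and then pin down its tangent space using the recognizable twists. Fix an $\cM$-adapted pair $(S_1,S_2)$ as given by Lemma \ref{L:inM}. First I would consider the orbit closures $\overline{GL(2,\bR)\cdot (X_n,\omega_n)}\subset \cM$; since $\cM$ is an affine invariant submanifold, each of these is a closed $GL(2,\bR)$ invariant subset, hence (by EMM) a finite union of affine invariant submanifolds, and in particular each $(X_n,\omega_n)$ lies in an affine invariant submanifold $\cM_n\subseteq \cM$ with $(X_n,\omega_n)$ not in a proper sub-orbit-closure; after discarding the finitely many proper affine invariant submanifolds of Proposition \ref{P:spanTM} and passing to a subsequence, I may assume the $(S_1,S_2)$-recognizable twists at $(X_n,\omega_n)$ span $T_{(X_n,\omega_n)}(\cM)$. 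By local finiteness of affine invariant submanifolds of a given dimension and the convergence $(X_n,\omega_n)\to (X,\omega)$, after a further subsequence I may assume that the $(X_n,\omega_n)$ all lie in a single affine invariant submanifold (equivalently, that their parallel-transported tangent spaces are eventually constant, using Proposition \ref{P:AnnV} to make sense of parallel transport across the boundary), whose closure in $\overline{\cH}$ contains $(X,\omega)$. Call a component of this closure through $(X,\omega)$ the desired $\cM'$; its dimension is at most that of $\cM$, and it remains to compute its tangent space.

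Next I would establish the two inclusions. For $T(\cM')\subseteq \Ann(V)\cap T_{(X_n,\omega_n)}(\cM)$: the containment $T(\cM')\subseteq \Ann(V)$ is Proposition \ref{P:AnnV} (the tangent space to the boundary stratum $\cH$ is exactly $\Ann(V)$, and $\cM'$ lies in that stratum), and the containment $T(\cM')\subseteq T_{(X_n,\omega_n)}(\cM)$ follows because $\cM'$ is a limit of pieces sitting inside $\cM$ and tangent spaces vary continuously, so a tangent vector to $\cM'$ at $(X,\omega)$ is a limit of tangent vectors to $\cM$ at $(X_n,\omega_n)$, and these are eventually a fixed subspace under parallel transport; thus $T(\cM')\subseteq \Ann(V)\cap T_{(X_n,\omega_n)}(\cM)$. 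For the reverse inclusion, I would use recognizable twists as the source of tangent vectors that survive degeneration. Since $(S_1,S_2)$ is $\cM$-adapted, every $(S_1,S_2)$-recognizable twist at $(X,\omega)$ lies in $T_{(X,\omega)}(\cM')$ — provided I first know such twists stay in $\cM'$; this follows from Lemma \ref{L:inM} applied to $\cM'$ once I check $(S_1,S_2)$ is also $\cM'$-adapted, which I would get from Proposition \ref{P:rinside} together with the fact that recognizable twists at nearby interior surfaces stay in $\cM$. The key point is the converse direction of Proposition \ref{P:rinside}-type reasoning: given a vector $\xi\in \Ann(V)\cap T_{(X_n,\omega_n)}(\cM)$, I want to realize $\xi$ (or enough such $\xi$ to span) as a limit of recognizable twists at the $(X_n,\omega_n)$ that are simultaneously recognizable at $(X,\omega)$. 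Because the recognizable twists at $(X_n,\omega_n)$ span $T_{(X_n,\omega_n)}(\cM)$, I can write any element of $\Ann(V)\cap T_{(X_n,\omega_n)}(\cM)$ as a combination of recognizable twists; the cylinders carrying these twists either persist to $(X,\omega)$ (by Lemma \ref{L:horrible}(1),(2)) — in which case the twist converges to a recognizable twist at $(X,\omega)$, hence lies in $T(\cM')$ — or they degenerate, in which case by Lemma \ref{L:horrible}(3) their circumference or modulus shrinks, and the corresponding cohomology class, being in $\Ann(V)$, must actually have been expressible using only the persisting cylinders. This dichotomy, pushed through carefully, yields $\Ann(V)\cap T_{(X_n,\omega_n)}(\cM)\subseteq T(\cM')$.

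Finally, I would combine the two inclusions to get equality $T(\cM')=\Ann(V)\cap T_{(X_n,\omega_n)}(\cM)$ for the subsequence, and note that $\cM'$ is indeed an affine invariant submanifold (a component of the boundary) containing $(X,\omega)$, completing the proof. The main obstacle I anticipate is the degenerate-cylinder case in the last paragraph: controlling which recognizable twists at the $(X_n,\omega_n)$ actually survive to recognizable twists at $(X,\omega)$, and showing that the ones that do not survive were redundant modulo $\Ann(V)$, i.e. carefully matching the combinatorics of $\cM$-parallel classes of cylinders on $(X_n,\omega_n)$ with those on $(X,\omega)$ under the collapse maps $f_n$ of Proposition \ref{P:fn}. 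This is exactly the point where Lemma \ref{L:horrible}(3) and the eventual constancy of $V$ (Proposition \ref{P:Vn}) must be leveraged, and where the finiteness packaged in the Cylinder Finiteness Theorem (via the fixed pair $(S_1,S_2)$) keeps the bookkeeping under control. A secondary subtlety is the passage to subsequences: several independent finiteness inputs (finitely many boundary components, finitely many proper affine invariant submanifolds from Proposition \ref{P:spanTM}, local finiteness of orbit closures of bounded dimension) each force a subsequence, which is why the statement only claims the formula for infinitely many $n$ rather than all $n$.
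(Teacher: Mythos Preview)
Your construction of $\cM'$ is circular, and this propagates through the rest of the argument. You take the orbit closures of the $(X_n,\omega_n)$, pass to a subsequence so they lie in ``a single affine invariant submanifold,'' and then declare $\cM'$ to be ``a component of this closure through $(X,\omega)$.'' But in the generic situation every $(X_n,\omega_n)$ has orbit closure equal to $\cM$ itself, so this produces nothing new: you are back to asking what the closure of $\cM$ looks like near $(X,\omega)$, which is precisely the proposition. The paper avoids this by working the other way around. It first builds the locally linear set $U$ in the boundary stratum consisting of all limits of $(X_n,\omega_n)+\xi_n$ with small $\xi_n\in \Ann(V)\cap T_{(X_n,\omega_n)}(\cM)$ (after passing to a subsequence so these intersections converge), and \emph{then} invokes EMM to define $\cM'$ as the smallest closed $GL(2,\bR)$-invariant set containing $U$. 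Because $U$ is linear, $\cM'$ is a single affine invariant submanifold, and the containment $\Ann(V)\cap T_{(X_n,\omega_n)}(\cM)\subseteq T(\cM')$ drops out essentially by construction.

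Your argument for the other containment is also inverted relative to what actually works. You try to start from recognizable twists at the interior points $(X_n,\omega_n)$ and push them to $(X,\omega)$, and you correctly flag that controlling the degenerating cylinders is the obstacle; indeed this direction is not established anywhere in the paper and is genuinely hard. The paper instead picks $(S_1,S_2)$ adapted to both $\cM$ and $\cM'$, then moves within $U$ to a \emph{generic} boundary point $(X',\omega')$ of $\cM'$---not $(X,\omega)$---where the recognizable twists span $T(\cM')$ by Proposition~\ref{P:spanTM}. Proposition~\ref{P:rinside} then transports each such twist \emph{from the boundary to the interior}: it is recognizable at $(X_n,\omega_n)+\xi_n$, hence lies in $T(\cM)$, and it is automatically in $\Ann(V)$. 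This gives $T(\cM')\subseteq \Ann(V)\cap T_{(X_n,\omega_n)}(\cM)$. Working at $(X,\omega)$ itself, as you do, fails because $(X,\omega)$ need not be a generic point of $\cM'$, so recognizable twists there need not span $T(\cM')$.
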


 \begin{rem}
 It could be that $(X,\omega)$ is also contained in a smaller affine invariant submanifold even if all $(X_n, \omega_n)$ have orbit closure $\cM$, although this is not the generic situation. 
 \end{rem}
 
 \begin{proof}
By Proposition \ref{P:AnnV}, there is a neighborhood of $0$ in $\Ann(V)$ such that for $\xi$ in this neighborhood and $n$ large enough, $(X_n, \omega_n)+\xi$ and $(X,\omega)+\xi$ are well defined, and such that if $\xi_n\in \Ann(V)$ are in this neighborhood and $\xi_n \to \xi$, then $(X_n, \omega_n)+\xi_n$ converges to $(X,\omega)+\xi$. 

By Proposition \ref{P:Vn}, each subspace $T_{(X_n, \omega_n)}(\cM)\cap \Ann(V)\subset H^1(X_n,\Sigma_n, \bC)$ can be thought of as contained in a fixed vector space isomorphic to the tangent space of the boundary stratum. It suffices to pass to a subsequence along which these subspaces converge in this fixed vector space. Assume we have already passed to this subsequence. 

Let $U$ denote the set of all limits of all such sequences $(X_n, \omega_n)+\xi_n$, where $\xi_n \in \Ann(V) \cap T_{(X_n, \omega_n)}(\cM)$ is in the neighborhood of $0$ referred to above. Note that $U$ is equal to $(X,\omega)$ plus all sufficiently small vectors in the subspace of $\Ann(V)$ to which $T_{(X_n, \omega_n)}(\cM)\cap \Ann(V)\subset H^1(X_n,\Sigma_n, \bC)$ converges.  

Let $\cM'$ be the smallest closed $GL(2,\bR)$ invariant set containing $U$. Because all closed $GL(2,\bR)$ invariant sets are finite unions of affine invariant submanifolds, and because $U$ is linear, we see that $\cM'$ is a single affine invariant submanifold. 

Let $(S_1, S_2)$ be both $\cM$-adapted and $\cM'$-adapted. By Proposition \ref{P:spanTM}, away from a finite union of proper affine invariant submanifolds, the tangent space to $\cM'$ is spanned by $(S_1, S_2)$-recognizable twists. 

Pick a specific sequence $\xi_n$ so that the limit of the $(X_n, \omega_n)+\xi_n$ is a surface $(X', \omega')$ not contained in this finite union of proper affine invariant submanifolds. 
 
 The tangent space to $\cM'$ at $(X', \omega')$ is spanned by $(S_1, S_2)$-recognizable twists.  By Proposition \ref{P:rinside}, all the  $(S_1, S_2)$-recognizable twists at $(X', \omega')$ are recognizable at $(X_n, \omega_n)+\xi_n$ as well. Hence they are all in $\Ann(V)\cap T_{(X_n,\omega_n)}(\cM)$. 

We have shown that the tangent space to $\cM'$ at $(X', \omega')$ is contained in $\Ann(V)\cap T_{(X_n,\omega_n)}(\cM)$ for all $n$ large enough. Let us now address the opposite containment. Suppose $T(\cM')$ was strictly contained in $\Ann(V)\cap T_{(X_n,\omega_n)}(\cM)$ for all $n$.  In this case, pick $\xi_n'\in \Ann(V)\cap T_{(X_n,\omega_n)}(\cM)$ so that $\xi_n'$ converges to $\xi \notin T(\cM')$. The definition of $\cM'$ gives that for $t$ small  enough, $(X', \omega')+t\xi\in \cM$. Hence $\xi\in T(\cM')$, a contradiction. 
\end{proof}

\begin{rem}\label{R:variant}
A variant of the above approach allows the use of sets $(S_1, S_2)$ that are $\cM$-adapted without knowing whether they are $\cM'$-adapted. The proof of Proposition \ref{P:spanTM} shows that for any affine invariant submanifold $\cM'$, and  arbitrary $(S_1, S_2)$, away from a finite union of proper affine invariant submanifolds the span of the $(S_1, S_2)$-recognizable twists contains $T(\cM')$. In the above proof, one can then pick $(X', \omega')$ such that the span of the  $(S_1, S_2)$-recognizable twists contains $T(\cM')$. This strategy will be required in Section \ref{S:MultiCase}.
\end{rem}

\subsection{Global boundary}

We now rule out the possibility that infinitely many boundary orbit closures $\cM'$ occur in Proposition \ref{P:main}. This will conclude the proof of the first part of Theorem \ref{T:main2}, which appeared in the introduction as Theorem \ref{T:main}.

\begin{thm}\label{T:finite}
Only finitely many affine invariant submanifolds arise in Proposition \ref{P:main}. 
\end{thm}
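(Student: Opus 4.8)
## Proof Proposal for Theorem \ref{T:finite}

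The plan is to show that the boundary of $\cM$ inside $\overline{\cH}_{conn}$ is a closed, $GL(2,\bR)$-invariant subset of $\overline{\cH}_{conn}$, and then apply the Eskin--Mirzakhani--Mohammadi theorem (in the connected case, which is available) to conclude that it is a \emph{finite} union of affine invariant submanifolds. The affine invariant submanifolds arising in Proposition \ref{P:main} are then among the finitely many pieces of this decomposition, which gives the result. So the real content is: (a) $\partial\cM\cap\overline{\cH}_{conn}$ is closed and $GL(2,\bR)$-invariant, and (b) each $\cM'$ produced by Proposition \ref{P:main} is genuinely one of the pieces (rather than something cutting across several), so that passing to subsequences does not produce infinitely many distinct objects.

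First I would establish $GL(2,\bR)$-invariance: since the partial compactification carries a continuous $GL(2,\bR)$-action (stated after Definition \ref{D:converge}), and $\cM$ is $GL(2,\bR)$-invariant, the closure $\overline{\cM}$ in $\overline{\cH}$ is $GL(2,\bR)$-invariant, hence so is $\partial\cM = \overline{\cM}\setminus\cM$, and intersecting with the invariant set $\overline{\cH}_{conn}$ preserves invariance. Closedness of $\partial\cM\cap\overline{\cH}_{conn}$ within $\overline{\cH}_{conn}$ requires a small argument: a limit of boundary points could a priori land back in $\cM$ itself, but $\cM$ is a properly immersed submanifold of its own stratum $\cH$, and a sequence of surfaces converging in the partial compactification to a point of a \emph{lower} stratum cannot converge to a point of $\cH$; combined with properness of $\cM\hookrightarrow\cH$, this shows $\partial\cM\cap\overline{\cH}_{conn}$ is closed in $\overline{\cH}_{conn}$. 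Applying \cite{EM, EMM} to this closed invariant set of connected surfaces yields a finite decomposition $\partial\cM\cap\overline{\cH}_{conn} = \cM_1'\cup\cdots\cup\cM_N'$ into affine invariant submanifolds.

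Next I would argue that every $\cM'$ obtained from Proposition \ref{P:main} coincides with one of the $\cM_i'$. By construction $\cM'$ is an affine invariant submanifold contained in $\partial\cM\cap\overline{\cH}_{conn}$, hence contained in $\bigcup \cM_i'$. An irreducible affine invariant submanifold contained in a finite union of affine invariant submanifolds must be contained in one of them (by dimension/irreducibility, or because the $\cM_i'$ are locally closed and $\cM'$ is connected), and conversely $\cM'$ is defined in Proposition \ref{P:main} as the \emph{smallest} $GL(2,\bR)$-invariant set containing the linear family $U$; since each $\cM_i'$ is itself $GL(2,\bR)$-invariant and $U\subset\cM'\subset\cM_i'$ for the relevant $i$, minimality does not directly give equality, but one can instead note that $\dim\cM' = \dim U$ is forced and that $U$ is open in $(X,\omega)+\Ann(V)\cap T(\cM)$; thus $\cM'$ is the unique affine invariant submanifold through $(X,\omega)$ tangent to that subspace, and among the finitely many $\cM_i'$ passing through $(X,\omega)$ there is at most one with this tangent space. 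Hence only finitely many distinct $\cM'$ can appear.

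The main obstacle I anticipate is the closedness step: ruling out that a sequence of boundary points converges to an interior point of $\cM$. This is exactly the kind of statement about the structure of the partial compactification — how strata of smaller genus sit inside the closure of strata of larger genus — that the paper treats in Section \ref{S:Basic}, and the clean way to handle it is to use that the stratification of $\overline{\cH}$ by the topological type of the degeneration is locally finite and that each stratum is locally closed, together with properness of the immersion $\cM\hookrightarrow\cH$. A secondary subtlety is bookkeeping around self-crossings of boundary affine invariant submanifolds (flagged in the discussion after Theorem \ref{T:main2}), but this only affects whether we count branches or components, not finiteness, and is handled exactly as in \cite[Section 2.1]{LNW}.
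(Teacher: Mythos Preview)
Your step (a) is sound, but step (b) has a genuine gap. You correctly obtain $\cM' \subset \cM_{i_0}'$ for some $i_0$ and acknowledge that minimality of $\cM'$ does not force equality. Your patch --- that $\cM'$ is the unique affine invariant submanifold through $(X,\omega)$ with tangent space $\Ann(V)\cap T(\cM)$, and that at most one of the $\cM_i'$ has this tangent space there --- does not show that $\cM'$ \emph{is} one of the $\cM_i'$; nothing prevents $\cM'$ from being a proper affine invariant submanifold of $\cM_{i_0}'$, and an affine invariant submanifold can contain infinitely many of those. Since the limit point $(X,\omega)$ and the approximating sequence $(X_n,\omega_n)$ both vary over all of $\partial\cM$, your argument gives no global bound on the number of distinct $\cM'$.

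The paper closes exactly this gap by a diagonal argument rather than by identifying each $\cM'$ with a piece of an EMM decomposition of $\partial\cM$. Assuming infinitely many $\cM_k'$ arise, pass to a subsequence of fixed maximal dimension lying in one boundary stratum; by EMM there are points $(X^{(k)},\omega^{(k)}) \in \cM_k'$ converging to some $(X^{(\infty)},\omega^{(\infty)})$ with strictly larger orbit closure $\cM_\infty'$. Choose $(X_n^{(k)},\omega_n^{(k)}) \in \cM$ converging to $(X^{(k)},\omega^{(k)})$ with $T(\cM_k') = \Ann(V)\cap T_{(X_n^{(k)},\omega_n^{(k)})}(\cM)$. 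A diagonal sequence $(X_{n_k}^{(k)},\omega_{n_k}^{(k)})$ converges to $(X^{(\infty)},\omega^{(\infty)})$, and Proposition~\ref{P:main} applied to it identifies both $T(\cM_k')$ and $T(\cM_\infty')$ with the same $\Ann(V)\cap T(\cM)$ along infinitely many terms, contradicting $\dim\cM_\infty' > \dim\cM_k'$. Your outline does not avoid this step; filling your gap requires essentially this argument.
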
\ann{R: The statement of Theorem 7.5 is not clear.\\ A: Added clarification immediately after the theorem statement.}

That is, there is a finite list of affine invariant submanifolds in the boundary of $\cM$ such that if $(X_n, \omega_n)\in \cM$ converge to $(X, \omega)\in \partial \cM \cap \overline{\cH}_{conn}$, then  $(X, \omega)$ is contained in one of these finitely many affine invariant submanifolds, and the tangent space to this affine invariant submanifold  can be computed as in Proposition \ref{P:main}.

\begin{proof}
Suppose in order to find a contradiction that infinitely many affine invariant submanifolds  arise in Proposition \ref{P:main}. Pick a sequence $\cM_k$ of them that all have maximal dimension and all lie in the same boundary stratum. 

By EMM, there are $(X^{(k)}, \omega^{(k)})\in \cM'_k$ that converge to some $(X^{(\infty)},\omega^{(\infty)})$ that has  larger dimensional orbit closure $\cM'_\infty$. 

Pick a sequence $(X_n^{(k)}, \omega_n^{(k)})\to (X^{(k)}, \omega^{(k)})$ with $(X_n^{(k)}, \omega_n^{(k)})\in \cM$ and such that $T(\cM'_k)=\Ann(V)\cap T_{(X_n^{(k)}, \omega_n^{(k)})}(\cM)$.

 Some ``diagonal" sequence $(X_{n_k}^{(k)}, \omega_{n_k}^{(k)})$ then converges to $(X^{(\infty)},\omega^{(\infty)})$. Thus, for infinitely many $n$, the tangent space to both $\cM'_k$ and $\cM'_\infty$ can be identified with $$\Ann(V)\cap T_{(X_{n_k}^{(k)}, \omega_{n_k}^{(k)})} \cM,$$ which is a contradiction because $\cM'_k$ and $\cM'_\infty$ have different dimensions.  

 Proposition \ref{P:Vn} allows us to consider $V$ without worrying about dependence on $k$ or $n_k$. 
\end{proof}

\begin{proof}[Proof of Theorem \ref{T:main2}, first part] 
Proposition \ref{P:main} shows that each point in the boundary appears in an affine invariant submanifold with the desired tangent space. The proof additionally shows that the set of limit points are dense in the affine invariant submanifold. 

Theorem \ref{T:finite} shows that only finitely many affine invariant submanifolds appear in this way. Hence these finitely many affine invariant submanifolds constitute the boundary of $\cM$ in the given boundary stratum. 
\end{proof}

\section{Multicomponent case}\label{S:MultiCase}

In the first subsection, we prove the second statement in Theorem \ref{T:main2}, which is the direct generalization of Proposition \ref{P:main} to multicomponent surfaces, by explaining what modifications to the proof of Proposition \ref{P:main} are required. In the next two subsections, we assume  multicomponent EMM is true, and prove  Theorem \ref{T:main3}. As in the connected case, this involves first showing that all limits are contained in an affine invariant submanifold with the expected tangent space (as in Proposition \ref{P:main}) and then showing that only finitely many such affine invariant submanifolds occur (as in Theorem \ref{T:finite}).

\subsection{Projection onto connected components}

Consider a multicomponent limit  $(X', \omega')$ of a sequence $(X_n, \omega_n)\in \cM$.  Say $\pi$ is the projection onto a connected component. (So $\pi$ just forgets the other connected components.) 

As in the proof of Proposition \ref{P:main}, after passing to an appropriate subsequence, we consider the smallest affine invariant submanifold $\cM'$ that contains all $\pi(X', \omega')$, where $(X', \omega')$ is a limit of $(X_n, \omega_n)+\xi_n$, as in the proof of Proposition \ref{P:main}. 

As in that proposition, we can assume $(X', \omega')$ has the property that $T(\cM')$ is spanned by $(S_1, S_2)$-recognizable twists at $\pi(X', \omega')$. We now claim that for every $(S_1, S_2)$-recognizable twist $u$ on $\pi(X', \omega')$, there is a $(S_1, S_2)$-recognizable twist on $(X', \omega')$ which restricts to $u$.

This is a consequence of simple linear algebra. Let $\cE$ be a set of homogeneous linear equations in variables $t_1, \ldots, t_n$. Suppose that $t_1, \ldots, t_k$ satisfy all the linear equations in $\cE$ that only involve the first $k$ variables. Then there are values of $t_{k+1}, \ldots, t_n$ so that $t_1, \ldots, t_n$ satisfy all linear equations in $\cE$. 

Thus, also as in Proposition \ref{P:main}, we see that $T(\cM')$ is contained in $\pi(\Ann(V)\cap T(\cM))$. The proof that $T(\cM')$ cannot be smaller than $\pi(\Ann(V)\cap T(\cM))$ is the same as in the connected case (Proposition \ref{P:main}).

\subsection{Results using multicomponent EMM: Analogue of Proposition \ref{P:main}}

Note that every affine invariant submanifold is a product of irreducible affine invariant submanifolds. 

We follow the argument in the connected case, however some complications arise. As in the proof of Proposition \ref{P:main}, we consider the set $U$ of limits of sequences $(X_n, \omega_n)+\xi_n$, where $\xi_n\in \Ann(V)\cap T_{(X_n,\omega_n)}(\cM)$ are small enough. We may assume that $U$ is a small locally linear set in the boundary. The main complication is the possibility, which  is ruled out only at the conclusion of our arguments, that the smallest closed $GL(2, \bR)$ invariant set containing $U$ is not an affine invariant submanifold. 

 To be concrete, 
  let us mention that one such bad situation would be if the smallest closed invariant set containing $U$ was the set of all surfaces in $\cH(2)\times \cH(2)$ where the two components have equal area.  The following is an example of a locally linear set $U$ with exactly this bad property. 
   
   \ann{R: I do not understand Example 8.1.\\A: Added explanation.}
 \begin{ex}\label{E:H2}
  Choose $(X,\omega)\in \cH(2)$ with dense orbit, and consider $v\in H^1(X,\bR)$ that is symplectically orthogonal to $\Re(\omega)$ and $\Im(\omega)$. Note that for any small such $v$, $(X, \omega)+v$ and $(X, \omega)$ have the same area. 
 
Define $U$ to be the subset of $\cH(2)\times \cH(2)$ given by $((X,\omega), (X,\omega))+\xi$, where $\xi$ ranges over a neighborhood of zero in $$\span_\bC((\Re(\omega), \Re(\omega)), (\Im(\omega), \Im(\omega)), (0, v)).$$ Note that every element of $U$ has that the two components have equal area. Assuming multicomponent EMM, one can show that the smallest $GL(2, \bR)$ invariant set containing $U$ is  the set of all surfaces in $\cH(2)\times \cH(2)$ where the two components have equal area. (Indeed, by Conjecture \ref{C:MEMM}, the smallest closed invariant set containing $U$ must be the subset of some affine invariant submanifold where the two components have equal area. By construction and part (2) of the conjecture, this affine invariant submanifold can't be irreducible. Since by construction its projection to each coordinate must be all of $\cH(2)$, we see that it must be $\cH(2)\times \cH(2)$.) 
 \end{ex}

Our first lemma shows that irreducible affine invariant submanifolds do not contain any such strange sets.   

\begin{lem}\label{L:irredclosed}
Let $\cM$ be an irreducible affine invariant submanifold. Then any closed $GL(2, \bR)$ invariant subset of $\cM$ is a finite union of affine invariant submanifolds. 
\end{lem}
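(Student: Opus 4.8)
The goal is to prove Lemma \ref{L:irredclosed}: if $\cM$ is an irreducible affine invariant submanifold (of a stratum of multicomponent surfaces), then any closed $GL(2,\bR)$-invariant subset $\cN\subset\cM$ is a finite union of affine invariant submanifolds. The obstruction to this in the multicomponent world, as the discussion before the lemma explains, is the ability to rescale components independently — this is what produces uncountably many invariant sets and non-AIS invariant sets in general. So the key point to exploit is that irreducibility, via part (2) of multicomponent EMM (Conjecture \ref{C:MEMM}), rigidifies the area ratios: on an irreducible affine invariant submanifold, locally the absolute periods of one component determine those of all others, and in particular the ratio of areas of the components is \emph{constant}. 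This constancy of area ratios is exactly what removes the independent-rescaling freedom and should let us reduce to the single-component case (the Eskin–Mirzakhani–Mohammadi theorem).

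\textbf{Step 1: Reduce to the equal-area locus of a single AIS via the area-ratio rigidity.} Write the components of a surface in the ambient labeled stratum as indexed by $1,\dots,p$. By Conjecture \ref{C:MEMM}(2), since $\cM$ is irreducible the ratios of areas $(\mathrm{Area}_1:\cdots:\mathrm{Area}_p)$ are constant on $\cM$, say equal to $(a_1:\cdots:a_p)$ with all $a_i>0$. Rescaling each component by $a_i^{-1/2}$ identifies $\cM$ (as a $GL(2,\bR)$-space, and preserving the AIS structure up to a linear change of period coordinates) with an affine invariant submanifold $\cM_0$ of a labeled stratum all of whose surfaces have \emph{all components of equal area}. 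Any closed $GL(2,\bR)$-invariant $\cN\subset\cM$ carries over to a closed $GL(2,\bR)$-invariant $\cN_0\subset\cM_0$, and $\cN$ is a finite union of AIS iff $\cN_0$ is. So it suffices to treat the case where $\cM$ lies in the equal-area locus.

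\textbf{Step 2: Apply multicomponent EMM on the equal-area locus.} Now $\cN_0$ is a closed $GL(2,\bR)$-invariant subset of a stratum of multicomponent surfaces consisting entirely of surfaces all of whose components have equal area — precisely the hypothesis of Conjecture \ref{C:MEMM}. Hence $\cN_0$ equals the set of surfaces all of whose components have equal area inside a \emph{finite} union of affine invariant submanifolds $\cA_1,\dots,\cA_m$. Thus $\cN_0 = \bigcup_{j} \big(\cA_j \cap \{\text{all components equal area}\}\big)$, and it remains to show each such intersection is itself a finite union of affine invariant submanifolds. If $\cA_j$ is irreducible, then by Conjecture \ref{C:MEMM}(2) the area ratios on $\cA_j$ are constant, so the equal-area condition either holds on all of $\cA_j$ (giving $\cA_j$ itself, an AIS) or on none of it (giving the empty set). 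If $\cA_j = \cA_j' \times \cA_j''$ is a product, decompose into irreducible factors and note that ``all components equal area'' is equivalent to a finite system of equalities among the (constant, by irreducibility of the factors) area ratios of the factors together with the single linear relation equating, say, the total areas of the factors; since area is an absolute period–quadratic function but the relevant locus inside a product of equal-area-on-each-factor AIS is cut out by equalities of the form $\mathrm{Area}(\text{factor }k) = \mathrm{Area}(\text{factor }\ell)$, which are closed $GL(2,\bR)$-invariant conditions, one iterates this reduction over the finitely many ways the equal-area condition can be distributed among factors. In all cases one obtains a finite union of affine invariant submanifolds.

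\textbf{The main obstacle.} The genuinely delicate point is Step 2's bookkeeping: handling a \emph{reducible} $\cA_j$ and confirming that ``all components have equal area'' slices it into finitely many AIS rather than something worse. The safe route is to observe that on each irreducible factor the area ratios are already locked (Conjecture \ref{C:MEMM}(2)), so the equal-area condition is a finite list of equalities among constants (vacuously true or false on each connected piece) plus equalities between the \emph{total areas of distinct irreducible factors}; the latter are $GL(2,\bR)$-invariant closed conditions, and imposing one of them on a product $\cA'\times\cA''$ of (equal-area-within-each-factor) AIS yields again a finite union of AIS because it amounts to intersecting with the preimage of the diagonal under a fixed linear-area map, which on the equal-area loci is itself an affine condition in period coordinates. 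Iterating over the at-most-finitely-many partitions of the factor set finishes the argument. I would also double-check the reduction in Step 1 is compatible with the paper's conventions (component rescalings act linearly on period coordinates, hence send AIS to AIS and closed invariant sets to closed invariant sets), which is routine. Aside from this combinatorial care, the lemma is essentially an immediate consequence of Conjecture \ref{C:MEMM} and its part (2).
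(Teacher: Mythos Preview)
Your Step 1 and the beginning of Step 2 reproduce the paper's proof exactly. The paper's entire argument is two sentences: the area ratios of the components are constant on an irreducible $\cM$ (Conjecture \ref{C:MEMM}(2)), so after rescaling the components one lands in the equal-area locus, and then the conclusion ``is given by multicomponent EMM.'' That is all the paper says.

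You then go further than the paper and attempt the bookkeeping that the conjecture literally only yields $\cN = (\text{equal-area locus})\cap \bigcup_j \cA_j$, not $\cN=\bigcup_j \cA_j$. This caution is reasonable, but your resolution contains an error. You claim that imposing $\Area(\text{factor }k)=\Area(\text{factor }\ell)$ on a product of AIS is ``an affine condition in period coordinates.'' It is not: area is the symplectic pairing of $\Re(\omega)$ with $\Im(\omega)$, a \emph{quadratic} function of periods. The equal-area locus in, say, $\cH(2)\times\cH(2)$ is a genuine quadric, not an affine invariant submanifold, so your argument for the reducible $\cA_j$ case does not go through as written.

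If you want to close the gap, use the hypothesis $\cN\subset\cM$ rather than analyzing reducible $\cA_j$ in isolation. Since $\cM$ already lies in the equal-area locus, one has $(\text{equal-area})\cap\cA_j \subset \cN\subset\cM$, hence $(\text{equal-area})\cap\cA_j=\cM\cap\cA_j$. Thus $\cN=\bigcup_j(\cM\cap\cA_j)$ is a finite union of intersections of two affine invariant submanifolds, each of which is locally linear, closed, and $GL(2,\bR)$-invariant; finiteness of its components then follows (e.g.\ from analyticity). The paper simply absorbs this step into the phrase ``given by multicomponent EMM.''
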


\begin{proof}
In an irreducible affine invariant submanifold, the ratio of areas of components is constant. By scaling factors, it can be assumed that the constants are one, and then the statement is given by multicomponent EMM. 
\end{proof}

\begin{lem}
Let $\cM$ be an irreducible affine invariant submanifold. Any flat subbundle of $T(\cM)$ not contained in $\ker(p)$ is equal to all of $T(\cM)$. 
\end{lem}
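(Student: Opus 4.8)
The plan is to mimic the proof of Proposition~\ref{P:spanTM} in the multicomponent setting, using Lemma~\ref{L:irredclosed} in place of ordinary EMM and the multicomponent analogue of \cite[Theorem 5.1]{Wfield} (which is part (4) of Conjecture~\ref{C:MEMM}, semisimplicity of $H^1$). Let $B \subset T(\cM)$ be a flat subbundle with $B \not\subset \ker(p)$. The goal is to show $B = T(\cM)$.

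First I would pass to the labelled stratum and use irreducibility to normalize so that all components have equal area; by Lemma~\ref{L:irredclosed} every closed $GL(2,\bR)$-invariant subset of $\cM$ is then a finite union of affine invariant submanifolds, so Proposition~\ref{P:spanTM} and Lemma~\ref{L:adapted} go through verbatim: there is an $\cM$-adapted pair $(S_1, S_2)$, and away from a finite union of proper affine invariant submanifolds of $\cM$ the $(S_1, S_2)$-recognizable twists span $T(\cM)$. Next I would invoke semisimplicity of $H^1$ over $\cM$ (part (4) of multicomponent EMM): since $B$ is flat and $p(B) \ne 0$, the image $p(B)$ is a nonzero flat subbundle of the semisimple bundle $H^1|_\cM$, hence a direct summand, and in particular its fibrewise symplectic-orthogonal complement is also flat and $GL(2,\bR)$-invariant. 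The key input is then that $p(T(\cM))$ is itself a sum of irreducible pieces each of which contains the tautological plane $\span(\Re\omega, \Im\omega)$ in its $GL(2,\bR)$-orbit closure sense, so any nonzero flat subbundle of $p(T(\cM))$ must contain the tautological bundle; consequently $p(B)$ contains $\span(\Re\omega,\Im\omega)$, and in particular $B$ contains vectors that are not purely relative, matching the hypothesis of the recognizable-twist machinery.

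The heart of the argument is then identical to the connected case. At a generic point $(X,\omega) \in \cM$ — one outside the finite bad locus — the recognizable twists span $T(\cM)$. A recognizable twist is, by Lemma~\ref{L:inM}, built from standard twists on equivalence classes of $\cM$-parallel cylinders, and by the same flat-subbundle argument as in \cite[Theorem 5.1]{Wfield} any flat subbundle of $T(\cM)$ that is not contained in $\ker(p)$ must contain all the standard twists, hence all recognizable twists, hence all of $T(\cM)$ at the generic point; by flatness and irreducibility of $\cM$ this propagates to all of $\cM$, giving $B = T(\cM)$. One small point to handle is that a priori $B$ need only be defined over a possibly smaller locus or after passing to the pre-image in the labelled stratum; since $\cM$ is irreducible (so its labelled pre-image has irreducible components) and flat subbundles descend along the forgetful map, this causes no difficulty.

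The main obstacle is making precise the claim that a nonzero flat $GL(2,\bR)$-invariant subbundle of $p(T(\cM))$ must contain the tautological plane, i.e.\ the multicomponent version of \cite[Theorem 5.1]{Wfield}. In the connected case this is exactly Wright's theorem; in the multicomponent case it should follow from part (4) of Conjecture~\ref{C:MEMM} together with the observation that each irreducible factor's absolute cohomology contains $\span(\Re\omega,\Im\omega)$ for that factor and that these span the full tautological bundle of the product — but since we are already assuming multicomponent EMM throughout this subsection, this is legitimate, and the argument reduces to careful bookkeeping of which flat summands can avoid the tautological directions.
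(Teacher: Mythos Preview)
Your proposal is circular. You invoke the fact that recognizable twists span $T(\cM)$ at generic points (the multicomponent analogue of Proposition~\ref{P:spanTM}), but look at how Proposition~\ref{P:spanTM} is proved: its last line applies \cite[Theorem~5.1]{Wfield}, which is precisely the connected case of the lemma you are trying to establish. In the paper's logical order, the present lemma comes \emph{before} Lemma~\ref{L:irredspan} (the multicomponent analogue of Proposition~\ref{P:spanTM}) and is used to prove it, not the other way around. Your ``heart of the argument'' paragraph makes the circularity explicit: you write that ``by the same flat-subbundle argument as in \cite[Theorem 5.1]{Wfield} any flat subbundle of $T(\cM)$ that is not contained in $\ker(p)$ must contain all the standard twists'' --- but containing all the standard twists is an immediate consequence of the conclusion $B=T(\cM)$, so you are assuming what you want to prove.

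The paper's proof is one line: run the argument of \cite[Theorem~5.1]{Wfield} verbatim. That argument uses hyperbolicity of the Teichm\"uller geodesic flow to isolate the tautological directions. For a general multicomponent stratum the diagonal flow $(g_t, \ldots, g_t)$ embeds in a larger $\bR^k$ action and is not hyperbolic (see the Remark after Conjecture~\ref{C:MEMM}); irreducibility, via part~(2) of Conjecture~\ref{C:MEMM}, forces the absolute periods of each component to be locally determined by those of any one component, which restores hyperbolicity. That is the whole content. You correctly identified in your final paragraph that ``the main obstacle is making precise \ldots\ the multicomponent version of \cite[Theorem 5.1]{Wfield}'' --- but that obstacle \emph{is} the lemma, and the recognizable-twist machinery sits downstream of it, not upstream.
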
 

\begin{proof}
This follows as in \cite[Theorem 5.1]{Wfield}. Irreducibility is only used so that the geodesic flow is hyperbolic. 
\end{proof}

\begin{lem}\label{L:irredspan}
Let $\cM$ be an irreducible affine invariant submanifold. Let $(S_1, S_2)$ be arbitrary finite sets. Then there is a dense open $GL(2, \bR)$ invariant subset of   $\cM$ on which the span of the $(S_1, S_2)$-recognizable twists contains $T(\cM)$. 
\end{lem}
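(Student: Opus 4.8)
The plan is to mimic the proof of Proposition~\ref{P:spanTM}, in the variant form recorded in Remark~\ref{R:variant}, replacing its two main external inputs by their irreducible multicomponent counterparts: the role of EMM is played by Lemma~\ref{L:irredclosed}, and the role of \cite[Theorem 5.1]{Wfield} is played by the flat-subbundle lemma stated just above (flat subbundles of $T(\cM)$ not contained in $\ker(p)$ equal all of $T(\cM)$, valid for irreducible $\cM$ precisely because the geodesic flow is then hyperbolic). Since $(S_1,S_2)$ is not assumed $\cM$-adapted, the $(S_1,S_2)$-recognizable twists need not lie in $T(\cM)$, so throughout one works with the intersection $W(X,\omega)\cap T(\cM)$, where $W(X,\omega)$ denotes the span of the $(S_1,S_2)$-recognizable twists at $(X,\omega)$.

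First I would record the two elementary facts that feed the machine. For any surface in $\cM$ and any direction containing a cylinder, the standard twist in the full collection of cylinders in that direction corresponds to $t_i=m_i$, hence is $(S_1,S_2)$-recognizable for every $S_1,S_2$; it lies in $T(\cM)$ by the Cylinder Deformation Theorem, being the sum of the standard twists over the equivalence classes of $\cM$-parallel cylinders in that direction; and its image under $p$ equals $I(\sum h_i\alpha_i)\neq 0$, since pairing $\sum h_i\alpha_i$ with $\omega$ yields $\sum h_i\int_{\alpha_i}\omega$, a sum of nonzero positive real multiples of a common complex number. Consequently $W(X,\omega)\cap T(\cM)$ is nonzero and not contained in $\ker(p)$ at every point of $\cM$.

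Next I would let $\cU\subset\cM$ be the locus on which $\dim\bigl(W(X,\omega)\cap T(\cM)\bigr)$ attains its maximum (attained, being a bounded integer-valued function). Because recognizability is phrased entirely in terms of cohomology classes, cylinders, and ratios of circumferences and of moduli of \emph{parallel} cylinders --- all $GL(2,\bR)$-equivariant data --- the set $\cU$ is $GL(2,\bR)$ invariant. Lemma~\ref{L:recnearby}, whose proof uses no connectivity hypothesis, shows that near a given surface the parallel translate of $W(X,\omega)$ lies in $W(X',\omega')$, and since parallel transport preserves $T(\cM)$ this gives, after translation, $W(X,\omega)\cap T(\cM)\subseteq W(X',\omega')\cap T(\cM)$; on $\cU$, maximality upgrades this to equality, so $W\cap T(\cM)$ is a flat subbundle of $T(\cM)$ over $\cU$. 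The complement $\cM\setminus\cU$ is closed and $GL(2,\bR)$ invariant, so by Lemma~\ref{L:irredclosed} it is a finite union of affine invariant submanifolds, each \emph{proper} since $\cU$ is nonempty; hence $\cU$ is open and dense. Finally, applying the flat-subbundle lemma above to $W\cap T(\cM)$, which is not contained in $\ker(p)$, forces $W\cap T(\cM)=T(\cM)$ over $\cU$, i.e.\ the $(S_1,S_2)$-recognizable twists span a subspace containing $T(\cM)$ at every point of the dense open $GL(2,\bR)$ invariant set $\cU$.

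I do not expect a genuine obstacle here beyond bookkeeping; the two care points are (i) that one must intersect with $T(\cM)$ everywhere, as $(S_1,S_2)$ is arbitrary rather than $\cM$-adapted, and (ii) checking that each ingredient --- Lemma~\ref{L:recnearby}, Lemma~\ref{L:irredclosed}, and the flat-subbundle lemma --- is available in the irreducible multicomponent setting; the first is proved without connectivity, and the other two are exactly the lemmas established just above, where the use of irreducibility (hyperbolicity of geodesic flow for the last, constancy of area ratios for Lemma~\ref{L:irredclosed}) has already been isolated. An essentially equivalent alternative would be to first enlarge $(S_1,S_2)$ to an $\cM$-adapted pair using the Cylinder Finiteness Theorem (Theorem~\ref{T:CFT}, which applies to multicomponent $\cM$), note that recognizability with respect to a larger pair is a stronger condition, and then run the $\cM$-adapted version of the argument, in which the recognizable twists already lie in $T(\cM)$ and no intersection is needed.
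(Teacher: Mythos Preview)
Your proof is correct and is essentially the detailed unpacking of the paper's one-line proof (``Given the previous lemma, this follows from the proof of Proposition~\ref{P:spanTM}, following Remark~\ref{R:variant}''). The only cosmetic difference is that you maximize $\dim(W\cap T(\cM))$ directly, whereas the paper (via Lemma~\ref{L:adapted}) maximizes $\dim W$ and then intersects with $T(\cM)$; both choices yield a flat subbundle of $T(\cM)$ not contained in $\ker(p)$, and the argument concludes identically. One small caution on your alternative route: the paper proves Theorem~\ref{T:CFT} only in the connected case (cf.\ the remark at the end of Section~\ref{S:multi}), so invoking it for multicomponent $\cM$ would require a separate justification---but since this is offered only as a side remark and your main argument does not rely on it, this does not affect the correctness of your proof.
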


\begin{proof}
Given the previous lemma,\ann{A: Added the words ``Given the previous lemma".} this follows from the proof of Proposition \ref{P:spanTM}, following Remark \ref{R:variant}. 
\end{proof}

\ann{A: Added some explanation of the strategy.} 
We now pause to outline the remainder of this subsection. Keeping in mind Example \ref{E:H2}, we cannot assume at this point that the smallest closed $GL(2,\bR)$ set containing $U$ is an affine invariant submanifold rather than a nonlinear object. We circumvent this issue by first working with the span of recognizable twists. Lemma \ref{L:multispan} gives a condition that can be used to show this span is at least as large as the tangent space to the smallest affine invariant submanifold containing $U$, and in Lemma \ref{L:noS1parallel} this condition is established almost everywhere on $U$. Finally, in the course of the proof of Proposition \ref{P:multimain}, it is shown that $U$ is an open subset of the span of the recognizable twists, which will allow us to conclude our analysis. 


\begin{lem}\label{L:multispan}
Let $\cM_1, \ldots, \cM_k$ be irreducible affine invariant submanifolds. Set $\cM=\cM_1\times \cdots \times \cM_k$.

Let $(X,\omega)\in \cM$, and assume that there is an $(X', \omega')$ in the orbit closure of $(X,\omega)$ with the following properties. 

First, no cylinder in any factor of $(X', \omega')$ has a parallel cylinder on any other factor with ratio of circumferences in $S_1$. Second, $\pi_i(X', \omega')$  has dense $GL(2, \bR)$ orbit in $\cM_i$ for each $i$. 

Then the span of the $(S_1, S_2)$-recognizable twists at $(X,\omega)$ contains $T(\cM)$. 
\end{lem}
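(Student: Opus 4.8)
The plan is to prove the statement first at $(X', \omega')$ and then transport it back to $(X, \omega)$ along the orbit. For the transport step, I would use that the span of the $(S_1, S_2)$-recognizable twists is $GL(2,\bR)$-equivariant: for $g\in GL(2,\bR)$ and a fixed direction on the surface, $g$ multiplies the circumferences of all cylinders in that direction by a common factor and the moduli of all cylinders in that direction by a common factor, so the homogeneous linear conditions with coefficients in $S_2$ that enter the definition of recognizability are preserved, and $g$ carries recognizable twists to recognizable twists. (This is the same equivariance implicit in Lemmas \ref{L:adapted} and \ref{L:irredspan}.) Choosing $g_n \in GL(2,\bR)$ with $g_n(X,\omega)\to(X',\omega')$ and applying Lemma \ref{L:recnearby} at $(X',\omega')$, for $n$ large $g_n(X,\omega)$ lies in the neighborhood $\cV$ provided there, so the span of the recognizable twists at $g_n(X,\omega)$ contains the parallel transport to $g_n(X,\omega)$ of the span at $(X',\omega')$. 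Since $T(\cM)$ is a flat, $GL(2,\bR)$-invariant subbundle, if the span at $(X',\omega')$ contains $T(\cM)$ then so does the span at $g_n(X,\omega)$, and pulling back by $g_n^{-1}$ shows the span at $(X,\omega)$ contains $T(\cM)$. Thus it suffices to establish the statement at $(X',\omega')$.

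To do that, write $T(\cM)=\bigoplus_{i=1}^k T(\cM_i)$, viewing $T(\cM_i)$ as the subspace of $T(\cM)$ of vectors supported on the $i$-th factor. Fix $i$. Because $\pi_i(X',\omega')$ has dense $GL(2,\bR)$-orbit in $\cM_i$, it cannot lie in the proper closed $GL(2,\bR)$-invariant complement of the dense open set furnished by Lemma \ref{L:irredspan}; hence the span of the $(S_1,S_2)$-recognizable twists at $\pi_i(X',\omega')$ contains $T(\cM_i)$. I would then show that each $(S_1,S_2)$-recognizable twist $u_i$ at $\pi_i(X',\omega')$ extends, by declaring it to equal $0$ on all other factors, to a $(S_1,S_2)$-recognizable twist on $(X',\omega')$. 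Indeed, fix a direction on $(X',\omega')$ and a subset $\{C_j : j\in\cI\}$ of the cylinders in that direction with all pairwise circumference ratios in $S_1$; the first hypothesis on $(X',\omega')$ forbids two such cylinders from lying on distinct factors, so $\cI$ is contained in a single factor, say the $a$-th. If $a=i$ the relevant coordinates are exactly those of $u_i$ and satisfy every linear relation with coefficients in $S_2$ that the corresponding moduli satisfy, since $u_i$ is recognizable at $\pi_i(X',\omega')$; if $a\neq i$ all those coordinates are $0$, which trivially satisfies every homogeneous linear relation. Summing over $i$, the span of the $(S_1,S_2)$-recognizable twists at $(X',\omega')$ contains $\bigoplus_{i=1}^k T(\cM_i)=T(\cM)$, and by the transport step above the same holds at $(X,\omega)$.

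The only genuinely substantive point is the extension-by-zero claim; everything else is bookkeeping around Lemmas \ref{L:recnearby} and \ref{L:irredspan} and the flatness and $GL(2,\bR)$-invariance of $T(\cM)$. Its proof rests entirely on the first hypothesis on $(X',\omega')$ — that no cylinder on one factor is parallel to a cylinder on another factor with circumference ratio in $S_1$ — which is precisely what prevents a subset $\cI$ witnessing a failure of recognizability from straddling two factors. I would be careful to use the exact definition of recognizability (all pairwise circumference ratios within $\cI$ in $S_1$, and the $t$-coordinates obeying every $S_2$-coefficient linear relation obeyed by the moduli), so that the case $a\neq i$ reduces cleanly to the fact that $0$ solves every homogeneous linear equation, and the case $a=i$ is a direct transcription of recognizability on the single factor $\cM_i$.
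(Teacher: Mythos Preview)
Your proof is correct and follows essentially the same two-step strategy as the paper: first establish the containment at $(X',\omega')$, then transport it to $(X,\omega)$ via Lemma~\ref{L:recnearby} and $GL(2,\bR)$-equivariance. The paper's proof is extremely terse---it simply cites Lemma~\ref{L:irredspan} for the first step and Lemma~\ref{L:recnearby} for the second---so your version in fact makes explicit the extension-by-zero argument (using the first hypothesis on $(X',\omega')$ to confine any $S_1$-admissible subset $\cI$ to a single factor) that the paper leaves to the reader, since Lemma~\ref{L:irredspan} only directly applies to each irreducible $\cM_i$ rather than to the product $\cM$.
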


Here $\pi_j: \cM\to \cM_j$\ann{A: Fixed typo.} is the projection into the $j$-th factor. By a cylinder on the $j$-th factor of $(X,\omega)$, we mean a cylinder on $\pi_j(X,\omega)$.

\begin{proof}
By Lemma \ref{L:irredspan}, the span of the $(S_1, S_2)$-recognizable twists contains $T(\cM)$ at $(X', \omega')$. Since the orbit of $(X, \omega)$ accumulates at $(X', \omega')$, by Lemma \ref{L:recnearby} we have that the same holds for $(X,\omega)$. 
\end{proof}

\begin{lem}\label{L:noS1parallel}
Let $\cM_1, \ldots, \cM_k$ be irreducible affine invariant submanifolds. Set $\cM=\cM_1\times \cdots \times \cM_k$.

Suppose $U$ is a  subset of $\cM$ that is equal to an open subset of a vector subspace in period coordinates, and suppose that $\cM$ is the smallest affine invariant submanifold containing $U$. Let $S_1\subset \bR$ be any finite set. Then for almost every surface $(X,\omega)$ in $U$, almost every point $(X', \omega')$ in the orbit closure of $(X,\omega)$ has the following properties. 

First, no cylinder in any factor of $(X', \omega')$ has a parallel cylinder on any other factor with ratio of circumferences in $S_1$. Second, $\pi_i(X', \omega')$  has dense $GL(2, \bR)$ orbit in $\cM_i$ for each $i$. 
\end{lem}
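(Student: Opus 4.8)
The plan is to show that the surfaces in $\cM$ violating either of the two asserted properties form a null set for the affine measure $\nu$ on $\cM$, and then to note that a generic point of $U$ has $GL(2,\bR)$ orbit closure equal to all of $\cM$. For the latter: by multicomponent EMM there are only countably many affine invariant submanifolds, and the orbit closure of a point of $U$ is one of them; since $\cM$ is by hypothesis the smallest affine invariant submanifold containing $U$, no proper affine invariant submanifold $\cN\subsetneq\cM$ contains $U$, so, $\cN$ being locally linear in period coordinates, $\cN\cap U$ is locally contained in a proper affine subspace of the affine span of $U$ and hence is null in $U$; summing over the countably many such $\cN$ gives that almost every $(X,\omega)\in U$ has orbit closure $\cM$. (As is customary for multicomponent surfaces, we normalize the components to have equal area, equivalently work in the equal-area locus on which $\nu$ is finite; this does not change which sets are null.) It therefore suffices to prove that $B=B_1\cup B_2\subseteq\cM$ is $\nu$-null, where $B_2$ is the set of surfaces some of whose factor projections $\pi_i(X',\omega')$ fails to have dense orbit in $\cM_i$, and $B_1$ is the set of surfaces carrying a cross-factor pair of parallel cylinders whose circumferences have ratio in $S_1$.

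For $B_2$, the locus in $\cM_i$ of surfaces with non-dense $GL(2,\bR)$ orbit is a countable union of proper affine invariant submanifolds of $\cM_i$, hence is null for the affine measure $\nu_i$ on $\cM_i$. Since $\cM=\cM_1\times\cdots\times\cM_k$ is a product, $T(\cM)=\bigoplus_i T(\cM_i)$, so period coordinates split as a product and $\nu$ pushes forward to $\nu_i$ under $\pi_i$; thus $\pi_i^{-1}$ of a $\nu_i$-null set is $\nu$-null, and $B_2=\bigcup_i\pi_i^{-1}(\,\cdot\,)$ is $\nu$-null.

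The essential point is $B_1$, and this is where the product hypothesis on $\cM$ enters. For a homology class $\gamma$ carried by factor $i$ and a homology class $\gamma'$ carried by factor $j$ with $j\ne i$, and for $s\in S_1$, put $Z^{\pm}_{\gamma,\gamma',s}=\{(X',\omega')\in\cM:\int_{\gamma'}\omega'=\pm s\int_{\gamma}\omega'\}$; the core curves of any cross-factor pair of parallel cylinders with circumference ratio $s$ lie in some $Z^{\pm}_{\gamma,\gamma',s}$. Each $Z^{\pm}_{\gamma,\gamma',s}$ is a linear subspace in period coordinates, and it is $GL(2,\bR)$-invariant since the $GL(2,\bR)$ action commutes with multiplication by the real scalar $\pm s$. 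Moreover it is proper: $\int_{\gamma'}$ factors through $\pi_j$ and $\int_{\gamma}$ through $\pi_i$, these functionals vary independently over the product $\cM$, and neither is identically zero on its factor (each of $\gamma,\gamma'$ is the core curve of a cylinder on some surface of $\cM$, where the corresponding period is nonzero), so the relation $\int_{\gamma'}\omega'=\pm s\int_{\gamma}\omega'$ cannot hold identically on $\cM$. A proper $GL(2,\bR)$-invariant linear subspace of $\cM$ is $\nu$-null. Finally, only countably many homology classes occur as core curves of cylinders on surfaces in $\cM$ (by a standard compactness argument, as in Lemma \ref{L:homfin}), and $S_1$ is finite, so $B_1$ is contained in the countable union of the sets $Z^{\pm}_{\gamma,\gamma',s}$ and hence is $\nu$-null. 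Combining, $B$ is $\nu$-null, which proves the lemma.

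The step I expect to require the most care is the first one — verifying cleanly that a generic point of $U$ has full orbit closure $\cM$, which involves both the area-ratio normalization and the self-crossing subtleties bracketed elsewhere in the paper. Within the last step, the only nontrivial input is that the parallelism loci $Z^{\pm}_{\gamma,\gamma',s}$ are genuinely proper, and this is precisely where one cannot drop the hypothesis that $\cM$ splits as the product $\cM_1\times\cdots\times\cM_k$: on a general affine invariant submanifold, periods of core curves on different components can be forced to be proportional (for instance on loci of surfaces all of whose components cover a common surface).
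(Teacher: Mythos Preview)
Your argument has a genuine gap at the very first step. You claim that a generic $(X,\omega)\in U$ has $GL(2,\bR)$ orbit closure equal to all of $\cM=\cM_1\times\cdots\times\cM_k$, but this is never true when $k\geq 2$: the diagonal $GL(2,\bR)$ action preserves the ratios of the areas of the components, so the orbit closure of any single surface is trapped in a fixed--area--ratio hypersurface of $\cM$, which is a proper (and nonlinear) subset. Relatedly, your appeal to countability of affine invariant submanifolds fails here: as the paper notes just after Theorem \ref{T:main3}, in the multicomponent setting there are uncountably many affine invariant submanifolds (obtained by rescaling components), and Conjecture \ref{C:MEMM} only asserts countability inside the equal--area locus. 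Your parenthetical about normalizing to equal area does not repair this, since $U$ itself need not sit in any single area--ratio slice (indeed the paper's proof explicitly uses that the area ratio is a nonconstant function on $U$).

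Once the orbit closure of $(X,\omega)$ is not all of $\cM$, your measure--theoretic argument collapses: knowing that the bad set $B_1$ is $\nu$--null in $\cM$ says nothing about whether $B_1$ is null for the natural measure on the orbit closure. In fact this can fail: if the orbit closure is (a rescaling of) an irreducible $\cN\subsetneq\cM$, then $\cN$ may force fixed circumference ratios between cylinders on different components (e.g.\ loci of common covers), so $B_1\cap\cN$ can have full measure. The paper's proof handles exactly this obstruction by a case split: when the generic orbit closure is reducible it is the appropriate area--ratio slice of $\cM$ and your $B_1$ argument essentially goes through there; when it is irreducible, one finds a fixed $\cN$ with $U\subset r^{-1}(\cN)$ and then uses that the area ratio is nonconstant on $U$ to arrange, for generic $(X,\omega)$, that the rescaling carries the (countable) set $S_1'$ of $\cN$--parallel circumference ratios off of $S_1$.
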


\begin{proof}
The case $k=1$ follows from Lemma \ref{L:irredclosed}. Assume $k=2$. 

Let $U_1$ be the subset of $(X,\omega)$ in  $U$ such that the orbit closure $\pi_i(X,\omega)$ is  $\cM_i$ for $i=1,2$. This set $U_1$ has full measure, because it is the complement of a countable number of linear subspaces (one for each affine invariant submanifolds of each $\cM_i$). 


Case 1: For almost every $(X, \omega)\in U$, the smallest affine invariant submanifold containing $(X, \omega)$ is reducible. The result follows, because for any such $(X,\omega) \in U_1$, the orbit closure of $(X,\omega)$ is the subset of $\cM_1\times \cM_2$ where the two components have the same ratio of area as $(X,\omega)$. \ann{A: Made cosmetic changes, because ``irreducible orbit closure" isn't defined, only irreducible affine invariant submanifold.}


Case 2: A positive measure subset  of $(X, \omega)\in U$ is contained in an irreducible affine invariant submanifold. Let $U_2$ be the subset of $U_1$ with this property. For  $(X, \omega)\in \cM_1\times \cM_2$, let $r(X,\omega)$ be the result of scaling the two factors so that they both have equal area. By the countability statement in multicomponent EMM, there is an affine invariant submanifold $\cN\subset \cM_1 \times \cM_2$ such that the following holds. For a positive measure set $U_3\subset U_2$, for every $(X,\omega)\in U_3$ the orbit closure of $r(X,\omega)$ is the subset of $\cN$ where the factors both have equal area.   Because $r^{-1}(\cN)$ is an analytic submanifold, since it contains a positive measure subset of $U$, it must in fact contain all of $U$. 


Note that the ratio of the area of the two components is a non-constant analytic function on $U$. (If it were constant, then $U$ would be contained in a scaling of $\cN$, showing that $\cM_1\times \cM_2$ is not the smallest affine invariant submanifold containing $U$.)  

 Let $\cN^{gen}$ be the set surfaces in  $\cN$  on which all parallel cylinders are in fact $\cN$-parallel.
  Note that $\cN^{gen}$ has full measure. Let $S_1'$ be the set of ratios of circumferences\ann{A: Added missing word.} of $\cN$-parallel cylinders on surfaces in $\cN$. This set is countable. (In fact, a generalized version of the Cylinder Finiteness Theorem applies here and gives that $S_1'$ is finite, but we only need countability here and that is easy.)
 
 Let $(X,\omega)$ be any surface in $U_3$ where the square root of the ratio of the areas of the two factors is not an element of $S_1$ divided by an element of $S_1'$. By the definition of $U_3$, the orbit closure of $(X, \omega)$ is $\cN$ with the components rescaled, and any surface $(X', \omega')$ in the $GL(2, \bR)$ orbit closure of $(X,\omega)$ with $r(X', \omega') \in \cN^{gen}$ has the desired property that no cylinder on $\pi_1(X', \omega')$ is parallel to a cylinder in $\pi_2(X', \omega')$ with ratio of circumferences in $S_1$.

Now, we will derive the result for $k>2$ from the result for $k=2$. For each $i,j$, let $U_{i,j}$ be the subset of $(X,\omega) \in U$ for which almost every surface $(X', \omega')$ in the orbit closure of $(X,\omega)$ has that $\pi_i(X', \omega')$ has dense $GL(2,\bR)$ orbit in $\cM_i$, and $\pi_j(X', \omega')$ has dense $GL(2,\bR)$ orbit in $\cM_j$, and  that no cylinder on $\pi_i(X', \omega')$ is parallel to a cylinder in $\pi_j(X', \omega')$ with ratio of circumferences in $S_1$. The lemma for $k=2$ gives that $U_{i,j}$ has full measure. The intersection of the $U_{i,j}$ gives the full measure set desired. 
\end{proof}

\begin{prop}\label{P:multimain} 
Suppose that $(X_n,\omega_n)\in \cM$ converge to $(X,\omega) \in \partial \cM$.

Then $(X, \omega)$ is contained in an affine invariant submanifold $\cM'$ whose tangent space at $(X,\omega)$ is given by, for infinitely many $n$, $\Ann(V)\cap T_{(X_n, \omega_n)}(\cM)\subset H^1(X_n,\Sigma_n, \bC)$ via the isomorphism between the tangent space to the boundary stratum at $(X,\omega)$ and $\Ann(V)\subset H^1(X_n,\Sigma_n, \bC)$.

Also, any finite sets $(S_1, S_2)$ that are $\cM$-adapted are also $\cM'$ adapted.  
\end{prop}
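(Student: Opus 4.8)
The plan is to run the argument of Proposition~\ref{P:main}, with one structural change dictated by the multicomponent setting: since the smallest closed $GL(2,\bR)$-invariant set containing the family of limit directions need not be an affine invariant submanifold (Example~\ref{E:H2}), the span of the recognizable twists is used in place of that orbit closure. Fix $\cM$-adapted finite sets $(S_1,S_2)$. After passing to a subsequence, Propositions~\ref{P:AnnV} and~\ref{P:Vn} let us form the set $U$ of all limits of surfaces $(X_n,\omega_n)+\xi_n$ with $\xi_n\in\Ann(V)\cap T_{(X_n,\omega_n)}(\cM)$ small; then $U=(X,\omega)+B$, where $B$ is a small ball about $0$ in the subspace $W\subseteq\Ann(V)$ to which $T_{(X_n,\omega_n)}(\cM)\cap\Ann(V)$ converges. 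Decomposing $[\omega_n]=a_n+b_n$ with $a_n\in W$ and $b_n$ in a complement of $W$ in $T(\cM)$ on which the pairing with $V$ is injective, the vanishing of the $V$-periods of $\omega_n$ in the limit forces $b_n\to 0$, so the class of $(X,\omega)$ together with the tautological classes $\Re(\omega),\Im(\omega)$ all lie in $W$; hence $U$ is an open subset of the \emph{vector} subspace $W$ in period coordinates. Let $\cM'$ be the smallest affine invariant submanifold containing $U$ — it exists by multicomponent EMM, applied after rescaling the components to equal area — and write $\cM'=\cM'_1\times\cdots\times\cM'_k$ as a product of irreducibles.

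The inclusion $W\subseteq T(\cM')$ is immediate from $U\subseteq\cM'$, so the content is the reverse inclusion. First, by Lemma~\ref{L:noS1parallel} the hypotheses of Lemma~\ref{L:multispan} are satisfied at almost every point of $U$ — this is exactly where we use that $U$ is open in a vector subspace and that $\cM'$ is the smallest affine invariant submanifold containing it — so at such points the span of the $(S_1,S_2)$-recognizable twists contains $T(\cM')$. Second, every $(Y,\eta)\in U$ is a limit of surfaces $(X_n,\omega_n)+\xi_n\in\cM$, so by Proposition~\ref{P:rinside} each $(S_1,S_2)$-recognizable twist at $(Y,\eta)$ lies, for $n$ large, in the span of the recognizable twists at $(X_n,\omega_n)+\xi_n$; since $(S_1,S_2)$ is $\cM$-adapted these remain in $\cM$, and, annihilating $V$ by construction, they lie in $\Ann(V)\cap T_{(X_n,\omega_n)+\xi_n}(\cM)=\Ann(V)\cap T_{(X_n,\omega_n)}(\cM)$, hence in $W$. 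Taking $(Y,\eta)$ generic in $U$ and combining the two observations gives $T(\cM')\subseteq W\subseteq T(\cM')$, so $T(\cM')=W$; a dimension count along the subsequence upgrades the convergence to the equality $\Ann(V)\cap T_{(X_n,\omega_n)}(\cM)=W=T(\cM')$ for infinitely many $n$. As $(X,\omega)\in U\subseteq\cM'$, this yields the stated tangent space formula.

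For the last assertion, the second observation above shows that the span of the $(S_1,S_2)$-recognizable twists is contained in $T(\cM')$ at \emph{every} point of $U$, i.e.\ every $(S_1,S_2)$-recognizable twist on a surface in $U$ remains in $\cM'$. Since $T(\cM')=W$, the real dimension of $U$ equals that of $\cM'$, so $U$ is open in $\cM'$ and meets the dense open $GL(2,\bR)$-invariant locus of Lemma~\ref{L:adapted} on which the span of the recognizable twists is locally constant; there the span is a flat subbundle, not contained in $\ker(p)$ (it contains standard twists) and, by what was just shown, contained in $T(\cM')$. By Lemma~\ref{L:recnearby} the span at every point of $\cM'$ is contained in this subbundle, hence in $T(\cM')$, so $(S_1,S_2)$ is $\cM'$-adapted. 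Seeing that this flat subbundle is genuinely defined over all of $\cM'$ requires controlling the locus where the span is not maximal, which is done through the product decomposition $\cM'=\cM'_1\times\cdots\times\cM'_k$ together with Lemma~\ref{L:irredclosed} applied to each irreducible factor.

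The main obstacle is precisely the failure, in the multicomponent case, of the orbit closure of $U$ to be an affine invariant submanifold: we cannot define $\cM'$ as that orbit closure and simply read off its tangent space as in Proposition~\ref{P:main}, and the whole point of routing the lower bound for $T(\cM')$ through Lemmas~\ref{L:multispan} and~\ref{L:noS1parallel} is to sidestep this. The genuinely delicate step inside that route is the almost-everywhere verification on $U$ of the two conditions of Lemma~\ref{L:noS1parallel} — no cylinder on one factor parallel to a cylinder on another factor with circumference ratio in $S_1$, and dense $GL(2,\bR)$ orbits in all the factors — which is where the area-ratio phenomena peculiar to multicomponent surfaces, and the assumption of multicomponent EMM, enter in an essential way.
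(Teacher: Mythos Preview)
Your argument for the tangent space formula is essentially the paper's: define $U$, take $\cM'$ to be the smallest affine invariant submanifold containing $U$, use Lemmas~\ref{L:noS1parallel} and~\ref{L:multispan} to find a point of $U$ where the span of $(S_1,S_2)$-recognizable twists contains $T(\cM')$, and then push those twists into $T(\cM)\cap\Ann(V)$ via Proposition~\ref{P:rinside}. One unnecessary detour: the decomposition $[\omega_n]=a_n+b_n$ with $b_n$ in a complement of $W$ ``on which the pairing with $V$ is injective'' is neither needed nor quite correct as stated (a complement of $W$ in $T(\cM)$ need not miss $\Ann(V)$, since $T(\cM)\cap\Ann(V)$ only converges to $W$ rather than equalling it). The description of $U$ as $(X,\omega)$ plus a small ball in the limit subspace follows directly from the construction in Proposition~\ref{P:main}, and Lemma~\ref{L:noS1parallel} only requires $U$ to be locally linear in period coordinates.

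For the last assertion you take a substantially longer route than the paper, and the detour has gaps. You argue via Lemma~\ref{L:adapted}, flat subbundles, and semicontinuity from Lemma~\ref{L:recnearby}, but you only know the span of recognizable twists is contained in $T(\cM')$ on $U$; to propagate this to all of $\cM'$ via a flat subbundle you would need the maximal-span locus to be dense \emph{and connected} in $\cM'$, and that the span on $U\cap\cG$ is globally the subbundle on $\cG$. Your appeal to Lemma~\ref{L:irredclosed} on each factor gestures at this but does not establish it. The paper's argument is a one-liner: for any surface in $\cM'\subset\partial\cM$, Proposition~\ref{P:rinside} places any $(S_1,S_2)$-recognizable twist $u$ in the span of recognizable twists at a nearby surface in $\cM$, hence in $T(\cM)$ since $(S_1,S_2)$ is $\cM$-adapted; and $u\in\Ann(V)$ by construction, so $u\in T(\cM)\cap\Ann(V)=T(\cM')$.
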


\begin{proof}
Define $U$ as in the proof of Proposition \ref{P:main}, and let $\cM'$ be the smallest affine invariant submanifold containing $U$. 

Pick $(S_1, S_2)$  that is $\cM$-adapted. 

By  Lemmas \ref{L:multispan} and \ref{L:noS1parallel}, for some  limit $(X', \omega')\in U$ the span of the $(S_1, S_2)$-recognizable twists contains $T(\cM')$. By Proposition \ref{P:rinside}, all these twists are contained in the span of the $(S_1, S_2)$-recognizable twists at $(X_n, \omega_n)+\xi_n$ for $n$ large enough, and hence we see that they are contained in $T(\cM)\cap \Ann(V)$. We get that $T(\cM')\subset T(\cM)\cap \Ann(V)$. 

The other containment proceeds just as in the proof of Proposition \ref{P:main}. 

Proposition \ref{P:rinside} also shows that $(S_1, S_2)$ is $\cM'$-adapted,  because any $(S_1, S_2)$-recognizable twist $u$ at a surface in $\partial \cM$ also gives a  $(S_1, S_2)$-recognizable twist at a nearby surface in $\cM$. Since $(S_1, S_2)$ is $\cM$-adapted, we have $u\in T(\cM)$. By definition, we have that $u\in \Ann(V)$. Hence $u\in T(\cM)\cap \Ann(V)=T(\cM')$.
\end{proof}

\subsection{Results using multicomponent EMM: Analogue of Theorem \ref{T:finite}}

Now we move on toward a version of Theorem \ref{T:finite}. The new danger in the multicomponent case is that a sequence of affine invariant submanifolds might not equidistribute to a larger affine invariant submanifold. For example, if $\cM_n$ is the locus in $\cH(2)\times \cH(2)$ where the second component is $n$ times the first, then the $\cM_n$ diverge in $\cH(2)\times \cH(2)$. 

Let $\cH_i, i=1, \ldots, k$ be strata of connected translation surfaces, and say $\cM$ is an affine invariant submanifold of $\cH_1\times\cdots\times\cH_k$. Let $A(\cM)$ be the set of $a\in \bR$ such that there are $1\leq i,j\leq k$, $i\neq j$ such that in $\cM$, the area of the $i$-th component is always $a$ times the area of the $j$-th component. So in the example above, $A(\cM_n)=\{n, 1/n\}$. 

\begin{lem}
For any finite sets $(S_1, S_2)$ and $d>0$, there is a finite set $A(S_1, S_2,d)\subset \bR$ such that the following holds. Suppose $\cM$ is an affine invariant submanifold  in a stratum of dimension at most $d$, and that $(S_1, S_2)$ is $\cM$-adapted. Then  $A(\cM)\subset A(S_1, S_2, d)$. 
\end{lem}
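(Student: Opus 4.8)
The plan is to combine three facts: there are only finitely many strata of bounded dimension; in each fixed stratum only finitely many affine invariant submanifolds are $(S_1,S_2)$-adapted; and each affine invariant submanifold has only finitely many area ratios between its components. Setting $A(S_1,S_2,d)$ to be the union, over this finite list of adapted submanifolds, of their finite sets of area ratios will then do the job.

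\emph{Step 1 (finitely many strata).} First I would observe that there are only finitely many strata $\cH(\kappa_1)\times\cdots\times\cH(\kappa_p)$ of multicomponent translation surfaces of dimension at most $d$. Each factor $\cH(\kappa_i)$ is a connected stratum, hence has complex dimension at least $2$, so $p\le d$; and for a connected stratum $\cH(\kappa)$ of genus $g$ surfaces with $s$ marked points one has $\dim_\bC\cH(\kappa)=2g+s-1$, so the bound $\le d$ forces $g$ and $s$, and hence the partition $\kappa$ of $2g-2$, to lie in a finite set. (The same conclusion holds if $d$ denotes real dimension.)

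\emph{Step 2 (finitely many adapted submanifolds in each stratum).} Next I would invoke the multicomponent analogue of Theorem \ref{T:Sdetermines}: for each fixed stratum $\cH$, only finitely many affine invariant submanifolds $\cM\subset\cH$ have $(S_1,S_2)$ being $\cM$-adapted. Its proof mirrors that of Theorem \ref{T:Sdetermines}, with multicomponent EMM (Conjecture \ref{C:MEMM}) in place of EMM. Were there infinitely many such $\cM_n\subset\cH$, one would pass to a subsequence accumulating on an affine invariant submanifold $\cN$ on a dense open subset of which the $(S_1,S_2)$-recognizable twists span $T(\cN)$ (this uses Lemma \ref{L:irredclosed}, so that closed invariant subsets of the relevant irreducible factors are finite unions of affine invariant submanifolds, together with Lemma \ref{L:irredspan}); since $(S_1,S_2)$ is $\cM_n$-adapted, these recognizable twists lie in each nearby $T(\cM_n)$ by Lemma \ref{L:recnearby}, and pushing two nearby points of distinct $\cM_n,\cM_{n'}$ toward $\cN$ would force $T(\cM_n)$ and $T(\cM_{n'})$ to agree under parallel transport, which is impossible for distinct affine invariant submanifolds. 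Combining this with Step 1, there are only finitely many affine invariant submanifolds $\cM^{(1)},\dots,\cM^{(N)}$ that lie in strata of dimension at most $d$ and have $(S_1,S_2)$ being $\cM^{(j)}$-adapted.

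\emph{Step 3 (conclusion).} Finally, each $A(\cM^{(j)})$ is finite: in the ambient stratum $\cH(\kappa_1)\times\cdots\times\cH(\kappa_p)$ of $\cM^{(j)}$ there are at most $p(p-1)$ ordered pairs $(i,i')$ of distinct components, and for each such pair there is at most one real number $a$ for which ``the area of component $i$ equals $a$ times the area of component $i'$'' holds identically on $\cM^{(j)}$, because component areas are positive. Hence $A(S_1,S_2,d):=\bigcup_{j=1}^N A(\cM^{(j)})$ is finite, and if $\cM$ is any affine invariant submanifold in a stratum of dimension at most $d$ for which $(S_1,S_2)$ is $\cM$-adapted, then $\cM=\cM^{(j)}$ for some $j$ and so $A(\cM)=A(\cM^{(j)})\subset A(S_1,S_2,d)$. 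The hard part will be Step 2: unlike the connected case, a fixed multicomponent stratum can contain a continuum of affine invariant submanifolds (differing only in the area ratios of their components) and its closed $GL(2,\bR)$-invariant subsets need not be finite unions of affine invariant submanifolds, so one must verify carefully that the accumulation-and-parallel-transport argument still produces a contradiction — this is precisely where multicomponent EMM and the stability of recognizable twists under small perturbations (Lemma \ref{L:recnearby}) are essential.
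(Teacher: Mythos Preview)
Your Step 2 has a genuine gap that makes the argument circular. In a multicomponent stratum, an infinite sequence of affine invariant submanifolds need not accumulate anywhere: as the paper explicitly warns just before this lemma, the loci $\cM_n\subset\cH(2)\times\cH(2)$ where the second component is $n$ times the first simply diverge. Your invocation of Lemma \ref{L:irredclosed} does not help, since that lemma applies only inside an \emph{irreducible} affine invariant submanifold, whereas the ambient product stratum is reducible; and multicomponent EMM (Conjecture \ref{C:MEMM}) governs only the equal-area locus. To rule out divergence of the $\cM_n$ you would need an a priori bound on their area ratios --- but that is precisely the content of the lemma you are trying to prove. Indeed, the paper's very next lemma takes a bound on area ratios as a \emph{hypothesis} in order to obtain accumulation, so the logical order is the reverse of what you propose.

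The paper's proof is a direct combinatorial computation that never tries to enumerate adapted submanifolds. Pick any $(X,\omega)\in\cM$ and any direction; the cylinders $\cC$ in that direction number fewer than $d$. On an equivalence class $\cC'\subset\cC$ under ``ratio of circumferences lies in $S_1$'', all pairwise circumference ratios lie in a finite set depending only on $S_1$ and $d$. The twist with $t_i=m_i$ on $\cC'$ and $t_i=0$ elsewhere is $(S_1,S_2)$-recognizable, and the space of such twists is cut out by one of only finitely many systems of linear equations in at most $d$ variables with coefficients in $S_2$; for each such system one fixes once and for all a positive solution $(t_1,\dots,t_{k'})$. Since $(S_1,S_2)$ is $\cM$-adapted, the corresponding cylinder stretch lies in $T(\cM)$ and changes the area of each component by $\sum_{C_i\text{ on that component}} t_ic_i$; constancy of the area ratio then forces that ratio to be a quotient of such sums, which lies in a finite set determined by $S_1,S_2,d$ alone.
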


\begin{proof}
For simplicity, suppose $\cM$ consists of surfaces with two components. Pick any surface $(X, \omega)\in \cM$, and consider the set $\cC=\{C_1, \ldots, C_k\}$ of cylinders in some direction. Note $k<d$. (This is the only way $d$ will be used.)

Consider the equivalence relation on $\cC$ generated by $C_i$ equivalent to $C_j$ if the ratio of circumferences is in $S_1$. Let $\cC'\subset \cC$ be an equivalence class for this equivalence  relation. Note that the ratio of circumferences of any two cylinders in $\cC'$ is in a finite set depending only on $S_1$ and $d$.\ann{A: Typo fixed.}  Note also that $\cC'$ supports at least one $(S_1, S_2)$-recognizable twist, corresponding to $t_i=0$ for cylinders in $\cC\setminus \cC'$, and $t_i$ equal to the modulus for the cylinders in $\cC'$. 

Let $\alpha_i$ be the core curve of $C_i$.  Without loss of generality, assume $\cC'=\{C_1, \ldots, C_{k'}\}$. 

By definition, the $(S_1, S_2)$-recognizable twists in $\cC'$ are the twists of the form 
$$u=\sum_{i=1}^{k'} t_i c_i I_\Sigma(\alpha_i),$$ 
where the $t_i$ satisfy all linear equations with coefficients in $S_2$ that the moduli $m_i$ satisfy. There are only finitely many systems of linear equations in $S_2$ with at most $d$ variables. For each one of them that admits a solution $(t_1, \ldots, t_{k'})$ with all $t_i>0$, pick a solution $t_i>0$. 

Now, we use the fixed solution $(t_1, \ldots, t_{k'})$ to the set of linear equations defining the $(S_1, S_2)$-recognizable twists in $\cC$. Say that $C_1, \ldots, C_\ell$ are contained in the first component, and $C_{\ell+1}, \ldots, C_{k'}$ in the second. Then, applying the cylinder stretch arising from $u$ (an appropriate complex multiple of $u$ that changes area), we see that the area of the first component has derivative 
$\sum_{i=1}^\ell t_i c_i$, and the area of the second component has derivative $\sum_{i=\ell+1}^{k'} t_i c_i$. Since the ratios of areas is constant, this ratio must be $\sum_{i=1}^\ell t_i c_i$ divided by $\sum_{i=\ell+1}^{k'} t_i c_i$. 
\end{proof}

\begin{lem}
Let $A$ be a finite set, and let $\cH$ be a stratum of multicomponent translation surfaces. Suppose that $\cM_i, i=1,2, \ldots$, is an infinite sequence of distinct affine invariant submanifolds of fixed dimension $d$ of $\cH$ with $A(\cM_i)\subset A$. 

Then the closure of the union of the $\cM_i$ contains an affine invariant submanifold of dimension larger than $d$. 
\end{lem}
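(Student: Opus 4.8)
The plan is to reduce to the equal-area setting, where multicomponent EMM is available, and then imitate the standard argument that an infinite family of distinct affine invariant submanifolds of bounded dimension must equidistribute to a strictly larger one.

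Write $\cH=\cH_1\times\cdots\times\cH_k$ with $k$ fixed, and decompose each $\cM_i$ as a product of irreducible affine invariant submanifolds with underlying partition $\cP_i$ of $\{1,\dots,k\}$. By part~(2) of multicomponent EMM, $\cP_i$ is precisely the partition of the components into classes whose absolute periods are rigidly linked, and on each irreducible factor the ratios of areas of its components are constant and lie in $A(\cM_i)\subseteq A$. Since $A$ is finite and there are only finitely many partitions of $\{1,\dots,k\}$, I would pass to a subsequence along which $\cP_i$ is a fixed partition $\cP=\{P_1,\dots,P_m\}$ and the within-factor area ratios agree for every $i$; then a single fixed element of the torus of componentwise rescalings carries every $\cM_i$ to an affine invariant submanifold on which all components within each block $P_\ell$ have equal area. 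This operation preserves the stratum, the dimension $d$, and all the hypotheses, so I assume it has been performed.

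Next I would pass to the equal-area slices $\cM_i^{=}=\cM_i\cap\cH^{=}$, where $\cH^{=}\subseteq\cH$ is the locus of surfaces all of whose $k$ components have equal area. Because the $m$ irreducible factors are individually scaling-invariant and their areas are mutually unconstrained, every surface of $\cM_i$ can be moved into $\cM_i^{=}$ by rescaling its factors; hence $\cM_i^{=}$ is nonempty of codimension $m-1$ in $\cM_i$, its affine span in period coordinates is all of $\cM_i$ (the equal-area conditions cut $\cM_i$ by genuinely indefinite quadrics, not by hyperplanes), and $\cM_i$ is the saturation of $\cM_i^{=}$ under the torus $T_{\cP}$ of $\cP$-block rescalings. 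In particular $\cM_i^{=}=\cM_j^{=}$ forces $\cM_i=\cM_j$, so the $\cM_i^{=}$ are pairwise distinct of dimension $d-m+1$. Now $Z:=\overline{\bigcup_i\cM_i^{=}}$ is a closed $GL(2,\bR)$-invariant subset of $\cH^{=}$, so multicomponent EMM gives $Z=\cH^{=}\cap(\cN_1\cup\cdots\cup\cN_p)$ for finitely many affine invariant submanifolds $\cN_j$. Each $\cM_i^{=}$, being closed, $GL(2,\bR)$-invariant and equal-area, is a finite union of pieces of the form $\cN\cap\cH^{=}$, at least one of dimension exactly $d-m+1$; a pigeonhole argument then produces one $\cN_{j_0}$ containing infinitely many such top-dimensional pieces, and since these are distinct we obtain $\dim(\cN_{j_0}\cap\cH^{=})>d-m+1$. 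Using the affine-span observation one also checks that the product decomposition of $\cN_{j_0}$ cannot be strictly coarser than $\cP$, since a coarser $\cN_{j_0}$ would force a linear period relation across blocks that $\cM_i^{=}$, having affine span $\cM_i$, cannot satisfy.

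Finally I would transfer this back to $\overline{\bigcup_i\cM_i}$. Since each fixed rescaling is continuous, $\overline{\bigcup_i\cM_i}=\overline{\,T_{\cP}\cdot\bigcup_i\cM_i^{=}\,}\supseteq T_{\cP}\cdot Z\supseteq T_{\cP}\cdot(\cN_{j_0}\cap\cH^{=})$, and because $\cN_{j_0}\cap\cH^{=}$ is $GL(2,\bR)$-invariant the $T_{\cP}$-orbit contributes only $m-1$ new directions, so this set has dimension $\dim(\cN_{j_0}\cap\cH^{=})+(m-1)>d$. When the decomposition of $\cN_{j_0}$ is exactly $\cP$, the set is $\cN_{j_0}$ itself, an affine invariant submanifold of dimension $>d$ inside $\overline{\bigcup_i\cM_i}$, and undoing the rescaling of the first paragraph finishes the proof. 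When the decomposition of $\cN_{j_0}$ is strictly finer than $\cP$ I would instead run an induction on $k$: those factors $\cM_i^{(\ell)}$ projecting to a block in which only finitely many distinct projections occur may be made constant after passing to a further subsequence, while a block in which infinitely many distinct projections remain has fewer than $k$ components (as $m\ge 2$ in this case), so the lemma applied there yields a strictly larger affine invariant submanifold which, combined with the now-constant remaining factors, produces the required affine invariant submanifold of dimension $>d$ in the closure of $\bigcup_i\cM_i$.

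The main obstacle is this last step: reconciling the componentwise-rescaling tori attached to $\cN_{j_0}$ and to the $\cM_i$, ruling out a coarser decomposition, and — in the finer case — organizing the induction so that one genuinely recovers an affine invariant submanifold, rather than merely a $GL(2,\bR)$-invariant analytic set, of dimension exceeding $d$ inside $\overline{\bigcup_i\cM_i}$.
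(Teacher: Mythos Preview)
Your first paragraph matches the paper: pass to a subsequence so that the irreducible decomposition $\cP$ and the within-block area ratios are fixed, and rescale so they are all equal to $1$. After that you diverge from the paper by working in the \emph{global} equal-area locus $\cH^{=}$ and applying multicomponent EMM there. This produces an affine invariant submanifold $\cN_{j_0}$ whose irreducible decomposition $\cQ$ you must then compare with $\cP$, and this is precisely the obstacle you flag at the end. Your argument that $\cQ$ cannot be strictly coarser than $\cP$ (or incomparable) is fine, but the case where $\cQ$ strictly refines $\cP$ is a genuine gap: your induction sketch does not explain why the projections of the irreducible $\cM_{i,\ell}$ to the finer $\cQ$-blocks are affine invariant submanifolds, nor how the pieces reassemble to an affine invariant submanifold (rather than an invariant analytic set) of dimension exceeding $d$ inside $\overline{\cup_i\cM_i}$.

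The paper avoids this entirely by applying multicomponent EMM \emph{block by block} rather than globally. Once $\cM_i=\cM_{i,1}\times\cdots\times\cM_{i,p}$ with the $\cM_{i,j}\subset\cH_j$ irreducible and with area ratios independent of $i$, each family $\{\cM_{i,j}\}_i$ already lives in a fixed-area-ratio slice of a single factor $\cH_j$; after rescaling, multicomponent EMM applies directly there and gives that $\overline{\cup_i\cM_{i,j}}$ is a finite union of affine invariant submanifolds. Letting $\cN_j$ be the component of largest dimension, the paper then takes the product $\cN_1\times\cdots\times\cN_p$, which by construction has irreducible decomposition refining $\cP$, so no reconciliation step is needed. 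The dimension comparison $\sum_j\dim\cN_j>d$ follows because $\dim\cM_{i,j}\le\dim\cN_j$ for every $i,j$, and if equality held for every $j$ then (each $\cN_j$ being connected) one would get $\cM_{i,j}=\cN_j$ for all $i$, contradicting distinctness of the $\cM_i$. The moral is that your detour through $\cH^{=}$ creates a problem (the unknown decomposition $\cQ$) that the factorwise approach simply never encounters.
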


\begin{proof}
 Passing to a subsequence we may assume that all $\cM_i$ have the same pairs of components with the same fixed area ratios. In other words, $\cH$ is the product of strata of (possibly multicomponent) surfaces $\cH=\cH_1\times \cdots \times\cH_p$, and each $\cM_i$  is a product of irreducibles, $\cM_i=\cM_{i,1} \times \cdots \times \cM_{i,p}$. Here $\cM_{i,j}\subset \cH_j$, and the ratios of the components in each $\cM_{i,j}$ does not depend on $j$. 

For each $j$, since all $\cM_{i,j}$ live in the subset of $\cH_j$ where the ratios of areas are fixed independent of $i$, the closure of $\cup_j \cM_{i,j}$ is a finite union of affine invariant submanifolds. Let $\cN_j$ be the component of largest dimension. 

The union of the closure of the $\cM_i$ contains $\cN_1\times \cdots\times \cN_p$, and it is easy to see that this has dimension larger than $d$. 
\end{proof}

\begin{proof}[Proof of Theorem \ref{T:main3}]
Given Proposition \ref{P:multimain}, to conclude the proof of Theorem \ref{T:main3} it suffices to prove a version of Theorem \ref{T:finite} in the multicomponent situation. 

So we assume to the contrary, that for some fixed boundary stratum $\cH'$, there are infinitely many affine invariant submanifolds $\cM_i$ arising as in Proposition \ref{P:multimain}. Pass to a subsequence where the $\cM_i$ have maximal dimension $d$. The previous two lemmas give that the closure of the union of the $\cM_i$ contains an affine invariant submanifold of dimension larger than $d$, and so we can conclude the proof exactly as in the proof of Theorem \ref{T:finite}. 
\end{proof}

\section{Basic boundary theory for strata}\label{S:Basic}

In the first subsection we prove Propositions \ref{P:fn} and \ref{P:Vn}. In the second we prove Proposition \ref{P:AnnV}. In the third, we show that the notion of convergence defined in Section 2 can be rephrased in terms of the Deligne-Mumford compactification of $\cM_{g,s}$. 

\subsection{Proof of Proposition \ref{P:Vn}}

%
%

The next lemma considers a stratum  $\cH$ in the boundary of some other stratum $\cH'$. 

\begin{lem}\label{L:Vthin}
Let $\cH'$ and $\cH$ be two strata of multicomponent translation surfaces, and let $(X,\omega, \Sigma)\in \cH$. Let $U$ be small neighborhood of $\Sigma$, such that  each component of $U$ is homeomorphic to a disk. Fix $\delta>0$ and a neighborhood $\cO$ of $\omega$ in the space of differential 1-forms on $X\setminus U$. Let $\cU$ be the neighborhood in $\cH'$ of $(X, \omega,\Sigma)$ that consists of $(X',\omega', \Sigma')\in \cH'$ such that there is a map $g:X\setminus U\to X'$ that is a diffeomorphism onto its range, such that
\begin{enumerate}
\item $g^*(\omega')\in \cO$ and 
\item the injectivity radius of points not in the image of $g$ is at most $\delta$. 
\end{enumerate}
Let $\Delta$ be the length of the shortest saddle connection on $(X,\omega, \Sigma)$. Then for $U, \delta, \cO$ small enough, any saddle connection on $(X', \omega', \Sigma')$ of length less than $\Delta/2$ is contained in the $\delta$-thin part (the subset with injectivity radius at most $\delta$). 
\end{lem}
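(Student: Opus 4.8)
The plan is to argue by contradiction. Suppose that for arbitrarily small $U$, $\delta$, $\cO$ there is an $(X',\omega',\Sigma')\in\cU$ carrying a saddle connection $\gamma$ of flat length less than $\Delta/2$ and a point $q\in\gamma$ of injectivity radius greater than $\delta$. By condition (2) defining $\cU$, such a $q$ must lie in the thick part $g(X\setminus U)$; set $p=g^{-1}(q)\in X\setminus U$. The idea is to collapse everything outside the thick part and push $\gamma$ forward to a geodesic arc on $(X,\omega,\Sigma)$ joining two points of $\Sigma$ and of length strictly less than $\Delta$, which is impossible.

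First I would calibrate the parameters. Since $X\setminus U$ is compact and $\omega$ is nonvanishing on it, $|\omega|$ is bounded below there, so by shrinking $\cO$ we may assume $g$ is $(1+\eta)$-bi-Lipschitz from $(X\setminus U,|\omega|)$ onto its image, $\eta$ small and to be fixed, and that $\omega'$ is nonvanishing on $g(X\setminus U)$. By shrinking $U$ we may assume each component $U_m$ is a flat metric ball of radius $r_m<\Delta/100$ about a point $m\in\Sigma$, so that each circle $g(\partial U_m)$ has length less than $\min(\Delta/50,\delta/10)$; since distinct points of $\Sigma$ lie at distance at least $\Delta$, the set $\partial U$ then lies at distance more than $\Delta/2$ from every point of $\Sigma$ except the centre of its own component. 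Finally, by shrinking $\cO$ and $U$ further I would arrange that $\Sigma'$ is disjoint from $g(X\setminus U)$: the zeros of $\omega'$ are excluded because $\omega'$ is nonvanishing there, and the remaining, order-zero, marked points of $\Sigma'$ cannot enter the thick part either, since under the collapse they would correspond to points of $\Sigma$, which all lie in $U$. This is the point at which the meaning of the neighbourhood $\cU$ is really used.

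Now parametrise $\gamma$ by arc length with $\gamma(s_0)=q$, and let $[a,b]\ni s_0$ be the maximal interval on which $\gamma$ stays in the thick part. Then $q$ is interior to the thick part, since a point on a boundary circle $g(\partial U_m)$ would have injectivity radius at most $\delta/20$; likewise no endpoint of $\gamma$ lies in the thick part, so $\gamma(a)$ and $\gamma(b)$ lie on boundary circles $g(\partial U_{m_1})$ and $g(\partial U_{m_2})$. Hence $g^{-1}\circ\gamma|_{[a,b]}$ is a path in $X\setminus U$ through $p$ from $\partial U_{m_1}$ to $\partial U_{m_2}$ of $\omega$-length less than $(1+\eta)|\gamma|<(1+\eta)\Delta/2$, and extending it by radial arcs in $U_{m_1}$ and $U_{m_2}$ of length at most $r_{m_1}$ and $r_{m_2}$ gives a path on $X$ from $m_1$ to $m_2$ of length less than $(1+\eta)\Delta/2+\Delta/50$, hence less than $\Delta$ once $\eta$ is small. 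If $m_1\neq m_2$, then tightening this path rel endpoints to a geodesic produces a saddle connection on $(X,\omega,\Sigma)$ of length less than $\Delta$, contradicting the definition of $\Delta$.

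The main obstacle is the remaining case $m_1=m_2$, in which $\gamma|_{[a,b]}$ begins and ends on the same short circle $C:=g(\partial U_{m_1})$. If $\gamma|_{[a,b]}$ together with the short arc of $C$ between its endpoints bounds a disk $D$ inside the thick part, then $g^{-1}(D)\subset X\setminus U$ is a genuinely flat disk, since all cone points of $X$ lie in $U$; developing $g^{-1}(D)$ into $\bR^2$ sends the geodesic $g^{-1}(\gamma|_{[a,b]})$ to a straight segment and the short arc of $C$ to a path with the same endpoints, so $|\gamma|_{[a,b]}|$ equals the Euclidean distance between those endpoints and is at most the length of that arc, giving $|\gamma|_{[a,b]}|<\delta/10$; but then $q$ lies within $\delta/20$ of $C$, so a loop around the thin region through $q$ has length below $\delta/5$ and the injectivity radius at $q$ is at most $\delta/10<\delta$, a contradiction. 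The other possibility is that this loop is homotopic to $C$, so that $\gamma|_{[a,b]}$ wraps the thin region inside $C$; to handle it one must follow $\gamma$ beyond $\gamma(a)$ and $\gamma(b)$ into the thin part — which by condition (2) is thin everywhere — and use the expanding-annulus structure of the thin regions (the structure behind Lemma \ref{L:horrible}) together with a suitable collapse map to extract from $\gamma$ a short saddle connection or a short essential closed curve on $X$, again a contradiction. Pinning down this last sub-case, and the exact thresholds relating $U$, $\delta$, $\cO$ and $\Delta$, is the real content; everything else is routine.
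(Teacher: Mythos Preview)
Your approach is the paper's at its core, only far more elaborate. The paper's entire proof is three sentences: choose $U$ so that on $X$ any path between different components of $U$ has length at least $\tfrac34\Delta$; then for $\cO$ small any path on $X'$ between different components of $X'\setminus g(X\setminus U)$ has length at least $\Delta/2$; since that complement is contained in the $\delta$-thin part, the result follows.

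You correctly spot that this does not literally exclude an excursion of $\gamma$ into $g(X\setminus U)$ that exits and re-enters through the \emph{same} boundary circle $g(\partial U_m)$ --- your case $m_1=m_2$ --- and you try a further case split. This is where the proposal falters. In the ``bounds a disk'' sub-case you develop $g^{-1}(D)$ as though $g$ were an exact flat isometry (it is only $(1+\eta)$--bi-Lipschitz), so the pullback of the geodesic $\gamma|_{[a,b]}$ need not develop to a straight segment; and even granting your conclusion $|\gamma|_{[a,b]}|<\delta/10$, the $1$-Lipschitz bound on injectivity radius gives only $\mathrm{injrad}(q)\le \delta + O(|g(\partial U_m)|)$, not $\le\delta$. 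The remaining sub-case you explicitly leave open, so as written the proof is incomplete.

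The gap is closed by one more estimate of exactly the paper's flavour, and no expanding-annulus machinery is needed. On $X$, the metric ball $N$ of radius $\Delta/2$ about $m$ is an embedded flat cone: every geodesic loop at $m$ is a saddle connection, hence of length $\ge\Delta$, and no other point of $\Sigma$ lies in $N$. In a flat cone the distance to the apex is convex along straight lines, so any straight segment contained in $N$ with both endpoints on $\partial U_m$ lies \emph{inside} $U_m$. Consequently any straight segment in $X\setminus U_m$ from $\partial U_m$ back to itself must exit $N$, forcing its length to exceed $\Delta-2r_m$. Transported by the near-isometry $g$, this shows a geodesic excursion $\gamma|_{[a,b]}$ on $X'$ with $m_1=m_2$ would have length close to $\Delta$, contradicting $|\gamma|<\Delta/2$. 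Thus $\gamma$ never enters $g(X\setminus U)$ at all, and neither of your two sub-cases actually arises.
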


\begin{proof}
We may pick $U$ to be a union of small balls about points of $\Sigma$ small enough that any path between different components of $U$ on $(X,\omega, \Sigma)$ has length at least $\frac3{4}\Delta$. If $\cO$ is chosen small enough, then any path between different components of $X'\setminus g(X\setminus U)$ has length at least $\Delta/2$, which gives the result since $X'\setminus g(X\setminus U)$ is contained in the $\delta$-thin part by supposition. 
\end{proof}

\begin{proof}[Proof of Propositions \ref{P:fn} and \ref{P:Vn}]
We may assume that the $U_n$ are homeomorphic to disks and the $U_{n-1}\setminus U_n$ are homeomorphic to annuli. 

Given $g_n: X\setminus U_n\to X_n$, define the collapse  map $f_n$ to be the inverse of $g_n$ on $X\setminus g_n(U_{n-1})$, and to map $X_n\setminus g_n(X\setminus U_{n})$ to the point $\Sigma\cap U_n$. The remainder of $X_n$ is a collection of annuli on which $f_n$ can be defined to interpolate.  

By Lemma \ref{L:Vthin}, for all $\delta>0$ small enough and $n$ large enough, $$V_n=\ker(f_n:H_1(X_n, \Sigma_n, \bZ)\to H_1(X, \Sigma, \bZ))$$ is spanned by the relative homology of the $\delta$-thin part\ann{A: Added sentence in parentheses.}. (Indeed, the set $X_n\setminus g_n(X\setminus U_{n})$ that is collapsed can be assumed to be in the thin part, and Lemma \ref{L:Vthin} says that the thin part isn't larger.)  Also for $n$ large enough, $V_n$ is locally constant: there is a neighborhood $\cU$ of $(X, \omega, \Sigma)$ given by Lemma \ref{L:Vthin} that contains all $(X_n, \omega_n, \Sigma_n)$ for $n$ sufficiently large, on which the homology of the $\delta'$-thin part is invariant under parallel transport.  
\end{proof}

\subsection{Proof of Proposition \ref{P:AnnV}: How strata fit together} 


Suppose that $(X_n, \omega_n, \Sigma_n)\in \cH'$  converges to $(X,\omega, \Sigma)\in \cH$, and let $V\subset H_1(X_n, \Sigma_n, \bZ)$ be the space of vanishing cycles.\ann{A: Some significant expositional changes have been made to Section 9.2. They do not effect the rest of the paper. The purpose of the chances was to clarify the proofs. } 

\begin{lem}
$\Ann(V)$ is naturally identified with the tangent space of the boundary stratum $\cH$ via  the collapse maps $f_n$. 
\end{lem}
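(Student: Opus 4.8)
The plan is to show that the pullback $f_n^*\colon H^1(X,\Sigma,\bC)\to H^1(X_n,\Sigma_n,\bC)$ is an isomorphism onto $\Ann(V)$, and that this isomorphism depends neither on the choice of $f_n$ nor on $n$ (for $n$ large). Since strata are locally modeled on relative cohomology, the tangent space to $\cH$ at $(X,\omega,\Sigma)$ is canonically $H^1(X,\Sigma,\bC)$ and that of $\cH'$ at $(X_n,\omega_n,\Sigma_n)$ is canonically $H^1(X_n,\Sigma_n,\bC)$, so this is exactly the assertion of the lemma. (We take $\bC$ coefficients throughout, so $V$ denotes the complex span of the vanishing cycles.) When $X$ is disconnected, ``$f_n^*$'' is read as the transpose of $(f_n)_*\colon H_1(X_n,\Sigma_n,\bC)\to H_1(X,\Sigma,\bC)$; this makes sense on $H^1(X,\Sigma,\bC)=\Hom(H_1(X,\Sigma,\bC),\bC)$ because, as noted after Proposition \ref{P:fn}, $H_1(X,\Sigma,\bC)$ is unchanged by the identifications among points of $\Sigma$, and one may equally work with the connected quotient $\bar X$ of $X$ obtained by performing those identifications together with the genuine collapse map $X_n\to\bar X$.

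First I would note that the image of $f_n^*$ lies in $\Ann(V)$: if $\phi\in H^1(X,\Sigma,\bC)$ and $v\in V=\ker((f_n)_*)$, then $\langle f_n^*\phi,v\rangle=\langle\phi,(f_n)_*v\rangle=0$. Next I would check that $(f_n)_*$ is surjective. By Proposition \ref{P:fn}, for each $\e>0$ the map $f_n$ restricts, for $n$ large, to the inverse of the embedding $g_n$ on the $\e$-thick part, while the part that $f_n$ collapses into $\Sigma$ lies in the $\e$-thin part; hence any relative homology class on $(X,\Sigma)$ may be represented by a $1$-chain supported in the $\e$-thick part of $X$ and then lifted through $g_n$ to a $1$-chain on $X_n$ with the same image under $(f_n)_*$. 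Dualizing, $f_n^*$ is injective.

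It remains to count dimensions. By the definition of $V$ together with the surjectivity just shown, the sequence
$$0\longrightarrow V\longrightarrow H_1(X_n,\Sigma_n,\bC)\xrightarrow{\ (f_n)_*\ }H_1(X,\Sigma,\bC)\longrightarrow 0$$
is exact, so $\dim_\bC H_1(X_n,\Sigma_n,\bC)=\dim_\bC V+\dim_\bC H_1(X,\Sigma,\bC)$. Since the sets of marked points are nonempty, the pairing between relative cohomology and relative homology is perfect on each of $X$ and $X_n$, so $\dim H^1=\dim H_1$ there, and therefore $\dim_\bC\Ann(V)=\dim_\bC H^1(X_n,\Sigma_n,\bC)-\dim_\bC V=\dim_\bC H^1(X,\Sigma,\bC)$. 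Thus the injection $f_n^*\colon H^1(X,\Sigma,\bC)\to\Ann(V)$ is an isomorphism.

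Finally, the identification is natural in the two required senses. It is independent of the choice of $f_n$ satisfying the conclusion of Proposition \ref{P:fn}, because two such choices agree on the $\e$-thick part for every $\e$ and hence induce the same map on relative homology (as recorded after that proposition), so the same transpose. And by Proposition \ref{P:Vn}, for $n$ large the spaces $V_n$ and the maps $(f_n)_*$ are carried to one another by parallel transport within a fixed neighborhood, so $f_n^*$ yields a single identification valid for all large $n$. I expect the one point requiring genuine care to be the surjectivity of $(f_n)_*$ in the multicomponent limit case, where $f_n$ is not literally a map out of the connected $X_n$; this is handled by passing to the connected quotient $\bar X$ above and observing that the collapse map $X_n\to\bar X$ has degree one on each component of $\bar X$, hence is surjective on homology.
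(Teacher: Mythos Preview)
Your argument is correct and follows the same route as the paper: the paper's proof is a single sentence that, by duality and the definition of $V$ as $\ker((f_n)_*)$, the induced map $f_n^*$ is an isomorphism from $H^1(X,\Sigma,\bC)$ onto $\Ann(V)$. You have simply unpacked this, in particular supplying the surjectivity of $(f_n)_*$ (which the paper leaves implicit) and the naturality with respect to $n$ and the choice of $f_n$.
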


\begin{proof}
By the definition of $V$ as the kernel of the action of the collapse map on relative homology, and because relative cohomology is the dual to relative homology,  the induced map $f_n^*$ on relative cohomology is an isomorphism from $H^1(X, \Sigma, \bC)$ to $\Ann(V)$. 
\end{proof}

The remainder of Proposition \ref{P:AnnV} is given by the following. 

\begin{prop}\label{P:plusv}
There exists a neighborhood of 0 in $\Ann(V)$ and an $n_0>0$ such that for $\xi, \xi_n$ in this neighborhood with $\xi_n\to \xi$, the paths  $(X_n, \omega_n, \Sigma_n)+t\xi_n$ and $(X, \omega, \Sigma)+t\xi$ are well defined for $t\in [0,1]$ and $n>n_0$, and 
 $(X_n, \omega_n, \Sigma_n)+\xi_n\to (X, \omega, \Sigma)+\xi$.
\end{prop}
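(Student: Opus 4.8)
The plan is to reduce the statement to a statement about cohomology classes supported on the thick part of the surfaces and then use the continuity of period coordinates together with Definition \ref{D:converge}. First I would recall from the proof of Proposition \ref{P:Vn} that, for $n$ large and $\delta$ small, the surface $X_n$ decomposes into a thick part $Y_n = g_n(X \setminus U_n)$ (where $g_n$ is very nearly an isometry onto its image) and a thin part consisting of the collapsing annuli and the small pieces $X_n \setminus g_n(X\setminus U_n)$ that $f_n$ crushes to points of $\Sigma$. A class $\xi_n \in \Ann(V) \subset H^1(X_n, \Sigma_n, \bC)$ is, by the lemma preceding this proposition, of the form $f_n^*(\eta)$ for a unique $\eta \in H^1(X, \Sigma, \bC)$; concretely, $\xi_n$ can be represented by a closed $1$-form supported on $Y_n$ which, under $g_n$, agrees with a fixed closed $1$-form on $X\setminus \Sigma$ representing $\eta$. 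The point is that deforming $(X_n,\omega_n,\Sigma_n)$ by $\xi_n$ only changes the flat structure on the thick part, and does so by the \emph{same} recipe (pulled back via $g_n$) that deforming $(X,\omega,\Sigma)$ by $\eta$ uses on $X\setminus\Sigma$.

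Next I would make the ``well-defined" assertion precise. Strata are locally modelled on relative cohomology via period coordinates, so $(X,\omega,\Sigma)+t\xi$ is defined for $(t,\xi)$ in a neighborhood of $(0,0)$; fix such a neighborhood $W$ in $\Ann(V)$. For the pre-limit surfaces one must check that the \emph{same} neighborhood works for all large $n$: that is, $(X_n,\omega_n,\Sigma_n)+t\xi_n$ does not degenerate or leave the stratum for $t\in[0,1]$ and $\xi_n\in W$. This is where the separation of thick and thin parts is used. Since $\xi_n$ is supported on the thick part $Y_n$ and, via $g_n$, equals a fixed bounded closed form, the deformation changes periods of curves in $Y_n$ by a controlled amount independent of $n$ (by compactness of the closure of $\{\eta : \eta \in W\}$ and the fact that $g_n^*\omega_n \to \omega$ in the compact-open topology on $X\setminus\Sigma$). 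On the thin part, the short saddle connections and degenerating annuli are simply unaffected by $\xi_n$, since the representing form vanishes there; hence no new degeneration is introduced and the injectivity radius of the collapsing part is unchanged. One also needs that the zeros of $\omega_n + t\xi_n$ keep the prescribed orders: this holds because near each zero the form is unchanged (the representing form is supported away from neighborhoods of $\Sigma$, which under $g_n$ correspond to neighborhoods of the zeros of $\omega_n$, up to the thin part), so $(X_n,\omega_n,\Sigma_n)+t\xi_n$ stays in $\cH'$. After possibly shrinking $W$ to a smaller neighborhood and enlarging $n_0$, both families of paths are defined for $t\in[0,1]$.

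Finally I would verify the convergence $(X_n,\omega_n,\Sigma_n)+\xi_n \to (X,\omega,\Sigma)+\xi$ directly from Definition \ref{D:converge}. Take the same decreasing neighborhoods $U_n$ of $\Sigma$ (or slightly shrunk ones $U_n'$) and the same maps $g_n$, now regarded as maps from the deformed limit $(X,\omega,\Sigma)+\xi$ into the deformed pre-limit $(X_n,\omega_n,\Sigma_n)+\xi_n$; since the deformations are compatible under $g_n$ by construction, condition (1) — that $g_n^*(\omega_n + \xi_n)$ converges to $\omega + \xi$ in the compact-open topology on $X\setminus\Sigma$ — follows from $g_n^*\omega_n \to \omega$ together with $\xi_n \to \xi$ (the representing forms converge on compact subsets of $X\setminus\Sigma$). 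Condition (2), that the injectivity radius of points outside the image of $g_n$ goes to zero uniformly, is unaffected by the deformation since, as noted, the forms representing $\xi_n$ vanish on the thin part, so the thin part of the deformed surface coincides metrically with the thin part of $(X_n,\omega_n,\Sigma_n)$ on the relevant region.

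I expect the main obstacle to be the second paragraph: showing that a \emph{single} neighborhood of $0$ and a single $n_0$ suffice, i.e., that the deformation $(X_n,\omega_n,\Sigma_n)+t\xi_n$ cannot degenerate for small $\xi_n$ uniformly in $n$. The delicate point is that the natural identification of $\Ann(V)$ with $H^1(X,\Sigma,\bC)$ is via the collapse maps, which are not canonically defined on the thin part, so one must be careful to choose representatives of $\xi_n$ supported on the thick part and argue that the size of the deformation there is genuinely uniform in $n$ — this uses that $g_n$ is nearly isometric on $X\setminus U_n$ with constants approaching $1$, which in turn comes from condition (1) of Definition \ref{D:converge}. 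Once this uniformity is in hand, everything else is a routine unwinding of the definitions.
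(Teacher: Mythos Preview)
Your strategy is sound and essentially complete, but it is genuinely different from the paper's. The paper works in the \v{C}ech model: it builds an $\epsilon$-decent open cover of $(X,\omega,\Sigma)$ with all of $\Sigma$ in a single chart, describes deformations as small 1-cocycles that re-glue the charts (Lemma~\ref{L:exists} gives the uniform well-definedness bound), pulls the cover back by $f_n$ to get an $\epsilon/2$-decent cover of $(X_n,\omega_n,\Sigma_n)$, and then verifies convergence by an explicit piecewise-linear construction using hexagons cut from a fixed triangulation. You instead work in a de Rham model: represent $\eta_n=(f_n^*)^{-1}\xi_n$ by a closed 1-form $\alpha_n$ on $X$ compactly supported away from $\Sigma$, push forward by $g_n$ to get $\beta_n$ on the thick part of $X_n$, and realise $(X_n,\omega_n,\Sigma_n)+t\xi_n$ as the flat structure $\omega_n+t\beta_n$; since $\beta_n$ vanishes on the thin part, both the zero structure and the thin-part metric are untouched, and the original $g_n$ witness convergence because $g_n^*(\omega_n+\beta_n)=g_n^*\omega_n+\alpha_n\to\omega+\alpha$. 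Both routes hinge on the same localisation idea (the $\Ann(V)$-deformation lives on the thick part), but your convergence step is markedly shorter: you avoid the hexagon construction entirely at the cost of needing the de Rham interpretation of ``$+\xi$'' to be set up carefully.

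Two points in your write-up deserve tightening. First, you should make explicit the linear section from cohomology classes to compactly supported representing forms (e.g.\ fix forms for a basis of $H^1(X,\Sigma,\bC)$ supported in a fixed $X\setminus U_N$), so that ``$\xi_n$ small'' translates to ``$\alpha_n$ small in $C^0$'' uniformly, and so that $(g_n)_*\alpha_n$ extends smoothly by zero for all $n\ge N$; this is exactly what makes the uniform-in-$n$ well-definedness go through. Second, the sentence ``the same maps $g_n$'' is doing real work: you are using that in the add-a-form model the underlying smooth manifold is unchanged, so $g_n$ is literally the same smooth map between the deformed surfaces, and that the complement of its image has unchanged flat metric (hence unchanged small injectivity radius) because $\beta_n$ vanishes there. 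Saying this explicitly would remove any ambiguity.
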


It is plausible that part of this (that the paths are well defined) could be derived using the AGY norm \cite{AGY}, in particular using \cite[Proposition 5.5]{AG}.
Here we will instead sketch a proof using Cech cohomology. 

The use of Cech cohomology to describe the tangent space to Teichm\"uller space is standard and is sometimes called Kodaira-Spencer deformation theory. See, for example, \cite[Section 7.2.4]{IT} for an introduction.  Cech cohomology has also been previously used to describe deformations of translation surfaces. See, for example, \cite[Section 2]{HubbMas} and \cite{McM:nav}.  

Given a finite open cover $\cU=\{U_1, \ldots, U_p\}$ of a translation surface  $(X, \omega, \Sigma)$, we consider 1-cocycles $w$  of $\cU$ with coefficients in the sheaf of locally constant functions. Such a $w$ consists of a complex number  $w_{i,j}\in \bC$ for each $1\leq i, j\leq p$ with $U_i\cap U_j\neq \emptyset$. These numbers are required to satisfy $w_{i,j}=-w_{j,i}$ for all $i,j$, and $w_{i,k}=w_{i,j}+w_{j,k}$ for all $i,j,k$ with $U_i\cap U_j\cap U_k\neq \emptyset$. We define $\|w\|=\max |w_{i,j}|$. 

We consider deformations of $(X, \omega, \Sigma)$ defined by sliding each piece $U_j$ over the piece $U_i$ in direction $w_{i,j}$. See Figure \ref{F:wij}.
\begin{figure}[h]
\includegraphics[width=0.8\linewidth]{wij.pdf}
\caption{}
\label{F:wij}
\end{figure}
Such a deformation is not always well defined. For example,  as the pieces slide more and more over each other, a zero of $\omega$ can hit the boundary of some $U_{i,j}$, preventing that $U_{i,j}$ from being slid any further (as in Figure \ref{F:maxT}, left); or, if pieces are being slid away from each other, a ``hole" in the surface may appear (as in Figure \ref{F:maxT}, right). 
\begin{figure}[h]
\includegraphics[width=0.8\linewidth]{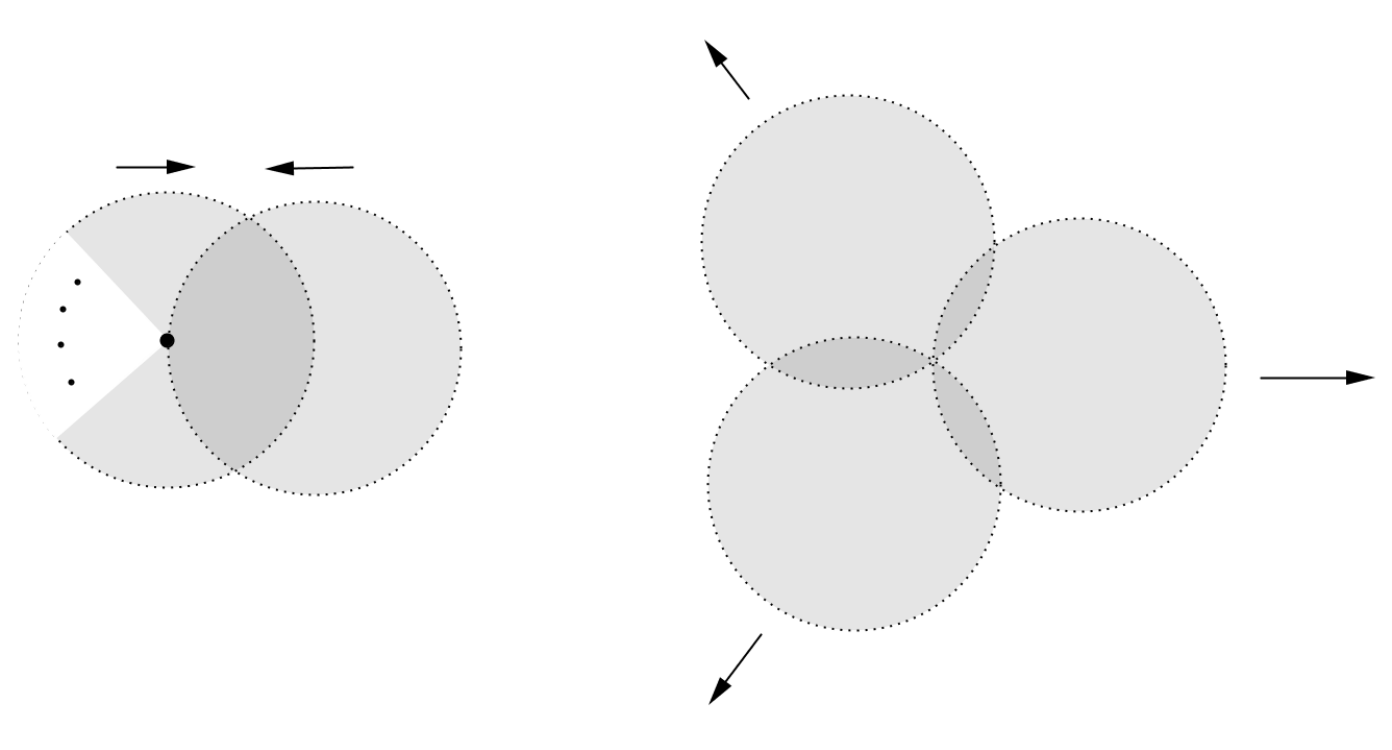}
\caption{}
\label{F:maxT}
\end{figure}

The following definition and lemma provide a precise definition of such deformations and a proof that they are well defined when $w$ is small enough and the open cover is nice enough.

\ann{A: This definition is new.}
\begin{defn}\label{D:decent}
Define a finite open cover  $\cU=\{U_1, \ldots, U_p\}$ of a translation surface $(X, \omega, \Sigma)$ to be decent if each point of $\Sigma$ is contained in exactly one  $U_i$ and is disjoint from the closures of every other $U_j$, and  the closures of $U_i$ and $U_j$ are disjoint whenever $U_i$ and $U_j$ are disjoint. Define $\cU$ to be $\e$ decent if the following conditions hold. 
\begin{enumerate}
\item If $U_i$ and $U_j$ are disjoint, the distance between them is greater than $2\e$.
\item Every point of $\Sigma$ is contained in a unique $U_i$, and its distance to every other $U_j$ is greater than $2\e$.
\item For every point $x\in X$, there is some $U_i$ that contains all points of distance at most $\e$ from $U_i$.
\end{enumerate} 
\end{defn} 

Note that every decent cover is $\e$ decent for some $\e>0$, and that the definition allows a $U_i$ to contain more than one point of $\Sigma$.

If $x$ is a point on $(X, \omega, \Sigma)$ not in $\Sigma$, and $e\in \bC$, then $x+e$ is defined to be the result of straight line flow in direction $e/|e|$ for time $|e|$; if the trajectory of straight line flow hits a point of $\Sigma$ before time $|e|$, then  $x+e$ is  undefined. 

\begin{lem}\label{L:exists}
Let $\cU=\{U_1, \ldots, U_p\}$ be an $\e$ decent open cover of  $(X, \omega, \Sigma)$. Let $w$ be a 1-cocycle  of $\cU$ with coefficients in the sheaf of locally constant functions, and assume $\|w\|<\e$. 

Consider the disjoint union of the $U_i$ with the following identifications.  A point $x_i\in U_i$ is identified to $x_j\in U_j$ if  $U_i\cap U_j\neq \emptyset$ and $x_i+w_{i,j}$ is  defined and equal to $x_j$. This defines a translation surface in the same stratum as $(X, \omega, \Sigma)$.

Furthermore, the path of translation surfaces corresponding to $tw, t\in [0,1]$, is linear in period coordinates. 
\end{lem}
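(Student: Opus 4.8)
\emph{Plan.} The plan is to realize $X_w$ as a quotient of $\bigsqcup_i U_i$, verify it is a translation surface in the same stratum as $(X,\omega,\Sigma)$, and then compute its periods to see the affine dependence on $w$. Take $\sim$ to be the equivalence relation on $\bigsqcup_i U_i$ generated by the stated identifications; the cocycle relations $w_{i,i}=0$ and $w_{j,i}=-w_{i,j}$ make the generating data symmetric, and set $X_w=(\bigsqcup_i U_i)/\!\sim$, with quotient map $q$ and $q_i:=q|_{U_i}$. The key point is a \emph{shadowing} property, for which the hypotheses $\|w\|<\e$ and $\e$-decency are precisely calibrated: since every flat $\e$-ball of $X$ lies inside a single $U_k$ (condition (3)) and each generating identification displaces a point by flat distance $<\e$, one shows that whenever two points become identified the whole connecting chain can be realized inside one chart $U_k$, where, $U_k$ being isometric to a planar domain, the identifications are genuine translations of $\bC$ and the cocycle relations force the chain to reduce to a single translation. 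Condition (1) (disjoint charts are flat distance $>2\e$ apart) rules out identifications between charts disjoint in $X$, and condition (2) does the same for the identifications left undefined near $\Sigma$. From the shadowing statement one reads off that each $q_i$ is injective, that $\{q_i\}$ is a translation atlas on $X_w\setminus q(\Sigma)$, and that $X_w$ is Hausdorff; and by condition (2) each marked point $\sigma$ lies in a unique $U_{i(\sigma)}$ at flat distance $>2\e$ from all other charts, so $q_{i(\sigma)}$ carries a small neighbourhood of $\sigma$ homeomorphically onto a neighbourhood of $q(\sigma)$. Thus $(X_w,\omega_w,q(\Sigma))$ is a translation surface, and since $\|tw\|\le\|w\|<\e$ for $t\in[0,1]$ the same construction works for every $tw$.

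\emph{Same stratum.} All transition maps of the atlas $\{q_i\}$ are restrictions of translations of $\bC$, hence local isometries of the flat metric, so no zeros appear away from $q(\Sigma)$ and the cone angle of $X_w$ at $q(\sigma)$ equals that of $X$ at $\sigma$; the multiplicities of the zeros (and the number of order-zero marked points) are unchanged. For $w$ small, a partition of unity subordinate to $\cU$ that is identically $1$ near each point of $\Sigma$ and interpolates the shifts $w_{i,j}$ over the overlaps yields a diffeomorphism $X\to X_w$ close to the tautological identification, so $X_w$ has the same genus and number of components as $X$, and $(X_w,\omega_w,q(\Sigma))$ lies in $\cH$. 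This same diffeomorphism identifies $H_1(X,\Sigma;\bZ)$ with the relative homology of $X_w$, compatibly as $w$ varies, so the various $X_{tw}$ can all be described in one period chart.

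\emph{Linearity of periods.} Represent a basis of relative cycles by piecewise paths $\gamma=e_1\cdots e_r$ whose edges $e_m$ lie in single charts $U_{i_m}$ and whose consecutive edges meet in the double overlap $U_{i_m}\cap U_{i_{m+1}}$. On $X_{tw}$ the images $q_{i_m}(e_m)$ no longer join up exactly: the end of $q_{i_m}(e_m)$ and the start of $q_{i_{m+1}}(e_{m+1})$ differ by the identification, i.e.\ by a translation proportional to $t\,w_{i_m,i_{m+1}}$ in the flat chart. Closing up $\gamma$ with the short joining segments and integrating $\omega_{tw}$ chart by chart gives
\[
\int_{\gamma}\omega_{tw}\;=\;\int_{\gamma}\omega\;+\;t\,c_\gamma(w),
\]
where $c_\gamma(w)=\pm\sum_m w_{i_m,i_{m+1}}$ is linear in $w$ and, $w$ being a cocycle, depends only on $[\gamma]\in H_1(X,\Sigma;\bZ)$ — it is the pairing of $[\gamma]$ with the image of $w$ under the natural map from these $1$-cocycles to $H^1(X,\Sigma;\bC)$. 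Hence each period coordinate of $X_{tw}$ is affine in $t$; that is, $t\mapsto X_{tw}$ is linear in period coordinates, with velocity the relative cohomology class of $w$, consistently with the ``$+\,\xi$'' notation of Section \ref{S:multi}.

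\emph{Main obstacle.} The only non-formal step is the construction of $X_w$: showing that the quotient is a Hausdorff manifold and that each $q_i$ is injective, despite the identifications being undefined near $\Sigma$ and despite a priori arbitrarily long identification chains. This is exactly what $\e$-decency is designed to control — condition (3) is a Lebesgue-number statement allowing an entire chain of $<\e$-identifications to be realized inside one flat chart, where it collapses to a single translation, and conditions (1)--(2) keep disjoint charts and the marked points separated by more than any displacement such a chain can produce. Everything downstream is bookkeeping with the translation atlas.
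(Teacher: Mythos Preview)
Your overall plan is right, and your treatment of ``same stratum'' and of linearity is sound (indeed more explicit than the paper's one-sentence verification). But the heart of the construction --- your ``shadowing'' claim --- has a genuine gap.

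You assert that condition~(3) allows ``the whole connecting chain [to] be realized inside one chart $U_k$.'' This is not what condition~(3) gives: it says each flat $\e$-ball lies in a chart, but a chain of $n$ generating identifications can displace a point by up to $n\|w\|$, so there is no a~priori reason the whole chain stays in a single $\e$-ball. Even for a chain of length two, a chart $U_k$ containing all three points need not be ``isometric to a planar domain'' (it may contain a point of $\Sigma$, or have nontrivial topology). And to collapse two consecutive steps using the cocycle relation $w_{i,k}=w_{i,j}+w_{j,k}$ you would need $U_i\cap U_j\cap U_k\neq\emptyset$, which you never verify.

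The paper sidesteps all of this by proving directly that the one-step identification is already transitive --- so there are no long chains at all. The geometric input you are missing is a flat-geodesic straightening on $X$ itself: given $x_i\sim x_j\sim x_k$, the paper first shows (using condition~(2)) that any point of $U_i$ identified to a point of some other $U_j$ lies at distance greater than $\|w\|$ from $\Sigma$; then the concatenated straight-line path $x_i\to x_j\to x_k$ has length at most $2\|w\|$, so the flat geodesic in its homotopy class cannot hit a cone point and is therefore a single straight segment $x_i\to x_k$. Condition~(1) is used to deduce $U_i\cap U_k\neq\emptyset$ from $d(U_i,U_k)\le 2\|w\|$, so that the identification $x_i\sim x_k$ makes sense. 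Once transitivity is established, injectivity of each $q_i$ and Hausdorffness are immediate, and condition~(3) is invoked only at the end to show the quotient is a closed surface (every boundary point of a $U_i$ is covered by some overlapping $U_j$ after the regluing). Replacing your shadowing claim with this transitivity argument repairs your proof.
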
\ann{A: This lemma  replaces a less formal discussion of the same material.} 

Note that the lemma allows the $U_i$ to be rather arbitrary: they may have complicated boundaries, and neither the $U_i$ nor their intersections are assumed to be simply connected or even connected.  

\begin{proof}
Say that  $x_i\in U_i$ is identified to $x_j\in U_j$ if  $U_i\cap U_j\neq \emptyset$ and $x_i+w_{i,j}$ is  defined and equal to $x_j$. (By definition, this implies there is a straight line segment from $x_i$ to $x_j$ on $(X, \omega, \Sigma)$ with holonomy $w_{i,j}$. This straight line segment is not required to stay in $U_i\cup U_j$.) A priori, there is the possibility that $x_i\in U_i$ and $x_j\in U_j$ could have the same image in the quotient by these identification without being identified to each other. This will ruled out shortly when we show the relationship ``is identified to" is transitive. 

Suppose $x\in U_i$ has distance less than $\|w\|$ to a point of $\Sigma$. By Definition \ref{D:decent}(2) this point of $\Sigma$ lies in $U_i$, and the distance between $x$ and any $U_j, j\neq i$ is greater than $\|w\|$. Hence the $\|w\|$ neighborhood of every point of $\Sigma$ is contained in a single $U_i$, and no point of this neighborhood is identified to any point of any $U_j, j\neq i$. Note also that for any point $x$ of distance at least $\|w\|$ from $\Sigma$, $x+e$ is defined whenever $|e|\leq \|w\|$. 

Suppose $x_i\in U_i$ is identified to  $x_j=x_i+w_{i,j}\in U_j$, and $x_j$ is identified to $x_k=x_j+w_{j,k}\in U_k$. Then the distance from $U_i$ to $U_k$ is at most $|w_{i,j}|+|w_{j,k}|\leq 2\|w\|$, so $U_i\cap U_k\neq \emptyset$ by Definition \ref{D:decent}(1). Consider the flat geodesic $\gamma$ from $x_i$ to $x_j$ homotopic rel endpoints to the concatenation of the straight line flow path from $x_i$ to $x_{j}$  followed by the straight line flow path from $x_j$ to $x_k$.  By the previous comment, both $x_i$ and $x_k$ have distance greater than $\|w\|$ from $\Sigma$. Since $\gamma$ has length at most $2\|w\|$, this gives that $\gamma$ is a straight line segment from $x_i$ to $x_k$. This shows that $x_k=x_i+w_{i,k}$, so we conclude that $x_i$ is identified to $x_k$. 

Gluing pieces of translation surfaces via translations will always yield a translation surface, so it remains only to check that this translation surface is closed (unlike Figure \ref{F:maxT}, right). By   Definition \ref{D:decent}(3), for each point $x$ in the boundary of a $U_i$, there is some $U_j$ that contains all points on $(X, \omega, \Sigma)$ of distance at most $\|w\|$ from $x$. After the identifications described, this $U_j$ will be glued to $U_i$ in such a way to cover the point $x$ on the boundary of $U_i$. This shows the identification gives a closed surface. 

To verify that the path of translation surfaces obtained is linear, it suffices to show that if  $U_i\cap U_j\neq \emptyset$, then for any path joining a point of $\Sigma \cap U_i$ with a point of $\Sigma\cap U_j$ the period of this path varies linearly. This is the case by definition. 
\end{proof}

\begin{lem}
For each translation surface $(X,\omega, \Sigma)$, there is a decent open cover such that the derivatives of deformations of $(X,\omega, \Sigma)$ that arise as above from 1-cocycles on this cover span the tangent space to the stratum at $(X,\omega, \Sigma)$. This open cover may be chosen so that all points of $\Sigma$ are contained in a single element of the open cover. 
\end{lem}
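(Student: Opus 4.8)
The plan is to construct a decent open cover whose associated \v{C}ech 1-cocycles realize, via the construction of Lemma \ref{L:exists}, all tangent directions to the stratum. We proceed as follows.

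\textbf{Step 1: Triangulate using saddle connections.} Recall that every translation surface $(X,\omega,\Sigma)$ admits a triangulation whose vertex set is exactly $\Sigma$ and whose edges are saddle connections. Fix such a triangulation $T$ with triangles $\Delta_1,\dots,\Delta_N$ and edges (saddle connections) $e_1,\dots,e_E$. The relative periods $\int_{e_i}\omega$ (with chosen orientations) span $H^1(X,\Sigma,\bC)$, which is the tangent space to the stratum; indeed period coordinates are given by these integrals subject to the linear relations coming from the boundary of each triangle.

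\textbf{Step 2: Thicken the triangulation to a decent cover.} For each triangle $\Delta_k$ let $U_k$ be a slightly thickened open neighborhood of $\Delta_k$: concretely take all points of distance less than $\rho$ from $\Delta_k$ for a small $\rho>0$, but remove small disks of a fixed radius $r\ll\rho$ around each point of $\Sigma$ except that we designate one distinguished triangle, say $\Delta_1$, and instead arrange that every point of $\Sigma$ lies in $\Delta_1$'s thickening only; to achieve ``all points of $\Sigma$ in a single element'' we replace the collection $\{U_k\}$ by $\{U_1',U_2,\dots,U_N\}$ where $U_1'$ is an open neighborhood of the union of small disks around all of $\Sigma$ together with $\Delta_1$, chosen so that $U_1'$ still deformation retracts appropriately and so that the cover remains decent. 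For $\rho, r$ small enough this is a decent open cover in the sense of Definition \ref{D:decent}: each point of $\Sigma$ lies in the unique element $U_1'$ and is bounded away from the closures of all other $U_k$, and disjoint elements have disjoint closures.

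\textbf{Step 3: Realize an arbitrary tangent vector by a cocycle.} Given a tangent vector to the stratum, represent it as an assignment of a complex number $p_i$ to each oriented edge $e_i$ satisfying the triangle relations: for each triangle the (signed) sum of the $p_i$ over its three edges is zero. We must produce a 1-cocycle $w$ on the cover whose associated deformation has this derivative. For adjacent triangles $U_k, U_\ell$ sharing an edge $e_i$, the intersection $U_k\cap U_\ell$ is a thin neighborhood of $e_i$; set $w_{k,\ell}$ to be the appropriate combination of the $p_j$ describing the relative displacement, and define $w$ on triple overlaps using the cocycle condition, which is consistent precisely because of the triangle relations. (Because the nerve of this cover is the dual graph of the triangulation together with the incidence pattern of $\Sigma$, and the triangle relations are exactly the coboundary conditions, one checks directly that a consistent cocycle $w$ exists; alternatively, the map from \v{C}ech cocycles to period coordinates is surjective because \v{C}ech cohomology of the locally constant sheaf $\bC$ computes $H^1(X\setminus\Sigma,\bC)$ — or with the distinguished vertex element, $H^1(X,\Sigma,\bC)$ — and the refinement to this explicit cover loses nothing since the cover is good enough.) Finally, by Lemma \ref{L:exists}, for $w$ small the deformation $tw$ is well defined and linear in period coordinates, and by construction its derivative is the prescribed tangent vector.

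\textbf{Main obstacle.} The delicate point is Step 2 together with the clause that all of $\Sigma$ sit in a single cover element: one must simultaneously keep the cover decent (closures of disjoint pieces disjoint, $\Sigma$ isolated inside one piece) and keep the nerve rich enough that the \v{C}ech $H^1$ of the locally constant sheaf still surjects onto $H^1(X,\Sigma,\bC)$ after bunching all punctures into $U_1'$. This is a matter of choosing the thickening parameters carefully and checking that $U_1'$ retracts onto $\Delta_1$ union arcs to each point of $\Sigma$, so that the combinatorial model still sees all relative cycles; the verification is routine but is where all the care goes.
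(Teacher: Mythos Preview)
Your approach differs from the paper's and carries a genuine gap. The paper builds its cover from a maximal spanning tree of the triangulation: neighborhoods of the half-edges of that tree, together with the complement of a neighborhood of the whole tree. The point of the spanning-tree construction is that the periods of the tree edges can be varied \emph{independently} by cocycles on this cover, which is what gives the spanning property. For the second clause the paper proceeds modularly and much more simply than you do: given \emph{any} decent cover with the spanning property, adjoin one new element $U_0'$ equal to the union of tiny balls (radius $2r$) about $\Sigma$, excise the radius-$r$ balls from every old chart, and extend any old cocycle by $w_{0',i}=0$. This new cocycle describes exactly the same deformation, so nothing is lost, and there is no need to merge $\Sigma$ into some triangle's chart.

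The gap in your Step~3 is that Lemma~\ref{L:exists} assigns a \emph{single} complex number $w_{i,j}$ to each pair with $U_i\cap U_j\neq\emptyset$, independent of how many components that intersection has. So the deformations you produce are governed by the cohomology of the \emph{nerve} of the cover, not by sheaf-theoretic \v{C}ech cohomology; your alternative appeal to the latter only applies when the cover is good, and yours is not. For thickened triangles the nerve cohomology can be far too small. Concretely: take the square torus with one marked point. The only triangulation with vertex set $\Sigma$ has exactly two triangles, so your modified cover has two charts $U_1',U_2$, and the space of cocycles is the single line $\bC\cdot w_{1',2}$. Any relative cycle starts and ends in $U_1'$ and alternates between the two charts, so the period change telescopes to zero; equivalently, sliding $\Delta_2$ by a fixed $w$ along all three shared edges just translates $\Delta_2$ rigidly and returns the same torus. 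You obtain \emph{no} nontrivial deformations, whereas $\dim H^1(X,\Sigma,\bC)=2$. The slogan ``triangle relations are exactly the coboundary conditions'' fails here precisely because a single number $w_{k,\ell}$ cannot encode different slide amounts along the several edges shared by $\Delta_k$ and $\Delta_\ell$.
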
 \ann{A: This lemma has changed a bit.}

The proof follows from general theory, but we give a sketch of a concrete approach. 

\begin{proof}
We begin with the first statement.
Pick a maximal spanning subtree of a triangulation of the surface. A cover given by neighborhoods of half edges, and the complement of a neighborhood of the spanning tree, is easily seen to work.   The periods of the edges in the spanning tree can be changed independently by deformations given by cocycles.

To arrange for all elements of $\Sigma$ to be contained in a single element of the open cover, we may modify the cover  $\cU=\{U_1, \ldots, U_p\}$ as follows. We use the fact that  each element of $\Sigma$ is contained in a single $U_i$ and is disjoint from the $\overline{U_j}, j\neq i$.  
 First, add an open set $U_{0}'$ that consists of  points of distance less than $2r$ from $\Sigma$, where $r>0$ is very small. Then remove from each  $U_i$  the set of distance at most $r$ from $\Sigma$ to get a new open set $U_i'$. Then $\{U_0', U_1', \ldots, U_p'\}$ will be the new open cover.

For any cocycle $w$ supported on the original open cover, we can find a new cocycle $w'$ supported on the new open cover by $w'_{i,j}=w_{i,j}$ if $i,j\neq 0$ and $w'_{0,i}=w'_{i,0}=0$. The cocycles $w$ and $w'$ describe exactly the same deformation. 
\end{proof}

\begin{lem}
Fix an $\e$ decent  cover of $(X,\omega, \Sigma)$, such that the derivatives of deformations of $(X,\omega, \Sigma)$ that arise as above from 1-cocycles on this cover span the tangent space to the stratum at $(X,\omega, \Sigma)$, and so that $\Sigma$ is contained in a single element of the cover.
For $n$ sufficiently large, the pull back of this cover to $(X_n, \omega_n, \Sigma_n)$   is $\e/2$ decent, and the deformations given by 1-cocycles supported on this cover span $\Ann(V)$. 
\end{lem}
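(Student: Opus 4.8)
The plan is to produce the pullback cover of $(X_n,\omega_n,\Sigma_n)$ explicitly from the maps $g_n$ of Definition~\ref{D:converge}, verify its combinatorics and its metric geometry, and then match $1$-cocycles on the fixed cover of $X$ with $1$-cocycles on the pullback cover under the isomorphism $f_n^*\colon H^1(X,\Sigma,\bC)\to\Ann(V)$ coming from the collapse maps.

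First, the cover. Write $\cU=\{U_1,\dots,U_p\}$ with $\Sigma\subset U_1$; by $\e$-decency $d_X(\Sigma,U_i)>2\e$ for $i\neq 1$, and since $\bigcap U_n=\Sigma$ with the $U_n$ decreasing, for $n$ large $U_n$ lies in the $\e$-neighborhood of $\Sigma$, hence in $U_1$ and disjoint from each $\overline{U_i}$, $i\neq 1$. I would then set $U_i^{(n)}:=g_n(U_i)$ for $i\neq 1$ (well defined and open, as $g_n$ is a diffeomorphism onto its open range) and $U_1^{(n)}:=g_n(U_1\setminus U_n)\cup\bigl(X_n\setminus g_n(X\setminus U_n)\bigr)$, fattened slightly to an honestly open set if desired. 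One checks easily that these cover $X_n$ and that the nerve of $\cU^{(n)}$, with the same labelling, contains that of $\cU$: for $i,j\neq 1$ one has $U_i^{(n)}\cap U_j^{(n)}=g_n(U_i\cap U_j)$, and a point of $U_1\cap U_i$ has distance $>2\e$ from $\Sigma$, so lies outside $U_n$, and its $g_n$-image lies in $U_1^{(n)}\cap U_i^{(n)}$; triple intersections are handled the same way.

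Next, $\e/2$-decency. The two facts I would draw on are: (i) for any fixed neighborhood $W$ of $\Sigma$, $g_n|_{X\setminus W}$ is $(1+\delta_n)$-bi-Lipschitz onto its image with $\delta_n\to 0$, and a path in $X_n$ from $g_n(X\setminus W)$ into the collapsed region $X_n\setminus g_n(X\setminus U_n)$ has length at least $d_X(\,\cdot\,,\partial W)/(1+\delta_n)$ because it must cross $g_n(\partial W)$; and (ii) for $n$ large, the collapsed region together with its thin $g_n$-collar inside the $\e$-neighborhood of $\Sigma$ contains every point of injectivity radius below a fixed small constant, and in particular all of $\Sigma_n$ --- this is where Definition~\ref{D:converge}(2), Proposition~\ref{P:fn}, Lemma~\ref{L:Vthin}, and the absence of zeros of $\omega$ off $\Sigma$ enter. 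Granting (i) and (ii), each clause of Definition~\ref{D:decent} for $\cU^{(n)}$ with parameter $\e/2$ follows from the corresponding clause for $\cU$ with parameter $\e$: if $U_i^{(n)},U_j^{(n)}$ are disjoint then $U_i,U_j$ are disjoint, so $d_X(U_i,U_j)>2\e$, and a connecting path in $X_n$ either stays thick (length $\ge 2\e/(1+\delta_n)$) or enters the collapsed region (length $>(2\e-o(1))/(1+\delta_n)$), in either case $>\e$; marked points of $X_n$ lie in the collapsed region, hence in $U_1^{(n)}$ only and at distance $>\e$ from every $U_j^{(n)}$, $j\neq 1$; and for the ball condition I would split on whether $x=g_n(y)$ with $d_X(y,\Sigma)\ge\e$, in which case $B_{\e/2}(x)$ stays thick and pulls back into $B_\e(y)\subset U_i$ for some $i$, or $x$ lies in the collapsed region or its thin collar, in which case every path from $x$ to $g_n(\overline{U_j})$, $j\neq 1$, crosses the $\e$-collar of $\Sigma$ and so has length $>\e/2$, forcing $\overline{B_{\e/2}(x)}\subset U_1^{(n)}$. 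I expect clause (ii) --- locating the marked points and the short-injectivity-radius locus of $X_n$ inside $X_n\setminus g_n(X\setminus U_n)$ --- to be the main obstacle; the rest is the elementary observation that to travel from the bulk of $X$ to the collapsing part one must pass through a fixed-width annulus around $\Sigma$.

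Finally, spanning $\Ann(V)$. Using that $f_n^*\colon H^1(X,\Sigma,\bC)\to\Ann(V)$ is an isomorphism (the lemma identifying $\Ann(V)$ with the tangent space to $\cH$), I would transport a $1$-cocycle $w$ of $\cU$ to the cocycle $w^{(n)}$ of $\cU^{(n)}$ with $w^{(n)}_{i,j}=w_{i,j}$ (legitimate by the nerve inclusion), and note via Lemma~\ref{L:exists} that the deformation of $X_n$ it defines agrees on the thick part with the $g_n$-transport of the deformation of $X$ defined by $w$, since both amount to sliding $U_j$ over $U_i$ by $w_{i,j}$ and only the entries $w_{1,j}$ meet the collapsed region, which is itself left fixed. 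Writing $\xi_w,\xi_{w^{(n)}}$ for the derivatives, I would then pair against relative homology of $X_n$: classes in $V$ are carried inside $U_1^{(n)}$ (by Proposition~\ref{P:Vn} the vanishing cycles are the homology of the thin part, which lies in the collapsed region and thin collar), so their periods are unchanged and $\xi_{w^{(n)}}\in\Ann(V)$; a class $\eta$ carried by the thick part undergoes the same period change as $(f_n)_*\eta$ on $X$, i.e.\ $\langle\xi_{w^{(n)}},\eta\rangle=\langle f_n^*\xi_w,\eta\rangle$. Since $V$ together with the thick-part classes spans $H_1(X_n,\Sigma_n,\bC)$ (as $f_n$ is surjective with kernel $V$ and every class of $H_1(X,\Sigma,\bC)$ is represented off $\Sigma$), this gives $\xi_{w^{(n)}}=f_n^*(\xi_w)$; as the $\xi_w$ span $H^1(X,\Sigma,\bC)$ by hypothesis, the $\xi_{w^{(n)}}$ span $\Ann(V)$, as required.
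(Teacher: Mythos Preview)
Your proof is correct and follows the same strategy as the paper's, though you have written out considerably more detail. The paper's own proof is very brief: for $\e/2$-decency it says only that this ``follows because, for $n$ large enough, the map $g_n$ \ldots is close to an isometry for the flat metric,'' whereas you build the pullback cover explicitly and verify each clause of Definition~\ref{D:decent}. For the spanning statement, the paper argues by dimension count: once one knows the cocycle deformations lie in $\Ann(V)$ (because $V$ is supported in the single element of the cover containing the collapsed region) and that linearly independent cocycles on $X$ yield linearly independent deformations on $X_n$ (via the isomorphism $H_1(X,\Sigma)\cong H_1(X_n,\Sigma_n)/V$), equality of dimensions finishes it. You instead verify the identity $\xi_{w^{(n)}}=f_n^*(\xi_w)$ directly by pairing against homology. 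Both arguments are valid; the paper's is shorter, yours makes the identification with $f_n^*$ explicit.

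One small remark on your exposition: when you transport the cocycle you write that the nerve of $\cU^{(n)}$ \emph{contains} that of $\cU$ and call this sufficient for setting $w^{(n)}_{i,j}=w_{i,j}$. Strictly speaking you need the reverse inclusion as well (so that $w_{i,j}$ is defined whenever $U_i^{(n)}\cap U_j^{(n)}\neq\emptyset$, and so that no new cocycle conditions appear from new triple intersections). You in fact establish this implicitly in your decency verification --- disjoint $U_i,U_j$ stay at distance $>\e$ after pullback --- so the nerves coincide, but the logic would read more cleanly if you stated equality of nerves rather than containment.
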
 

Recall from the discussion after Proposition \ref{P:fn} that the maps $f_n:X_n\to X$ are only defined after identifying some points of $\Sigma$ with each other. The reason we have required all points of $\Sigma$ to lie in a single element of the open cover is so that we obtain an open cover even after points of $\Sigma$ are identified with each other. 

\begin{proof}
The first claim follows because, for $n$ large enough, the map $g_n$ given by Definition \ref{D:converge} is close to an isometry for the flat metric. 

Let $U$ be the union of the balls of tiny radius  centered at points of $\Sigma$. 
We require the radius to be smaller than $\e/2$. 
As in the proof of Lemma \ref{L:Vthin}, we see that $V$ is supported on $X_n\setminus g_n(X\setminus U)$, which is contained in a single element of the open cover. Hence the (derivatives of) deformations described by cocycles supported on this cover are in $\Ann(V)$. 

The dimension of the boundary stratum is the dimension of $\Ann(V)$. The relative homology  of $(X, \Sigma)$ is isomorphic to $H_1(X_n, \Sigma_n)/V$, and the effect of the deformation given by a 1-cocycle on period coordinates is given by this isomorphism. In particular, linearly independent 1-cocycles on $X$ give rise to linearly independent 1-cocycles on $X_n$.  

Thus  the cocycles supported on the cover of $X_n$ give rise to a linearly independent set of deformations of dimension equal to the dimension of $\Ann(V)$, and we conclude that these deformations span $\Ann(V)$. 
\end{proof}

\begin{proof}[Proof of Proposition \ref{P:plusv}]\ann{A: This proof has been rewritten.} 
Fix an $\e$ decent cover $\{U_1, \ldots, U_p\}$ of the limit surface $(X,\omega, \Sigma)$ such that the derivatives of deformations of $(X,\omega, \Sigma)$ that arise as above from 1-cocycles on this cover span the tangent space to the stratum at $(X,\omega, \Sigma)$, and so that $\Sigma$ is contained in a single element of the cover. Pulling back the cover by $f_n$ gives an $\e/2$ decent cover of  $(X_n, \omega_n, \Sigma_n)$. By the previous lemmas, deformations on this cover span in $\Ann(V)$.

There is a neighborhood of $0$ in $\Ann(V)$ given by deformations obtained from cocycles of norm at most $\e/2$. If $\xi, \xi_n$ are in this neighborhood, we have that  $(X_n,\omega_n, \Sigma_n)+\xi_n$ and  $(X,\omega, \Sigma)+\xi$ are all well defined. It remains to show convergence. We will do this assuming $\xi_n, \xi$ are described by  cocycles of norm at most $\e'$, where $0<\e'<\e/2$ will be specified later. 

The complement of the image of $g_n$ is a set of points with small injectivity radius that may have non-trivial topology. We call the complement of the image of $g_n$ the small subsurface of $(X_n,\omega_n, \Sigma_n)$. (There are at most $|\Sigma|$ many connected components of the small subsurface, and there may be  fewer.)

$(X_n,\omega_n, \Sigma_n)+\xi_n$ is  obtained by gluing the disjoint union of the open sets $f_n^{-1}(U_k)$, with the change in gluing compared to $(X_n,\omega_n, \Sigma_n)$  described by $\xi_n$. Since the change in gluing does not effect the metric on each element of the cover, and since the small surface is contained in a single element of the cover, it remains in the set of points with small injectivity radius on $(X_n,\omega_n, \Sigma_n)+\xi_n$. 

Assume $\e'<\e/10$. 
Then any sufficiently short saddle connection (say, length at most $\e/100$) on $(X_n,\omega_n, \Sigma_n)+\xi_n$ must be contained in one of these subsurfaces. 

To get from $(X,\omega, \Sigma)+\xi$ minus a neighborhood of $\Sigma$ to $(X_n,\omega_n, \Sigma_n)+\xi_n$ minus the union of the small subsurfaces, it suffices to change the flat metric very slightly on each element of the open cover ($g_n$ is close to an isometry for the flat metric), and then change the gluing instructions very slightly ($\xi_n$ is close to $\xi$). Hence there should be a map between these two spaces that is almost a flat isometry, and this map verifies the convergence of  $(X,\omega, \Sigma)+\xi$ to $(X_n,\omega_n, \Sigma_n)+\xi_n$.

We now sketch one method of constructing such a map. Fix a triangulation of $(X,\omega, \Sigma)$ with  vertices  $\Sigma$. Pick $\e'$ small enough so that this triangulation remains a triangulation on $(X,\omega, \Sigma)+\xi$ whenever $\|\xi\|<\e'$, that is, all the saddle connections of the triangulation remain saddle connections on $(X,\omega, \Sigma)+\xi$. 

Fix $\delta>0$ be very small, in particular much smaller than the the length of the smallest edge in the triangulation. For each edge of the triangulation of $(X,\omega ,\Sigma)$, consider the two points that are distance $\delta$ from the endpoints. Fix a given triangle in the triangulation, and denote these points $A,B,C,D,E,F$ as in Figure \ref{F:Hexagons} right. These six points are the corners of a hexagon $H$ that shares three edges with the triangle. (One should consider all triangles simultaneously, but for notational simplicity we will focus on just one.)

For $n$ large enough, all such hexagons are contained in the domain of $g_n$. Furthermore, for $n$ large enough, there is a hexagon $H_n$ spanned by the points $g_n(A), g_n(B), g_n(C), g_n(D), g_n(E), g_n(F)$. 
The side lengths of  $H_n$ converge to those of $H$. 

\begin{figure}[h]
\makebox[\textwidth][c]{
\includegraphics[width=1.05\linewidth]{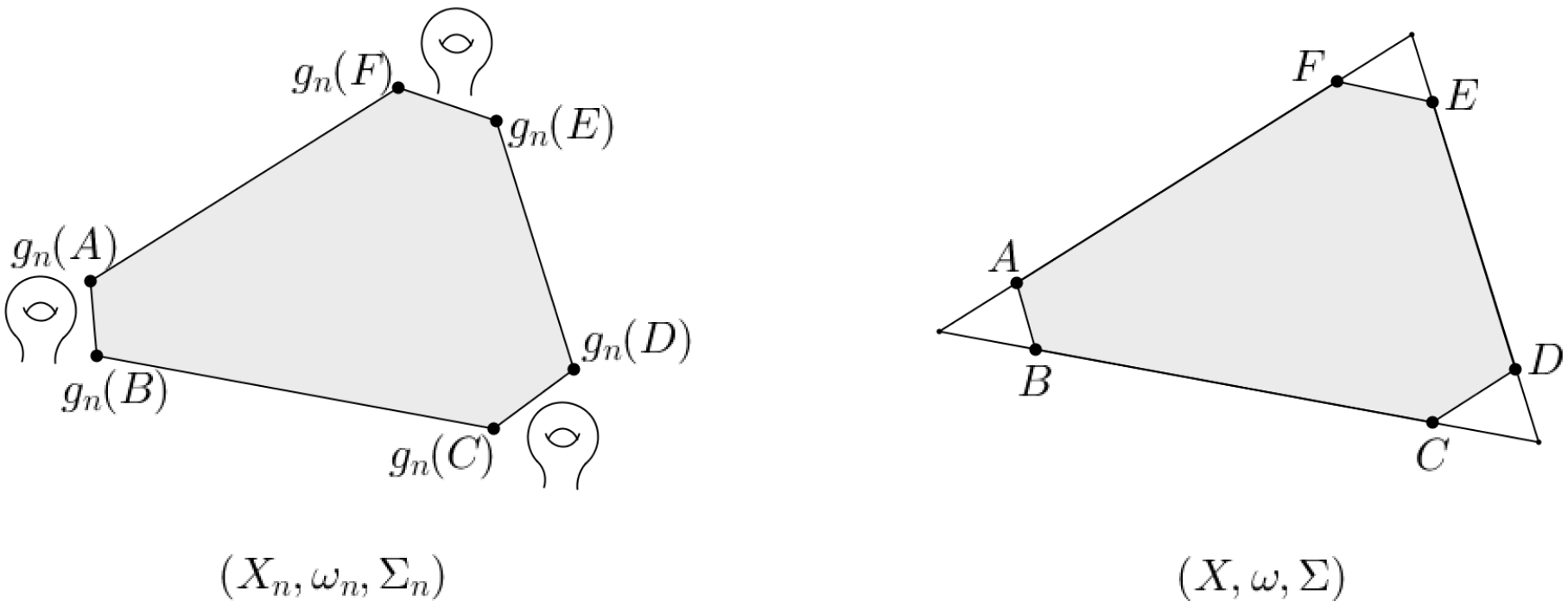}}
\caption{}
\label{F:Hexagons}
\end{figure}

We may assume that each point on an edge of the triangulation that is distance $\delta$ from a vertex is contained in only one element of the open cover, because $\delta$ is very small and because of Definition \ref{D:decent}(1). Each  set of the open cover minus a neighborhood of $\Sigma$ maps continuously into both $(X, \omega, \Sigma)+\xi$ and $(X_n, \omega_n, \Sigma_n)+\xi_n$, so  we get corresponding points $A', B', C', D', E', F'$ on $(X, \omega, \Sigma)+\xi$ and $A_n', B_n', C_n', D_n', E_n', F_n'$ on $(X_n, \omega_n, \Sigma_n)+\xi_n$. For $n$ large enough, these points span hexagons  $H'$ and $H_n'$. As $n\to\infty$, the sides lengths of $H_n'$ converge to those of $H'$. 

Pick a point $P$ in the interior of $H'$. Joining $P$ to the six points $A', B'$ etc., we triangulate $H'$. We can find a piecewise linear map from $H'$ to $H'_n$ that is linear on each triangle, and that has derivative closer and closer to the identity as $n\to\infty$. 

We now have for any fixed $\delta$ small enough a sequence of piecewise linear maps from $(X, \omega, \Sigma)+\xi$ minus a   neighborhood of $\Sigma$ depending on $\delta$ to $(X_n, \omega_n, \Sigma_n)+\xi_n$ that is closer and closer to preserving the flat metric as $n\to\infty$. Using the fact that the complement of the image of $g_n$ has injectivity radius going to zero as $n\to \infty$, we may see that for $\delta$ small enough and $n$ large enough, the injectivity radius of the complement of the image of these piecewise linear maps is arbitrarily small. The piecewise linear maps can be approximated by smooth maps that are also close to preserving the flat metric, and a sequence can be chosen along which $\delta\to0$ and $n\to \infty$ to give the sequence required by Definition \ref{D:converge} to verify convergence. 
\end{proof}

\subsection{Connection to Deligne-Mumford}

Here we sketch the proof of the following result. It may be familiar to some experts, and is not used in the paper. The sketch is included for completeness. 

Let $\overline{\cM}_{g,s}$ be the Deligne-Mumford compactification of the moduli space $\cM_{g,s}$ of genus $g$ Riemann surfaces with $s$ marked points. Recall that given a surface  $(X, \omega, \Sigma)$, the points of $\Sigma$ are the marked points, and $\Sigma$ is assumed to include the zeros of $\omega$. 

\begin{prop}\label{P:D-M}
$(X_n, \omega_n, \Sigma_n)$ converges to $(X, \omega, \Sigma)$ in the sense of Section 2 if and only if, for each limit point of the sequence in the bundle of Abelian differentials over $\overline{\cM}_{g,s}$, the result of removing the components on which the Abelian differential is zero is $(X,\omega, \Sigma)$. 
\end{prop}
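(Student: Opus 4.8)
The plan is to prove both implications by matching the flat-geometric degeneration of Definition \ref{D:converge} against the complex-analytic degeneration in $\overline{\cM}_{g,s}$. I would first reduce to the case where the total areas $\Area(X_n,\omega_n)$ are bounded — automatic in all applications, e.g. when each $\omega_n$ has area $1$ — so that the sequence has limit points in the space of area-$\le 1$ Abelian differentials over $\overline{\cM}_{g,s}$, which is compact. The second ingredient is the standard local description of convergence in $\overline{\cM}_{g,s}$: along a subsequence with $[X_n]\to[X_\infty]$ and $\omega_n\to\omega_\infty$ in the Hodge bundle, there are finitely many disjoint collar annuli $A_n^{(1)},\dots,A_n^{(m)}\subset X_n$ around the vanishing curves, together with diffeomorphisms $\psi_n\colon X_n\setminus\bigcup_j A_n^{(j)}\to X_\infty\setminus\{\text{nodes}\}$ onto their range, the range exhausting $X_\infty$ minus its nodes, such that $(\psi_n^{-1})^*\omega_n\to\omega_\infty$ in $C^\infty_{\mathrm{loc}}$ away from the nodes and the moduli of the $A_n^{(j)}$ tend to infinity; see \cite[Section 7.2.4]{IT}, \cite[Section 2]{HubbMas}, \cite{McM:nav}, and compare \cite{Ba}.

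The core of the argument is a flat-geometric dictionary turning this picture into the injectivity-radius statements of Definition \ref{D:converge}, using boundedness of the areas. First, each vanishing curve has flat systole (in its homotopy class on $X_n$) tending to $0$, since otherwise a collar of flat circumference bounded below and modulus tending to infinity would carry area tending to infinity; hence the part of each $A_n^{(j)}$ at definite distance from $\partial A_n^{(j)}$ has flat injectivity radius tending to $0$. Second, if $Z$ is a component of $X_\infty$ with $\omega_\infty|_Z\equiv 0$, then $\Area\big(\psi_n^{-1}(Z\setminus\{\text{nodes}\})\big)\to 0$, and since an embedded flat disc of radius $r$ has area $\pi r^2$, every point of this region whose distance to the region's frontier exceeds $\sqrt{\Area/\pi}$ has flat injectivity radius at most $\sqrt{\Area/\pi}\to 0$. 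Third, a marked point of $X_\infty$, and (after separating it into a pair of marked points) a node joining two positive-area components, corresponds under $\psi_n$ to a point of $X_n$ of small flat injectivity radius, because the injectivity radius at a point is at most its distance to the nearest marked point. Conversely, a point of $X_\infty$ lying on a positive-area component and equal to neither a node nor a marked point has flat injectivity radius $\rho>0$, so its $\psi_n^{-1}$-preimage has flat injectivity radius at least $\rho/2$ for $n$ large. Let $(\widehat X,\widehat\omega,\widehat\Sigma)$ be the multicomponent translation surface obtained from $(X_\infty,\omega_\infty)$ by the recipe of Remark \ref{R:partial}: delete the zero-area components, fill the resulting punctures with marked points, and separate the remaining nodes into pairs of marked points. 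Combining these four facts, the maps $\psi_n^{-1}$, restricted to $\widehat X$ minus a suitable decreasing family of neighborhoods of $\widehat\Sigma$, verify the two clauses of Definition \ref{D:converge}, so the subsequence converges to $(\widehat X,\widehat\omega,\widehat\Sigma)$.

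To finish, I would record that limits in the sense of Definition \ref{D:converge} are unique — composing two families of comparison maps $g_n$ on exhausting compacta produces in the limit an area-preserving biholomorphism of the two candidate limits carrying one differential and marked set to the other — and then argue as follows. If $(X_n,\omega_n,\Sigma_n)$ converges to $(X,\omega,\Sigma)$ in the sense of Definition \ref{D:converge}, then for any subsequence with $\overline{\cM}_{g,s}$-limit $(X_\infty,\omega_\infty)$ the dictionary gives convergence of that subsequence to the associated $(\widehat X,\widehat\omega,\widehat\Sigma)$, whence $(\widehat X,\widehat\omega,\widehat\Sigma)=(X,\omega,\Sigma)$ by uniqueness; this is the forward implication. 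Conversely, if every $\overline{\cM}_{g,s}$-limit point of the sequence yields $(\widehat X,\widehat\omega,\widehat\Sigma)=(X,\omega,\Sigma)$, then given any subsequence, compactness extracts a further subsequence with an $\overline{\cM}_{g,s}$-limit, which by hypothesis and the dictionary converges to $(X,\omega,\Sigma)$ in the sense of Definition \ref{D:converge}; since every subsequence has a further subsequence so converging, the whole sequence does.

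I expect the main obstacle to be the flat-geometric dictionary, especially the first two of its four steps: these are precisely where the complex-analytic (equivalently, hyperbolic) pinching data must be converted into statements about the \emph{flat} injectivity radius, and where one must pin down the combinatorial correspondence between nodes, zero-area components, and the marked points of the limit. The standard $\overline{\cM}_{g,s}$ input and the subsequence bookkeeping are routine, and the uniqueness statement, though elementary, is genuinely needed for the forward implication.
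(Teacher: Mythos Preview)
Your overall strategy matches the paper's sketch: both reduce the equivalence to a ``flat-geometric dictionary'' translating Deligne--Mumford convergence (maps $g_n$ from the limit minus nodes into $X_n$ with $g_n^*\omega_n\to\omega_\infty$) into the injectivity-radius conditions of Definition~\ref{D:converge}, treating plumbing annuli and zero-area components separately, and then handling the converse by composing comparison maps. Your explicit uniqueness statement and subsequence bookkeeping actually make the logical structure cleaner than the paper's version, and your flagging of the bounded-area hypothesis is more careful than the paper, which silently assumes Deligne--Mumford limit points exist.

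The one genuine gap is in step~(1) of your dictionary. Your extremal-length argument correctly shows that the flat systole in the core homotopy class of each plumbing annulus tends to zero (area $\ge M\ell^2$ with $M\to\infty$ forces $\ell\to 0$), but the implication you then draw---that every point of $A_n^{(j)}$ at definite distance from $\partial A_n^{(j)}$ has small flat injectivity radius---does not follow from this. A short curve in the core class need not pass near a given point $p$, and nothing in your argument rules out a large embedded flat disc sitting somewhere in the annulus. The paper fills exactly this gap with a different mechanism: it observes that the \emph{boundary} of each plumbing annulus has small flat length (since $g_n$ is nearly isometric and the neighborhoods $U_n$ of the nodes shrink), and then invokes McMullen's isoperimetric inequality \cite[Theorem~A.2 and Lemma~4.3]{McM:nav}, which says a flat region with short boundary has almost all its area in a cylinder of small circumference. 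That isoperimetric input is the missing idea in your step~(1); once you have it, the rest of your argument goes through as written.
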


 To be precise, one should remove the components on which the Abelian differential is zero, and the nodes, and then add back in finitely many marled\ann{A: Word ``marked" added.} points as required to get a closed surface. 

\begin{proof}
We will  first assume that $(X_n, \omega_n, \Sigma_n)$ converges in the bundle of Abelian differentials, say to $(X', \omega', \Sigma')$, and argue that this implies convergence as in Section 2. Let $U_n$ be  decreasing neighborhoods of the nodes on $(X', \omega', \Sigma')$, such that $\cap U_n$ is  the set of nodes.

We may assume that there is a map $g_n$ from $X'\setminus U_n$ to $X_n$ such that 
$$\lim_{n\to \infty}\omega-g_n^*(\omega_n)=0.$$

We now claim that the injectivity radius of any point in $X_n\setminus g_n(X'\setminus U_n)$ goes to zero (with respect to the flat metric given by $\omega_n$). This claim follows as in \cite[Lemma 4.3]{McM:nav}: An  isoperimetric inequality \cite[Theorem A.2]{McM:nav} gives that, since the annuli in $X_n\setminus g_n(X'\setminus U_n)$ have small boundary, almost all of their area is contained in a cylinder of small circumference. Points in a cylinder of small circumference have small injectivity radius.   The boundary of $X_n\setminus g_n(X'\setminus U_n)$ is small because the map $g_n$ is almost isometric.

\begin{rem}
See also \cite[Theorem 2]{CS} for the statement of a more precise isoperimetric inequality, not required here. The work of Rafi  \cite{Rafi}, \cite[Section 4]{EKZbig} provides an alternate approach. 
\end{rem}

Define $G_n$ as the union of the components of $X'$ on which $\omega'$ is nonzero, minus $U_n$. 

We now claim that every point of $X_n \setminus g_n (G_n)$ has injectivity radius going to zero as $n$ goes to infinity. Given the first claim  it suffices to show this on the image under $g_n$ of components of $X'$ on which $\omega'$ is zero. However, $g_n$ does not distort the flat metric much, so the flat metric is uniformly small on this image. This gives the claim. 

Define $(X,\omega, \Sigma)$ to be $(X', \omega', \Sigma')$ with the components where $\omega'$ is 0 removed. Convergence $(X_n, \omega_n, \Sigma_n)\to (X,\omega, \Sigma)$ using the definition in Section 2 follows because the map $g_n$ on $(X', \omega', \Sigma')$ restricted to $(X,\omega, \Sigma)$ gives the map $g_n$ referred to in Section 2. 

To show the converse, it suffices to consider  $(X_n, \omega_n, \Sigma_n)$ that converge as in Section 2 to $(X,\omega, \Sigma)$. Without loss of generality, assume they converge to $(X', \omega', \Sigma')$ in the bundle of Abelian differentials over Deligne-Mumford. 

Let $(X'',\omega'', \Sigma'')$ be the result of removing the zero area components from $(X', \omega', \Sigma')$. We need to show that  $(X'',\omega'', \Sigma'')$ is equal to $(X,\omega, \Sigma)$. This is straightforward, because the composition of the map from $(X'',\omega'', \Sigma'')$ to $(X_n, \omega_n, \Sigma_n)$ with the collapse map to $(X,\omega, \Sigma)$ gives maps from a complement of a neighborhood of $\Sigma''$ in $(X'',\omega'', \Sigma'')$ to  $(X,\omega, \Sigma)$ whose limits in the compact open topology send $\omega$ to $\omega''$. 
\end{proof}

\bibliography{mybib}{}
\bibliographystyle{amsalpha}
\end{document}